\documentclass[11pt]{article}
\usepackage[margin=.8in,left=.8in]{geometry}
\usepackage{mathtools}
\usepackage{amsthm}
\usepackage{xcolor}
\usepackage{stmaryrd}

\usepackage{bbm}

\usepackage{enumitem} 
\setlist{
  topsep=1pt, 
  itemsep=0pt,
  parsep=1pt
}

\usepackage{adjustbox}

\usepackage{hyperref}
\hypersetup{
  colorlinks = true,
  allcolors=.  
}

\let\PLAINthebibliography\thebibliography
\renewcommand\thebibliography[1]{
  \PLAINthebibliography{#1}
  \setlength{\parskip}{1pt}
  \setlength{\itemsep}{1pt plus .3ex}
}

\usepackage{tikz}
\usetikzlibrary{decorations.pathmorphing}
\usetikzlibrary{arrows}
\usetikzlibrary{cd}

\tikzcdset{
  arrow style=tikz, 
  diagrams={
    trim left=
    {([xshift=5pt]current bounding box.west)},
    trim right=
    {([xshift=-5pt]current bounding box.east)},
    baseline=
    {([yshift=-2pt]current bounding box.center)},
    >={
      Computer Modern Rightarrow[
        length=4pt, width=4pt
      ]
    },
    hook/.style={
      {Hooks[right, length=2pt, width=8pt]}->
    },
    hook'/.style={
      {Hooks[left, length=2pt, width=8pt]}->
    },
    shorten=0pt,
  }
}

\DeclareMathAlphabet{\mathpzc}{OT1}{pzc}{m}{it} 

\newcommand\mathscr[1]{\scalebox{1.1}{$\mathpzc{#1}$}}

\newcommand{\proofstep}[1]{\scalebox{.85}{ #1 }}

\usepackage[titletoc]{appendix}

\usepackage[safe]{tipa} 

\definecolor{darkblue}{rgb}{0.05,0.25,0.65}
\definecolor{greenii}{RGB}{20,140,10}
\definecolor{lightgray}{rgb}{0.9,0.9,0.9}
\definecolor{orangeii}{RGB}{200,100,5}

\usepackage{multirow}

\usepackage{stmaryrd}    

\usepackage{amssymb,amsmath}

\newcounter{sqindex}

\usepackage{mathptmx}
\usepackage{amsmath}
\usepackage{graphicx}
\DeclareRobustCommand{\coprod}{\mathop{\text{\fakecoprod}}}
\newcommand{\fakecoprod}{%
  \sbox0{$\prod$}%
  \smash{\raisebox{\dimexpr.9625\depth-\dp0}{\scalebox{1}[-1]{$\prod$}}}%
  \vphantom{$\prod$}%
}

\newcommand{\isa}{\in}

\newcommand{\GroundField}{\mathbb{K}}

\newcommand{\Sets}{\mathrm{Set}}
\newcommand{\Set}{\Sets}

\newcommand{\Groups}{
  \mathrm{Grp}
}

\newcommand{\ZTwo}{\mathbb{Z}_{/2}}

\newcommand{\Modules}[1]{\mathrm{Mod}_{#1}}

\newcommand{\Groupoids}{\mathrm{Grpd}}
\newcommand{\Groupoid}{\Groupoids}

\newcommand{\AGroupoid}[1]{\mathcal{#1}}

\newcommand{\Categories}{\mathrm{Cat}}

\newcommand{\SimplicialGroupoids}{\mathrm{sSet}\mbox{-}\Groupoids}

\newcommand{\Actions}[1]{
  {#1}\,\mathrm{Act}
}

\newcommand{\RealNumbers}{\mathbb{R}}
\newcommand{\ComplexNumbers}{\mathbb{C}}

\newcommand{\shape}{
  \raisebox{1pt}{\rm\textesh}
}

\usepackage[new]{old-arrows}   

\def\acts{\raisebox{1.4pt}{\;\rotatebox[origin=c]{90}{$\curvearrowright$}}\hspace{.5pt}}

\makeatletter
\newif\if@sup
\newtoks\@sups
\def\append@sup#1{\edef\act{\noexpand\@sups={\the\@sups #1}}\act}%
\def\reset@sup{\@supfalse\@sups={}}%
\def\mk@scripts#1#2{\if #2/ \if@sup ^{\the\@sups}\fi \else%
  \ifx #1_ \if@sup ^{\the\@sups}\reset@sup \fi {}_{#2}%
  \else \append@sup#2 \@suptrue \fi%
  \expandafter\mk@scripts\fi}
\def\tensor#1#2{\reset@sup#1\mk@scripts#2_/}
\def\multiscripts#1#2#3{\reset@sup{}\mk@scripts#1_/#2%
  \reset@sup\mk@scripts#3_/}
\makeatother

\makeatletter
\newbox\slashbox \setbox\slashbox=\hbox{$/$}
\def\itex@pslash#1{\setbox\@tempboxa=\hbox{$#1$}
  \@tempdima=0.5\wd\slashbox \advance\@tempdima 0.5\wd\@tempboxa
  \copy\slashbox \kern-\@tempdima \box\@tempboxa}
\def\slash{\protect\itex@pslash}
\makeatother

\def\clap#1{\hbox to 0pt{\hss#1\hss}}
\def\mathllap{\mathpalette\mathllapinternal}
\def\mathrlap{\mathpalette\mathrlapinternal}
\def\mathclap{\mathpalette\mathclapinternal}
\def\mathllapinternal#1#2{\llap{$\mathsurround=0pt#1{#2}$}}
\def\mathrlapinternal#1#2{\rlap{$\mathsurround=0pt#1{#2}$}}
\def\mathclapinternal#1#2{\clap{$\mathsurround=0pt#1{#2}$}}

\let\oldroot\root
\def\root#1#2{\oldroot #1 \of{#2}}
\renewcommand{\sqrt}[2][]{\oldroot #1 \of{#2}}

\DeclareSymbolFont{symbolsC}{U}{txsyc}{m}{n}
\SetSymbolFont{symbolsC}{bold}{U}{txsyc}{bx}{n}
\DeclareFontSubstitution{U}{txsyc}{m}{n}

\DeclareSymbolFont{stmry}{U}{stmry}{m}{n}
\SetSymbolFont{stmry}{bold}{U}{stmry}{b}{n}

\DeclareFontFamily{OMX}{MnSymbolE}{}
\DeclareSymbolFont{mnomx}{OMX}{MnSymbolE}{m}{n}
\SetSymbolFont{mnomx}{bold}{OMX}{MnSymbolE}{b}{n}
\DeclareFontShape{OMX}{MnSymbolE}{m}{n}{
    <-6>  MnSymbolE5
   <6-7>  MnSymbolE6
   <7-8>  MnSymbolE7
   <8-9>  MnSymbolE8
   <9-10> MnSymbolE9
  <10-12> MnSymbolE10
  <12->   MnSymbolE12}{}

\usepackage{cleveref}

\crefformat{section}{\S#2#1#3} 
\crefformat{subsection}{\S#2#1#3}
\crefformat{subsubsection}{\S#2#1#3}

\theoremstyle{plain}
\newtheorem{theorem}{Theorem}[section]
\newtheorem{lemma}[theorem]{Lemma}
\newtheorem{proposition}[theorem]{Proposition}

\newtheorem{corollary}[theorem]{Corollary}
\theoremstyle{definition}
\newtheorem{definition}[theorem]{Definition}

\newtheorem{example}[theorem]{Example}

\newtheorem{remark}[theorem]{Remark}

\newcommand{\defneq}{\equiv}

\newcommand{\extmap}{\raisebox{.0pt}{\---}\scalebox{.8}{$\!\Box$}}

\begin{document}

\setlength{\abovedisplayskip}{3.5pt}
\setlength{\belowdisplayskip}{3.5pt}
\setlength{\abovedisplayshortskip}{-5pt}
\setlength{\belowdisplayshortskip}{3pt}

\title{A Global Model Structure for $\mathbb{K}$-Linear $\infty$-Local Systems}

\author{
  Hisham Sati${}^{\ast, \dagger}$
\qquad 
  Urs Schreiber${}^\ast$
}

\maketitle

\begin{abstract}
Parameterized stable homotopy theory organizes local systems of spectra over homotopy types, governed by a ``yoga'' of six functors. To provide semantics for the recently developed \emph{Linear Homotopy Type Theory} (LHoTT), good model categories of these spectra are required, preferably monoidal with respect to the external smash product. 

We focus on the case of parameterized $H\scalebox{.95}{$\mathbb{K}$}$-module spectra ($\infty$-local systems), motivated by recent applications of parameterized homotopy to topological quantum computing. While traditionally treated via dg-categories, we leverage combinatorial model structures on simplicial chain complexes to construct the first dedicated global model structure for $\scalebox{.95}{$\mathbb{K}$}$-linear $\infty$-local systems, which offers better control than existing models for general parameterized spectra. In particular, when restricted to base 1-types, our model structure is monoidal with respect to the external tensor product, making it a candidate target semantics for the multiplicative fragment of LHoTT.
\end{abstract}

\medskip

 \begin{center}
 \begin{minipage}{14cm}
  \tableofcontents  
  \end{minipage}
 \end{center}

\vfill

\hrule
\vspace{5pt}

{
\footnotesize
\noindent
\def\arraystretch{1}
\tabcolsep=0pt
\begin{tabular}{ll}
${}^*$\,
&
Mathematics, Division of Science; and
\\
&
Center for Quantum and Topological Systems,
\\
&
NYUAD Research Institute,
\\
&
New York University Abu Dhabi, UAE.  
\end{tabular}
\hfill
\adjustbox{raise=-15pt}{
\href{https://ncatlab.org/nlab/show/Center+for+Quantum+and+Topological+Systems}{
\includegraphics[width=3cm]{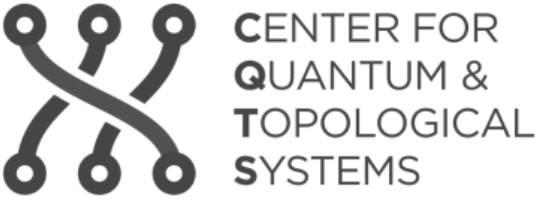}}
}

\vspace{1mm} 
\noindent ${}^\dagger$The Courant Institute for Mathematical Sciences, NYU, NY

\vspace{.05cm}

\noindent
The authors acknowledge the support by {\it Tamkeen UAE} under the 
{\it NYU Abu Dhabi Research Institute grant} {\tt CG008}.
}

\newpage

\section{Introduction and Overview}

\noindent
{\bf Parameterized stable homotopy theory.}
The two major branches of homotopy theory, namely the unstable plain homotopy theory of spaces and the stable homotopy theory of spectra, unify in the \emph{parameterized stable homotopy theory} of \emph{parameterized spectra} over spaces (flat bundles of spectra). The tightness of this unification is witnessed by the remarkable fact (due to G. Biedermann 2007, cf. \cite[\S 4.2.3]{AnelJoyal2021}) that the global homotopy theory of parameterized spectra (amalgamated over varying base spaces) is again an $\infty$-topos like the plain homotopy theory of spaces --- even though each of its fiber homotopy theories of spectra parameterized over a fixed base is stable and as such far from being an $\infty$-topos. But in fact, globally parameterized spectra are ``doubly closed monoidal'', with the canonical Cartesian product (which is the ``external direct sum'' of parameterized spectra) now accompanied by the ``external smash tensor product'' of spectra (the fiberwise smash product after pullback to the Cartesian product of base spaces). 

\medskip

\noindent
{\bf The quest for good models.}
A powerful tool for accurately dealing with homotopy theories famously is \emph{model category theory} (cf. \cite{Quillen67}\cite{Hovey99}\cite{Hirschhorn02}), and specifically \emph{monoidal model category theory} if further (tensor) products are involved. However, global model categories of parameterized spectra, pioneered by May \& Sigurdsson \cite{MaySigurdsson06} (see also \cite{HSS20}\cite{BM2021}), are notoriously hard to deal with. Considerable streamlining was achieved by Malkiewich \cite{Malkiewich19}\cite{Malkiewich23}, but not quite including the monoidal model property of the external smash product of spectra. Such a monoidal model structure is expected to be needed notably for realizing categorical semantics for the recently described \emph{Linear Homotopy Type Theory} (LHoTT, \cite{RFL21}\cite{Riley22} following \cite[\S 3]{Schreiber14}), 
which promises to be a formal language for parameterized stable homotopy theory generalizing how plain Homotopy Type Theory (HoTT) is now famously known to be a formal language for plain homotopy theory (cf. \cite{Shulman21}).

\medskip

\noindent
{\bf Specialization to the $\mathbb{K}$-Linear case.}
However, in prominent applications of parameterized stable homotopy theory and LHoTT, such as topological quantum processes (cf. \cite{Schreiber14}\cite{TQP}\cite{QS}\cite[\S 2.3.1]{SS25-Complete}), one deals only with the comparatively simpler $\mathbb{K}$-linear case of parameterized $H\mathbb{K}$-module spectra over some field $\mathbb{K}$ (not necessarily of characteristic zero). Namely, under the \emph{stable Dold-Kan correspondence} which says here that $H\mathbb{K}$-module spectra are equivalently (simplicial) chain complexes of $\mathbb{K}$-vector spaces,
\begin{equation}
  \label{ChainComplexesAsModuleSpectra}
  \mathrm{Mod}_{H\GroundField}(\mathrm{Spectra})
  \underset{
    \mathclap{
    \scalebox{.7}{
      \def\arraystretch{.9}
      \begin{tabular}{c}
        \cite[Thm. 1.1]{Shipley07}
        \\
        \cite[Thm. 5.1.6]{SchwedeShipley03}
      \end{tabular}
    }
    }
  }{
  \;\;\;\;\;\;
  \simeq_{{}_{\mathrm{Qu}}}
  \;\;\;\;\;\;
  }
  \mathrm{Ch}_{\GroundField}
  \underset{
    \mathclap{
      \scalebox{.7}{
        \def\arraystretch{.9}
        \begin{tabular}{c}
          \cite[p. 10]{RezkSchwedeShipley01}
          \\
          Thm. \ref{ModelCategoryOfSimplicialChainComplexes}
        \end{tabular}
      }
    }
  }{
  \;\;\;\;\;\;
  \simeq_{{}_{\mathrm{Qu}}}
  \;\;\;\;\;\;
  }
  \mathrm{sCh}_{\GroundField}
  \mathrlap{\,,}
\end{equation}
the parameterized $H\mathbb{K}$-module spectra are equivalently identified with homotopically parameterized chain complexes of vector spaces: ``flat $\infty$-vector bundles'' (cf. \S\ref{SimplicialLocalSystems} below): 
These have been studied with the alternative dg-algebraic tools of dg-category theory under the name ``$\infty$-local systems'' (\cite{RiveraZeinalian20}\cite{AMV24} following \cite{BlockSmith14}). 

\medskip

\noindent
{\bf Need for a good $\mathbb{K}$-linear model.}
What has been missing is a combination of these approaches: A model category theoretic formulation of the homotopy theory of $\infty$-local systems, akin to the model structures for parameterized spectra but capitalizing on the relative simplifications that occur after $\mathbb{K}$-linearization and reducing fiberwise to common dg-algebraic constructions on chain complexes of vector spaces. 

\medskip

\noindent
{\bf Outline of results.}
To that end, we start in \S\ref{SimplicialLocalSystems} by making explicit a good (left proper simplicial combinatorial) model structure on the category $\mathbf{sCh}_{\mathbb{K}}$ of simplicial chain complexes (Thm. \ref{ModelCategoryOfSimplicialChainComplexes}). This readily implies (Prop. \ref{ModelStructureOnSimplicialFunctors}) a combinatorial model structure on each of the simplicial categories $\mathbf{sCh}_{\mathbb{K}}^{\mathbf{X}}$ of $\infty$-local systems over any fixed base homotopy type modeled as a simplicial groupoid $\mathbf{X}$, naturally defined the way one will want to use them in practice (Def. \ref{MonoidalSimplicialFunctorCategoryIntoSimplicialChainComplexes}): as simplicial functors $\begin{tikzcd}[sep=small] \mathbf{X} \ar[r] & \mathbf{sCh}_{\mathbb{K}}\end{tikzcd}$. The main step, then, is in \S\ref{ParameterizationOverVaryingBaseSpaces} the verification of the integral model structure on the Grothendieck construction 
\begin{equation}
  \mathbf{Loc}_{\mathbb{K}}
  \;:=\;
  \;\;\;
  \underset
    {\mathclap{
      \mathbf{X} \in \mathrm{sSet}\text{-}\mathrm{Grpd}_{\mathrm{skl}}
      }
    }
    \int
    \;\;\;
  \mathbf{sCh}
    ^{\mathbf{X}}
    _{\mathbb{K}}
  \mathrlap{\,,}
\end{equation}
amalgamating all these local model structures as the base homotopy type varies (Thm. \ref{GlobalModelStructure}). On this global model structure we discuss in \S\ref{ExternalTensorOnSimplicialLocalSystems} aspects of the 6-functor formalism (Grothendieck's ``motivic yoga'') and specifically of the \emph{external tensor product} of $\infty$-local systems (Def. \ref{ExternalTensorProductOfSimplicialLocalSystems},  covering the Cartesian product of base homotopy types),
\begin{equation}
  \begin{tikzcd}[
    row sep=0pt
  ]
  \mathbf{Loc}_{\mathbb{X}}
  \times
  \mathbf{Loc}_{\mathbb{K}}
  \ar[
    r,
    "{ \boxtimes }"
  ]
  &
  \mathbf{Loc}_{\mathbb{K}}
  \mathrlap{\,.}
  \\
  \mathbf{sCh}^{\mathbf{X}}
    _{\mathbb{K}}
  \times
  \mathbf{sCh}^{\mathbf{Y}}
    _{\mathbb{K}}
  \ar[r]
  &
  \mathbf{sCh}^{\mathbf{X} \times \mathbf{Y}}
    _{\mathbb{K}}
  \mathrlap{\,.}
  \end{tikzcd}
\end{equation}
While this cannot be a Quillen bifunctor (since already the underlying Cartesian product of simplicial groupoids is not), we show in Thm. \ref{ExternalTensorProductIsHomotopical} that it comes as close as it gets under these circumstances. In fact, in \S\ref{ExternalModulesOverOneTypes} we show that when restricted to base homotopy 1-types, the external tensor product does form a monoidal model category structure (Thm. \ref{ExternalMonoidalModelStructureOnLocalSystemsOverOneTypes}).

\medskip

\noindent
{\bf Conclusion and outlook.}
Under the equivalence \eqref{ChainComplexesAsModuleSpectra}, the general form of our result compares favorably with the global model categories for parameterized plain spectra ($\mathbb{S}$-module spectra) indicated in \cite[Rem. 5.4.3]{Malkiewich23}:
 The analogue for $\mathbb{S}$-modules of our sub-result \eqref{ExternalTensorIsHomotopical} in Thm. \ref{ExternalTensorProductIsHomotopical} 
  is \cite[Rem. 6.1.4]{Malkiewich23}, 
  and the analogue of our sub-result \eqref{ExternalTensorIsQuillenBifunctorOverFixedBaseObjects} is
  \cite[Lem. 6.4.3]{Malkiewich19}\cite[Lem. 5.4.5]{Malkiewich23}, following \cite{MaySigurdsson06},
  which is referred to there as the ``perhaps most convenient category of parameterized spectra''.  At the same time, our model makes full use of the simplifications brought about by $\mathbb{K}$-linearization and will lend itself to applications in this context, some of which we indicate in the concluding \S\ref{DiscussionAndOutlook}.

\medskip

\section{The Model Structures}
 \label{ExternalTensorProductAsAQuillenBifunctor}

\subsection{\texorpdfstring{Flat $\infty$-vector bundles ($\infty$-local systems)}{Flat infinity-vector bundles (infinity-local systems)}}
\label{SimplicialLocalSystems}

The idea of {\it flat $\infty$-vector bundles} ($\infty$-local systems) over any space $X$ is that 

\begin{itemize}
  \item[{\bf (i)}] to any point $x \in X$ is assigned a chain complex $V_x$ (cf. Def. \ref{CategoryOfChainComplexes} below);
  \item[{\bf (ii)}] to any path $p : \begin{tikzcd}[decoration=snake] x \ar[r, decorate] & y\end{tikzcd}$ is assigned
  a chain map $\phi_p : V_x \to V_y$;
  \item[{\bf (iii)}] to any path-of-paths 
  $  
  \begin{tikzcd}[decoration=snake, row sep=6pt]
      & 
      y
      \ar[
        dr, 
        decorate, 
        bend left=20,
        "{ p' }",
        "{\ }"{name=s, pos=.1, swap}
      ]
      \\
      x
      \ar[
        ur, 
        decorate,
        "{ p }",
        bend left=20
      ]
      \ar[
        rr, 
        decorate,
        bend right=20,
        "{ p }"{swap},
        "{\ }"{pos=.4, name=t}
      ]
      &&
      z
      \ar[
        from=s,
        to=t,
        Rightarrow,
        "{ \sigma }"
      ]
    \end{tikzcd}
      $
  is assigned a chain homotopy
  $ 
    \begin{tikzcd}[decoration=snake, row sep=2pt]
      & 
      V_y
      \ar[
        dr, 
        bend left=20,
        "{ \phi_{p'} }",
        "{\ }"{name=s, pos=.1, swap}
      ]
      \\
      V_x
      \ar[
        ur, 
        "{ \phi_{p} }",
        bend left=15
      ]
      \ar[
        rr, 
        bend right=15,
        "{ \phi_{p''} }"{swap},
        "{\ }"{pos=.4, name=t}
      ]
      &&
      V_z
      \ar[
        from=s,
        to=t,
        Rightarrow,
        "{ \sigma }"
      ]
    \end{tikzcd}
  $
  
  (In the special case where all chain complexes here are concentrated in degree 0, then these chain homotopies are necessarily 
  identities and the structure reduces to that of an ordinary flat vector bundle.)
  \item[\bf ($\infty$)] ``and so on''...
\end{itemize}
But the explicit continuation of this pattern is not as straightforward as the previous steps, since there is not {\it directly} a notion of higher chain homotopy.

On the other hand, in the conceptual perspective of $\infty$-category theory it is evident that, abstractly, flat $\infty$-vector bundles 
should equivalently be $\infty$-functors from the fundamental $\infty$-groupoid of their base space to the $\infty$-category of chain complexes:
\begin{equation}
  \label{LocalSystemsAsInfinityFunctors}
  \overset{
    \mathclap{
      \raisebox{6pt}{
        \scalebox{.7}{
          \color{darkblue}
          \bf
          $\infty$-local systems
        }
      }
    }
  }{
  \mathrm{LocSys}_\infty(
    \underset{
      \mathclap{
        \rotatebox{-30}{
          \rlap{
            \hspace{-10pt}
            \scalebox{.6}{
              \color{gray}
              \bf
              base space
            }
          }
        }
      }
    }{
      X
    }
  )
  }
  \;\;
  :=
  \;\;
  \overset{
    \mathclap{
      \raisebox{2pt}{
        \scalebox{.7}{
          \color{darkblue}
          \bf
          $\infty$-functors
        }
      }
    }
  }{
  \mathrm{Func}_\infty\Big(
    \underset{
      \mathclap{
        \hspace{14pt}
        \rotatebox{-30}{
          \rlap{
            \hspace{-14pt}
            \scalebox{.6}{
              \color{gray}
              \bf
              \def\arraystretch{.75}
              \begin{tabular}{c}
                fundamental 
                \\
                \hspace{-40pt}$\infty$-groupoid
              \end{tabular}
            }
          }
        }
      }
    }{
    \shape X
    }
\;\;    ,\;\;
    \underset{
      \mathclap{
        \hspace{19pt}
        \rotatebox{-30}{
          \rlap{
            \hspace{-14pt}
            \scalebox{.6}{
              \color{gray}
              \bf
              \def\arraystretch{.75}
              \begin{tabular}{c}
                $\infty$-category of
                \\
                \hspace{-40pt}chain complexes
              \end{tabular}
            }
          }
        }
      }
    }{
    N \mathrm{Ch}_{\mathbb{K}} [\mathrm{W}_{\mathrm{qi}}^{-1}]
    }
  \Big)
  }
  \;\;\simeq\;\;
  \overset{
    \mathclap{
      \raisebox{2pt}{
        \scalebox{.7}{
          \color{darkblue}
          \bf
          simplicial functors
        }
      }
    }
  }{
  N
  \Big(
  \mathrm{sFunc}\big(
    \underset{
      \mathclap{
        \hspace{13pt}
        \rotatebox{-30}{
          \rlap{
            \hspace{-10pt}
            \scalebox{.6}{
              \color{gray}
              \bf
              \def\arraystretch{.8}
              \begin{tabular}{c}
                fundamental 
                \\
                \hspace{-30pt}simplicial groupoid
              \end{tabular}
            }
          }
        }
      }
    }{
    \mathcal{G}(\shape X)
    }
   \;\; ,\;\;
    \underset{
      \mathclap{
        \hspace{13pt}
        \rotatebox{-30}{
          \rlap{
            \hspace{-10pt}
            \scalebox{.6}{
              \color{gray}
              \bf
              \def\arraystretch{.8}
              \begin{tabular}{c}
                simplicial model category
                \\
                \hspace{-70pt}of chain complexes
              \end{tabular}
            }
          }
        }
      }
    }{
    \mathrm{sCh}_{\mathbb{K}}
    }
  \big)^\circ
  \Big)
  \,.
  }
\end{equation}
\vspace{.7cm}

A quick but unwieldy way to define such $\infty$-functors is as maps from the singular simplicial complex $\shape X$ of $X$ into the suitably-defined 
``homotopy-coherent simplicial nerve'' of $\mathrm{Ch}_{\mathbb{K}}$ (which is essentially tantamount to defining a simplicial notion of ``higher chain homotopy''). 
Taking this to be the ``dg-nerve'' \cite[\S 1.3.1.6]{LurieAlgebra} of the dg-self-enrichment of $\mathrm{Ch}_{\mathbb{K}}$ is the approach 
in the existing literature on $\infty$-local systems \cite[\S 2]{BlockSmith14}\cite[\S 5]{RiveraZeinalian20}. 

\smallskip 
However, here we need tighter control over these $\infty$-categories of $\infty$-local systems, namely we need a good model category 
presentation by simplicial functors --- which had been missing but which we establish now. 

\begin{remark}[Equivalence of models of $\infty$-local systems]
\label{EquivalenceOfModelsOfInfinityLocalSystems}
That our new construction of $\infty$-local systems (in Def. \ref{MonoidalSimplicialFunctorCategoryIntoSimplicialChainComplexes} below) is equivalent, 
as an $\infty$-category, to the existing constructions (\cite[\S 2]{BlockSmith14}\cite[\S 5]{RiveraZeinalian20})
can be seen via the comparison maps between the dg-nerve  and the simplicial nerve established in
\cite[Prop. 1.3.4.5]{LurieAlgebra}\cite[Prop. 5.17]{GwilliamPavlov18}.\footnote{We thank Dmitri Pavlov for pointing out this result.}
\end{remark}

\noindent
{\bf Simplicial model categories.}
In the following, we make free use of notions and facts of model category theory \cite{Quillen67} (textbook accounts include \cite{Hovey99}\cite{Hirschhorn02}\cite{SSS23Character}) 
in particular as concerns (combinatorial) simplicial model categories (see \cite[\S A]{Lurie09} and especially the comprehensive introductory account in \cite{Riehl14}).

\medskip 

\noindent
{\bf $\infty$-Vector spaces.} We consider a particularly good model category presentation of the $\infty$-category of $\infty$-vector 
spaces modeled by chain complexes of vector spaces.
While the model category $\mathrm{Ch}_{\mathbb{K}}$ of unbounded chain complexes is fairly familiar (recalled below), in its 
plain form as an ordinary category it is not a useful ingredient in the construction of flat $\infty$-vector bundles ($\infty$-local systems), 
since the ordinary functors from 1-groupoids into it only model flat vector bundles over homotopy 1-types. This may
be one reason why existing literature on $\infty$-local systems has made no use of model category theoretic tools.
However, there is a well-known general approach to such situations which does stay within (and hence retains the power of)
model category theory: This is to find an enhancement to a {\it combinatorial simplicial model} category whose underlying 
ordinary model category is Quillen equivalent to the original one: In this case, the category of simplicial functors with 
this codomain still presents the desired $\infty$-functor $\infty$-category but now does inherit a supporting model category structure itself.
The fact that $\mathrm{Ch}_{\mathbb{K}}$ does admit such a simplicial enhancement is essentially well-known, though 
some of the details, such as its compatibility with the monoidal structure, are not explicit in the literature;
therefore we spell it out:

\begin{definition}[Category of simplicial chain complexes\footnote{Beware that some authors say ``simplicial chain complex''
for the chain complexes that compute singular homology groups. Here we properly mean ``simplicial objects in the category 
of chain complexes'' ---  which is of course not unrelated but different and/or more general.}]
\label{CategoryOfChainComplexes} $\,$ \newline
  For $\mathbb{K}$ a field (not necessarily of characteristic zero), we write:
  \begin{itemize}
  \item[{\bf (i)}]
  \fbox{$\big(
    \mathrm{Mod}_{\mathbb{K}}
    ,\,
    \otimes
  \big)$}
  for the category of $\mathbb{K}$-vector spaces with $\mathbb{K}$-linear maps between them (which below we frequently think 
  of as the special case of $R$-modules for $R = \mathbb{K}$ a field, whence the notation), and equipped with the usual 
  closed monoidal category structure given by the ordinary tensor product
  whose tensor unit is $\mathbb{K}$. We denote
  the linear mapping vector space between a pair of vector spaces by angular brackets 
  \begin{equation}
    \label{ClosedMonoidalStructureOnKMod}
    (-) \otimes (-)
    \,:\,
    \mathrm{Mod}_{\mathbb{K}}
    \times
    \mathrm{Mod}_{\mathbb{K}}
    \xrightarrow{\quad}
    \mathrm{Mod}_{\mathbb{K}}
    \,,
    \;\;\;\;\;\;\;\;\;\;
    [-,-]
    \,:\,
    \mathrm{Mod}_{\mathbb{K}}^{\mathrm{op}}
    \times
    \mathrm{Mod}_{\mathbb{K}}
    \xrightarrow{\quad}
    \mathrm{Mod}_{\mathbb{K}}    
  \end{equation}
  and will successively overload this notation as this category is incrementally generalized in the following. The linear 
  mapping space is of course the {\it internal hom} for this closed monoidal category, in that we have natural isomorphisms of hom-sets of this form:
  \begin{equation}
    \label{InternalHomIsomorphismForVectorSpaces}
    \mathrm{Mod}_{\mathbb{K}}
    \big(
      T
      \,\otimes\,
      V
      ,\,
      W
    \big)
    \;\simeq\;
    \mathrm{Mod}_{\mathbb{K}}\big(
      T
      ,\,
      [V,\,W]
    \big)
    \,.
  \end{equation}
  The category $\mathrm{Mod}_{\mathbb{K}}$ is complete and cocomplete, in particular it is canonically 
{\it tensored} and {\it powered} over $\mathrm{Set}$
  \begin{equation}
    \label{CoTensoringOfVectorSpacesOverSets}
    \begin{tikzcd}[row sep=-2pt, column sep=small]
      \mathrm{Set}
      \times
      \mathrm{Mod}_{\mathbb{K}}
      \ar[
        rr,
        "{
          (\mbox{-})
          \cdot
          (\mbox{-})
        }"
      ]
      &&
      \mathrm{Mod}_{\mathbb{K}}
      \mathrlap{\,,}
      \\
      (S,\, V) &\longmapsto&
      \underset{s \in S}{\coprod} V
    \end{tikzcd}
    \hspace{1.2cm}
    \begin{tikzcd}[row sep=-2pt, column sep=small]
      \mathrm{Set}^{\mathrm{op}}
      \times
      \mathrm{Mod}_{\mathbb{K}}
      \ar[
        rr,
        "{
          (\mbox{-})^{(\scalebox{.8}{-})}
        }"
      ]
      &&
      \mathrm{Mod}_{\mathbb{K}}
      \\
      (S,\, V) &\longmapsto&
      \underset{s \in S}{\prod} V
    \end{tikzcd}
  \end{equation}
  such that there are natural isomorphisms of
  hom-sets of the following form
  \begin{equation}
    \label{TensoringIsomorphismsForKMod}
    S \,\in\, \,\mathrm{Set}
    ,\,
    V,W \,\in\, \mathrm{Mod}_{\mathbb{K}}
    \hspace{.5cm}
      \vdash
    \hspace{.5cm}
    \mathrm{Mod}_{\mathbb{K}}\big(
      S \cdot V
      ,\,
      W
    \big)
    \;\simeq\;
    \mathrm{Set}\big(
      S 
      ,\,
      \mathrm{Mod}_{\mathbb{K}}(
        V
        ,\,
        W
      )
    \big)
    \;\simeq\;
    \mathrm{Mod}_{\mathbb{K}}\big(
      V
      ,\,
      W^S
    \big)
    \,.
  \end{equation}
  This may also be understood as the natural isomorphism \eqref{InternalHomIsomorphismForVectorSpaces}
  partially restricted along the free $\mathbb{K}$-module functor 
  \[
    \begin{tikzcd}[row sep=-3pt, column sep=small]
      \mathllap{
        \mathbb{K}[-] 
        \,:\, 
      }
      \mathrm{Set} 
        \ar[rr]
        &&
      \mathrm{Mod}_{\mathbb{K}}
      \\
      S 
        &\longmapsto& 
      \underset{s \in S}{\coprod}
      \mathbb{K}
    \end{tikzcd}
  \]
 in that
  \begin{equation}
    \label{EquivalentWaysOfTensoringVectorSpaces}
    \begin{tikzcd}
      \mathrm{Set}
        \times
      \mbox{\bf{Mod}}_{\mathbb{K}}
      \ar[
        rr,
        "{
          \mathbb{K}{[\mbox{-}]}
          \,\times\,
          \mathrm{id}
        }",
        "{\ }"{pos=.5, swap, name=s}
      ]
      \ar[
        dr,
        "{ (\mbox{-})\cdot(\mbox{-}) }"{swap},
        "{\ }"{name=t}
      ]
      \ar[
        from=s,
        to=t,
        Rightarrow, 
        "{ \sim }"{sloped, swap}
      ]
      &&
      \mbox{\bf{Mod}}_{\mathbb{K}}
        \times
      \mbox{\bf{Mod}}_{\mathbb{K}}
      \ar[
        dl, 
        "{ (\mbox{-})\otimes(\mbox{-}) }"
      ]
      \\
      &
      \mbox{\bf{Mod}}_{\mathbb{K}}
    \end{tikzcd}
    \hspace{1cm}
    \mathbb{K}[S]
      \,\otimes\,
    V
    \;\simeq\;
    S \cdot V
    \,.
  \end{equation}

  In particular, for $G \in \mathrm{Grp} \to \mathrm{Set}$ a set equipped with a group structure $\mu : G \times G \to G$, $\mathrm{e} : \ast \to G$, 
  whose group algebra is $\mathbb{K}[G] \,\in\, \mathrm{Alg}_{\mathbb{K}} \to \mathrm{Mod}_{\mathbb{K}}$, a $\mathbb{K}[G]$-module structure on
  a vector space may equivalently be thought of as a ``tensoring action'' via a morphism
  $\rho : G \cdot V \longrightarrow V$ making the following diagrams commute:
  \[
    \begin{tikzcd}[column sep=large]
      G \cdot (G \cdot V) 
      \ar[r, phantom, "{ \simeq }"]
      \ar[
        d,
        "{
          \mathrm{id}
          \cdot 
          \rho
        }"
      ]
      &[-20pt]
      (G \times G) \cdot V
      \ar[
        r,
        "{ \mu \,\cdot\, \mathrm{id} }"
      ]
      &
      G \cdot V
      \ar[
        d, 
        "{ \rho }"
      ]
      \\
      G \cdot V
      \ar[rr, "{\rho}"]
      &&
      V
    \end{tikzcd}
    \hspace{1cm}
    \begin{tikzcd}
      \ast \cdot \mathscr{V}
      \ar[rr, "{ \mathrm{e} \,\cdot\, \mathrm{id} }"]
      \ar[dr, "{ \sim }"{swap, sloped}]
      &&
      G \cdot \mathscr{V}
      \ar[dl, "\rho"]
      \\
      & \mathscr{V}
    \end{tikzcd}
  \]


  \item[{\bf (ii)}] 
  \fbox{$\big(\mathrm{Ch}_{\mathbb{K}}, \, \otimes\big)$} for the category of (unbounded) chain complexes of $\mathbb{K}$-vector 
  spaces with chain maps between them as morphisms (cf. \cite[\S 1]{Weibel94})
  \begin{equation}
    \label{ChainMap}
    \begin{tikzcd}[column sep=large]
      V
      \ar[d, "\phi"]
      \ar[r, phantom, ":\defneq"]
      &
      \big[
      \;
      \cdots
      \ar[r, "{ \partial^V_{1} }"]
      \ar[d, phantom, shift left=5pt, "{\cdots}"]
      &
      V_1 
      \ar[r, "{ \partial^V_{0} }"]
      \ar[d, "{ \phi_1 }"]
      &
      V_0 
      \ar[r, "{ \partial^V_{-1} }"]
      \ar[d, "{ \phi_0 }"]
      &
      V_{-1}
      \ar[r, "{ \partial^V_{-2} }"]
      \ar[d, "{ \phi_{-1} }"]
      &
      \cdots
      \ar[d, phantom, shift right=5pt, "{\cdots}"]
      \;
      \big]
      \\
      W
      \ar[r, phantom, ":\defneq"]
      &
      \big[
      \;
      \cdots
      \ar[r, "{ \partial^W_{1} }"]
      &
      W_1 
      \ar[r, "{ \partial^W_{0} }"]
      &
      W_0 
      \ar[r, "{ \partial^W_{-1} }"]
      &
      W_{-1}
      \ar[r, "{ \partial^W_{-2} }"]
      &
      \cdots      
      \;
      \big]
    \end{tikzcd}
  \end{equation} 
  and equipped with monoidal category structure given by
  the usual tensor product of chain complexes (cf. \cite[\S 2.7]{Weibel94}):
  \begin{equation}
    \label{TensorProductOfChainComplexes}
    \hspace{-6mm} 
    \begin{tikzcd}[column sep=21pt]
      V
      \otimes 
      W
    \ar[r, phantom, ":\defneq"]
    &[-9pt]
   \Big[
    \;
    \cdots
    \ar[r]
    &[-5pt]
    \underset{n \in \mathbb{Z}}{\bigoplus}
    \big(
      V_{n+1} 
      \!\otimes\!
      W_{-n}
    \big)
    \ar[
      rr,
      "{
        \def\arraystretch{.8}
        \begin{array}{c}
        \partial^V \otimes\, \mathrm{id} -
        \\
        (-1)^n
        \mathrm{id} \,\otimes\, \partial^W
        \end{array}
      }"
    ]
    & &
    \underset{n \in \mathbb{Z}}{\bigoplus}
    \big(
      V_n 
      \!\otimes\!
      W_{-n}
    \big)
    \ar[
      rr,
      "{
        \def\arraystretch{.8}
        \begin{array}{c}
        \partial^V \otimes\, \mathrm{id} -
        \\
        (-1)^n
        \mathrm{id} \,\otimes\, \partial^W
        \end{array}
      }"
    ]
    &&
    \underset{n \in \mathbb{Z}}{\bigoplus}
    \big(
      V_{n-1} 
      \!\otimes\!
      W_{-n}
    \big)
    \ar[r]
    &[-5pt]
    \cdots \;\Big]
    \end{tikzcd}
  \end{equation}
  whose tensor unit is
  \begin{equation}
    \label{TensorUnitAmongChainComplexes}
    \mathbbm{1}
    \;\;:\defneq \;
    \begin{tikzcd}[sep=20pt]
      [\;
        \cdots
        \ar[r]
        &
        0 
        \ar[r]
        &
        0 
        \ar[r]
        &
        \underset{
          \mathclap{
            \raisebox{-3pt}{
              \scalebox{.7}{
                \color{gray}
                $\mathrm{deg} = 0$
              }
            }
          }
        }{
          \mathbb{K}
        }
        \ar[r]
        &
        0
        \ar[r]
        &
        0
        \ar[r]
        &
        \cdots
      \;]\,,
    \end{tikzcd}
  \end{equation}
  and which is closed with internal-hom (``mapping complex'') given by:
  \begin{equation}
    \label{InternalHomOfChainComplexes}
    \hspace{-3mm} 
    [V,\,W]
    \;:\defneq\;
    \Big[
    \begin{tikzcd}[column sep=50pt]
      \cdots
      \ar[r]
      &[-30pt]
      \underset{
        n \in \mathbb{X}
      }{\bigtimes}
      \big[
        V_n 
        ,\,
        W_{n+1}
      \big]
      \ar[
        r,
        "{
          \def\arraystretch{.8}
          \begin{array}{l}
            \hspace{3pt} 
            \partial^{W} \,\circ\, (\mbox{-})
            \;-
            \\
            (-1)^n 
            (\mbox{-}) \,\circ\, \partial^V
           \end{array}
        }"
      ]
      &
      \underset{
        n \in \mathbb{X}
      }{\bigtimes}
      \big[
        V_n 
        ,\,
        W_{n}
      \big]
      \ar[
        r,
        "{
          \def\arraystretch{.8}
          \begin{array}{l}
            \hspace{3pt} 
            \partial^{W} \,\circ\, (\mbox{-})
            \;-
            \\
            (-1)^n 
            (\mbox{-}) \,\circ\, \partial^V
           \end{array}
        }"
      ]
      &
      \underset{
        n \in \mathbb{X}
      }{\bigtimes}
      \big[
        V_n 
        ,\,
        W_{n-1}
      \big]
      \ar[r]
      &[-30pt]
      \cdots
    \end{tikzcd}
    \Big]
    \,,
  \end{equation}
  where the tensor symbol and the angular brackets on the right denote the corresponding operations \eqref{ClosedMonoidalStructureOnKMod} on component vector spaces.
  
  This category is (co)complete with (co)limits formed degreewise in $\mathrm{Mod}_{\mathbb{K}}$; in particular it is (co)tensored over $\mathrm{Set}$, degreewise as in \eqref{CoTensoringOfVectorSpacesOverSets}  
  \begin{equation}
    \label{SetTensoringOfChainComplexes}
    \begin{tikzcd}[row sep=-3pt, column sep=small]
      \mathrm{Set} \,\times\, \mathrm{Ch}_{\mathbb{K}}
      \ar[
        rr,
        "{
          (\mbox{-}) \cdot (\mbox{-})
        }"
      ]
      &&
      \mathrm{Ch}_{\mathbb{K}}
      \\
      (
        S ,\, V
      )
      &\longmapsto&
      \underset{s \in S}{\coprod} V
    \end{tikzcd}
    \hspace{1cm}
    \begin{tikzcd}[sep=0pt]
      \mathrm{Set}^{\mathrm{op}} 
        \,\times\, 
      \mathrm{Ch}_{\mathbb{K}}
      \ar[
        rr,
        "{
          (\mbox{-})^{(\mbox{-})}
        }"
      ]
      &&
      \mathrm{Ch}_{\mathbb{K}}
      \\
      (
        S ,\, V
      )
      &\longmapsto&
      \underset{s \in S}{\prod} V
    \end{tikzcd}
  \end{equation}
  
  \item[{\bf (iii)}] 
  \fbox{$\big(\mathrm{sCh}_{\mathbb{K}}, \otimes \big) \,\coloneqq\, \big(\mathrm{Ch}_{\mathbb{K}}, \otimes\big)^{\Delta^{\mathrm{op}}}$} 
  for the category of {\it simplicial objects} (cf. \cite[\S 1]{May67}\cite[\S A.1]{TamakiKono06}) 
  in the previous category of unbounded chain complexes:
  $$
    \hspace{-.5cm}
    \mathscr{V}
    :\defneq
    \left[
    \adjustbox{raise=7pt}{
    \begin{tikzcd}[
      row sep=43pt,
      column sep=55pt
    ]
      {}
      \ar[d, -, shift left=18pt, dotted]
      \ar[d, -, shift left=12pt, dotted]
      \ar[d, -, shift left=6pt, dotted]
      \ar[d, -, dotted]
      \ar[d, -, shift right=6pt, dotted]
      \ar[d, -, shift right=12pt, dotted]
      \ar[d, -, shift right=18pt, dotted]
      &[-10pt]
      &
      \ar[d, -, shift left=18pt, dotted]
      \ar[d, -, shift left=12pt, dotted]
      \ar[d, -, shift left=6pt, dotted]
      \ar[d, -, dotted]
      \ar[d, -, shift right=6pt, dotted]
      \ar[d, -, shift right=12pt, dotted]
      \ar[d, -, shift right=18pt, dotted]
      &
      \ar[d, -, shift left=18pt, dotted]
      \ar[d, -, shift left=12pt, dotted]
      \ar[d, -, shift left=6pt, dotted]
      \ar[d, -, dotted]
      \ar[d, -, shift right=6pt, dotted]
      \ar[d, -, shift right=12pt, dotted]
      \ar[d, -, shift right=18pt, dotted]
      &
      \ar[d, -, shift left=18pt, dotted]
      \ar[d, -, shift left=12pt, dotted]
      \ar[d, -, shift left=6pt, dotted]
      \ar[d, -, dotted]
      \ar[d, -, shift right=6pt, dotted]
      \ar[d, -, shift right=12pt, dotted]
      \ar[d, -, shift right=18pt, dotted]
      \\[+5pt]
      \mathscr{V}_{2}
      \ar[r, phantom, ":\defneq"{pos=.6}]
      \ar[
        d,
        shift left=22pt,
        "{ d^2_1  }"{description}
      ]
      \ar[
        from=d, 
        shift left=11pt, 
        "{s^0_1}"{description}
      ]
      \ar[
        from=d, 
        shift right=11pt, 
        "{s^1_1}"{description}
      ]
      \ar[
        d,
        "{ d^1_1 }"{description}
      ]
      \ar[
        d,
        shift right=22pt,
        "{ d^0_1  }"{description}
      ]
      &
      \big[\;\cdots
      \ar[
        r,
        "{ \partial^{\mathscr{V}_2}_1 }"{description}
      ]
      &
      \mathscr{V}_{2,1}
      \ar[
        r,
        "{ \partial^{\mathscr{V}_2}_0 }"{description}
      ]
      \ar[
        d,
        shift left=22pt,
        "{ d^2_{1,1}  }"{description, pos=.65}
      ]
      \ar[
        d,
        "{ d^1_{1,1} }"{description, pos=.65}
      ]
      \ar[
        d,
        shift right=22pt,
        "{ d^0_{1,1}  }"{description, pos=.65}
      ]
      \ar[
        from=d, 
        shift left=11pt, 
        "{s^0_{1,1}}"{description, pos=.65}
      ]
      \ar[
        from=d, 
        shift right=11pt, 
        "{s^1_{1,1}}"{description, pos=.65}
      ]
      &
      \mathscr{V}_{2,0}
      \ar[
        r,
        "{ \partial^{\mathscr{V}_2}_{-1} }"{description}
      ]
      \ar[
        d,
        shift left=22pt,
        "{ d^2_{1,0}  }"{description, pos=.65}
      ]
      \ar[
        d,
        "{ d^1_{1,0} }"{description, pos=.65}
      ]
      \ar[
        d,
        shift right=22pt,
        "{ d^0_{1,0}  }"{description, pos=.65}
      ]
      \ar[
        from=d, 
        shift left=11pt, 
        "{s^0_{1,0}}"{description, pos=.65}
      ]
      \ar[
        from=d, 
        shift right=11pt, 
        "{s^1_{1,0}}"{description, pos=.65}
      ]
      &
      \mathscr{V}_{2,-1}      
      \ar[
        r,
        "{ \partial^{\mathscr{V}_2}_{-2} }"{description}
      ]
       \ar[
        d,
        shift left=22pt,
        "{ d^2_{1,-1}  }"{description, pos=.65}
      ]
      \ar[
        d,
        "{ d^1_{1,-1} }"{description, pos=.65}
      ]
      \ar[
        d,
        shift right=22pt,
        "{ d^0_{1,-1}  }"{description, pos=.65}
      ]
      \ar[
        from=d, 
        shift left=11pt, 
        "{s^0_{1,-1}}"{description, pos=.65}
      ]
      \ar[
        from=d, 
        shift right=11pt, 
        "{s^1_{1,-1}}"{description, pos=.65}
      ]
     &
      \cdots\;\big]
      \\
      \mathscr{V}_{1}
      \ar[r, phantom, ":\defneq"{pos=.6}]
      \ar[
        d,
        shift left=11pt,
        "{ d^1_{0}  }"{description}
      ]
      \ar[
        from=d, 
        "{s^0_{0}}"{description}
      ]
      \ar[
        d,
        shift right=11pt,
        "{ d^0_0  }"{description}
      ]
      &
      \big[\;\cdots
      \ar[
        r,
        "{ \partial^{\mathscr{V}_1}_1 }"{description}
      ]
      &
      \mathscr{V}_{1,1}
      \ar[
        r,
        "{ \partial^{\mathscr{V}_1}_0 }"{description}
      ]
      \ar[
        d,
        shift left=11pt,
        "{ d^1_{0,1}  }"{description, pos=.65}
      ]
      \ar[
        from=d, 
        "{s^0_{0,1}}"{description, pos=.65}
      ]
      \ar[
        d,
        shift right=11pt,
        "{ d^0_{0,1}  }"{description, pos=.65}
      ]
      &
      \mathscr{V}_{1,0}
      \ar[
        r,
        "{ \partial^{\mathscr{V}_1}_{-1} }"{description}
      ]
      \ar[
        d,
        shift left=11pt,
        "{ d^1_{0,0}  }"{description, pos=.65}
      ]
      \ar[
        from=d, 
        "{s^0_{0,0}}"{description, pos=.65}
      ]
      \ar[
        d,
        shift right=11pt,
        "{ d^0_{0,0}  }"{description, pos=.65}
      ]
      &
      \mathscr{V}_{1,-1}      
      \ar[
        r,
        "{ \partial^{\mathscr{V}_1}_{-2} }"{description}
      ]
      \ar[
        d,
        shift left=11pt,
        "{ d^1_{0,-1}  }"{description, pos=.65}
      ]
      \ar[
        from=d, 
        "{s^0_{0,-1}}"{description, pos=.65}
      ]
      \ar[
        d,
        shift right=11pt,
        "{ d^0_{0,-1}  }"{description, pos=.65}
      ]
      &
      \cdots\;\big]
      \\
      \mathscr{V}_{0}
      \ar[r, phantom, ":\defneq"{pos=.6}]
      &
      \big[\;\cdots
      \ar[
        r,
        "{ \partial^{\mathscr{V}_0}_1 }"{description}
      ]
      &
      \mathscr{V}_{0,1}
      \ar[
        r,
        "{ \partial^{\mathscr{V}_0}_0 }"{description}
      ]
      &
      \mathscr{V}_{0,0}
      \ar[
        r,
        "{ \partial^{\mathscr{V}_0}_{-1} }"{description}
      ]
      &
      \mathscr{V}_{0,-1}      
      \ar[
        r,
        "{ \partial^{\mathscr{V}_0}_{-2} }"{description}
      ]
      &
      \cdots\;\big]
    \end{tikzcd}
    }
  \!\!\!\!\!  \right]
  $$  
  with simplicial chain maps between these as morphisms, i.e.,
  arrays of linear maps respecting all these structure maps:
  $$
    \begin{tikzcd}[row sep=20pt]
      \mathscr{V}
      \ar[d, "{ \phi }"]
      \\
      \mathscr{W}
    \end{tikzcd}
    \;\;\;\;
    :\defneq
    \;\;\;
    \left(\!\!\!\!\!
    \adjustbox{raise=3pt}{
    \begin{tikzcd}
      \mathscr{V}_{i}
      \ar[d, " \phi_{i} "]
      \\
      \mathscr{W}_{i}
    \end{tikzcd}
    }
    \;\in\;
    \mathrm{Mor}\big(\mathrm{Ch}_{\mathbb{K}}\big)
   \!\!\! \right)_{
      {i \in \mathbb{N}}
   }
   \;\;\;
   \defneq
   \;\;\;
    \left(\!\!\!\!\!\!
    \adjustbox{raise=3pt}{
    \begin{tikzcd}
      \mathscr{V}_{i, j}
      \ar[d, " \phi_{i,j} "]
      \\
      \mathscr{W}_{i,j}
    \end{tikzcd}
    }
    \;\in\;
    \mathrm{Mor}\big(\mathrm{Mod}_{\mathbb{K}}\big)
   \!\!\! \right)_{
     \substack{
      {i \in \mathbb{N}}
      \\
      {j \in \mathbb{Z}}
      }
   }
  $$
  and equipped with the simplicial-degree-wise tensor product of chain complexes from above:
  $$
    \mathscr{V} \otimes \mathscr{W}
    \;\;\;:\defneq\;\;\;
    \left[
    \adjustbox{raise=+5pt}{
    \begin{tikzcd}[row sep=50pt]
      {}
      \ar[d, shift left=10pt, -, dotted]
      \ar[d, shift left=5pt, -, dotted]
      \ar[d, -, dotted]
      \ar[d, shift right=5pt, -, dotted]
      \ar[d, shift right=10pt, -, dotted]
      \\[-26pt]
      \mathscr{V}_1 \otimes \mathscr{W}_1
      \ar[
        from=d, 
        "{ s^0_{0} \otimes s^0_0 }"{pos=.65,description}
      ]
      \ar[
        d,
        shift right=17pt,
        "{ d^0_0 \otimes d^0_0 }"{pos=.65,description}
      ]
      \ar[
        d,
        shift left=17pt,
        "{ d^1_0 \otimes d^1_0 }"{pos=.65,description}
      ]
      \\
      \mathscr{V}_0 \otimes \mathscr{W}_0
    \end{tikzcd}
    }
    \right]
    \,,
  $$
  which is\footnote{
    This follows by using Lem. \ref{EnhancedEnrichmentOfFunctorCategories} for: 
  $\mathbf{V} :\defneq \mathbf{C} :\defneq \mathrm{Ch}_{\mathbb{K}}$ and
  $\mathbf{X} :\defneq \Delta^{\mathrm{op}}$ regarded as a $\mathrm{Set}\xhookrightarrow{\mathbb{K}[-]}\mathrm{Ch}_{\mathbb{K}}$-enriched category).
  } 
  monoidal closed with 
  internal hom (``simplicial mapping complex'') given by the end-formula
  \begin{equation}
    \label{InternalHomOfSimplicialChainComplexes}
    \mathscr{V}, \mathscr{W}
    \;\in\;
    \mathrm{sCh}_{\mathbb{K}}
    \hspace{.6cm}
    \vdash
    \hspace{.6cm}
    [\mathscr{V},\,\mathscr{W}]
    \;:=\;
    \bigg(
    [k]
    \;\longmapsto\;
    \int_{[s] \in \Delta}
    \Big[
      \Delta[k]_s \cdot \mathscr{V}_s
      ,\,
      \mathscr{W}_s
    \Big]
    \bigg)
    \;\;\;
    \in
    \;
    \mathrm{sCh}_{\mathbb{K}}
    \,,
  \end{equation}
  (where in the integrand on the right the angular brackets refer to the mapping complex \eqref{InternalHomOfChainComplexes} and the dot to the tensoring \eqref{SetTensoringOfChainComplexes} of component chain complexes).
  
  \item[{\bf (iv)}]
  \fbox{$\mathbf{sCh}_{\mathbb{K}}$} for the $\mathrm{sSet}$-enriched category (``simplicial category'') whose objects are those of $\mathrm{sCh}_{\mathbb{K}}$ and whose simplicial hom-sets are\footnote{
    This follows using Lem. \ref{EnhancedEnrichmentOfFunctorCategories} for $\mathbf{V} :\defneq \mathrm{Set}$, $\mathbf{C} :\defneq \mathrm{Ch}_{\mathbb{K}}$ and $\mathbf{X} :\defneq \Delta^{\mathrm{op}}$.
  }
  \begin{equation}
    \label{SimplicialHomSetsBetweenSimplicialChainComplexes}
    \mathscr{V}, \mathscr{W}
    \;\in\;
    \mathrm{sCh}_{\mathbb{K}}
    \hspace{.6cm}
    \vdash
    \hspace{.6cm}
    \mbox{\bf{sCh}}_{\mathbb{K}}
    \big(
    {
      \mathscr{V}
      ,\,
      \mathscr{W}
    }
    \big)
    \;\;:=\;\;
    \bigg(
    [k] 
    \,\mapsto\, 
    \int_{[s] \in \Delta}
    \mathrm{Ch}_{\mathbb{K}}\big(
      \Delta[k]_s \cdot \mathscr{V}_s
      ,\,
      \mathscr{W}_s
    \big)
    \bigg)
    \;\;\;
    \in
    \;
    \mathrm{sSet}
    \,,
  \end{equation}
 and with (co)tensoring given by
 \begin{equation}
    \label{sSetTensoringOfSimplicialChainComplexes}
    \hspace{-8mm} 
    \begin{tikzcd}[sep=0pt]
      \mathrm{sSet} \times \mathrm{sCh}_{\mathbb{K}}
      \ar[
        rr, 
        "{ 
          (\mbox{-})
          \cdot
          (\mbox{-})
        }"
      ]
      &&
      \mathrm{sCh}_{\mathbb{K}}
      \\
      \big( 
        \mathcal{S} 
        ,\, 
        \mathscr{V} 
      \big)
      &\mapsto&
      \big(
        [k] 
          \,\mapsto\, 
        \mathcal{S}_k \cdot \mathscr{V}_k
      \big)
      \,,
    \end{tikzcd}
    \hspace{1.2cm}
    \begin{tikzcd}[sep=0pt]
      \mathrm{sSet}^{\mathrm{op}} 
        \times 
      \mathrm{sCh}_{\mathbb{K}}
      \ar[
        rr, 
        "{ 
          (\mbox{-})^{(\mbox{-})} 
        }"
      ]
      &&
      \mathrm{sCh}_{\mathbb{K}}
      \\
            \big( 
        \mathcal{S} 
        ,\, 
        \mathscr{V} 
      \big)
      &\mapsto&
      \Big(
        [k] 
          \,\mapsto\, 
        \int_{s \in \Delta}
        \big(
          \mathscr{V}_s
        \big)^{
          (\mathcal{S} \times \Delta[k])_s
        }
      \Big)
      \,,
    \end{tikzcd}
  \end{equation}
  \begin{equation}
    \label{TensoringIsomorphismsForSCh}
    \hspace{-5mm} 
    \mathcal{S} \,\in\, \,\mathrm{sSet}
    ;\,
    \mathscr{V}
    ,
    \mathscr{W}
    \,\in\, 
    \mathrm{sCh}_{\mathbb{K}}
    \hspace{.5cm}
      \vdash
    \hspace{.5cm}
    \mbox{\bf{sCh}}_{\mathbb{K}}\big(
      \mathcal{S} 
      \!\cdot\! 
      \mathscr{V}
      ,\,
      \mathscr{W}
    \big)
    \;\simeq\;
    \mbox{\bf{sSet}}\big(
      \mathcal{S}
      ,\,
      \mbox{\bf{sCh}}(
        \mathscr{V}
        ,\,
        \mathscr{W}
      )
    \big)
    \;\simeq\;
    \mbox{\bf{sCh}}_{\mathbb{K}}\big(
      \mathscr{V}
      ,\,
      \mathscr{W}^{\mathcal{S}}
    \big)
  \end{equation}
  and regarded as an {\it $\mathrm{sSet}$-enriched monoidal category} (Def. \ref{sSetEnrichedMonoidalCategory}) 
  by enhancement of the previous tensor functor to an $\mathrm{sSet}$-enriched functor with the following components:
  \begin{equation}
    \label{sSetEnrichedTensorProduct}
    \hspace{-.5cm}
    \begin{tikzcd}[sep=-4pt]
      \mbox{\bf{sCh}}_{\mathbb{K}}\big(
        \mathscr{V}
        ,\,
        \mathscr{W}
      \big)
      \times
      \mbox{\bf{sCh}}_{\mathbb{K}}\big(
        \mathscr{V}'
        ,\,
        \mathscr{W}'
      \big)
      \ar[
        rr,
        "{
          \otimes_{ 
            (\mathscr{V} ,\, \mathscr{V}')
            ,\,
            (\mathscr{W} ,\, \mathscr{W}')
          }
        }"
      ]
      &&
      \mbox{\bf{sCh}}_{\mathbb{K}}\big(
        \mathscr{V} \,\otimes\, \mathscr{V}'
        ,\,
        \mathscr{W} \,\otimes\, \mathscr{W}'
      \big)  
      \\
      \Big(\!
      \big(
        \Delta[k]
        \cdot \mathscr{V}
        \overset{ \phi }{\to}
        \mathscr{W}
      \big)
      ,\,
      \big(
        \Delta[k]
        \cdot \mathscr{V}'
        \overset{ \phi' }{\to}
        \mathscr{W}'
      \big)
     \! \Big)
      &\mapsto&
      \Big(
        \Delta[k]
        \!\cdot\!
        \big(\mathscr{V} \otimes \mathscr{V}'\big)
        \xrightarrow{
          \mathrm{diag}
          \cdot(\cdots)
        }
        \big(
        \Delta[k]
        \cdot
        \mathscr{V} 
        \big)
          \otimes 
        \big(
          \Delta[k] \cdot \mathscr{V}'
        \big)
          \xrightarrow{
            \phi \,\otimes\, \phi'
          }
        \mathscr{W} \otimes \mathscr{W}'
      \Big)
    \end{tikzcd}
  \end{equation}
  and dually for the simplicial enhancement of the internal hom-functor:
  \begin{equation}
    \label{sSetEnrichedInternalHom}
    \hspace{-.4cm}
    \begin{tikzcd}[sep=0pt]
      \mbox{\bf{sCh}}^{\mathrm{op}}_{\mathbb{K}}\big(
        \mathscr{V}
        ,\,
        \mathscr{W}
      \big)
      \,\times\,
      \mbox{\bf{sCh}}_{\mathbb{K}}\big(
        \mathscr{V}'
        ,\,
        \mathscr{W}'
      \big)
      \ar[
        rr,
        "{
          [
            -
            ,\
            -
          ]_{
            (\mathscr{V} ,\, \mathscr{V}')
            ,\,
            (\mathscr{W} ,\, \mathscr{W}')
          }
        }"
      ]
      &&
      \mbox{\bf{sCh}}_{\mathbb{K}}\big(
        [\mathscr{V} ,\, \mathscr{V}']
        ,\,
        [\mathscr{W} ,\, \mathscr{W}']
      \big)
      \\
      \Big(
      \big(
       \Delta[k]
       \!\cdot\!
       \mathscr{W} 
         \xrightarrow{\phi}
       \mathscr{V}
       \big)
       ,
       \big(
       \mathscr{V}'
         \xrightarrow{\phi'}
       (\mathscr{W}')^{
         \Delta[k]
       }
       \big)
       \Big)
       &\mapsto&
       \Big(
         [\mathscr{V},\,\mathscr{W}]
         \xrightarrow{
           \scalebox{.7}{$
             \big[
               \phi
               ,\,
               \phi'
             \big]
           $}
         }
         \big[
           \Delta[k] \!\cdot\! \mathscr{V}'
           ,\,
           (\mathscr{W}')^{\Delta[k]}
         \big]
         \xrightarrow{
           (\cdots)^{\mathrm{diag}}
         }
         [\mathscr{V},\,\mathscr{W}]
       \Big)\,.
    \end{tikzcd}
  \end{equation}
  \item[{\bf (v)}]
  \adjustbox{fbox}{
      \begin{tikzcd}
        \mathrm{Ch}_{\mathbb{K}}
        \ar[
          r,
          shift left=5pt,
          "{ \mathrm{const} }"
        ]
        \ar[
          from=r,
          shift left=5pt,
          "{ \mathrm{ev}_0 }"
        ]
        \ar[
          r,
          phantom,
          "{ \scalebox{.7}{$\bot$} }"
        ]
        &
        \mathrm{sCh}_{\mathbb{K}}
      \end{tikzcd}
  }
  for the pair of strong monoidal adjoint functors, where
  \begin{itemize}
    \item the left adjoint, $\mathrm{const}$, sends a chain complex $V$ to the simplicial chain complex all of whose entries are $V$ and all whose simplicial maps are $\mathrm{id}_V$,
    \item the right adjoint, $\mathrm{ev}_0$, sends a simplicial chain complex $\mathscr{V}$ to its 0-component $\mathscr{V}_0$ (this being the limit over the simplicial diagram),
  \end{itemize}

  \item[{\bf (vi)}] 
  \fbox{$\mathrm{sCh}_{\mathbb{K}} 
    \overset{\mathrm{tot}}{\longrightarrow}
  \mathrm{Ch}_{\mathbb{K}}$} 
  for the {\it total chain complex} functor 
  \begin{equation}  
     \label{Totalization}
      \mathrm{tot}(\mathscr{V})
      \;\;
      :\defneq
      \;\;
      \begin{tikzcd}[column sep=50pt]
        \Big[\;
        \cdots
        \ar[r]
        &[-20pt]
        \underset{
          s+t = 1
        }{\bigoplus}
        \mathscr{V}_{s,t}
        \ar[
          r,
          "{
            \partial 
            + 
            \sum_s
            (-1)^s d^s 
          }"
        ]
        &
        \underset{
          s+t = 0
        }{\bigoplus}
        \mathscr{V}_{s,t}        
        \ar[
          r,
          "{
            \partial 
            + 
            \sum_s
            (-1)^s d^s 
          }"
        ]
        &
        \underset{
          s+t = -1
        }{\bigoplus}
        \mathscr{V}_{s,t}
        \ar[r]
        &[-20pt]
        \cdots
        \;\Big].
      \end{tikzcd}
  \end{equation}
\end{itemize}

\end{definition}
\begin{theorem}[Model category of simplicial chain complexes]
\label{ModelCategoryOfSimplicialChainComplexes}
The categories of chain complexes of vector spaces from Def. \ref{CategoryOfChainComplexes} carry the following model category structures:

\begin{enumerate}[label = (\roman*)]
  \item
  \fbox{$(\mathrm{Ch}_{\mathbb{K}},\,\otimes)$} carries a monoidal model category structure with the following properties: 
  \begin{itemize}
    \item the model structure has
    \begin{itemize}
    \item weak equivalences the quasi-isomorphisms (the isomorphisms on chain homology),
    \item fibrations being the degreewise surjections (in particular all objects are fibrant),
    \item cofibrations the degreewise injections (in particular all objects are cofibrant),
    \end{itemize}
    \item the model structure is:
    \begin{itemize}
      \item proper,
      \item combinatorial      
      with sets of generating (acyclic) cofibrations
       $\mathrm{I}_{\mathbb{K}} 
         \,:=\, 
        \{ 
          i_n \;\big\vert\; n \in \mathbb{Z} 
        \}
      $
       ($\mathrm{J}_{\mathbb{K}} \,:=\, \{ j_n \,\big\vert\, n \in \mathbb{Z} \}$) given by:
  \begin{equation}
    \label{GeneratingCofibrationsOfChainComplexes}
    \hspace{-1.2cm}
    \adjustbox{scale=.9}{
    \begin{tikzcd}[sep=10pt]
      \mathbb{S}^{n-1}
      \ar[d, "{ i_n }"]
      \ar[r, phantom, ":\equiv"]
      &
      \big[\;
      \cdots
      \ar[r]
      &
      0 
      \ar[r]
      \ar[d]
      &
      0
      \ar[r]
      \ar[d]
      &
      0
      \ar[r]
      \ar[d]
      &
      \mathbb{K}
      \ar[r]
      \ar[d, "{\mathrm{id}}"]
      &
      0
      \ar[r]
      \ar[d]
      &
      \cdots
      \;\big]
      \\
      \mathbb{D}^n
      \ar[r, phantom, ":\equiv"]
      &
      \big[\;
      \cdots
      \ar[r]
      &
      0 
      \ar[r]
      &
      0
      \ar[r]
      &
      \underset{
        \mathclap{
          \raisebox{-3pt}{
            \scalebox{.7}{
              $\mathrm{deg} = n$
            }
          }
        }
      }{
        \mathbb{K}
      }
      \ar[r, "{ \mathrm{id} }"]
      &
      \mathbb{K}
      \ar[r]
      &
      0
      \ar[r]
      &
      \cdots
      \;\big]
    \end{tikzcd}
    \hspace{.3cm}
    \begin{tikzcd}[sep=10pt]
      0
      \ar[d, "{ j_n }"]
      \ar[r, phantom, ":\equiv"]
      &
      \big[\;
      \cdots
      \ar[r]
      &
      0 
      \ar[r]
      \ar[d]
      &
      0
      \ar[r]
      \ar[d]
      &
      0
      \ar[r]
      \ar[d]
      &
      0
      \ar[r]
      \ar[d]
      &
      0
      \ar[r]
      \ar[d]
      &
      \cdots
      \;\big]
      \\
      \mathbb{D}^n
      \ar[r, phantom, ":\equiv"]
      &
      \big[\;
      \cdots
      \ar[r]
      &
      0 
      \ar[r]
      &
      0
      \ar[r]
      &
      \underset{
        \mathclap{
          \raisebox{-3pt}{
            \scalebox{.7}{
              $\mathrm{deg} = n$
            }
          }
        }
      }{
        \mathbb{K}
      }
      \ar[r, "{\mathrm{id}}"]
      &
      \mathbb{K}
      \ar[r]
      &
      0
      \ar[r]
      &
      \cdots
      \;\big]
    \end{tikzcd}
    }
  \end{equation}
    \end{itemize}
  \end{itemize}

  \item
  \fbox{$(\mathbf{sCh}_{\mathbb{K}},\,\otimes)$} carries a simplicial monoidal model category structure
  with the following properties:
  \begin{itemize}
  \item the model structure has
  \begin{itemize}
    \item weak equivalences the total-quasi-isomorphisms, 
    i.e., the quasi-isos on total chain complexes \eqref{Totalization},
    \item all objects cofibrant;
  \end{itemize}
    \item 
    the adjunction 
    $\mathrm{const} : \mathrm{Ch}_{\mathbb{K}}
    \rightleftarrows \mathrm{sCh}_{\mathbb{K}} : \mathrm{ev}_0$
    is a monoidal Quillen equivalence;
  \item the model structure is:
  \begin{itemize}
    \item left proper,
    \item combinatorial,

    with sets of generating (acyclic) cofibrations to be denoted
    \begin{equation}
      \label{GeneratingCofibrationsOfSimplicialChainComplexes}
      \mathrm{sI}_{\mathbb{K}}
      \,, \quad
      \mathrm{sJ}_{\mathbb{K}}
      \,.
    \end{equation}
  \end{itemize}
  \end{itemize}
  \end{enumerate}
\end{theorem}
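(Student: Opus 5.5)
Part (i) is essentially classical, so I would mostly assemble references and check the listed properties. Over a field every short exact sequence splits. Hence the projective model structure on $\mathrm{Ch}_{\mathbb{K}}$ (Hovey, Thm.~2.3.11), with generating sets $\mathrm{I}_{\mathbb{K}}$ and $\mathrm{J}_{\mathbb{K}}$ as in \eqref{GeneratingCofibrationsOfChainComplexes}, has the following features:
\begin{itemize}
\item its cofibrations (degreewise split monos with cofibrant cokernel) are exactly the degreewise injections, because every complex over $\mathbb{K}$ is a direct sum of discs and shifted spheres and hence cofibrant;
\item its fibrations are the degreewise surjections.
\end{itemize}
It is combinatorial because $\mathrm{Ch}_{\mathbb{K}}$ is locally presentable and the generating sets are small. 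It is proper because all objects are both fibrant and cofibrant. For the monoidal model axiom it suffices to check pushout-products on generators: $i_n \,\square\, i_m \simeq i_{n+m}$-type inclusions, which are cofibrations, and $i_n \,\square\, j_m$ is an injection with contractible cokernel. The unit $\mathbbm{1}$ is cofibrant.

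For part (ii) I would proceed as in \cite[p.~10]{RezkSchwedeShipley01}.
\begin{enumerate}
\item Start from the Reedy model structure on $\mathrm{Ch}_{\mathbb{K}}^{\Delta^{\mathrm{op}}}$. It is combinatorial and left proper, with generators obtained as latching/boundary pushout-products $\partial\Delta[k]\cdot(-) \cup \Delta[k]\cdot(-)$ of $\mathrm{I}_{\mathbb{K}}$ and $\mathrm{J}_{\mathbb{K}}$.
\item Observe that every object is Reedy cofibrant. The latching maps $L_k\mathscr{V}\to\mathscr{V}_k$ are injective because, by the Dold--Kan/Eilenberg--Zilber splitting, degenerate simplices form a direct summand, and over $\mathbb{K}$ all injections are cofibrations.
\item Apply RSS (Thm.~3.6 there) to left Bousfield localize the Reedy structure so that it becomes a simplicial model category. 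The tensoring is \eqref{sSetTensoringOfSimplicialChainComplexes}, and the weak equivalences are the ``hocolim-equivalences'', i.e.\ the maps inducing equivalences on geometric realization. The localized structure stays combinatorial and left proper, and I take $\mathrm{sI}_{\mathbb{K}}$ and $\mathrm{sJ}_{\mathbb{K}}$ to be its generating sets. Since cofibrations are unchanged by localization, all objects remain cofibrant.
\item Identify these weak equivalences with total quasi-isomorphisms. For Reedy cofibrant $\mathscr{V}$, the realization $\int^{[k]} N_\bullet(\Delta[k])\otimes \mathscr{V}_k$ is the normalized totalization. That is quasi-isomorphic to $\mathrm{tot}(\mathscr{V})$ of \eqref{Totalization} by the standard normalization theorem, applied columnwise plus a spectral sequence or filtration argument for unbounded double complexes. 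I would use the direct-sum totalization, which converges since simplicial degree is bounded below.
\end{enumerate}

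Next I would establish the simplicial monoidal structure. With $\otimes$ taken degreewise, I would check the pushout-product axiom. On cofibrations it holds because cofibrations are the maps with injective relative latching maps and $\otimes$ over a field preserves these, again via Eilenberg--Zilber. For acyclicity I would use the Eilenberg--Zilber shuffle quasi-isomorphism $\mathrm{tot}(\mathscr{V})\otimes\mathrm{tot}(\mathscr{W}) \to \mathrm{tot}(\mathscr{V}\otimes\mathscr{W})$ together with the Künneth theorem over a field, which gives that $\mathscr{V}\otimes(-)$ preserves total quasi-isomorphisms. Compatibility of the monoidal and simplicial structures amounts to the enriched tensor \eqref{sSetEnrichedTensorProduct}, i.e.\ $(\mathcal{S}\cdot\mathscr{V})\otimes(\mathcal{S}'\cdot\mathscr{W}) \simeq (\mathcal{S}\times\mathcal{S}')\cdot(\mathscr{V}\otimes\mathscr{W})$.

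Finally, the adjunction $\mathrm{const}\dashv \mathrm{ev}_0$ is Quillen, since $\mathrm{const}$ sends injections to injections and quasi-isos to total quasi-isos: $\mathrm{tot}(\mathrm{const}\,V)\simeq V$, as the normalized complex vanishes in positive simplicial degree. Both functors are strong monoidal. For the Quillen equivalence, the derived unit $V\to \mathrm{ev}_0 R\,\mathrm{const}V$ is a quasi-iso for the same reason. Conversely, since every $\mathscr{V}$ is a hocolim of its levels in the localized structure, $\mathscr{V}$ is weakly equivalent to $\mathrm{const}\,\mathrm{tot}(\mathscr{V})$, which gives essential surjectivity on homotopy categories.

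I expect the main obstacle to be step 4 together with the acyclicity half of the pushout-product axiom. The RSS weak equivalences are defined abstractly, and matching them to $\mathrm{tot}$-quasi-isos for \emph{unbounded} chain complexes requires care with convergence of the double-complex filtration. I would handle this by filtering by simplicial degree, which is bounded below, and using that direct sums and homology commute with filtered colimits.
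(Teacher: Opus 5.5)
Your argument is correct, but for the monoidal axiom in (ii) it takes a different route from the paper.

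\textbf{Where you agree with the paper.} Part (i), and Reedy cofibrancy, left properness and combinatoriality in (ii), essentially match. The paper deduces cofibrancy of every complex from the colimit of its connective covers rather than from your disc/sphere splitting; either works.

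\textbf{The paper's route to monoidality.} It takes the Reedy structure to be monoidal by Barwick's general result. It then applies Barwick's criterion for a left Bousfield localization to remain monoidal. Every object is a homotopy colimit of constant ones, and the local objects are the homotopically constant Reedy fibrant ones. So it suffices that $[\mathrm{const}(V),\mathscr{W}]_k \simeq [V,\mathscr{W}_k]$ is homotopically constant, i.e.\ that $[V,-]$ preserves quasi-isomorphisms.

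\textbf{Your route.} You check the pushout-product axiom by hand.
\begin{itemize}
\item Reedy cofibrations are exactly the levelwise injections. The Dold--Kan splitting identifies the relative latching map with $\mathrm{id}_{L_k\mathscr{W}}\oplus N_k f$, so no Eilenberg--Zilber input is needed for this half.
\item For acyclicity, the shuffle equivalence $\mathrm{tot}(\mathscr{V})\otimes\mathrm{tot}(\mathscr{W})\simeq\mathrm{tot}(\mathscr{V}\otimes\mathscr{W})$ plus K\"unneth over a field show that $\mathscr{V}\otimes(-)$ preserves injections and total quasi-isomorphisms. That suffices.
\end{itemize}

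\textbf{Comparison.} Your route is more elementary and self-contained. However, it depends on your step 4, identifying the RSS hocolim-equivalences with total quasi-isomorphisms, which the paper simply takes from RSS. The paper's route is shorter but relies on heavier localization machinery. Likewise, you reprove the Quillen equivalence $\mathrm{const}\dashv\mathrm{ev}_0$, which RSS already supplies.

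\textbf{Two small points.}
\begin{itemize}
\item In the derived-unit argument, ``for the same reason'' does not apply, because the fibrant replacement of $\mathrm{const}\,V$ is not constant. What you need is that fibrant objects $\mathscr{W}$ of the localized structure are homotopically constant. Then $\mathrm{const}(\mathscr{W}_0)\to\mathscr{W}$ is a levelwise quasi-isomorphism, hence a total one, so the inclusion $\mathscr{W}_0\hookrightarrow\mathrm{tot}(\mathscr{W})$ is a quasi-isomorphism.
\item The convergence worry in step 4 and in the shuffle map goes away. The normalization and Eilenberg--Zilber homotopies are built from simplicial operators, which are chain maps. So they are homotopies of double complexes and survive direct-sum totalization, with no spectral sequence needed.
\end{itemize}
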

\begin{proof}
  We discuss the claims successively:

  \noindent
  {\bf Combinatorial model structure.}
  Generally, for $R$ a commutative ring, the projective model structure on unbounded chain complexes of $R$-modules (i.e.,
  with weak equivalences the quasi-isomorphisms and fibrations the degreewise surjection) exists 
  as a proper and cofibrantly generated model category
  by arguments given in
  \cite[after Thm. 9.3.1]{HoveyPalmieriStrickland97}\cite[p. 41]{Hovey99}\cite[p. 7]{SchwedeShipley00}\cite[Thm. 3.2]{Fausk06}. For the special case where all submodules of free $R$-modules are themselves 
  free (such as for $R = \mathbb{Z}$ but also in our case where $R = \mathbb{K}$ is a field), an alternative proof is spelled out in \cite{Strickland20}.
  
  That the underlying category is locally presentable, hence that this cofibrantly generated  model structure is in fact combinatorial, 
  follows from classical facts:
  A category of $R$-modules is a Grothendieck abelian category (cf. \cite[Thm. 8.11]{Johnstone77}), the category of chain complexes in 
  a Grothendieck abelian category is itself Grothendieck abelian (cf. \cite[p. 3]{Hovey99a}) and every Grothendieck abelian category 
  is locally presentable, by \cite[Prop. 3.10]{Beke00} (cf. \cite[Cor. 5.2]{Krause15}).

\smallskip 
  \noindent
  {\bf Characterization of the cofibrations.}
   Still in the generality of any commutative ground ring $R$,  \cite[Prop. 2.3.9]{Hovey99} shows that the cofibrations in this projective 
   model structure are the degreewise {\it split}-injections {\it with  cofibrant cokernel}, and \cite[Lem 2.3.6]{Hovey99} shows that at least all bounded-below chain complexes of {\it projective} modules are cofibrant.

  Now to specialize this characterization to our case where $R$ is a field $\mathbb{K}$, so that the above $R$-modules become $\mathbb{K}$-vector spaces: 
  Here, by the basis theorem, every module is free and in particular projective, and every short exact sequence splits. So in this case it follows, 
  first, that at least every bounded-below chain complex is cofibrant and that the cofibrations are the degreewise injections with cofibrant cokernel.
  So to see that in this case the cofibrations in fact coincide with all degreewise injections it is now sufficient to see that actually every
  chain complex over a field is projectively cofibrant:  
    Observe that every chain complex $V$ is the colimit over its cotower of $k$-connective covers
  $
    V \,\simeq\,
    {\mathrm{colim}}
    \big(
      \mathrm{cn}_0 V
      \hookrightarrow
      \mathrm{cn}_{-1} V
      \hookrightarrow
      \mathrm{cn}_{-2} V
      \hookrightarrow
      \cdots
    \big)
    \,,
  $
  which, by the remarks just made, is a transfinite composition of cofibrations, so that the cocone $\mathrm{cn}_0 V \hookrightarrow V$ is itself 
  a cofibration. Since we already know that $\mathrm{cn}_0 V$ is cofibrant (being bounded below), it follows that also $V$ is.

\smallskip 
  \noindent
  {\bf Monoidal model structure.} That the tensor product of chain complexes makes the above model category into a monoidal model category is 
  discussed in \cite[Cor. 3.7]{Hovey01}\cite[Thm. 6.1]{Fausk06} and in \cite{Strickland20} (there in the generality where submodules of 
  free $R$-modules are themselves free). Explicitly, notice that for checking the pushout-product axiom \eqref{PushoutProductAxiom} in
  a closed tensor product on a cofibrantly generated model category, it is sufficient to check it on generating (acyclic) cofibrations, 
  which in our case  \eqref{GeneratingCofibrationsOfChainComplexes} is fairly immediate.
  
  Much of the model category structure listed so far, that is not specific to $R$ being a field, is also summarized in \cite[\S 1]{MuroRoitzheim19}.

\smallskip 
  \noindent
  {\bf Simplicial model structure.} Again in the generality of any commutative ground ring, \cite[p. 10]{RezkSchwedeShipley01} show that a 
  simplicial enhancement of the projective model structure on unbounded chain complexes is given by the Reedy model structure on
  $\mathrm{sCh}_{R}$ left Bousfield-localized at the total-quasi-isomorphisms, making the adjoint pair 
  $\mathrm{const} \dashv \mathrm{ev}_0$ a Quillen equivalence.

  For us, it remains to see that over a ground field $R = \mathbb{K}$ all objects in this simplicial model structure are cofibrant. 
  But since left Bousfield localization does not change the class of cofibrations, and since we already saw above that all objects 
  in $\mathrm{Ch}_{\mathbb{K}}$ are cofibrant, it is sufficient to see that: \footnote{This argument was pointed out by Charles 
  Rezk, and we thank Dmitri Pavlov for further discussion. The details may be found spelled out 
  at: \href{ https://ncatlab.org/nlab/show/Reedy+model+structure\#WithValuesInAnAbelianCategory}{\tt ncatlab.org/nlab/show/Reedy+model+structure\#WithValuesInAnAbelianCategory}.}
  
  Every simplicial diagram in a model category with underlying abelian category and all whose objects are cofibrant is itself Reedy cofibrant. 
  This follows by appeal to the Dold-Kan correspondence, which exhibits any such simplicial object degreewise as a direct sum of objects of 
  degenerate and of non-degenerate simplices. Inspection shows that these summands of degenerate simplices are isomorphically the 
  ``latching objects'' appearing in the definition of the Reedy model structure, which implies that a simplicial diagram in the 
  given case is Reedy cofibrant as soon as the degreewise sub-objects of non-degenerate simplices are cofibrant.

\smallskip 
  \noindent
  {\bf Monoidal simplicial model structure.} 
  First, the plain Reedy model structure on simplicial objects in a symmetric monoidal model category is itself monoidal model under the degree-wise tensor product, by
  \cite[Thm. 3.51]{Barwick10}.
  To check that this monoidal model structure is preserved by left Bousfield localization at the total-quasi-isomorphisms, we check the
  sufficient criterion given in \cite[Thm. 3.51]{Barwick10}\footnote{We thank Dmitri Pavlov for pointing out this result.}. Indeed, observing that:
  \begin{itemize}
  \item every object $\mathscr{V} \in \mathrm{sCh}_{\mathbb{K}}$ is a homotopy colimit of simplicially constant objects $\mathrm{const}(V)$ 
  (since these are Reedy cofibrant, by the above, so that $\mathrm{hocolim}_{[k] \in \Delta} \mathrm{const}(\mathscr{V}_k)$ is computed by
  the coend $\int^{[k] \in \Delta} \Delta[k] \cdot \mathrm{const}(\mathscr{V})_k$, which is $\mathscr{V}$),
  \item for a Reedy fibrant object $\mathscr{W}$ to be local in $\mathrm{sCh}_{\mathbb{K}}$ with respect to total-quasi-isomorphisms 
  means \cite{RezkSchwedeShipley01} to be  {\it homotopically constant} in that all the simplicial maps 
  $d_i : \mathscr{W}_{i + 1} \longrightarrow \mathscr{W}_i$ and $s_i : \mathscr{W}_i \longrightarrow \mathscr{W}_{i+1}$ are quasi-isomorphisms,
  \end{itemize}
  this criterion says it is sufficient to check that for $\mathscr{W}$ homotopically constant, also the internal hom 
  $\big[\mathrm{const}(V),\,\mathscr{W}\big]$ \eqref{InternalHomOfSimplicialChainComplexes} 
  is homotopically constant. Now for constant domain, the internal hom reduces (essentially by an incarnation of the Yoneda Lemma) to
  \[
  \hspace{2cm} 
    \def\arraystretch{1.5}
    \begin{array}{ll}
      \mathllap{
      \big[
        \mathrm{const}(V)
        ,\,
        \mathscr{W}
      \big]
      \;:\; [k] \;\longmapsto\;      
      }
      \int_{[s] \in \Delta}
      \Big[
        \big(
          \Delta[k] \cdot 
          \mathrm{const}(V)
        \big)_{s}
        ,\,
        \mathscr{W}_s
      \Big]
   &     \;\simeq\;
      \int_{[s] \in \Delta}
      \big[
          \Delta[k]_s 
          \cdot 
          V
        ,\,
        \mathscr{W}_s
      \big]
      \\
   &   \;\simeq\;
      \int_{[s] \in \Delta}
      \Big[
        V
        ,\,
        (\mathscr{W}_s)^{\Delta[k]_s}
      \Big]
      \\
   &   \;\simeq\;
      \Big[
        V
        ,\,
        \int_{[s] \in \Delta}
        (\mathscr{W}_s)^{\Delta[k]_s}
      \Big]
      \\
   &   \;\simeq\;
      \big[
        V
        ,\,
        \mathscr{W}_k
      \big]
      \,.
    \end{array}
  \]
Therefore, it is sufficient now to observe that $[V,-] \,:\,\mathrm{Ch}_{\mathbb{K}} \longrightarrow \mathrm{Ch}_{\mathbb{K}}$
preserves all quasi-isomorphisms. But this is the case because, by the above discussion, (1.) all objects in $\mathrm{Ch}_{\mathbb{K}}$, 
such as $V$ here, are cofibrant so that $[V,-]$ is a right Quillen functor by the pullback-power axioms satisfied in the monoidal model 
category, and (2.)   all objects, such as the $\mathscr{W}_k$ here, are also fibrant, so that weak equivalences between them are preserved 
by right Quillen functors, according to Ken Brown's lemma \ref{KenBrownLemma}.

  It just remains to observe that the Quillen equivalence $\mathrm{const} \dashv \mathrm{ev}_0$ is a monoidal Quillen adjunction 
  (according to \cite[Def. 4.2.16]{Hovey99}), which is immediate since $\mathrm{const}$ is already a strong monoidal functor and 
  since the tensor unit is cofibrant (like all objects).

\smallskip 
 \noindent
 {\bf Left proper combinatorial simplicial model structure.} Finally, that this model structure on $\mathrm{sCh}_{\mathbb{K}}$ is left proper 
  and combinatorial follows by general results from the above fact that $\mathrm{Ch}_{\mathbb{K}}$ is so:
  \begin{itemize}
    \item[1.] Any Reedy model category with coefficients in a locally presentable model category (with small domains of generating cofibrations) 
    is itself locally presentable, by \cite[Thm. 15.6.27]{Hirschhorn02}.
    \item[2.] Any functor category out of a small category into a locally presentable category is itself locally presentable
    \cite[Cor. 1.54]{AdamekRosicky94}.
    \item[3.]
      Any Reedy model category with coefficients in a left (or right) proper model category is itself left (or right) proper
      \cite[Thm. 15.3.4 (2)]{Hirschhorn02}.
    \item[4.] 
    Any left Bousfield localization of a left proper combinatorial model category is itself
    left proper combinatorial \cite[Thm. 4.7]{Barwick10}. 
    \qedhere
  \end{itemize}
\end{proof}

\medskip

\noindent
{\bf The upshot of Theorem \ref{ModelCategoryOfSimplicialChainComplexes}} is that the simplicial monoidal model category $\mathbf{sCh}_{\mathbb{K}}$ 
is a good model category theoretic enhancement of the coefficient $\infty$-category needed to define flat $\infty$-vector bundles.
Beyond giving a good handle on $\infty$-local systems over fixed base space, we use this below to construct the global theory of flat $\infty$-vector
bundles over {\it varying} parameter spaces (Thm. \ref{GlobalModelStructure} below), on which the $\boxtimes$-tensor product will exist as 
a decently homotopical functor (Thm. \ref{ExternalTensorProductIsHomotopical} below).

\begin{remark}[Fibrant replacement]
  Since every object of $\mathrm{sCh}_{\mathbb{K}}$ is cofibrant, the notion of higher chain homotopy encoded by this model category is all 
  given by Reedy {\it fibrant} replacement of chain complexes $V \,\in\, \mathrm{Ch}_{\mathbb{K}} \xhookrightarrow{\mathrm{const}} \mathrm{sCh}_{\mathbb{K}}$.
\end{remark}

\smallskip 
\noindent
{\bf Linear $\infty$-representations of $\infty$-categories}.
Specifically, we can now invoke the following general constructions:

\begin{definition}[Category of simplicial local systems] 
\label{MonoidalSimplicialFunctorCategoryIntoSimplicialChainComplexes}
Given $\mathbf{X} \,\in\, \mathrm{sSet}\mbox{-}\mathrm{Cat}$ a small simplicial category, we write

\noindent
\fbox{$\Big(\mathbf{sCh}_{\mathbb{K}}^{\mathbf{X}} \,:\defneq\, \mathbf{sFunc}\big(\mathbf{X},\,\mathbf{sCh}_{\mathbb{K}}\big),\, \otimes_{\mathbf{X}}\Big)$} 
for the closed monoidal simplicial category whose \footnote{
  This uses Lem. \ref{EnhancedEnrichmentOfFunctorCategories} for $\mathbf{V} :\defneq \mathrm{sSet}$, $\mathbf{C} :\defneq \mathrm{sCh}_{\mathbb{K}}$ 
  and the given $\mathbf{X}$.
} 
\begin{itemize}
  \item objects are $\mathrm{sSet}$-enriched functors from $\mathbf{X}$ to $\mathbf{sCh}_{\mathbb{K}}$ (Def. \ref{CategoryOfChainComplexes}), 
  to be denoted
  \[
    \begin{tikzcd}[row sep=-3pt, column sep=small]
    \mathllap{
    \mathscr{V}_{\!\mathbf{X}} 
    \;:\;\;
    }
    \mathbf{X}
    \ar[rr]
    &&
    \mathbf{sCh}_{\mathbb{K}}
    \\
    x &\longmapsto& \mathscr{V}_{\!x}
    \end{tikzcd}
  \]
  \item hom-complexes are
  \[
    \mbox{\bf{sCh}}^{
      \scalebox{.6}{\bf{X}}
    }_{\mathbb{K}}
    \big(
     \mathscr{V}_{\!\scalebox{.7}{\bf{X}}}
     ,\,
     \mathscr{W}_{\!\scalebox{.7}{\bf{X}}}
    \big)
    \;\;
    :\defneq
    \;\;
    \int_{x \in \scalebox{.6}{\bf{X}}}
    \mbox{\bf{sCh}}\big(
      \mathscr{V}_{\!x}
      ,\,
      \mathscr{W}_{\!x}
    \big)
    \;\;\;
    \in
    \;
    \mathrm{sSet}
    \,,
  \]
and\footnote{
  Now using Lem. \ref{EnhancedEnrichmentOfFunctorCategories}
  for $\mathbf{V} :\defneq \mathbf{C} :\defneq \mathrm{sCh}_{\mathbb{K}}$ and $\mathbf{X}$ regarded as a 
  $\mathrm{sSet} \xhookrightarrow{\mathbb{K}[-]}\mathrm{sCh}_{\mathbb{K}}$-enriched category.
}

\begin{itemize}
\item 
equipped with the cup-tensor product induced from the tensor product carried by  $\mathbf{sCh}_{\mathbb{K}}$
\begin{equation}
  \label{CupTensorProductOnsChValuedSimplicialFunctors}
  \begin{tikzcd}[row sep=-3pt, column sep=small]
    \mathbf{sCh}_{\mathbb{K}}^{\mathbf{X}}
    \,\times\,
    \mathbf{sCh}_{\mathbb{K}}^{\mathbf{X}}
    \ar[
      rr,
      "{ \otimes_{\mathbf{X}} }"
    ]
    &&
    \mathrm{sCh}_{\mathbb{K}}^{\mathbf{X}}
    \\
    \big(
      \mathscr{V}_{\!\mathbf{X}}
      ,\,
      \mathscr{V}'_{\!\mathbf{X}}
    \big)
    &\longmapsto&
    \mathbf{X}
    \xrightarrow{ \mathrm{diag} }
    \mathbf{X} \times \mathbf{X}
    \xrightarrow{
      \mathscr{V}_{\!\mathbf{X}}
      \times
      \mathscr{V}'_{\!\mathbf{X}}
    }
    \mathbf{sCh}_{\mathbb{K}}
    \times
    \mathbf{sCh}_{\mathbb{K}}
    \xrightarrow{ \otimes }
    \mathbf{sCh}_{\mathbb{K}}    
  \end{tikzcd}
\end{equation}
\item and similarly with the following cup-tensoring
\begin{equation}
  \label{CupTensoringOnsChValuedSimplicialFunctors}
  \begin{tikzcd}[row sep=-3pt, column sep=small]
    \mathbf{sSet}^{\mathbf{X}}
    \,\times\,
    \mathbf{sCh}_{\mathbb{K}}^{\mathbf{X}}
    \ar[
      rr,
      "{ (\mbox{-})\cdot_{{}_{\mathbf{X}}}(\mbox{-}) }"
    ]
    &&
    \mathbf{sCh}_{\mathbb{K}}^{\mathbf{X}}
    \\
    \big(
      S_{\mathbf{X}}
      ,\,
      \mathscr{V}_{\!\mathbf{X}}
    \big)
    &\longmapsto&
    \mathbf{X}
    \xrightarrow{ \mathrm{diag} }
    \mathbf{X} \times \mathbf{X}
    \xrightarrow{
      S_{\mathbf{X}}
      \times
      \mathscr{V}'_{\!\mathbf{X}}
    }
    \mathbf{sSet}
    \times
    \mathbf{sCh}_{\mathbb{K}}
    \xrightarrow{ (\mbox{-})\cdot(\mbox{-}) }
    \mathbf{sCh}_{\mathbb{K}}    
  \end{tikzcd}
\end{equation}
\item whose corresponding internal hom is given by
\begin{equation}
  \begin{tikzcd}[row sep=-3pt, column sep=small]
    \big(
      \mathbf{sCh}_{\mathbb{K}}^{\mathbf{X}}
    \big)^{\mathrm{op}}
    \,\times\,
    \mathbf{sCh}_{\mathbb{K}}^{\mathbf{X}}
    \ar[
      rr,
      "{ [-,-]_{{}_{\mathbf{X}}} }"
    ]
    &&
    \mathrm{sCh}_{\mathbb{K}}^{\mathbf{X}}
    \\
    \big(
      \mathscr{V}_{\!\mathbf{X}}
      ,\,
      \mathscr{W}_{\!\mathbf{X}}
    \big)
    &\longmapsto&
    \Big(
  x 
    \;\mapsto\;
  \int_{x' \in \scalebox{.6}{\bf{X}}}
  \big[
    \mbox{\bf{X}}(x',x)
    \cdot
    \mathscr{V}_{\!x'}
    ,\,
    \mathscr{W}_{\!x'}
  \big]
  \Big)
  \end{tikzcd}
\end{equation}
\end{itemize}
\end{itemize}
\begin{equation}
  \label{InternalHomAdjunctionForSimplicialLocalSystems}
  \mathscr{V}
  \,\in\,
  \mathbf{sCh}_{\mathbb{K}}^{\mathbf{X}}
  \hspace{1.2cm}
   \dashv
  \hspace{1.2cm}
  \begin{tikzcd}
    \mathbf{sCh}_{\mathbb{K}}^{\mathbf{X}}
    \ar[
      rr,
      shift left=5pt,
      "{
        \mathscr{V} 
          \otimes_{\mathbf{X}} 
        (\mbox{-})
      }"
    ]
    \ar[
      from=rr,
      shift left=5pt,
      "{
        [
          \mathscr{V}
          ,\,
          \mbox{-}
        ]_{\mathbf{X}}
      }"
    ]
    \ar[
      rr,
      phantom,
      "{ \scalebox{.7}{$\bot$} }"
    ]
    &&
    \mathbf{sCh}_{\mathbb{K}}^{\mathbf{X}}\;.
  \end{tikzcd}
\end{equation}
\end{definition}

\begin{proposition}[Model category of simplicial local systems over a fixed base space]
\label{ModelStructureOnSimplicialFunctors} 
$\,$ \newline
  For $\mathbf{X} \,\in\, \mathrm{sSet}\mbox{-}\mathrm{Cat}$ a small simplicial category, the 
  monoidal simplicial functor category (Def. \ref{MonoidalSimplicialFunctorCategoryIntoSimplicialChainComplexes}) 
  
  \noindent
    \fbox{$\mathbf{sCh}^{\mathbf{X}}_{\mathbb{K}}$} carries a model category structure with the following properties:
  \begin{itemize}
    \item[{\bf (i)}] the model structure has
    \begin{itemize}
    \item weak equivalences the $\mathbf{X}$-objectwise weak equivalences in $\mathrm{sCh}_{\mathbb{K}}$,
    \item fibrations the $\mathbf{X}$-objectwise
     fibrations in $\mathrm{sCh}_{\mathbb{K}}$
     (both according to Prop. \ref{ModelCategoryOfSimplicialChainComplexes});
    \end{itemize}
    \item[{\bf (ii)}] 
    the model structure is 
    \begin{itemize}
      \item combinatorial
      
      with sets of generating (acyclic) cofibrations
      those of \eqref{GeneratingCofibrationsOfSimplicialChainComplexes} tensored to representables:
      \begin{equation}
        \label{GeneratingCofibrationsOfSimplicialFunctors}
        \mathrm{sI}_{\mathbb{K}}^{\mathbf{X}} 
        := 
        \big\{
          \mathbf{X}(x,-)
          \cdot
          i
          \;\big\vert\;
          x \in \mathbf{X},\;
          i \in \mathrm{sI}_{\mathbb{K}}
        \big\}
        ,\;
        \hspace{1cm}
        \mathrm{sJ}_{\mathbb{K}}^{\mathbf{X}} 
        := 
        \big\{
          \mathbf{X}(x,-)
          \cdot
          j
          \;\big\vert\;
          x \in \mathbf{X},\;
          j \in \mathrm{sJ}_{\mathbb{K}}
        \big\}
        \,.
      \end{equation}

    \end{itemize}
  \end{itemize}
\end{proposition}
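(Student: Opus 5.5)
The plan is to recognize this as the projective model structure on enriched functors into a combinatorial simplicial model category, and to invoke the standard existence theorem for it. By Thm. \ref{ModelCategoryOfSimplicialChainComplexes}, $\mathbf{sCh}_{\mathbb{K}}$ is a combinatorial simplicial model category. Lurie's result \cite[Prop. A.3.3.2]{Lurie09} (cf. also \cite{Riehl14} for the enriched transfer) then gives a combinatorial model structure on $\mathbf{sFunc}(\mathbf{X},\mathbf{sCh}_{\mathbb{K}})$ for any small simplicial category $\mathbf{X}$. In this structure the weak equivalences and fibrations are objectwise, and its cofibrations are generated by the enriched tensors $\mathbf{X}(x,-)\cdot i$. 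This is exactly the claim. Applying that result requires its excellence-type hypotheses for projective structures; if those are not directly satisfied, I will instead run the transfer argument by hand, as follows.

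\textbf{Transfer.} Consider the evaluation functors $\mathrm{ev}_x : \mathbf{sCh}_{\mathbb{K}}^{\mathbf{X}} \to \mathrm{sCh}_{\mathbb{K}}$. By the enriched Yoneda lemma and the tensoring isomorphism \eqref{TensoringIsomorphismsForSCh}, each $\mathrm{ev}_x$ has left adjoint $V \mapsto \mathbf{X}(x,-)\cdot V$. The product of these adjunctions over the objects of $\mathbf{X}$ exhibits $\mathbf{sCh}_{\mathbb{K}}^{\mathbf{X}}$ as right-induced from $\prod_{x} \mathrm{sCh}_{\mathbb{K}}$. The transfer theorem (Kan's recognition theorem, e.g.\ \cite[Thm. 11.3.2]{Hirschhorn02}) then yields a cofibrantly generated model structure with the generating sets \eqref{GeneratingCofibrationsOfSimplicialFunctors}, once two conditions hold.
\begin{enumerate}
\item[(a)] The domains of $\mathrm{sI}^{\mathbf{X}}_{\mathbb{K}}$ and $\mathrm{sJ}^{\mathbf{X}}_{\mathbb{K}}$ are small. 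This is automatic once we know local presentability.
\item[(b)] Every relative $\mathrm{sJ}^{\mathbf{X}}_{\mathbb{K}}$-cell complex is an objectwise weak equivalence.
\end{enumerate}

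\textbf{Local presentability.} The category of enriched functors from a small simplicial category into a locally presentable, $\mathrm{sSet}$-enriched and tensored category is locally presentable. It is the category of algebras for an accessible monad on $\prod_x \mathrm{sCh}_{\mathbb{K}}$, and it is closed under limits and filtered colimits in $\mathrm{Func}(\mathrm{Ob}\,\mathbf{X}\times\ldots)$; cf.\ \cite{AdamekRosicky94}. Hence the resulting structure is combinatorial.

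\textbf{Acyclicity condition (main obstacle).} Colimits in the functor category are computed objectwise. So evaluated at $y$, a pushout along $\mathbf{X}(x,-)\cdot j$ becomes a pushout in $\mathrm{sCh}_{\mathbb{K}}$ along $\mathbf{X}(x,y)\cdot j$. By the simplicial model category axiom (SM7), $S\cdot(-)$ preserves acyclic cofibrations for any simplicial set $S$, since every simplicial set is cofibrant. Hence $\mathbf{X}(x,y)\cdot j$ is an acyclic cofibration in $\mathrm{sCh}_{\mathbb{K}}$. Acyclic cofibrations are stable under pushout and transfinite composition. Therefore every relative $\mathrm{sJ}^{\mathbf{X}}_{\mathbb{K}}$-cell complex is objectwise an acyclic cofibration, in particular an objectwise weak equivalence, which is condition (b).

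It remains to note that by adjunction the right lifting property against $\mathrm{sI}^{\mathbf{X}}_{\mathbb{K}}$ (respectively $\mathrm{sJ}^{\mathbf{X}}_{\mathbb{K}}$) is equivalent to objectwise acyclic fibration (respectively fibration). This identifies the weak equivalences and fibrations as stated in (i).
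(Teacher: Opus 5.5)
Your proposal is correct and takes the same route as the paper, whose proof simply invokes \cite[Prop. A.3.3.2]{Lurie09} for the projective model structure on enriched functors into the combinatorial simplicial model category $\mathbf{sCh}_{\mathbb{K}}$. Your hedge is unnecessary: the excellence hypothesis there concerns the enriching category $\mathrm{sSet}$ (Kan--Quillen), which is excellent in Lurie's sense, so the citation applies directly, and your by-hand transfer argument is a correct unpacking of that result.
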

\begin{proof}
  Since $\mathbf{sCh}_{\mathbb{K}}$ is simplicial combinatorial, by Prop. \ref{ModelCategoryOfSimplicialChainComplexes}, this is the 
  existence statement of the projective model structure on enriched functors, see for instance \cite[Prop. A.3.3.2]{Lurie09}.
\end{proof}

\begin{remark}[Base change between model structures of simplicial local systems]
  \label{BaseChangeBetweenModelStructuresOfSimplicialLocalSystems}
  For $\mathbf{f} \,:\, \mathbf{X} \longrightarrow \mathbf{X}'$ a morphism of simplicial groupoids,
  the induced pair of adjoint functors
  \eqref{KanExtensionAdjointTriple}
  \[
    \begin{tikzcd}
      \mathbf{sCh}^{\mathbf{X}}_{\mathbb{K}}
      \ar[
        rr,
        shift left=6pt,
        "{ \mathbf{f}_! }"
      ]
      \ar[
        from=rr,
        shift left=6pt,
        "{ \mathbf{f}^\ast }"
      ]
      \ar[
        rr,
        phantom,
        "{ \scalebox{.7}{$\bot$} }"
      ]
      &&
      \mathbf{sCh}^{\mathbf{X}'}_{\mathbb{K}}
    \end{tikzcd}
  \]
  is a Quillen adjunction with respect to the model structure from Prop. \ref{ModelStructureOnSimplicialFunctors} 
(since the right adjoint $\mathbf{f}^\ast$, 
  which acts by precomposition, clearly preserves the objectwise defined weak equivalences and fibrations).
\end{remark}

\begin{remark}[Linear $\infty$-category-representations]
  In the following, we focus on the specialization of Prop. \ref{ModelStructureOnSimplicialFunctors} to domains $\mathcal{X}$ which 
  model $\infty$-groupoids. But it is clear that Prop. \ref{ModelStructureOnSimplicialFunctors} is relevant more generally. 

  For example, already in one of the simplest examples of a small (simplicial) category $\mathcal{X}$ which is not a (simplicial) 
  groupoid, namely the category $\mathrm{FinSet}_{\mathrm{inj}}$ of finite sets and {\it injective} maps between them, Prop. \ref{ModelStructureOnSimplicialFunctors} provides the homotopy-coherent enhancement of the notion of {\it FI-modules} (known to have deep relation to braid group representations and hence to topological quantum computation, see eg. \cite{Wilson23} for pointers). 
  A dedicated discussion of such homotopical FI-modules has recently appeared in \cite{Arro23}.
\end{remark}

\medskip

\noindent
{\bf $\infty$-Group representations.}
With this in hand and by the fact that every $\infty$-group is presented by a simplicial group, we immediately obtain 
a model category of ``$\mathbb{K}$-linear $\infty$-representations of $\infty$-groups'', identified with $\infty$-local systems over a delooping $\infty$-groupoid:

\begin{definition}[Simplicial delooping groupoids]
 \label{SimplicialDeloopingGroupoid}
  For a simplicial group
  $\mathcal{G} \,\in\, \mathrm{Grp}(\mathrm{sSet})$ we write 
  \[
    \mathbf{B}\mathcal{G}
    \,\in\,
    \mathrm{sSet}\mbox{-}\mathrm{Grpd}
    \longhookrightarrow
    \mathrm{sSet}\mbox{-}\mathrm{Cat}
  \]
  for the simplicial groupoid which has
  \begin{itemize}
    \item a single object,
    \item single hom-object identified with the simplicial group,
    whose composition operation and identity element is given by the group operation and the neutral element on $\mathcal{G}$.
  \end{itemize}
\end{definition}

\begin{remark}[Simplicial functors between $\mathbf{B}\mathcal{G}$s are simplicial group homomorphisms]
\label{SimplicialFunctorsBetweenDeloopingGroupoids}
$\,$ \newline  
  The $\mathrm{sSet}$-enriched functors between simplicial delooping groupoids (Def. \ref{SimplicialDeloopingGroupoid}) 
  correspond naturally to homomorphisms between the corresponding simplicial groups
  \[
    \mathrm{sSet}\mbox{-}\mathrm{Grpd}
    \big(
      \mathbf{B}\mathcal{G}
      ,\,
      \mathbf{B}\mathcal{G}'
    \big)
    \;\;
    \simeq
    \;\;
    \mathrm{sGrp}(\mathcal{G},\,\mathcal{G}')
    \,.
  \]
\end{remark}

\begin{remark}[Simplicial local systems on $\mathbf{B}\mathcal{G}$ are simplicial group representations]
\label{SimplicialGroupActions}
$\,$
\newline
By the (co)tensoring adjunctions, a simplicial functor $\mathscr{V}_{\mathbf{B}\mathcal{G}} \,\in\, \mathbf{sCh}^{\mathbf{B}\mathcal{G}}_{\mathbb{K}}$ 
on the simplicial delooping groupoid $\mathbf{B}\mathcal{G}$ (Def. \ref{SimplicialDeloopingGroupoid})
is equivalently a simplicial $\mathcal{G}$-action $\rho_{\mathscr{V}}$ on some object $\mathscr{V} \,\in\, \mathbf{sCh}_{\mathbb{K}}$:
\begin{equation}
  \label{SimplicialGroupActionAdjointness}
  \begin{tikzcd}[sep=0pt]
  \mbox{\bf{sCh}}_{\mathbb{K}}^{\mathbf{B}\mathcal{G}}
  \ar[rr, "{\sim}"]
  &&
  \mathcal{G}\mathrm{Act}
  \big(
    \mathbf{sCh}_{\mathbb{K}}
  \big)
  \\
  \mathscr{V}_{\mathbf{B}\mathcal{G}}
  &\mapsto&
  \big(
    \mathscr{V}
    ,\,
    \rho_{\mathscr{V}}
  \big)
  \end{tikzcd}
  \hspace{2cm}
  \begin{tikzcd}[row sep=3pt]
    \mathcal{G} 
    \!\cdot\!
    \mathscr{V}
    \ar[
      rr,
      "{
        \scalebox{.8}{$
          \rho_{\mathscr{V}}
        $}
      }"{description}
    ]
    &&
    \mathscr{V}    
    \\
    \hline
    \mathllap{
      \mathscr{V}_{\mathbf{B}\mathcal{G}}
      \;:\;\;
    }
    \mathcal{G}
    \ar[
      rr
    ]
    &&
    \mbox{\bf{sCh}}_{\mathbb{K}}\big(
      \mathscr{V}
      ,\,
      \mathscr{V}
    \big)
    \\
    \hline
    \mathscr{V}
    \ar[
      rr,
      "{
        \scalebox{.8}{$
          \tilde\rho_{\mathscr{V}}
        $}
      }"{description}
    ]
    &&
    \mathscr{V}^{\mathcal{G}}
  \end{tikzcd}
\end{equation}
\end{remark}

\medskip

\begin{proposition}
[Closed monoidal structure of simplicial local systems over simplicial delooping groupoids]
\label{ClosedMonoidalStructureOfLocalSystemsOverSimplicialDelooping}
Over a simplicial delooping groupoid, the monoidal structure from Def. \ref{MonoidalSimplicialFunctorCategoryIntoSimplicialChainComplexes} has 

\begin{itemize}
\item [{\bf (i)}] 
tensor product given by
\[
  \hspace{1.4cm}
  \adjustbox{scale=.9}{
  \begin{tikzcd}[row sep=-3pt, column sep=0pt]
    \mathllap{
    \mathscr{V}_{\mathbf{B}\mathcal{G}}
    \otimes
    \mathscr{W}_{\mathbf{B}\mathcal{G}}
    \;:\;\;
    }
    \mathcal{G}
    \ar[rr]
    &&
    \mbox{\bf{sCh}}^{\scalebox{.7}{\bf{B}}\mathcal{G}}_{\mathbb{K}}
    \big(
      \mathscr{V}\otimes\mathscr{W}
      \;,\;
      \mathscr{V}\otimes\mathscr{W}
    \big)
    \\
    \big(
      \Delta[k]
      \xrightarrow{g}
      \mathcal{G}
    \big)
    &\mapsto&
    \bigg(
    \Delta[k]
    \cdot
    \mathscr{V}\otimes\mathscr{W}
    \xrightarrow{
      \scalebox{.7}{$
        \mathrm{diag}
        \cdot
        \mathscr{V} 
          \otimes
        \mathscr{W}
      $}
    }
    \big(
      \Delta[k] \cdot\mathscr{V}
    \big)
      \otimes
    \big(
      \Delta[k] \cdot \mathscr{W}    
    \big)
    \xrightarrow{
      \scalebox{.7}{$
      \tilde\rho_{\mathscr{V}}
      \otimes
      \tilde\rho_{\mathscr{W}}
      $}
    }
    (\mathscr{G}\cdot\mathscr{V})
      \otimes
    (\mathscr{G}\cdot\mathscr{W})    
    \xrightarrow{
      g\ \otimes g
    }
    \mathscr{V} 
      \otimes 
    \mathscr{W}
    \bigg)
  \end{tikzcd}
  }
\]

\item[{\bf (ii)}] and internal hom given by
  \[
    \hspace{.8cm}
    \adjustbox{scale=.9}{
    \begin{tikzcd}[row sep=-3pt, column sep=0pt]
      \mathllap{
      \big[
        \mathscr{V}_{\mathbf{B}\mathcal{G}}
        ,\,
        \mathscr{W}_{\mathbf{B}\mathcal{G}}
      \big]
      \;:\;\;
      }
      \mathcal{G}
      \ar[rr]
      &&
      \mbox{\bf{sCh}}_{\mathbb{K}}
      \big(
      [
        \mathscr{V}
        ,\,
        \mathscr{W}
      ]
      ,\,
      [
        \mathscr{V}
        ,\,
        \mathscr{W}
      ]
      \big)
      \\
      \big(
        \Delta[k] 
          \xrightarrow{g} 
        \mathcal{G}
      \big)
      &\longmapsto&
      \Big(
        [\mathscr{V},\,\mathscr{W}]
        \xrightarrow{
          \scalebox{.7}{$
          \big[
            \rho_{\mathscr{V}}
            ,\,
            \tilde\rho_{\mathscr{W}}
          \big]
        $}
        }
        \big[
          \mathcal{G}
          \!\cdot\!
          \mathscr{V}
          ,\,
          \mathscr{V}^{\mathcal{G}}
        \big]
        \xrightarrow{\scalebox{.7}{$
          \big[
            g^{-1} \!\cdot\! \mathscr{V}
            ,\,
            \mathscr{V}^g
          \big]
        $}
        }
        \big[
          \Delta[k]
          \!\cdot\!
          \mathscr{V}
          ,\,
          \mathscr{W}^{\Delta[k]}
        \big]
        \xrightarrow{
          \scalebox{.73}{$
          [\mathscr{V},\mathscr{W}]^{\mathrm{diag}}
          $}
        }
        [\mathscr{V},\,\mathscr{W}]^{
          \Delta[k]
        }
      \Big).
    \end{tikzcd}
    }
  \]
\end{itemize}
\end{proposition}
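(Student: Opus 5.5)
The plan is to unwind Def.~\ref{MonoidalSimplicialFunctorCategoryIntoSimplicialChainComplexes} in the special case $\mathbf{X} = \mathbf{B}\mathcal{G}$, using the dictionary of Rem.~\ref{SimplicialGroupActions}. That remark identifies a simplicial functor on $\mathbf{B}\mathcal{G}$ with a map of simplicial sets $\mathcal{G} \to \mathbf{sCh}_{\mathbb{K}}(\mathscr{V},\mathscr{V})$ respecting composition. By \eqref{SimplicialHomSetsBetweenSimplicialChainComplexes}, a $k$-simplex of such a map is a morphism $\Delta[k]\cdot\mathscr{V}\to\mathscr{V}$. For $g : \Delta[k]\to\mathcal{G}$ this morphism is $\Delta[k]\cdot\mathscr{V} \xrightarrow{g\cdot \mathscr{V}} \mathcal{G}\cdot\mathscr{V} \xrightarrow{\rho_{\mathscr{V}}} \mathscr{V}$. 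Dually, under \eqref{TensoringIsomorphismsForSCh}, it is $\mathscr{V}\xrightarrow{\tilde\rho_{\mathscr{V}}}\mathscr{V}^{\mathcal{G}}\xrightarrow{\mathscr{V}^g}\mathscr{V}^{\Delta[k]}$. So both claims reduce to computing, simplex by simplex, the action map of the cup-tensor and internal-hom functors, and then rewriting it in terms of $\rho$ and $\tilde\rho$.

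For (i), I would use \eqref{CupTensorProductOnsChValuedSimplicialFunctors}. The composite $\mathbf{B}\mathcal{G}\xrightarrow{\mathrm{diag}}\mathbf{B}\mathcal{G}\times\mathbf{B}\mathcal{G}\to\mathbf{sCh}_{\mathbb{K}}\times\mathbf{sCh}_{\mathbb{K}}\xrightarrow{\otimes}\mathbf{sCh}_{\mathbb{K}}$ sends a $k$-simplex $g$ to the pair of $k$-simplices $(g\text{ acting on }\mathscr{V},\, g\text{ acting on }\mathscr{W})$. The enriched tensor functor \eqref{sSetEnrichedTensorProduct} then sends that pair to the composite $\Delta[k]\cdot(\mathscr{V}\otimes\mathscr{W})\xrightarrow{\mathrm{diag}\cdot(\cdots)}(\Delta[k]\cdot\mathscr{V})\otimes(\Delta[k]\cdot\mathscr{W})\to\mathscr{V}\otimes\mathscr{W}$. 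Substituting the factorization through $\mathcal{G}\cdot(-)$ from the first paragraph gives the displayed formula.

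For (ii), I first specialize the internal-hom formula of Def.~\ref{MonoidalSimplicialFunctorCategoryIntoSimplicialChainComplexes}. The end over the single object $\ast$ of $\mathbf{B}\mathcal{G}$ is $[\mathcal{G}\cdot\mathscr{V},\mathscr{W}]$, equalized by the two $\mathcal{G}$-actions. Via the unit $\ast\to\mathcal{G}$ this equalizer is isomorphic to the underlying object $[\mathscr{V},\mathscr{W}]$. This is the same Yoneda-type reduction as in the proof of Thm.~\ref{ModelCategoryOfSimplicialChainComplexes}: an end over a representable collapses. The residual action is the conjugation action. To make it explicit, I apply the enriched internal-hom functor \eqref{sSetEnrichedInternalHom} to the pair consisting of the contravariant simplex $g^{-1}$ acting on $\mathscr{V}$ (the inverse appearing because of $\mathrm{op}$) and the covariant simplex $g$ acting on $\mathscr{W}$. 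Rewriting these via $\rho_{\mathscr{V}}$ and $\tilde\rho_{\mathscr{W}}$ yields the displayed three-step composite ending in $(\cdots)^{\mathrm{diag}}$.

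The main obstacle is bookkeeping rather than ideas: checking that the identification of the end with $[\mathscr{V},\mathscr{W}]$ transports the action exactly to the stated conjugation formula. This requires the correct placement of $g^{-1}$ and of the diagonal $\Delta[k]\to\Delta[k]\times\Delta[k]$, and compatibility with faces and degeneracies. I would verify it by testing on $0$-simplices, where it reduces to the familiar formula $(g\cdot f)(v) = g f(g^{-1}v)$. Naturality in $[k]$ is automatic, since every map used is built from natural (co)tensoring isomorphisms.
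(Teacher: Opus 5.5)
Your proposal is correct, and for (i) it is exactly the paper's argument: unwind \eqref{CupTensorProductOnsChValuedSimplicialFunctors} and \eqref{sSetEnrichedTensorProduct}. What you derive, namely $\rho_{\mathscr{V}}\circ(g\cdot\mathscr{V})$ in each tensor factor, is the intended reading of the display, whose last two arrow labels are garbled. Likewise, $\mathscr{V}^{\mathcal{G}}$ and $\mathscr{V}^g$ in (ii) should read $\mathscr{W}^{\mathcal{G}}$ and $\mathscr{W}^g$.

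For (ii) your route differs from the paper's. You evaluate the end formula for $[-,-]_{\mathbf{X}}$ from Def.~\ref{MonoidalSimplicialFunctorCategoryIntoSimplicialChainComplexes} and transport the residual action. The paper never computes the end. It observes that, with the tensor product from (i), a map $\mathscr{U}\otimes\mathscr{V}\to\mathscr{W}$ of $\mathcal{G}$-objects is the same as a map $\mathscr{U}\to[\mathscr{V},\mathscr{W}]$ that is equivariant for the conjugation action. So the displayed object satisfies \eqref{InternalHomAdjunctionForSimplicialLocalSystems}, and is the internal hom by uniqueness of right adjoints. That route is shorter and needs no evaluation of the end at all. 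Yours has the merit of giving an explicit isomorphism with the end and showing where the conjugation comes from.

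Your route needs two points made precise.

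First, identifying the end with $[\mathscr{V},\mathscr{W}]$ is not the bare Yoneda collapse from the proof of Thm.~\ref{ModelCategoryOfSimplicialChainComplexes}, because here $\mathscr{V}$ itself carries the action. The object $\mathcal{G}\cdot\mathscr{V}$ enters the end with the diagonal action. You first need the shearing isomorphism $(g,v)\mapsto(g,g^{-1}v)$ onto the free action on the $\mathcal{G}$-factor alone, which uses inverses in $\mathcal{G}$. Only then is restriction along the unit $\ast\to\mathcal{G}$ an isomorphism, with inverse $f\mapsto\big((g,v)\mapsto g\,f(g^{-1}v)\big)$.

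Second, checking on $0$-simplices does not determine a simplicial map out of $\mathcal{G}$. The transport computation should be run with an arbitrary $k$-simplex $g:\Delta[k]\to\mathcal{G}$ as a generalized element, where it goes through verbatim.
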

\begin{proof}
  The first statement follows readily by unwinding the definitions. This makes the adjunction property of the second formula essentially manifest.
\end{proof}

\medskip

\noindent
{\bf Skeletal simplicial groupoids.}
Intermediate between general $\mathrm{sSet}$-enriched groupoids in Prop. \ref{ModelStructureOnSimplicialFunctors} 
and simplicial delooping groupoids in Rem. \ref{SimplicialGroupActions} are skeletal simplicial groupoids.

\begin{definition}[Skeletal simplicial groupoids]
  \label{SkeletalSimplicialGroupoids} $\,$

\begin{itemize}
  \item[{\bf (i)}] 
  A simplicial groupoid $\mathbf{X} \,\in\, \mathrm{sSet}\mbox{-}\mathrm{Grpd}$ is {\it skeletal} if its only non-empty 
  hom-complexes are those from a given object to itself.
  In other words, if and only if it is isomorphic to a disjoint union of simplicial delooping groupoids (Def. \ref{SimplicialDeloopingGroupoid}):
  \begin{equation}
   \label{ConditionForSkeletalSimplicialGroupoid}
   \mathbf{X}
   \,\in\,
   \mathrm{sSet}\mbox{-}\mathrm{Grpd}
   \hspace{1cm} 
   \vdash
   \hspace{1cm}
   \mbox{$\mathbf{X}$ is skeletal}
   \;\;\;\;\;
     \Leftrightarrow
   \;\;\;\;\;
   \mathbf{X}
   \;\;
     \underset{\mathrm{iso}}{\simeq}
   \;\;
   \underset{[x] \in \pi_0(\mathbf{X})}{\coprod}
   \mathbf{B}\big(
     \mathbf{X}(x,x)
   \big)
   \,.
  \end{equation}

\item[{\bf (ii)}] 
We denote the full subcategory of skeletal simplicial groupoids by
  \begin{equation}
    \label{SubcategoryOfSkeletalSimplicialGroupoids}
    \mathrm{sSet}\mbox{-}\mathrm{Grpd}_{\mathrm{skl}}
   \, \longhookrightarrow \, 
    \mathrm{sSet}\mbox{-}\mathrm{Grpd}
    \,.
  \end{equation}
\end{itemize}
\end{definition}
Given any $\mathbf{X} \,\in\, \mathrm{sSet}\mbox{-}\mathrm{Grpd}$ we say that a {\it skeleton of $\mathbf{X}$} is a full inclusion (hence a Dwyer-Kan equivalence) out of a skeletal groupoid \eqref{ConditionForSkeletalSimplicialGroupoid}
\begin{equation}
  \label{SkeletonForSimplicialGroupoid}
  \begin{tikzcd}
  \mathbf{X}_{\mathrm{skl}}
  \ar[
    r, 
    hook, 
    "{ 
      \in \mathrm{W}_{\mathrm{DK}} 
     }"{swap}
  ]
  &
  \mathbf{X}\;.
  \end{tikzcd}
\end{equation}

\begin{lemma}[Skeletal implies fibrant]
  \label{SkeletalSimplicialGroupoidsAreFibrant}
  Every skeletal simplicial groupoid (Def. \ref{SkeletalSimplicialGroupoids}) is fibrant.
\end{lemma}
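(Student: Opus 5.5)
The fibrancy in question refers to the Dwyer–Kan model structure on $\mathrm{sSet}\mbox{-}\mathrm{Grpd}$ (the one whose weak equivalences are the Dwyer–Kan equivalences in \eqref{SkeletonForSimplicialGroupoid}). In that model structure the fibrant objects are exactly the simplicial groupoids all of whose hom-simplicial sets are Kan complexes. The fibrations are the maps that are Kan fibrations on hom-objects and, as a path-lifting condition, are surjective on $\pi_0$-lifts. For the map to the terminal groupoid the path-lifting condition is vacuous, so fibrancy reduces to hom-wise Kan-ness. The plan is therefore to reduce the claim to showing that every hom-object of a skeletal simplicial groupoid is a Kan complex.

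By \eqref{ConditionForSkeletalSimplicialGroupoid}, a skeletal $\mathbf{X}$ is a disjoint union of simplicial delooping groupoids $\mathbf{B}\big(\mathbf{X}(x,x)\big)$. Its non-empty hom-objects are therefore exactly the simplicial sets $\mathbf{X}(x,x)$, and each of these carries the structure of a simplicial group: composition is a simplicial map, and inverses are simplicial because $\mathbf{X}$ is an $\mathrm{sSet}$-enriched groupoid. The empty hom-objects are trivially Kan complexes.

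The remaining step is the classical fact (Moore) that the underlying simplicial set of any simplicial group is a Kan complex; see for instance \cite[\S 17]{May67}. I would cite this result rather than reprove it. This is the only substantive input, and it is standard. Note that it uses only the simplicial group structure on the endomorphism objects and not skeletality as such: skeletality is what guarantees that the hom-objects are simplicial groups rather than mere simplicial torsors. For a general simplicial groupoid, $\mathbf{X}(x,y)$ is a torsor over $\mathbf{X}(x,x)$ and is also Kan whenever it is non-empty.

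I do not expect a real obstacle. The only point needing care is to pin down the model structure intended (Dwyer–Kan, as the paper's use of $\mathrm{W}_{\mathrm{DK}}$ indicates) and to state its characterization of fibrant objects correctly.
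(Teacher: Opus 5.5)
Your proposal is correct and follows the paper's proof. Fibrancy in the Dwyer--Kan model structure unwinds to the hom-objects being Kan complexes; for a skeletal groupoid these are empty or simplicial groups, which are Kan by Moore's theorem. Your side remark is also right: non-empty hom-objects of any simplicial groupoid are Kan, being torsors over the vertex groups, so skeletality is not actually needed for fibrancy.
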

\begin{proof}
  Unwinding the definitions, this is tantamount to saying that underlying any simplicial group is a Kan-fibrant simplicial set. This is the case by Moore's theorem \cite[Thm. 3, p. 18-04]{Moore54}\cite[\S II 3.8]{Quillen67}.
\end{proof}

\medskip

The following Lemmas \ref{SkeletizationOfSimplicialGroupoids},  \ref{SkeletalizationOfLocalSystems}, which are standard arguments, use the Axiom of Choice in the underlying category of sets, which we assume throughout, as usual.
\begin{lemma}[Skeletization of simplicial groupoids]
 \label{SkeletizationOfSimplicialGroupoids}
  Every $\mathbf{X} \in\, \mathrm{sSet}\mbox{-}\mathrm{Grpd}$ admits an adjoint equivalence and deformation retraction onto a skeleton \eqref{SkeletonForSimplicialGroupoid}.
\end{lemma}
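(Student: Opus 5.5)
The plan is to mimic the classical skeletization argument for ordinary groupoids, carrying it out with the simplicial enrichment. Since $\mathbf{X}$ is a simplicial groupoid, each hom-object $\mathbf{X}(x,y)$ is a simplicial set. Its set of vertices $\mathbf{X}(x,y)_0$ is the hom-set of the underlying ordinary groupoid $\mathbf{X}_0$, whose objects are those of $\mathbf{X}$ and whose morphisms are the 0-simplices. Write $\pi_0(\mathbf{X})$ for the set of isomorphism classes of objects in $\mathbf{X}_0$; equivalently, these are the classes of objects with non-empty hom-object. Using the Axiom of Choice, choose for each class $[x] \in \pi_0(\mathbf{X})$ a representative $x_{[x]}$. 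Then, for every object $y$, choose a vertex $\gamma_y \in \mathbf{X}(y, x_{[y]})_0$, with $\gamma_{x_{[y]}} := \mathrm{id}$. Define $\mathbf{X}_{\mathrm{skl}} \hookrightarrow \mathbf{X}$ to be the full simplicial subgroupoid on the chosen representatives. It is skeletal in the sense of Def. \ref{SkeletalSimplicialGroupoids}: distinct representatives lie in distinct classes, so the hom-objects between them are empty.

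Next we define the retraction $r : \mathbf{X} \to \mathbf{X}_{\mathrm{skl}}$. On objects, $r(y) := x_{[y]}$. On hom-objects, $r$ is given by conjugation:
\[
  \mathbf{X}(y,y') \longrightarrow \mathbf{X}(x_{[y]}, x_{[y']})\,, \qquad f \longmapsto \gamma_{y'} \circ f \circ \gamma_y^{-1}\,,
\]
where the vertices $\gamma$ act on $k$-simplices $f$ through the degenerate $k$-simplices $s_0^k \gamma$. Equivalently, $r$ is the composite of simplicial maps obtained from the enriched composition $\mathbf{X}(y',x)\times \mathbf{X}(y,y')\times \mathbf{X}(x',y) \to \mathbf{X}(x',x)$, restricted along the vertices $\gamma_{y'}$ and $\gamma_y^{-1}$. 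This makes it clear that $r$ is a map of simplicial sets and respects composition and identities, by associativity of enriched composition and the cancellation $\gamma_{y}^{-1}\circ\gamma_y = \mathrm{id}$. Because we chose $\gamma_x = \mathrm{id}$ on representatives, we get $r \circ \iota = \mathrm{id}$ strictly, which is the retraction property. In the other direction, the family $(\gamma_y)_y$ of vertices assembles into a simplicial natural isomorphism $\mathrm{id}_{\mathbf{X}} \Rightarrow \iota\circ r$. Enriched naturality holds as an equation of simplicial maps $\mathbf{X}(y,y') \to \mathbf{X}(y,x_{[y']})$, namely $\gamma_{y'}\circ f = (\gamma_{y'} f \gamma_y^{-1})\circ \gamma_y$, which holds degreewise. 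Its components are invertible, so this yields an equivalence. This is the deformation retraction.

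To upgrade to an adjoint equivalence, take the unit to be $\gamma : \mathrm{id} \Rightarrow \iota r$ and the counit to be $\mathrm{id} : r\iota \Rightarrow \mathrm{id}$. The triangle identities reduce to $\gamma_{\iota(x)} = \mathrm{id}$ and $r(\gamma_y) = \gamma_{x_{[y]}}\gamma_y\gamma_y^{-1} = \mathrm{id}$, and both hold by our normalization. The step I expect to need the most care is the enriched-naturality check, which amounts to verifying that conjugation by vertices is compatible with face and degeneracy maps. This follows because vertices act through degeneracies and composition is a simplicial map. Finally, Lemma \ref{SkeletalSimplicialGroupoidsAreFibrant} is not needed here, but the inclusion is fully faithful and essentially surjective, so it is a Dwyer–Kan equivalence as required in \eqref{SkeletonForSimplicialGroupoid}.
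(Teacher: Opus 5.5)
Your proposal is correct and follows essentially the same argument as the paper: choose a representative in each component and connecting vertices normalized to identities on the representatives, define the retraction by conjugating with these vertices, and read off the strict retraction $r\circ\iota=\mathrm{id}$, the enriched natural isomorphism, and the triangle identities from that normalization. The only difference is that you orient the $\gamma_y$ as $y\to x_{[y]}$, so your data present $r\dashv\iota$ rather than the paper's $\iota\dashv\mathbf{p}$; this is immaterial, because the unit and counit are invertible and an adjoint equivalence can be reversed.
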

\begin{proof}
  If $\mathbf{X}$ is empty then the statement is trivial. So assume $\mathbf{X}$ is inhabited, whence it is the disjoint union of its inhabited connected components
  \[
    \mathbf{X}
    \;\;
    \underset{\mathrm{iso}}{\simeq}
    \;\;
    \underset{i \in \pi_0(\mathbf{X})}{\coprod}
    \mathbf{X}_{i}
    \,.
  \]
  Choosing a base-point in each component
  \[
    i \,\in\, \pi_0(\mathbf{X})
    \;\;\;\;
      \vdash
    \;\;\;\;
    x_i \,\in\, \mathrm{Obj}(\mathbf{X}_i)
  \]  
  induces a $\mathrm{sSet}$-enriched full inclusion
  \[
    \iota
    \;:\;
    \begin{tikzcd}
      \underset{i \in \pi_0(\mathbf{X})}{\coprod}
      \mathbf{B}
      \big(
        \mathbf{X}(x_i, x_i)
      \big)
      \ar[r, hook]
      &
      \mathbf{X}
    \end{tikzcd}
  \]  
  and choosing a 1-morphism from each object to this basepoint:
  \[
    x \,\in\, \mathrm{Obj}(\mathbf{X})
    \;\;\;\;\;\;\;
    \vdash
    \;\;\;\;\;\;\;
    \gamma_x 
    \,:\,
    \ast 
      \xrightarrow{\;}
    \mathbf{X}(x_{[x]}, x)
  \]
  (where by $[x] \,\in\, \pi_0(\mathbf{X})$ we denote the connected component of the given object $x$) induces a reverse enriched functor
  \[
    \begin{tikzcd}[row sep=0pt, column sep=large]
      \mathbf{X} 
      \ar[
        rr, 
        "{ \mathbf{p} }"
      ]
      &&
      \underset{i \in \pi_0(\mathbf{X})}{\coprod}
      \mathbf{B}\big(
        \mathbf{X}(x_i, x_i
      \big)
      \\
      \mathbf{X}(x,y)
      \ar[rr, "{ \mathbf{p}_{x,y} }"]
      \ar[d, "{ \sim }"{sloped}]
      &&
      \mathbf{X}\big(x_{[x]},\, x_{[y]}\big)
      \\[+20pt]
      \ast
      \times
      \mathbf{X}(x,y)
      \times
      \ast
      \ar[
        rr,
      ]
      \ar[
        rr,
        "{
          (\gamma_y)^{-1}
          \times
          \mathrm{id}
          \times
          \gamma_x
        }"
      ]
      &&
      \mathbf{X}\big(
        y ,\, x_{[y]}
      \big)
      \times
      \mathbf{X}(x,y)
      \times
      \mathbf{X}\big(
        x_{[x]},\,x
      \big)
      \ar[
        u,
        "{ \circ }"{swap}
      ]
    \end{tikzcd}
  \]
  such that the $\gamma_x$ serve as components of an $\mathrm{sSet}$-enriched natural transformation 
  $
    \gamma 
      \,:\, 
    \iota \circ \mathbf{p} 
    \xrightarrow{\;} 
    \mathrm{id}_{\mathbf{X}}
      $.
  
  Similarly, the inverse components define a converse transformation, but if we choose, as we may, 
  $\gamma_{\,[x]} \,=\, \mathrm{id}_{[x]}$, then there is already an equality $\mathbf{p} \circ \iota \,=\, \mathrm{id}$. 
  This means that we have a {\it deformation retraction} of $\mathbf{X}$ into its skeleton
  \begin{equation}
    \label{SkeletaAreRetractions}
    \begin{tikzcd}
      \mathbf{X}_{\mathrm{sk}}
      \ar[r, "{ \iota }"]
      \ar[
        rr,
        rounded corners,
        to path={
             ([yshift=+0pt]\tikztostart.north)
         --  ([yshift=+9pt]\tikztostart.north)
         --  node[yshift=-5pt]{\scalebox{.7}{$\mathrm{id}$}}
             ([yshift=+9pt]\tikztotarget.north)
         --  ([yshift=+0pt]\tikztotarget.north)
        }
      ]
      &
      \mathbf{X}
      \ar[r, "{ \mathbf{p} }"]
      &
      \mathbf{X}_{\mathrm{skl}}
      \mathrlap{\,,}
    \end{tikzcd}
    \hspace{1cm}
    \begin{tikzcd}
      \mathbf{X}
      \ar[r, "{ \mathbf{p} }"]
      \ar[
        rr,
        bend right=45,
        "{
          \mathrm{id}
        }"{swap},
        "{\ }"{name=t}
      ]
      &
      \mathbf{X}_{\mathrm{skl}}
      \ar[r, "{ \iota }"]
      \ar[
        to=t,
        Rightarrow,
        "{ \gamma }"
      ]
      &
      \mathbf{X}
    \end{tikzcd}
  \end{equation}
  manifestly satisfying
  \[
    \mathbf{p}(\gamma_{x}) = \mathrm{id}_x
    \;\;\;
    \mbox{and}
    \;\;\;
    \gamma_{\mathbf{p}(x)} = \mathrm{id}_{\mathbf{p}(x)}
  \]
  and thus exhibiting $\iota$ as the left adjoint in an enriched adjoint equivalence.
\end{proof}

\begin{lemma}[Bifibrant resolution by skeletal simplicial groupoids]
  \label{BifibrantResolutionBySkeletalSimplicialGroupoids}
  Every $\mathbf{X} \,\in\, \mathrm{sSet}\mbox{-}\mathrm{Grpd}$ (Prop. \ref{DwyerKanModelStructures}) admits 
  a bifibrant replacement by a skeletal simplicial groupoid (Def. \ref{SkeletalSimplicialGroupoids}).
\end{lemma}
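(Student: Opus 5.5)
First take a cofibrant replacement of $\mathbf{X}$ in the Dwyer--Kan model structure on $\mathrm{sSet}\mbox{-}\mathrm{Grpd}$ (Prop. \ref{DwyerKanModelStructures}). Factoring $\varnothing \to \mathbf{X}$ as a cofibration followed by an acyclic fibration gives $q : \widehat{\mathbf{X}} \to \mathbf{X}$ with $q$ a Dwyer--Kan equivalence and $\widehat{\mathbf{X}}$ cofibrant. In this model structure the cofibrant simplicial groupoids are the retracts of free ones, in the sense of Dwyer--Kan. They have simplicially free hom-objects, generated degreewise by free groupoids on graphs.

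Next apply Lemma \ref{SkeletizationOfSimplicialGroupoids} to $\widehat{\mathbf{X}}$. This gives a full inclusion $\iota : \widehat{\mathbf{X}}_{\mathrm{skl}} \hookrightarrow \widehat{\mathbf{X}}$, which is a Dwyer--Kan equivalence, together with a retraction $\mathbf{p}$ such that $\mathbf{p}\circ\iota = \mathrm{id}$. So $\widehat{\mathbf{X}}_{\mathrm{skl}}$ is a retract of $\widehat{\mathbf{X}}$ in $\mathrm{sSet}\mbox{-}\mathrm{Grpd}$. Since the cofibrant objects are closed under retracts, $\widehat{\mathbf{X}}_{\mathrm{skl}}$ is cofibrant: it is a retract of $\varnothing \to \widehat{\mathbf{X}}$ in the arrow category, and cofibrations are closed under retracts.

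By Lemma \ref{SkeletalSimplicialGroupoidsAreFibrant}, $\widehat{\mathbf{X}}_{\mathrm{skl}}$ is also fibrant, hence bifibrant. The composite
\[
  q\circ\iota \;:\; \widehat{\mathbf{X}}_{\mathrm{skl}} \longrightarrow \mathbf{X}
\]
is a composite of two Dwyer--Kan equivalences, hence a weak equivalence. It therefore exhibits $\widehat{\mathbf{X}}_{\mathrm{skl}}$ as a bifibrant replacement of $\mathbf{X}$ by a skeletal simplicial groupoid.

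The main point to verify is the retract step. A retract of a cofibrant object is cofibrant in any model category, so this is formal once Lemma \ref{SkeletizationOfSimplicialGroupoids} supplies a genuine strict retraction $\mathbf{p}\circ\iota=\mathrm{id}$ in $\mathrm{sSet}\mbox{-}\mathrm{Grpd}$ and not merely an equivalence. It does, by the choice $\gamma_{x_i}=\mathrm{id}$ made there.

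If "replacement" is meant with the map pointing the other way, one can dualize, or else use that every object is fibrant in a suitable sense. The form given above, a weak equivalence from a bifibrant skeletal object to $\mathbf{X}$, seems the intended one and is the one this argument establishes.
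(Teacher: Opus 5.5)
Your proposal is correct and follows essentially the same approach as the paper: take a cofibrant replacement, pass to its skeleton via Lemma~\ref{SkeletizationOfSimplicialGroupoids}, observe that the skeleton is cofibrant because it is a strict retract ($\mathbf{p}\circ\iota=\mathrm{id}$), and note that it is fibrant by Lemma~\ref{SkeletalSimplicialGroupoidsAreFibrant}. You also spell out two points the paper leaves implicit, namely the composite weak equivalence $q\circ\iota$ and why the retraction is strict.
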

\begin{proof}
  By the existence of the model structure, all objects of $\mathrm{sSet}\mbox{-}\mathrm{Grpd}$ admit some cofibrant resolution, 
  and by Lem. \ref{SkeletizationOfSimplicialGroupoids} this in turn admits a deformation retraction along a weak equivalence onto 
  a skeletal object. The latter is still cofibrant since cofibrations are closed under retractions, and it is fibrant by 
  Lem. \ref{SkeletalSimplicialGroupoidsAreFibrant}. 
\end{proof}

\begin{lemma}[Skeletalization of simplicial local systems]
  \label{SkeletalizationOfLocalSystems}
  $\,$ \newline
  Given $\mathbf{X} \,\in\, \mathrm{sSet}\mbox{-}\mathrm{Grpd}$, with skeleton $\mathbf{X}_{\mathrm{skl}} \,\in\, \mathrm{sSet}\mbox{-}\mathrm{Grpd}_{\mathrm{skl}}$, 
  $\begin{tikzcd}\!\!\!\mathbf{X}_{\mathrm{skl}} \! \ar[r, hook, "{\iota}", "{ \in  \mathrm{W}_{\mathrm{DK}}}"{swap}] & \!\mathbf{X}\!\!\end{tikzcd}$ \eqref{SkeletonForSimplicialGroupoid}:
  
  \noindent {\bf (i)} We have an adjoint equivalence of categories of local systems
  \begin{equation}
    \label{SimplicialLocalSystemsEquivalentToRestrictionToSkeleton}
    \begin{tikzcd}[column sep=large]
      \mathbf{sCh}^{\mathbf{X}_{\mathrm{skl}}}_{\mathbb{K}}
      \ar[r, shift left=7pt, "{ \iota_! }"]
      \ar[from=r, shift left=7pt, "{ \iota^\ast }"]
      \ar[r, phantom, "{ \scalebox{.7}{$\bot_{\mathrlap{\simeq}}$} }"]
      &
      \mathbf{sCh}^{\mathbf{X}}_{\mathbb{K}}
      \,,
    \end{tikzcd}
  \end{equation}
  which is a Quillen equivalence with respect to the projective model structures (from Prop. \ref{ModelStructureOnSimplicialFunctors}).
  
 \noindent {\bf (ii)} Moreover, given a morphism $\mathbf{f} : \mathbf{X}' \xrightarrow{\;} \mathbf{X}$ 
 then these equivalences may be chosen such as to make a square of adjunctions commute
  \begin{equation}
    \label{SkeletizationOfLocalSystemsAlongAMap}
    \begin{tikzcd}[column sep=large]
      \mathbf{sCh}_{\mathbb{K}}^{\mathbf{X}'_{\mathrm{skl}}}
      \ar[
        rr,
        shift left=7pt,
        "{
          \mathbf{f}_!
        }"
      ]
      \ar[
        from=rr,
        shift left=7pt,
        "{
          \mathbf{f}^\ast
        }"
      ]
      \ar[rr, phantom, "{ \scalebox{.7}{$\bot$} }"]
      \ar[
        dd,
        shift left=7pt,
        "{ (\iota')_! }"
      ]
      \ar[
        from=dd,
        shift left=7pt,
        "{ (\iota')^\ast }"
      ]
      \ar[
        dd,
        phantom,
        "{ 
          \scalebox{.7}{$\bot_{\mathrlap{\simeq}}$} 
         }"{rotate=-90}
      ]
      &&
      \mathbf{sCh}_{\mathbb{K}}^{\mathbf{X}_{\mathrm{skl}}}
      \ar[
        dd,
        shift left=7pt,
        "{ \iota_! }"
      ]
      \ar[
        from=dd,
        shift left=7pt,
        "{ \iota^\ast }"
      ]
      \ar[
        dd,
        phantom,
        "{ 
          \scalebox{.7}{$\bot_{\mathrlap{\simeq}}$} 
         }"{rotate=-90}
      ]
      \\
      \\
      \mathbf{sCh}_{\mathbb{K}}^{\mathbf{X}'}
      \ar[
        rr,
        shift left=7pt,
        "{
          \mathbf{f}_!
        }"
      ]
      \ar[
        from=rr,
        shift left=7pt,
        "{
          \mathbf{f}^\ast
        }"
      ]
      \ar[
        rr, 
        phantom, 
        "{ \scalebox{.7}{$\bot$} }"
      ]
      &&
      \mathbf{sCh}_{\mathbb{K}}^{\mathbf{X}}
    \end{tikzcd}
  \end{equation}
\end{lemma}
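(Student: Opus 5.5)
We use the deformation retraction $\iota : \mathbf{X}_{\mathrm{skl}} \rightleftarrows \mathbf{X} : \mathbf{p}$ from Lem. \ref{SkeletizationOfSimplicialGroupoids}, with $\mathbf{p}\circ\iota = \mathrm{id}$ and the enriched natural isomorphism $\gamma : \iota\circ\mathbf{p} \Rightarrow \mathrm{id}_{\mathbf{X}}$. This datum is an $\mathrm{sSet}$-enriched adjoint equivalence. Precomposition $(-)^\ast$ is a strict 2-functor from simplicial groupoids to categories of simplicial functors, so it carries this to an adjoint equivalence $\iota^\ast \dashv \mathbf{p}^\ast$ (in fact $\mathbf{p}^\ast \dashv \iota^\ast$ as well). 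Here $\iota^\ast\mathbf{p}^\ast = \mathrm{id}$, and $\gamma^\ast : \mathbf{p}^\ast\iota^\ast \Rightarrow \mathrm{id}$ is invertible. Since left adjoints are unique up to isomorphism, $\iota_! \simeq \mathbf{p}^\ast$. Alternatively, one can verify directly that the left Kan extension along a fully faithful, essentially surjective enriched functor is given by precomposition with a quasi-inverse: the coend $\int^{x'} \mathbf{X}(\iota x', x)\cdot\mathscr{V}_{x'}$ collapses via $\gamma_x$ to $\mathscr{V}_{\mathbf{p}(x)}$. This establishes the adjoint equivalence \eqref{SimplicialLocalSystemsEquivalentToRestrictionToSkeleton}.

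For the Quillen property, Rem. \ref{BaseChangeBetweenModelStructuresOfSimplicialLocalSystems} already gives that $\iota_! \dashv \iota^\ast$ is Quillen, because $\iota^\ast$ preserves objectwise weak equivalences and fibrations. Since $\iota^\ast$ is an equivalence of categories whose inverse $\mathbf{p}^\ast$ also preserves objectwise weak equivalences, the derived unit and counit are the honest unit and counit evaluated on (co)fibrant replacements. Concretely, for cofibrant $\mathscr{V}$ and fibrant $\mathscr{W}$, a map $\iota_!\mathscr{V} \to \mathscr{W}$ is a weak equivalence iff its adjunct $\mathscr{V} \to \iota^\ast\mathscr{W}$ is. This holds because the adjunct is obtained by applying $\iota^\ast$ and composing with the unit isomorphism, and $\iota^\ast$ both preserves and reflects objectwise weak equivalences. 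Reflection follows since every object $x$ of $\mathbf{X}$ is isomorphic via $\gamma_x$ to one in the image of $\iota$, and a natural transformation at isomorphic objects has isomorphic components. This gives the Quillen equivalence.

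For (ii), the task is to choose skeleta compatibly. Given $\mathbf{f} : \mathbf{X}' \to \mathbf{X}$, first choose base points $x_i$ and paths $\gamma$ for $\mathbf{X}$, and then for $\mathbf{X}'$ arbitrarily. Define $\mathbf{f}_{\mathrm{skl}} := \mathbf{p}\circ\mathbf{f}\circ\iota' : \mathbf{X}'_{\mathrm{skl}} \to \mathbf{X}_{\mathrm{skl}}$; we read the top row of \eqref{SkeletizationOfLocalSystemsAlongAMap} as base change along $\mathbf{f}_{\mathrm{skl}}$. The square of right adjoints should commute, meaning $\mathbf{f}_{\mathrm{skl}}^\ast\iota^\ast \simeq (\iota')^\ast\mathbf{f}^\ast$, which amounts to $\iota\circ\mathbf{p}\circ\mathbf{f}\circ\iota' \simeq \mathbf{f}\circ\iota'$. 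This holds up to the natural isomorphism $\gamma$ whiskered by $\mathbf{f}\iota'$. Hence the right adjoints commute up to a canonical natural isomorphism $(\gamma\,\mathbf{f}\iota')^\ast$, and by passing to mates the left adjoints do as well.

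The main obstacle is strictness. If strict commutation is wanted, one must arrange $\mathbf{f}\circ\iota'$ to land in the image of $\iota$. We would try to do this by choosing the base points of $\mathbf{X}$ after those of $\mathbf{X}'$, taking $x_{[\mathbf{f}(x'_j)]} := \mathbf{f}(x'_j)$ whenever possible. This fails if two base points of $\mathbf{X}'$ map to the same component of $\mathbf{X}$. In that case we would settle for commutation up to the canonical isomorphism above, which suffices for all homotopical purposes and is what the phrase ``may be chosen such as to make a square of adjunctions commute'' plausibly intends.
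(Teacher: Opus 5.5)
Your proof is correct, and in both parts it takes a different route from the paper.

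\textbf{Part (i).} The paper identifies $\iota_! = \mathbf{p}^\ast$ and notes that the projective structure on $\mathbf{sCh}^{\mathbf{X}_{\mathrm{skl}}}_{\mathbb{K}}$ is right-transferred along $\mathbf{p}^\ast$ (as right adjoint of $\iota^\ast$). It then applies Prop.~\ref{ModelStructureTransferAlongAdjointEquivalence} to get that $\iota^\ast \dashv \mathbf{p}^\ast$ is a Quillen equivalence, and flips the adjoint equivalence to conclude for $\iota_! \dashv \iota^\ast$. You instead verify the Quillen-equivalence criterion directly. You use that the unit is invertible and that $\iota^\ast$ preserves and reflects objectwise weak equivalences, with reflection via the isomorphisms $\mathscr{V}_{\gamma_x}$. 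This is more elementary, and it makes explicit what the paper's final ``whence also'' leaves implicit.

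\textbf{Part (ii).} The paper picks base points $x'_{i'} \in \mathbf{f}^{-1}(\{x_i\})$ and paths $\gamma'_{x'} \in \mathbf{f}^{-1}(\{\gamma_x\})$ so that $\mathbf{f}$ literally restricts to the skeleta, and it asserts strict commutation. You use the canonical map $\mathbf{p}\circ\mathbf{f}\circ\iota'$, which exists for every $\mathbf{f}$. You obtain commutation up to the isomorphism $(\gamma\,\mathbf{f}\iota')^\ast$ and its mate. When the paper's choices are available (with $\gamma_{x_i} = \mathrm{id}$, as in Lem.~\ref{SkeletizationOfSimplicialGroupoids}), your map coincides with the paper's restriction of $\mathbf{f}$ and the comparison isomorphism is the identity, so the two constructions agree.

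\textbf{On strictness.} Your doubt is justified, because the preimages the paper's choice requires need not exist. Take $\mathbf{f}$ to be the inclusion of the discrete groupoid $\{a\} \sqcup \{b\}$ into the codiscrete groupoid on $\{a,b\}$. Any skeleton $\iota'$ of the source is bijective on objects, so $\mathbf{f}\circ\iota'$ hits both $a$ and $b$. It therefore cannot factor through the one-object skeleton of the target, and no choice makes even the square of precomposition functors commute on the nose.

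So the up-to-isomorphism statement you prove is what holds in general. It is also all that is used later, e.g.\ in reducing Prop.~\ref{FrobeniusReciprocityForSimplicialLocalSystems} to skeletal groupoids.

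One small correction to your last paragraph. Two base points of $\mathbf{X}'$ landing in the same component of $\mathbf{X}$ is an obstruction only for your particular order of choices. The genuine obstruction is when no single object of that component of $\mathbf{X}$ lies in the image of every component of $\mathbf{X}'$ over it.
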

\begin{proof}  
  The adjoint equivalence from the proof of Lem. \ref{SkeletizationOfSimplicialGroupoids} induces the claimed adjoint equivalence on local systems with $\iota_! \,=\, \mathbf{p}^\ast$
  \[
  \begin{tikzcd}
    \big(\mathbf{p}^\ast \iota^\ast \mathscr{V}\big)_x
    \;\simeq\;
    \mathscr{V}_{\iota\circ \mathbf{p} (x)}
    \ar[
      r, 
      "\mathscr{V}_{\gamma}"
    ]
    &
     \mathscr{V}_{x}
     \,.
  \end{tikzcd}
  \]
  Moreover, with $\iota_! \dashv \iota^\ast$ being an adjoint equivalence so is $\iota^\ast \dashv \iota_!$ and since the projective model structure on $\mathbf{sCh}_{\mathbb{K}}^{\mathbf{X}_{\mathrm{skl}}}$ is evidently right-transferred along $\iota_! = p^\ast$ from that of $\mathbf{sCh}_{\mathbb{K}}^{\mathbf{X}}$ it follows (Prop. \ref{ModelStructureTransferAlongAdjointEquivalence}) that $(\iota^\ast \dashv p^\ast)$ is a Quillen equivalence, whence also $(p^\ast \dashv \iota^\ast) \,\simeq\,(\iota_! \dashv \iota^\ast)$ is a Quillen equivalence:
\begin{equation}
  \label{QuillenEquivalencesRestrictionExtensioToSkeleton}
  \begin{tikzcd}
    \mathbf{sCh}_{\mathbb{K}}^{\mathbf{X}}
    \ar[
      rr,
      shift right=6pt,
      "{ \iota^\ast }"{swap}
    ]
    \ar[
      from=rr,
      shift right=6pt,
      "{ p^\ast }"{swap}
    ]
    \ar[
      rr,
      phantom,
      "{ \scalebox{.7}{$\simeq_{\mathrlap{\mathrm{Qu}}}$} }"
    ]
    &&
    \mathbf{sCh}_{\mathbb{K}}^{\mathbf{X}_{\mathrm{sk}}}
    \ar[
      rr,
      shift right=6pt,
      "{ p^\ast }"{swap}
    ]
    \ar[
      from=rr,
      shift right=6pt,
      "{ \iota^\ast }"{swap}
    ]
    \ar[
      rr,
      phantom,
      "{ \scalebox{.7}{$\simeq_{\mathrlap{\mathrm{Qu}}}$} }"
    ]
    &&
    \mathbf{sCh}_{\mathbb{K}}^{\mathbf{X}}\;.
  \end{tikzcd}
\end{equation}
While this construction is far from natural, due to the choices of $x_{i}$ involved, these choices can be made consistently with respect to a single map $\mathbf{f} : \mathbf{X}' \to \mathbf{X}$, by choosing $x'_{i'} \,\in\, \mathbf{f}^{-1}\big(\{x_i\}\big)$ for $i' \in [\mathbf{f}]^{-1}\big(\{i\}\big)$, thereby producing commuting diagrams of this form:
\[
  \begin{tikzcd}
    \mathbf{B}
    \big(
      \mathbf{X}'(x'_{i'}, x'_{i'})
    \big)
    \ar[
      rr,
      "{
        \mathbf{B}\big(
          \mathbf{f}_{x'_{i'}, x'_{i'}}
        \big)
      }"
    ]
    \ar[
      d, 
      hook, 
      "{ \iota' }"
    ]
    &&
    \mathbf{B}
    \big(
      \mathbf{X}(x_{i}, x_{i})
    \big)
    \ar[d, hook, "{ \iota }"]
    \\
    \mathbf{X}'
    \ar[rr, "{ \mathbf{f} }"]
    &&
    \mathbf{X}
    \mathrlap{\,.}
  \end{tikzcd}
\]
This is enough to obtain the diagram \eqref{SkeletizationOfLocalSystemsAlongAMap} commuting up to enriched natural isomorphism. But furthermore we may choose $\gamma'_{x'} \,\in\, \mathbf{f}^{-1}\big(\{\gamma_x\}\big)$ for $x' \in f^{-1}\big(\{x\}\big)$, which makes the diagram commute strictly.
\end{proof}

\medskip

\noindent
{\bf Simplicial local systems over skeletal simplicial groupoids.}
\begin{remark}[Simplicial local systems on skeletal groupoids]  
\label{SimplicialLocalySystemsOnSkeletalGroupoids}
  Over a skeletal simplicial groupoid (Def. \ref{SkeletalSimplicialGroupoids}), the model category
  of simplicial local systems (Prop. \ref{ModelStructureOnSimplicialFunctors}) is the product model structure on 
  the product of categories of simplicial local systems on the connected components:
  $$
    \mathbf{X}
    \;\simeq\; 
    \underset{s \in S}{\coprod} 
    \,
    \mathbf{B}\mathcal{G}_s
    \hspace{1cm}
      \vdash
    \hspace{1cm}
    \mathbf{sCh}^{\mathbf{X}}_{\mathbb{K}}
    \;\simeq\;
    \underset{s \in S}{\prod}
    \,
    \mathbf{sCh}^{\mathbf{B}\mathcal{G}_s}_{\mathbb{K}}    
    \;\;\;
    \in
    \;
    \mathrm{ModCat}.
  $$
  This is immediate from the fact that $\mathbf{Func}\big(\coprod_s \mathbf{D}_s ,\, \mathbf{C}\big) \,\simeq\, \prod_s \mathbf{Func}(\mathbf{D}_s, \mathbf{C})$ 
  and since the weak equivalences and fibrations in the projective model structure on functors are defined objectwise.
\end{remark}
\begin{proposition}[Monoidal model structure on simplicial local systems] 
\label{MonoidalModelStructureOnLinearSimplicialGroupRepresentations}
  For $\mathbf{X} \,\in\, \mathrm{sSet}\mbox{-}\mathrm{Grpd}$,
the simplicial model structure 
  \fbox{$\big(\mathbf{sCh}^{\mathbf{X}}_{\mathbb{K}}, \otimes\big)$} from Prop. \ref{ModelStructureOnSimplicialFunctors} is monoidal model
  (with respect to the monoidal structure from Prop. \ref{MonoidalSimplicialFunctorCategoryIntoSimplicialChainComplexes}).
\end{proposition}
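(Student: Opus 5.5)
We verify the two axioms of a monoidal model category for the pointwise tensor product $\otimes_{\mathbf{X}}$: the pushout-product axiom and the unit axiom. Since the model structure of Prop. \ref{ModelStructureOnSimplicialFunctors} is cofibrantly generated by $\mathrm{sI}^{\mathbf{X}}_{\mathbb{K}}$ and $\mathrm{sJ}^{\mathbf{X}}_{\mathbb{K}}$ from \eqref{GeneratingCofibrationsOfSimplicialFunctors}, and $\otimes_{\mathbf{X}}$ is closed (it has the internal hom \eqref{InternalHomAdjunctionForSimplicialLocalSystems}) and hence cocontinuous in each variable, it suffices to check the pushout-product axiom on pairs of generators. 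Concretely, we must show that $(\mathbf{X}(x,-)\cdot i)\,\square_{\mathbf{X}}\,(\mathbf{X}(y,-)\cdot i')$ is a cofibration, and that it is acyclic whenever $i$ or $i'$ is replaced by an element of $\mathrm{sJ}_{\mathbb{K}}$.

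\medskip
\noindent
Using the cup-tensoring \eqref{CupTensoringOnsChValuedSimplicialFunctors}, this pushout-product is the simplicial functor
\[
  z \;\longmapsto\; \big(\mathbf{X}(x,z)\times\mathbf{X}(y,z)\big)\cdot (i\,\square\, i')\,,
\]
that is, the simplicial functor $R := \mathbf{X}(x,-)\times\mathbf{X}(y,-)$ with values in $\mathrm{sSet}$, tensored with the pushout-product $i\,\square\,i'$ in $\mathrm{sCh}_{\mathbb{K}}$. By Thm. \ref{ModelCategoryOfSimplicialChainComplexes}, the map $i\,\square\,i'$ is a cofibration, and it is acyclic if $i$ or $i'$ is. The argument therefore reduces to the following claim: \emph{for any cofibration (resp.\ acyclic cofibration) $c$ in $\mathrm{sCh}_{\mathbb{K}}$, the functor $R\cdot c$ is a cofibration (resp.\ acyclic cofibration) in $\mathbf{sCh}^{\mathbf{X}}_{\mathbb{K}}$.}

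\medskip
\noindent
The claim is the main obstacle, because $R$ is not representable. Here the groupoid hypothesis does the work. The functor $R$ is a product of two representables, so it is a free right $\mathbf{X}(x,x)$-torsor in each variable. Choosing, by the axiom of choice as in Lem. \ref{SkeletizationOfSimplicialGroupoids}, representatives for the simplicial orbits of the diagonal action, we would exhibit $R$ as a cell complex in the projective model structure on $\mathbf{sSet}^{\mathbf{X}}$, built from representables tensored with $\partial\Delta[n]\hookrightarrow\Delta[n]$; the relevant shearing is $(g,h)\mapsto(g,g^{-1}h)$ in $\mathbf{X}(x,z)\times\mathbf{X}(y,z)\simeq\mathbf{X}(x,z)\times\mathbf{X}(y,x)$. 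Equivalently, the map $\emptyset\to R$ is a projective cofibration. Granting this, $R\cdot c$ is obtained from the maps $\mathbf{X}(w,-)\cdot\big((\partial\Delta[n]\hookrightarrow\Delta[n])\,\square\,c\big)$ by cobase change and transfinite composition. Since $\mathbf{sCh}_{\mathbb{K}}$ is a simplicial model category, each $(\partial\Delta[n]\hookrightarrow\Delta[n])\,\square\,c$ is a cofibration, and it is acyclic when $c$ is. Finally, $\mathbf{X}(w,-)\cdot(-)$ is left Quillen, being the left adjoint of evaluation at $w$. If the cell decomposition of $R$ turns out to be awkward, a fallback is to reduce to skeletal $\mathbf{X}$ via Lem. \ref{SkeletalizationOfLocalSystems} and Rem. \ref{SimplicialLocalySystemsOnSkeletalGroupoids}. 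For $\mathbf{X}=\mathbf{B}\mathcal{G}$ one has $R=\mathcal{G}\times\mathcal{G}$ with the diagonal left action, which is isomorphic to the free action $\mathcal{G}\times\mathcal{G}^{\mathrm{triv}}$. Because $\mathcal{G}$ is a simplicial group, $\mathcal{G}^{\mathrm{triv}}$ is built from non-degenerate cells, and this gives the cell structure directly. One caveat is that $\iota_!=\mathbf{p}^\ast$ in Lem. \ref{SkeletalizationOfLocalSystems} is strong monoidal only up to the isomorphisms $\mathscr{V}_\gamma$, so the reduction to the skeletal case has to be carried out with some care.

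\medskip
\noindent
For the unit axiom, the tensor unit is the constant functor with value $\mathrm{const}(\mathbbm{1})$. Its cofibrant replacement $Q\mathbbm{1}\to\mathbbm{1}$ is an objectwise weak equivalence. For any cofibrant $\mathscr{V}$, each value $\mathscr{V}_z$ is cofibrant, since projective cofibrations are objectwise cofibrations whenever the representables are objectwise cofibrant, and in $\mathrm{sCh}_{\mathbb{K}}$ all objects are cofibrant by Thm. \ref{ModelCategoryOfSimplicialChainComplexes}. Hence $Q\mathbbm{1}\otimes\mathscr{V}\to\mathscr{V}$ is objectwise of the form $(\text{weak equivalence})\otimes(\text{cofibrant object})$. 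This is a weak equivalence by the pushout-product axiom in $\mathrm{sCh}_{\mathbb{K}}$ together with Ken Brown's lemma, which completes the proof.
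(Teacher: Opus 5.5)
Your proof is correct, and it takes a genuinely different route from the paper's.

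\textbf{The paper's route.} It first reduces to skeletal $\mathbf{X}$ via $\iota_!\dashv\iota^\ast$, and then to a single $\mathbf{B}\mathcal{G}$ via the product decomposition. There it runs the Berger--Moerdijk argument. The generators $\mathcal{G}\cdot i$ are free. So, by adjunction, lifting their pushout-product against an acyclic fibration reduces to lifting $i$ against the underlying pullback-hom in $\mathbf{sCh}_{\mathbb{K}}$. That lift exists because the forgetful functor is strong closed monoidal and also left Quillen.

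\textbf{Your route.} You work over arbitrary $\mathbf{X}$, compute the pushout-product of two generators as $R\cdot(i\,\square\,i')$, and exploit the groupoid structure directly. The shearing you write down already finishes the job canonically. The map $(g,h)\mapsto(g,g^{-1}h)$ is an enriched natural isomorphism $\mathbf{X}(x,-)\times\mathbf{X}(y,-)\cong\mathbf{X}(x,-)\cdot\mathbf{X}(y,x)$. This also holds when $x,y$ lie in different components, since then both sides are empty. Hence
\[
R\cdot c\;\cong\;\mathbf{X}(x,-)\cdot\big(\mathbf{X}(y,x)\cdot c\big),
\]
which is an (acyclic) cofibration for two reasons: $\mathbf{sCh}_{\mathbb{K}}$ is a simplicial model category, and $\mathbf{X}(x,-)\cdot(-)\dashv\mathrm{ev}_x$ is a Quillen pair. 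So you need no orbit representatives, no cell decomposition and no fallback to skeleta. Your caveat about that fallback is also unfounded: $\mathbf{p}^\ast$ is precomposition, hence strictly monoidal for the pointwise tensor.

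\textbf{Comparison.} Both arguments rest on the same freeness of diagonal actions. The paper encodes it through the closed monoidal forgetful functor, which lets the argument be reused verbatim for the $G$-actions in Thm.~\ref{ExternalMonoidalModelStructureOnLocalSystemsOverOneTypes}. Yours makes it explicit at the level of representables. It avoids skeletalization and the axiom of choice, and needs no internal-hom formula.

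\textbf{The unit axiom.} Your objectwise check of the unit axiom is a genuine addition. The paper does not verify it for this proposition; it relies on a footnote claiming the unit is cofibrant. That claim fails for $\mathrm{const}(\mathbbm{1})$ in the projective structure over nontrivial $\mathbf{X}$, for example over $\mathbf{B}\mathbb{Z}$. Your argument settles the axiom for every $\mathscr{V}$, not only cofibrant ones: all objects of $\mathbf{sCh}_{\mathbb{K}}$ are cofibrant, so $(-)\otimes\mathscr{V}_z$ preserves all weak equivalences.
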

\begin{proof}
By the equivalences of categories \eqref{SimplicialLocalSystemsEquivalentToRestrictionToSkeleton}
it is sufficient to show this for 
skeletal $\mathbf{X}$ (Def. \ref{SkeletalSimplicialGroupoids}).
 Moreover, by Rem. \ref{SimplicialLocalySystemsOnSkeletalGroupoids} and since the tensor product is defined objectwise, the simplicial local systems
  over skeletal $\mathbf{X}$ form a product model category equipped factorwise with the closed monoidal structure
  from Prop. \ref{ClosedMonoidalStructureOfLocalSystemsOverSimplicialDelooping}:
  \[
    \big(
    \mathbf{sCh}^{\mathbf{X}}_{\mathbb{K}}
    ,\,
    \otimes_{\mathbf{X}}
    \big)
    \;\;\simeq\;\;
    \underset{i \in I}{\prod}
    \big(
    \mathbf{sCh}^{\mathbf{B}\mathcal{G}_i}_{\mathbb{K}}
    ,\,
    \otimes_{\mathbf{B}\mathcal{G}_i}
    \big)    
    \,.
  \]
  Therefore, it is in fact sufficient to check that a model category of simplicial local systems over a simplicial delooping groupoid
  $
    \big(
    \mathbf{sCh}^{\mathbf{B}\mathcal{G}_i}_{\mathbb{K}}
    ,\,
    \otimes_{\mathbf{B}\mathcal{G}_i}
    \big)    
  $
  is monoidal as a model category.

  This follows by \cite[p. 6]{BergerMoerdijk06}. For the record, we spell out the argument.
    The point is that over a delooping groupoid $\mathbf{B}\mathcal{G}$ the generating (acyclic) cofibrations \eqref{GeneratingCofibrationsOfSimplicialChainComplexes} 
  are given by tensoring a generating (acyclic) cofibration of $\mathbf{sCh}_{\mathbb{K}}$ with $\mathcal{G}$ equipped with its own multiplication action:
  \begin{equation}
    \label{GeneratingCofibrationForSimplicialGroupRepresentations}
    \mathrm{sI}^{\mathbf{B}\mathcal{G}}_{\mathbb{K}}
    \,=\,
    \big\{
     \mathcal{G} \cdot i
     \,\big\vert\,
     i \,\in\, \mathrm{sI}_{\mathbb{K}}
    \big\}
    ,\,
    \;\;\;\;\;
    \mathrm{sJ}^{\mathbf{B}\mathcal{G}}_{\mathbb{K}}
    \,=\,
    \big\{
     \mathcal{G} \cdot j
     \,\big\vert\,
     j \,\in\, \mathrm{sJ}_{\mathbb{K}}
    \big\}
    \,.
  \end{equation}
  This happens to coincide with the free construction (forming simplicial ``regular representations'') which is 
left adjoint to the functor $\mathrm{undrl}$ that forgets the $\mathcal{G}$-action \eqref{SimplicialGroupActionAdjointness}:
  \begin{equation}
    \label{FreeSimplicialGroupAction}
    \begin{tikzcd}[column sep=large]
    \mathbf{sCh}_{\mathbb{K}}^{\mathbf{B}\mathcal{G}}
    \,\simeq\,
    \mathcal{G}\mathrm{Act}\big(
      \mathbf{sCh}_{\mathbb{K}}
    \big)
    \ar[
      from=rr,
      shift right=6pt,
      "{
        \mathcal{G}\cdot \mathscr{V}
        \,\mapsfrom\,
        \mathscr{V}
      }"{swap}
    ]
    \ar[
      rr,
      shift right=7pt,
      "{
         \mathrm{undrl}
      }"{description}
    ]
    \ar[
      rr,
      phantom,
      "{\scalebox{.7}{$\bot$}}"
    ]
    &&
    \mathbf{sCh}_{\mathbb{K}}
    \end{tikzcd}
  \end{equation}
  Another conclusion from  \eqref{GeneratingCofibrationForSimplicialGroupRepresentations} is that the underlying functor is also {\it left Quillen}
  \begin{equation}
    \label{UnderyinFunctorOnSimplicialRepresentationsIsLeftQuillen}
    \begin{tikzcd}[column sep=large]
    \mathbf{sCh}_{\mathbb{K}}^{\mathbf{B}\mathcal{G}}
    \,\simeq\,
    \mathcal{G}\mathrm{Act}\big(
      \mathbf{sCh}_{\mathbb{K}}
    \big)
    \ar[
      from=rr,
      shift left=8pt,
      "{
        (\ast \to \mathbf{B}\mathcal{G})_\ast
      }"
    ]
    \ar[
      rr,
      shift left=7pt,
      "{
         \mathrm{undrl}
      }"{description}
    ]
    \ar[
      rr,
      phantom,
      shift right=1pt,
      "{\scalebox{.7}{$\bot_{\mathrlap{\mathrm{Qu}}}$}}"
    ]
    &&
    \mathbf{sCh}_{\mathbb{K}}
    \end{tikzcd}
  \end{equation}
  since $\mathcal{G}\cdot i$ (resp. $\mathcal{G}\cdot j$) are still (acyclic) cofibrations in $\mathbf{sCh}_{\mathbb{K}}$ due to its $\mathrm{sSet}$-enriched model structure (Prop. \ref{ModelCategoryOfSimplicialChainComplexes}).

  \smallskip

  With these preliminaries in hand, we check the pushout-product axiom \eqref{PushoutProductAxiom}: Consider a pair of generating cofibrations
  $\mathscr{V} \to \mathscr{V}'$ and $\mathscr{W} \to \mathscr{W}'$ in $\mathbf{sCh}^{\mathbf{B}\mathcal{G}}_{\mathbb{K}}$; 
  we need to show that their pushout-product morphism on the far left of the following diagrams is a cofibration, which equivalently means 
  that for any acyclic fibration $\mathscr{R} \to \mathscr{R}'$ and commuting diagrams as on the left, there exists a lift as shown by the 
  dashed arrow on the left, and by a standard argument (cf. \cite[Rem. A.3.1.6]{Lurie09}) this exists if and only if a lift in the 
  corresponding diagram on the right exists:
  \[
    \adjustbox{raise=6pt}{
    \begin{tikzcd}[row sep=small, column sep=large]
      \scalebox{.9}{$
      (\mathscr{V} \otimes \mathscr{W}')
      $}
      \overset{
        \mathclap{
          \scalebox{.7}{$
          \mathscr{V} \otimes \mathscr{W}
          $}
        }
      }{\phantom{A} \coprod \phantom{A}}
      \scalebox{.9}{$
      (\mathscr{V}' \otimes \mathscr{W})
      $}
      \ar[d]
      \ar[r]
      &
      \mathscr{R}
      \ar[d]
      \\
      \mathscr{V}' \otimes \mathscr{W}'
      \ar[r]
      \ar[ur, dashed]
      &
      \mathscr{R}'
    \end{tikzcd}
    }
    \hspace{1cm}
    \Leftrightarrow
    \hspace{1cm}
    \adjustbox{raise=-6pt}{
    \begin{tikzcd}[row sep=small, column sep=large]
      \mathscr{V}
      \ar[d]
      \ar[r]
      &
      {[\mathscr{W}', \mathscr{R}]}
      \ar[d]
      \\
      \mathscr{V}'
      \ar[r]
      \ar[ur, dashed]
      &
      \scalebox{.9}{$[\mathscr{W}',\mathscr{R}']$}
      \,
      \underset{
        \mathclap{
        \scalebox{.7}{$[\mathscr{W},\mathscr{R}']$}
        }
      }{\phantom{A} \prod \phantom{A}}
      \,
      \scalebox{.9}{$[\mathscr{W}, \mathscr{R}]$}
    \end{tikzcd}
    }
  \]
  But by the previous observation, the left morphism in the diagram on the right is in the image of the left adjoint functor \eqref{FreeSimplicialGroupAction}, 
  which finally means that the dashed lift on the right exists in $\mathbf{sCh}^{\mathbf{B}\mathcal{G}}_{\mathbb{K}}$ as soon as such exists
  for the underlying diagram in $\mathbf{sCh}_{\mathbb{K}}$.   
  Now by definition of the projective model structure in Prop. \ref{ModelStructureOnSimplicialFunctors} the underlying map of $\mathscr{R} \to \mathscr{R}'$ 
  is still a fibration, and
  using that $\mathbf{sCh}_{\mathbb{K}}$ is $\mathrm{sSet}$-enriched model (Prop. \ref{ModelCategoryOfSimplicialChainComplexes}) 
  it follows also that underlying the generating cofibrations \eqref{GeneratingCofibrationForSimplicialGroupRepresentations} are cofibrations in
  $\mathbf{sCh}_{\mathbb{K}}$, 
  and then that lifting in the diagram on the right above is that of a cofibration against an acyclic fibration in $\mathbf{sCh}_{\mathbb{K}}$ and hence exists.
  Verbatim the argument with the evident substitutions shows that the same kind of lifts exist if $\mathscr{W} \to \mathscr{W}'$ is actually an
  acyclic cofibration and $\mathscr{R} \to \mathscr{R}'$ is any fibration. In summary this establishes the pushout-product axiom 
  in $\mathbf{sCh}^{\mathbf{B}\mathcal{G}}_{\mathbb{K}}$.
\end{proof}

\subsection{Parameterization over varying base spaces}
\label{ParameterizationOverVaryingBaseSpaces}

We now glue all the model categories of simplicial local systems over fixed base spaces to an integral model structure on simplicial 
local systems over varying base spaces.
First, to recall (references in the following proof, p. \pageref{ProofOfKanQuillenModelStructures}):
\begin{definition}[Free maps of simplicial groupoids]
  \label{FreeMapsOfSimplicialGroupoids}
  A morphism $\mathbf{f} \,\isa\, \mathbf{X} \to \mathbf{Y}$ of simplicial groupoids \eqref{sSetLocalizationAndCore}
  is called {\it free} if it is degreewise injective and there exists a subset   
  $
    \Gamma 
    \,\subset\,
    \mathrm{Mor}(\mathbf{Y})
  $
  of ``freely generating'' morphisms (of any degree)
  such that:
  \begin{itemize}
    \item[(1)] no identity morphism is in $\Gamma$, but $\Gamma$ is closed under the degeneracy maps of $\mathbf{Y}$,
    \item[(2)] every non-identity morphism in $\mathbf{Y}$ is {\it uniquely} the composition of a {\it reduced} (see below) sequence of composable
    \begin{itemize}
      \item[(i)] morphisms in $\Gamma$ and their inverses,
      \item[(ii)] morphisms in the image under $\mathbf{f}$ of non-identity morphisms in $\mathbf{X}$,
    \end{itemize}
  \end{itemize}
  where {\it reduced} means that
  \begin{itemize}
    \item[(i)] no morphism in the sequence is consecutive with its own inverse,
    \item[(ii)] no two consecutive morphisms in the sequence are both in the image of $\mathbf{f}$.
  \end{itemize}
\end{definition}
For example, the point inclusion $\ast \to \mathbf{B} F(S)$ into the delooping groupoid of a free group, free on a set $S$, is a free map, in the sense of Def. \ref{FreeMapsOfSimplicialGroupoids}, with $\Gamma \defneq S$.
\begin{proposition}[Dwyer-Kan model structures]
\label{DwyerKanModelStructures}
$\,$

\noindent {\bf (i)} We have the following classical model categories:
\begin{itemize}
  \item[{\bf (a)}] The category of simplicial sets
  
  \fbox{$\mathrm{sSet}$} carries the Kan-Quillen model structure whose
  
  \begin{itemize}
    \item weak equivalences are the simplicial weak homotopy equivalences,
    \item fibrations are the Kan fibrations,
    \item cofibrations are the monomorphisms
  \end{itemize}

  \item[{\bf (b)}]
  The category of (small) $\mathrm{sSet}$-enriched categories (often known as ``simplicial categories'')

  \fbox{$\mathrm{sSet}\mbox{-}\mathrm{Cat}$} carries a model structure whose
  \begin{itemize}
    \item weak equivalences are the {\it Dwyer-Kan equivalences}, namely the $\mathrm{sSet}$-functors which on isomorphism classes of 
    objects in the homotopy category are surjective and on all hom-complexes are simplicial weak homotopy equivalences of underlying 
    simplicial sets,
    \item fibrations are the $\mathrm{sSet}$-functors which are isofibrations on homotopy categories and on all hom-complexes are
    Kan fibrations of underlying simplicial sets.
  \end{itemize}
  
\item[{\bf (c)}]
The category of small $\mathrm{sSet}$-enriched groupoids (often known as ``simplicial groupoids'')

\noindent
  \fbox{$\mathrm{sSet}\mbox{-}\mathrm{Grpd}$} carries a model structure whose
  \begin{itemize}
    \item weak equivalences are the Dwyer-Kan equivalences as above,
    \item fibrations are the maps that admit lifting of 1-morphisms and are Kan fibrations on underlying simplicial sets 
    of all automorphism groups,
    \item 
    cofibrations are in particular injective on objects and degreewise on all hom-complexes.
  \end{itemize}

  \item[{\bf (d)}] 
  The category of simplicial groups 

  \noindent
  \fbox{$\mathrm{sGrp} := \mathrm{Grp}(\mathrm{sSet})$} carries a model structure whose
  \begin{itemize}
    \item  weak equivalences are the simplicial weak homotopy equivalences of underlying simplicial sets,
    \item fibrations are the Kan fibrations of underlying simplicial sets,
    \item cofibrations are the retracts of ``almost free'' (cf. \cite[p. 270]{GoerssJardine09}) simplicial group inclusions,
    in particular all cofibrations are monomorphisms (simplicial subgroup inclusions).
    \end{itemize}
\end{itemize}
\noindent {\bf (ii)} We have the following functors relating these:
\begin{itemize}
\item
The canonical full inclusions are compatible with this model structure
\begin{equation}
  \label{DiagramOfDKModelStructures}
  \begin{tikzcd}[row sep=-2pt, column sep=large]
    \mathrm{sGrp}
    \ar[r, hook]
    &
    \mathrm{sSet}\mbox{-}\mathrm{Grpd}
    \ar[
      from=r, 
      shift right=7pt,
      "{ \mathrm{Loc} }"{swap}
    ]
    \ar[
      r, 
      shift right=7pt,
      hook,
      "{ \iota }"{swap}
    ]
    \ar[
      r,
      phantom,
      "{
        \scalebox{.7}{$\bot_{\mathrlap{\mathrm{Qu}}}$}
      }"
    ]
    &
    \mathrm{sSet}\mbox{-}\mathrm{Cat}    
    \\
    \mathcal{G}
    \ar[
      r,
      phantom,
      "{\longmapsto}"
    ]
    &
    \mathbf{B}\mathcal{G}
  \end{tikzcd}
\end{equation}
in that
\begin{itemize}
  \item[--] $\mathrm{Loc} \dashv \iota$ is a Quillen adjunction 
  
  (here $\mathrm{Loc}$ is degreewise the free groupoid construction on or equivalently the full localization of a category);
  \item[--] $\mathbf{B}(-)$ (Def. \ref{SimplicialDeloopingGroupoid})
  preserves weak equivalences and fibrations
    (but has no left adjoint).
\end{itemize}
\item
There is a Quillen equivalence
\begin{equation}
  \label{DKGroupoidQuillenEquivalence}
  \begin{tikzcd}[column sep=large]
    \mathrm{sSet}\mbox{-}\mathrm{Grpd}
    \ar[
      r,
      shift right=7pt,
      "{
        \overline{\mathrm{W}}
      }"{swap}
    ]
    \ar[
      from=r,
      shift right=7pt,
      "{
        \mathbf{G}
      }"{swap}
    ]
    \ar[
      r,
      phantom,
      "{
        \scalebox{.7}{$\simeq_{\mathrlap{\mathrm{Qu}}}$}
      }"
    ]
    &
    \mathrm{sSet}
  \end{tikzcd}
\end{equation}
and a Quillen adjunction
\begin{equation}
  \label{RigidificationQuillenAdjunction}
  \begin{tikzcd}
    \mathrm{sSet}\mbox{-}\mathrm{Grpd}
    \ar[
      rrr,
      shift right=7pt,
      "{
        \widehat{\mathrm{W}}
        \,:\defneq\,
        N \,\circ\, \iota
      }"{swap}
    ]
    \ar[
      from=rrr,
      shift right=7pt,
      "{
        \widetilde{\mathbf{G}}
        \;:\defneq\;
        \mathrm{Loc}
        \,\circ\,
        \mathfrak{C}
      }"{swap}
    ]
    \ar[
      rrr,
      phantom,
      "{ \scalebox{.7}{$\bot_{\mathrlap{\mathrm{Qu}}}$} }"
    ]
    &&&
    \mathrm{sSet}
  \end{tikzcd}
  \;\;\;
    :\defneq
  \;\;\;
  \begin{tikzcd}
    \mathrm{sSet}\mbox{-}\mathrm{Grpd}
    \ar[
      rr,
      shift right=7pt,
      "{
        \iota
      }"{swap}
    ]
    \ar[
      from=rr,
      shift right=7pt,
      "{
        \mathrm{Loc}
      }"{swap}
    ]
    \ar[
      rr,
      phantom,
      "{ \scalebox{.7}{$\bot_{\mathrlap{\mathrm{Qu}}}$} }"
    ]
    &&
    \mathrm{sSet}\mbox{-}\mathrm{Cat}
    \ar[
      rr,
      shift right=6pt,
      "{
        N
      }"{swap}
    ]
    \ar[
      from=rr,
      shift right=6pt,
      "{
        \mathfrak{C}
      }"{swap}
    ]
    \ar[
      rr,
      phantom,
      "{ \scalebox{.7}{$\bot$} }"
    ]
    &&
    \mathrm{sSet}\;,
  \end{tikzcd}
\end{equation}
such that
\begin{itemize}
    \item
    there exists a natural transformation
    \begin{equation}
      \label{ComparisonFromLocalizationOfRigidificationToDKGroupoids}
      \begin{tikzcd}
        \mathcal{X} \,\in\, \mathrm{sSet}
        \hspace{1cm}
          \vdash
        \hspace{1cm}
        \mathrm{Loc}
        \,\circ\,
        \mathfrak{C}(\mathcal{X})
        \ar[
          rr, 
          "{
            \in \mathrm{W}_{\mathrm{DK}}
          }"{swap}
        ]
        &&
        \mathbf{G}(\mathcal{X})
      \end{tikzcd}
    \end{equation}
    which is a Dwyer-Kan equivalence,
  \item 
    the natural transformation
    \begin{equation}
      \label{ProductProjectionUnderRigidification}
      \mathcal{X},\,\mathcal{Y} \,\in\,
      \mathrm{sSet}
      \hspace{1cm}
        \vdash
      \hspace{1cm}
      \begin{tikzcd}
        \mathfrak{C}(\mathcal{X} \times \mathcal{Y})
        \ar[
          rr,
          "{
            \left(
            \mathfrak{C}(\mathrm{pr}_{\mathcal{X}})
            ,\,
            \mathfrak{C}(\mathrm{pr}_{\mathcal{Y}})
            \right)
          }",
          "{ \in \mathrm{W}_{\mathrm{DK}} }"{swap}
        ]
        &\phantom{A}&
        \mathfrak{C}(\mathcal{X})
        \times
        \mathfrak{C}(\mathcal{Y})
      \end{tikzcd}
    \end{equation}
    is a Dwyer-Kan equivalence.
\end{itemize}
\end{itemize}
\end{proposition}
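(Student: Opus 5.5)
This proposition is a compilation of classical results, so the plan is mostly to assemble references, checking only the few claims that are not verbatim in the literature.

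For (i)(a), the Kan--Quillen model structure, we cite \cite[\S II.3]{Quillen67} (cf. \cite{GoerssJardine09}). For (i)(b), we cite Bergner's model structure on simplicial categories. Here we note that, for fibrant hom-objects, ``isofibration on homotopy categories'' is equivalent to Bergner's path-lifting condition. For (i)(c), we cite Dwyer--Kan's model structure on simplicial groupoids (``Homotopy theory and simplicial groupoids''). The cofibration claim follows from the small object argument. The generating cofibrations are the free maps (Def. \ref{FreeMapsOfSimplicialGroupoids}), i.e.\ the object inclusion $\varnothing \to \ast$ together with the free simplicial groupoids on $\partial\Delta[n]\hookrightarrow\Delta[n]$ placed in a hom. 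Free maps are degreewise injective, this property is stable under pushout and transfinite composition (reduced words persist), and it passes to retracts. For (i)(d), we cite \cite[\S V]{GoerssJardine09}: almost free maps are monomorphisms, and monomorphisms are closed under retracts.

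For (ii), the inclusion $\iota\colon\mathrm{sSet}\text{-}\mathrm{Grpd}\hookrightarrow\mathrm{sSet}\text{-}\mathrm{Cat}$ preserves fibrations and weak equivalences by direct comparison of the definitions in (b) and (c). Since $\iota$ has the left adjoint $\mathrm{Loc}$, this pair is a Quillen adjunction. The functor $\mathbf{B}(-)$ visibly preserves the hom-wise conditions, and isofibrancy is vacuous on a single object. It cannot be a right adjoint because it does not preserve the terminal-to-empty products (e.g. $\mathbf{B}$ of a product group versus a product of groupoids over disjoint objects). The Quillen equivalence $\mathbf{G}\dashv\overline{\mathrm{W}}$ is \cite{GoerssJardine09} (Dwyer--Kan), and $\mathfrak{C}\dashv N$ is Lurie's/Bergner's Quillen equivalence with the Joyal structure. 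Composing the latter with the Joyal-to-Kan--Quillen localization shows that $\mathrm{Loc}\circ\mathfrak{C}$ preserves cofibrations, since $\mathfrak{C}$ sends monos to cofibrations. It also preserves acyclic cofibrations, because weak homotopy equivalences go to DK-equivalences after groupoidification. This follows, for instance, from the natural comparison in the next step together with the facts that $\mathbf{G}$ is left Quillen and that both functors preserve colimits.

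I expect the main obstacle to be the two comparison statements. For \eqref{ComparisonFromLocalizationOfRigidificationToDKGroupoids}, I would first construct the map on generators. On $\Delta[n]$, $\mathrm{Loc}\,\mathfrak{C}(\Delta[n])$ is a contractible simplicial groupoid, and so is $\mathbf{G}(\Delta[n])$. I would produce a compatible cosimplicial map of cosimplicial simplicial groupoids, e.g.\ using that $\mathbf{G}(\Delta[n])$ is the free groupoid on the nondegenerate simplices of positive dimension in a cone-like way. Extending by colimits gives a natural map, since both functors are cocontinuous. I would then show it is a DK-equivalence by a cell induction over the skeleta of $\mathcal{X}$. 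Both functors are left Quillen with all objects of $\mathrm{sSet}$ cofibrant, so they send the skeletal pushouts to homotopy pushouts, and the map is an equivalence on the cells; the gluing lemma then concludes, with transfinite composites handled likewise. For \eqref{ProductProjectionUnderRigidification}, we appeal to Lurie's result \cite[Cor. 2.2.5.6]{Lurie09} that $\mathfrak{C}$ preserves products up to DK-equivalence. Alternatively, both sides model $\mathcal{X}\times\mathcal{Y}$ (up to Joyal/categorical equivalence) under the Quillen equivalence $\mathfrak{C}\dashv N$, and the map is compatible with the projections, hence a weak equivalence by 2-out-of-3 on derived units.
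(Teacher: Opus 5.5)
Much of your proposal matches the paper's proof. Both assemble citations (Quillen, Dwyer--Kan, Bergner/Lurie, Goerss--Jardine). Both get degreewise injectivity of groupoid cofibrations from their being retracts of free maps. Both show $\mathrm{Loc}\circ\mathfrak{C}$ preserves cofibrations via the Joyal structure, and preserves weak equivalences via the comparison with $\mathbf{G}$ plus Ken Brown and 2-out-of-3. Both cite \cite[Cor.~2.2.5.6]{Lurie09} for products.

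The genuine problem is your treatment of $\mathbf{B}(-)$. Both of your justifications fail, and in fact both claims are false as stated; the paper's proof only calls them ``immediate''.

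First, lifting of 1-morphisms (or isofibrancy on homotopy categories) is \emph{not} vacuous for $\mathbf{B}\mathcal{G}\to\mathbf{B}\mathcal{G}'$. It demands that every $0$-simplex of $\mathcal{G}'$ (resp.\ every element of $\pi_0\mathcal{G}'$) be hit. The inclusion $1\to\mathbb{Z}/2$ of constant simplicial groups is a Kan fibration: a map $\Delta[n]\to\mathbb{Z}/2$ that is constant at $e$ on a nonempty horn is constant at $e$. So it is a fibration in $\mathrm{sGrp}$. Yet $\mathbf{B}1\to\mathbf{B}(\mathbb{Z}/2)$ cannot lift the non-identity morphism, so it is a fibration neither of simplicial groupoids nor of simplicial categories. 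What does hold is that $\mathbf{B}$ preserves weak equivalences, fibrant objects (Moore), and exactly those fibrations that are surjective on $\pi_0$ (equivalently, degreewise surjective). These include all acyclic fibrations, which suffices for the later uses, e.g.\ Lem.~\ref{SkeletalSimplicialGroupoidsAreFibrant}.

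Second, your argument against a left adjoint confuses products with coproducts. Products in $\mathrm{sSet}\mbox{-}\mathrm{Grpd}$ are formed on objects by products of sets, so $\mathbf{B}(\mathcal{G}\times\mathcal{H})\simeq\mathbf{B}\mathcal{G}\times\mathbf{B}\mathcal{H}$ and $\mathbf{B}1$ is terminal; indeed $\mathbf{B}$ preserves all limits. It does have a left adjoint $U$: apply Higgins' universal group degreewise, which identifies all objects and sends a connected groupoid with $n$ objects and vertex group $H$ to $H\ast F_{n-1}$. So the parenthetical ``has no left adjoint'' cannot be proved. What is true is that $U\dashv\mathbf{B}$ is not a Quillen adjunction: $U$ sends the acyclic cofibration from a point into the free groupoid on one isomorphism to $1\to\mathbb{Z}$.

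Lastly, you reprove \eqref{ComparisonFromLocalizationOfRigidificationToDKGroupoids} rather than citing \cite[Thm.~1.1]{MRZ23} as the paper does. As written, your cell induction is circular: it uses that $\mathrm{Loc}\circ\mathfrak{C}$ is left Quillen, which you derive from the comparison. The induction only needs preservation of colimits and cofibrations. Contractibility on $\Delta[n]$ is also formal, via the DK-equivalence $\mathfrak{C}(\Delta[n])\to[n]$ of cofibrant objects and Ken Brown for $\mathrm{Loc}$. So the entire content is the construction of a natural cosimplicial map $\mathrm{Loc}\,\mathfrak{C}(\Delta[\bullet])\to\mathbf{G}(\Delta[\bullet])$, which you only gesture at.
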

\begin{proof}
\label{ProofOfKanQuillenModelStructures}
  {\bf (i)}
 The Kan-Quillen model structure on $\mathrm{sSet}$ is, of course, due to \cite[\S II.3]{Quillen67}, see also for instance \cite[\S I.11]{GoerssJardine09}.
 The model structure on $\mathrm{sGrp}$ is due to \cite[\S II 3.7]{Quillen67}, see \cite[\S V]{GoerssJardine09}.
 
 The model structure on $\mathrm{sSet}\mbox{-}\mathrm{Grpd}$ is due to \cite[\S 2.5]{DwyerKan84}. Their result \cite[\S 2.4 with \S 2.3 (i)]{DwyerKan84} asserts that the cofibrations are in particular retracts of degreewise injections of sets (of objects and of morphisms). But since injections of sets are closed under retracts this means that all cofibrations are in particular degreewise injections.
 
 The model structure on 
 $\mathrm{sSet}\mbox{-}\mathrm{Cat}$ is due to 
 \cite{Bergner07}, see also \cite[Thm. A.3.2.4]{Lurie09}. 
 
 \noindent
 {\bf (ii)}
 From this, it is immediate that  the functors in \eqref{DiagramOfDKModelStructures} preserve the structure as stated; the Quillen adjunction on the right 
 of \eqref{DiagramOfDKModelStructures} is also made explicit in \cite[Prop. 2.8]{MRZ23}.
 
 The Quillen equivalence $\mathcal{G} \dashv \overline{\mathrm{W}}$ \eqref{DKGroupoidQuillenEquivalence} is due to \cite[Thm. 3.3]{DwyerKan84} reviewed in \cite[Thm. 7.8]{GoerssJardine09}. 
 
 The adjunction $\mathfrak{C} \dashv N$ on the right of \eqref{RigidificationQuillenAdjunction} is actually a Quillen equivalence with respect to the Joyal model structure on simplicial sets \cite[Thm. 7.8]{Bergner07b}\cite[Thm. 2.2.5.1]{Lurie09}:
 \begin{equation}
   \label{RigidificationAdjunctionForQuasiCategories}
   \begin{tikzcd}
     \mathrm{sSet}\mbox{-}\mathrm{Cat}
     \ar[
       rr,
       shift right=7pt,
       "{ N }"{swap}
     ]
     \ar[
       from=rr,
       shift right=7pt,
       "{ \mathfrak{C} }"{swap}
     ]
     \ar[
       rr,
       phantom,
       "{
         \scalebox{.7}{$\bot_{\mathrlap{\mathrm{Qu}}}$}
       }"       
     ]
     &&
     \mathrm{sSet}_{\mathrm{Joy}}
     \,.
   \end{tikzcd}
 \end{equation} 
 But since the Joyal model structure has the same cofibrations as the Kan-Quillen model structure (the monomorphisms) this implies with \eqref{DiagramOfDKModelStructures} that $\widetilde{\mathbf{G}} \,\defneq\, \mathrm{Loc} \,\circ\, \mathfrak{C}$ preserves cofibrations. To see that it also preserves weak equivalences, and hence is a left Quillen functor as claimed on the left of \eqref{RigidificationQuillenAdjunction}, notice that \eqref{ComparisonFromLocalizationOfRigidificationToDKGroupoids} -- which is due to \cite[Thm. 1.1]{MRZ23} -- implies commuting squares
 \[
  \begin{tikzcd}[row sep=small]
    \mathcal{X}
    \ar[rr, "{ f }", "{ \in \mathrm{W} }"{swap}]
    &&
    \mathcal{X}'
    \\
    \widetilde{\mathbf{G}}(\mathcal{X})
    \ar[
      rr,
      "{
        \widetilde{\mathbf{G}}(f)
      }"
    ]
    \ar[d]
    &&
    \widetilde{\mathbf{G}}(\mathcal{X})
    \ar[d]
    \\
    \mathbf{G}(\mathcal{X})
    \ar[
      rr, 
      "{ \mathbf{G}(f) }"
    ]
    &&
    \mathbf{G}(\mathcal{Y})
    \mathrlap{\,,}
  \end{tikzcd}
\]
where the vertical maps are Dwyer-Kan equivalences. But also the bottom map is a Dwyer-Kan equivalence by Ken Brown's Lemma \ref{KenBrownLemma}, since $\mathbf{G}$ is a left Quillen functor \eqref{DKGroupoidQuillenEquivalence}
on a model category all whose objects are cofibrant, whence also  $\widetilde{\mathbf{G}}(f)$ is a weak equivalence, by the 2-out-of-3 property satisfied by weak equivalences.

Finally, the property \eqref{ProductProjectionUnderRigidification} is due to \cite[Cor. 2.2.5.6]{Lurie09}, see also \cite[Prop. 6.2]{DuggerSpivak11}.
\end{proof}

\begin{remark}[Dwyer-Kan simplicial fundamental groupoids]
  \label{SimplicialFundamentalGroupoid}
  The classical Dwyer-Kan functor $\mathbf{G} : \mathrm{sSet} \longrightarrow \mathrm{sSet}\mbox{-}\mathrm{Grpd}$ \eqref{DKGroupoidQuillenEquivalence} may be thought of as forming simplicial {\it fundamental groupoids} of spaces and hence so may be its Dwyer-Kan-equivalent version $\widetilde{\mathbf{G}}$ \eqref{RigidificationQuillenAdjunction}. 
\end{remark}

\medskip

\noindent
{\bf The integral model structure on simplicial local systems.}
Our interest is now in the pseudofunctor
assigning model categories (Def. \ref{ModCat})
of simplicial local systems (Prop. \ref{ModelStructureOnSimplicialFunctors})
to simplicial fundamental groupoids $\widetilde{\mathbf{G}}(-)$ (Rem. \ref{SimplicialFundamentalGroupoid})  of simplicial sets, which exists as a bivariant pseudofunctor by Ex. \ref{SystemsOfEnrichedFunctorCategories}
and with values in $\mathrm{ModCat}$ (Def. \ref{ModCat}) by
Rem. \ref{BaseChangeBetweenModelStructuresOfSimplicialLocalSystems}:
\begin{equation}
  \label{PseudofunctorAssigningModelCategoriesOfSimplicialLocalSystems}
  \begin{tikzcd}[row sep=-6pt, column sep=small]
    \mathrm{sSet}
    \ar[
      rr,
      "{ \mathbf{G} }"
    ]
    &&
    \mathrm{sSet}\mbox{-}\mathrm{Grpd}
    \ar[
      rr,
      "{
        \mathbf{sCh}_{\mathbb{K}}^{(-)}
      }"
    ]
    &&
    \mathrm{ModCat}
    \\
    \mathcal{X}
    \ar[
      d,
      "{ f }"
    ]
    &\longmapsto&
    \mathbf{X}
    \ar[
      d,
      "{ \mathbf{f} }"
    ]
    &\longmapsto&
    \mathbf{sCh}^{\mathbf{X}}_{\mathbb{K}}
    \ar[
      d,
      shift right=8pt,
      "{ \mathbf{f}_! }"{swap}
    ]
    \ar[
      from=d,
      shift right=8pt,
      "{ \mathbf{f}^\ast }"{swap}
    ]
    \ar[
      d,
      phantom,
      "{ \scalebox{.7}{$\dashv$} }"
    ]
    \\[30pt]
    \mathcal{X}'
    &\longmapsto&
    \mathbf{X}'
    &\longmapsto&
    \mathbf{sCh}^{\mathbf{X}'}_{\mathbb{K}}
  \end{tikzcd}
\end{equation}

\begin{theorem}[Integral model structure on simplicial local systems over varying bases]
\label{GlobalModelStructure} $\,$ 
\newline
  The integral model structures 
  (Def. \ref{IntegralModelStructure})
  on the Grothendieck constructions
  (Def. \ref{GrothendieckConstruction}) on the pseudofunctors \eqref{PseudofunctorAssigningModelCategoriesOfSimplicialLocalSystems} exist and are Quillen equivalent, to be denoted as follows:
  \begin{equation}
  \label{Loc}
  \adjustbox{fbox}{
  $
    \begin{tikzcd}
    \mathbf{Loc}^{\mathrm{sSet}}_{\mathbb{K}}
      \;\;
        :=
      \;\;
    \underset{
      \mathclap{
      \mathcal{X}
      \,\in\,
      \mathrm{sSet}
      }
    }{\displaystyle{\int}}
    \;
    \mathbf{sCh}^{\mathbf{G}(\mathcal{X})}_{\mathbb{K}}
    \;\;\;
    \ar[
      rr,
      shift left=7pt,
      "{
        \widehat{\mathbf{G}}
      }"
    ]
    \ar[
      from=rr,
      shift left=7pt,
      "{
        \widehat{\mathrm{W}}
      }"
    ]
    \ar[
      rr,
      phantom,
      "{
        \simeq_{\mathrm{Qu}}
      }"
    ]
    &&
    \;\;\;
    \underset{
      \mathclap{
      \mathbf{X}
      \,\in\,
      \mathrm{sSet}\mbox{-}\mathrm{Grpd}
      }
    }{\displaystyle{\int}}
    \;
    \mathbf{sCh}^{\mathbf{X}}_{\mathbb{K}}
    \;=:\;
    \mathbf{Loc}_{\mathbb{K}}
    \end{tikzcd}
  $
  }
\end{equation}
\end{theorem}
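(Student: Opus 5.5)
The plan is to invoke the general existence theorem for integral model structures on Grothendieck constructions (Harpaz–Prasma, presumably the one recalled in the appendix around Def. \ref{IntegralModelStructure}). It applies to a pseudofunctor $\mathcal{F}\colon \mathcal{M} \to \mathrm{ModCat}$ on a model category $\mathcal{M}$ once the following conditions hold:
\begin{itemize}
\item[(1)] $\mathcal{F}$ is relative, i.e.\ for a weak equivalence $f$ of the base the Quillen adjunction $f_! \dashv f^\ast$ is a Quillen equivalence;
\item[(2)] $\mathcal{F}$ is proper, i.e.\ for acyclic cofibrations $f$ the left adjoint $f_!$ preserves weak equivalences, and for acyclic fibrations $f$ the right adjoint $f^\ast$ preserves weak equivalences.
\end{itemize}
I would verify these for $\mathbf{X} \mapsto \mathbf{sCh}^{\mathbf{X}}_{\mathbb{K}}$ on $\mathrm{sSet}\mbox{-}\mathrm{Grpd}$ first. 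Then I would transport the result to the $\mathrm{sSet}$-indexed version via the Quillen equivalence of bases.

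\emph{Properness.} Since $\mathbf{f}^\ast$ is precomposition and weak equivalences in Prop. \ref{ModelStructureOnSimplicialFunctors} are objectwise, $\mathbf{f}^\ast$ preserves all weak equivalences, for every $\mathbf{f}$. For $\mathbf{f}_!$ I would use that all objects of $\mathbf{sCh}_{\mathbb{K}}$ are cofibrant. The left Kan extension $(\mathbf{f}_!\mathscr{V})_y = \int^{x}\mathbf{Y}(\mathbf{f}x,y)\cdot \mathscr{V}_x$ should then compute the homotopy left Kan extension whenever $\mathbf{f}$ is a cofibration, since cofibrations are injective on objects and hom-wise (Prop. \ref{DwyerKanModelStructures}). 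Concretely I would show: if $\mathbf{f}$ is a free map (Def. \ref{FreeMapsOfSimplicialGroupoids}), or a retract of one, then each $\mathbf{Y}(\mathbf{f}(-),y)$ is a projectively cofibrant right $\mathbf{X}$-module in $\mathrm{sSet}$, because it decomposes freely along reduced words. The coend is then a left Quillen bifunctor in the $\mathscr{V}$ variable, so by Ken Brown's Lemma \ref{KenBrownLemma} it preserves weak equivalences between the (all cofibrant in $\mathbf{sCh}_{\mathbb{K}}$) values. An alternative, cleaner route is to note that $\mathbf{f}_!$ preserves weak equivalences between projectively cofibrant objects, and reduce to cofibrant objects using left properness of $\mathbf{sCh}_{\mathbb{K}}$ (Thm. \ref{ModelCategoryOfSimplicialChainComplexes}). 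I would try the free-module argument first.

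\emph{Relativity.} For a Dwyer–Kan equivalence $\mathbf{f}\colon \mathbf{X}'\to\mathbf{X}$ I would reduce, via Lem. \ref{SkeletalizationOfLocalSystems}(ii), to the skeletal case, since the vertical adjunctions in \eqref{SkeletizationOfLocalSystemsAlongAMap} are Quillen equivalences and Quillen equivalences satisfy 2-out-of-3. For skeletal groupoids, Rem. \ref{SimplicialLocalySystemsOnSkeletalGroupoids} splits everything into a product over $\pi_0$; a DK-equivalence induces a bijection on $\pi_0$, so it suffices to treat a weak equivalence $\phi\colon \mathcal{G}'\to\mathcal{G}$ of simplicial groups. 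There $\phi^\ast$ preserves and reflects weak equivalences, since they are detected on underlying objects. It therefore remains to show that the derived unit $\mathscr{V}\to \phi^\ast\phi_!\mathscr{V}$ is a weak equivalence for cofibrant $\mathscr{V}$. Checking this on generating cells $\mathcal{G}'\cdot i$ it becomes $\mathcal{G}'\cdot V\to \mathcal{G}\cdot V$, which is a weak equivalence because $\mathbf{sCh}_{\mathbb{K}}$ is simplicial with all objects cofibrant. I would then pass to cell complexes via pushouts along cofibrations (left properness) and transfinite composites.

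\emph{Existence and transport.} With (1) and (2) established, the Harpaz–Prasma theorem yields the integral model structure on $\mathbf{Loc}_{\mathbb{K}}$. For $\mathbf{Loc}^{\mathrm{sSet}}_{\mathbb{K}}$, the composite pseudofunctor along the left Quillen functor $\mathbf{G}$ inherits relativity and properness, because $\mathbf{G}$ preserves cofibrations and weak equivalences (all simplicial sets are cofibrant). Hence that integral structure exists too. The induced functor $\widehat{\mathbf{G}}$ on Grothendieck constructions acts as $\mathbf{G}$ on bases and by identity on fibers, and the Harpaz–Prasma comparison result says that a Quillen equivalence of bases, through which the pseudofunctor factors, induces a Quillen equivalence of integral model structures; here \eqref{DKGroupoidQuillenEquivalence} supplies that equivalence. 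The main obstacle I expect is the properness of $\mathbf{f}_!$ for cofibrations $\mathbf{f}$, i.e.\ establishing the cofibrancy of the hom-modules $\mathbf{Y}(\mathbf{f}(-),y)$ from the free-map description.
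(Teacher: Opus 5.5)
Your proposal is correct. It shares the paper's skeleton: the three conditions of Prop.~\ref{ExistenceOfIntegralModelStructure}, the condition on $\mathbf{f}^\ast$ being automatic, and transport along $\mathbf{G}$ using that all simplicial sets are cofibrant. The two substantive steps are argued differently.

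\emph{Relativity.} The paper proves a general lemma, for any combinatorial simplicial $\mathbf{C}$ and any Dwyer--Kan equivalence of Kan-enriched categories, by passing to quasi-categories. Lurie's comparison of $N\big((\mathbf{C}^{\mathbf{X}})^\circ\big)$ with $N(\mathbf{C}^\circ)^{N(\mathbf{X})}$ identifies $\mathbb{R}\mathbf{f}^\ast$ with $N(\mathbf{f})^\ast$, which is an equivalence. You instead reduce via skeleta and $\pi_0$-splitting to a weak equivalence $\mathcal{G}'\to\mathcal{G}$ of simplicial groups. (The compatible base points needed in Lem.~\ref{SkeletalizationOfLocalSystems}(ii) exist because $\pi_0(\mathbf{f})$ is a bijection.) You then check the derived unit on free cells $\mathcal{G}'\cdot i$ and do cell induction. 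This is elementary and self-contained: it uses only that $\mathbf{sCh}_{\mathbb{K}}$ is simplicial and left proper with all objects cofibrant. The cost is the paper's generality.

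\emph{The condition on $\mathbf{f}_!$.} The paper uses only injectivity and essential surjectivity on objects. It reduces by Beck--Chevalley along full embeddings to maps $\mathbf{B}\mathcal{H}\to\mathbf{B}\mathcal{G}$, then uses the coset isomorphism $(\mathcal{G}_n\cdot\mathscr{V}_n)/\mathcal{H}_n\simeq(\mathcal{G}_n/\mathcal{H}_n)\cdot\mathscr{V}_n$. That route is concrete and is designed to carry over verbatim to Thm.~\ref{ExternalMonoidalModelStructureOnLocalSystemsOverOneTypes}. Your weighted-colimit argument covers all cofibrations at once. It also controls the actual weak equivalences of $\mathbf{sCh}_{\mathbb{K}}$, the total quasi-isomorphisms. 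An isomorphism that holds only degreewise, and is not natural in $n$, by itself shows only that degreewise quasi-isomorphisms are preserved. For example, $\Delta[1]\cdot\mathrm{const}(\mathbb{K})\to\mathrm{const}(\mathbb{K})$ is a weak equivalence that is not a quasi-isomorphism degreewise.

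\emph{Quillen equivalence.} Where you cite a Harpaz--Prasma comparison result, the paper checks directly. Since $\widehat{\mathbf{G}}$ is the identity on fibers, a map and its adjunct have the same linear component $\mathscr{V}\to f^\ast\mathscr{W}$, and the base is handled by $\mathbf{G}\dashv\overline{\mathrm{W}}$.

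Three things to tighten.
\begin{enumerate}
\item The reduced-word decomposition shows cofibrancy of the weights $\mathbf{Y}(\mathbf{f}(-),y)$ only for free $\mathbf{f}$. Suppose $\mathbf{f}$ is a retract of a free $\mathbf{f}'$ via $(a,c)\colon\mathbf{f}\to\mathbf{f}'$ and $(b,d)\colon\mathbf{f}'\to\mathbf{f}$, with $ba=\mathrm{id}$ and $dc=\mathrm{id}$. Then by pasting of mates, $\mathbf{f}_!$ is a natural retract of $c^\ast\mathbf{f}'_!b^\ast$. Since $b^\ast$ and $c^\ast$ preserve all weak equivalences, this settles the retract case.
\item The invariance statement you need is this: a projectively cofibrant weight $W$ makes $W\otimes_{\mathbf{X}}(-)$ preserve objectwise weak equivalences between objectwise cofibrant diagrams. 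You can get it via the injective structure, or by cell induction on $W$ with the gluing lemma. It is not a Quillen property for the projective structure in the $\mathscr{V}$ variable.
\item Drop the ``cleaner'' fallback. Knowing that $\mathbf{f}_!$ preserves weak equivalences between cofibrant objects, left properness of $\mathbf{sCh}_{\mathbb{K}}$ does not tell you that $\mathbf{f}_!(Q\mathscr{V})\to\mathbf{f}_!\mathscr{V}$ is a weak equivalence. That is exactly what the reduction would need.
\end{enumerate}
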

\begin{proof}

\smallskip 
  \noindent
  First, regarding the existence of the model structures,
  given maps $f : \mathcal{X}' \to \mathcal{X}$ in $\mathrm{sSet}$ (resp. $\mathbf{f} : \mathbf{X}' \to \mathbf{X}$ in $\mathrm{sSet}\mbox{-}\mathrm{Grpd}$),
  we need to show (by Prop. \ref{ExistenceOfIntegralModelStructure}) the following three properties:

  \smallskip 
  \begin{itemize}

  \item[1.] {\bf If $f$ (resp. $\mathbf{f}$) is a weak equivalence then $\mathbf{G}(f)_! \dashv \mathbf{G}(f)^\ast$ (resp. $\mathbf{f}_! \dashv \mathbf{f}^\ast$)
  is a Quillen equivalence.}
  
  Since $\mathbf{G}$ preserves weak equivalences (by Ken Brown's lemma \ref{KenBrownLemma}, being a left Quillen functor \eqref{DKGroupoidQuillenEquivalence} on a category with all objects cofibrant) it is sufficient to see that $\mathbf{sCh}_{\mathbb{K}}^{(-)}$ has this property.

  This is a special case of the following general statement, which may be of interest in its own right.  

  \begin{lemma}Let $\mathbf{C}$ be a combinatorial simplicial model category and $\mathbf{f} \,\colon\, \mathbf{X}' \longrightarrow \mathbf{X}$ a Dwyer-Kan equivalence (Prop. \ref{DwyerKanModelStructures}) of small $\mathrm{KanCplx}$-enriched categories. Then 
  $\mathbf{f}_! : \mathbf{C}^{\mathbf{X}'} \rightleftarrows \mathbf{C}^{\mathbf{X}} : \mathbf{f}^\ast$ is a Quillen equivalence between the projective model structures on the enriched functor categories.
  \end{lemma}
  
\noindent {\it Proof of Lemma.} We demonstrate this claim by appeal to the $\infty$-categories (quasi-categories) \cite{Joyal08}\cite{Lurie09} 
  presented by the model structure, via the homotopy coherent nerve functor $N : \mathrm{sSet}\mbox{-}\mathrm{Cat} \longrightarrow \mathrm{sSet}$ \eqref{RigidificationAdjunctionForQuasiCategories} applied to full simplicial subcategories $(\mbox{-})^{\circ}$ of bifibrant objects in simplicial model categories: By \cite[Prop. 4.2.44]{Lurie09} there are natural transformations as on the top of the following diagram, which restrict on bifibrant objects to weak equivalences of quasi-categories, as shown at the bottom:
  \begin{equation}
    \label{ComparisonMapHomotopCoherentNerve}
   \begin{tikzcd}[row sep=small]
     N\big(\mathbf{C}^{\mathbf{X}}\big)
     \ar[rr]
     &&
     N(\mathbf{C})^{N(\mathbf{X})}
     \\
     N\left(\big(\mathbf{C}^{\mathbf{X}})^{\circ}\right)
     \ar[u, hook]
     \ar[rr, "{ \in\, \mathrm{W}_{\mathrm{Joy}} }"{swap}]
     &&
     N(\mathbf{C}^\circ)^{N(\mathbf{X})}
     \ar[u, hook]
   \end{tikzcd}
  \end{equation}
  Consider then the following diagram of (large) simplicial sets:
  \begin{equation}
    \label{CompatibleBaseChangeForInfinityFunctor}
    \begin{tikzcd}[row sep=10pt]
      N\left(
        \big(\mathbf{C}^{\mathbf{X}}\big)^\circ
      \right)
      \ar[dr, hook]
      \ar[rrrr, "{ \in\, \mathrm{W}_{\mathrm{Joy}} }"{swap}]
      \ar[
        dddd, 
        "{ 
          N\left( \mathbb{R}(\mathbf{f}^\ast) \right) 
        }"{left}
      ]
      &&&&
      N(\mathbf{C}^\circ)^{N(\mathbf{X})}
      \ar[dddd, "{ N(\mathbf{f})^\ast }"{right}]
      \ar[dl, hook]
      \\
      &
      N\big(\mathbf{C}^{\mathbf{X}}\big)
      \ar[rr]
      \ar[
        d, "{ N(\mathbf{f}^\ast) }"{right}
      ]
      &&
      N(\mathbf{C})^{N(\mathbf{X})}      
      \ar[
        d,
        "{ N(\mathbf{f})^\ast }"{left}
      ]
      \\[+10pt]
      &
      N\big(\mathbf{C}^{\mathbf{X}'}\big)
      \ar[d, "{ N(Q) }"{left}]
      \ar[rr]
      \ar[
        dr, 
        "{ \mathrm{id} }"{above}, 
        "{\ }"{name=t, swap}
      ]
      &&
      N(\mathbf{C})^{N(\mathbf{X}')}      
      \\[+10pt]
      &
      N\big(\mathbf{C}^{\mathbf{X}'}\big)
      \ar[r, "{ \mathrm{id} }"{swap}]
      \ar[
        to=t,
        Rightarrow
      ]
      &      
      N\big(\mathbf{C}^{\mathbf{X}'}\big)
      \ar[ur]
      \\
      N\left(
        \big(\mathbf{C}^{\mathbf{X}'}\big)^\circ
      \right)     
      \ar[ur, hook]
      \ar[rrrr, "{ \in \mathrm{W}_{\mathrm{Joy}} }"{swap}]
      &&&&
      N(\mathbf{C}^\circ)^{N(\mathbf{X}')}
      \mathrlap{\,,}
      \ar[uul, hook]
    \end{tikzcd}
  \end{equation}
  where $Q$ denotes a {\it functorial} cofibrant replacement functor (which exists by \cite[Prop. 2.3]{Dugger01}\cite[Prop. 2.5]{Barwick10}
since $\mathbf{C}^{\mathbf{X}'}$ is combinatorial) and the double arrow denotes (the image under the right adjoint functor $N$ of) the corresponding 
natural transformation whose components are the resolution equivalences $Q(\mbox{-}) \xrightarrow[\in \mathrm{W}]{\phantom{-}} (\mbox{-})$.

  In this diagram \eqref{CompatibleBaseChangeForInfinityFunctor}: the top and bottom squares are instances of \eqref{ComparisonMapHomotopCoherentNerve}, 
  the middle square commutes by naturality, the right square commutes
  evidently and the left square commutes by the usual construction of derived functors of right Quillen functors. With this, we have a natural transformation 
  of $\infty$-functors filling the full diagram and we observe that this is a natural equivalence: This follows by \cite[\S 5, Thm C (p. 125)]{Joyal08} 
  from the fact that its objectwise components are (resolution-)equivalences, by construction of $N(Q)$.

  In conclusion, this shows that the right derived functor $\mathbb{R}\mathbf{f}^\ast$ represents the precomposition $\infty$-functor $N(\mathbf{f})^\ast$ up to natural equivalence; 
  in particular, both coincide on homotopy categories up to natural isomorphism. 
  
  But for $\mathbf{f}$ a DK-equivalence between $\mathrm{KanCplx}$-enriched categories the $\infty$-functor $N(\mathbf{f})$ is an equivalence of $\infty$-categories 
  by \cite[Thm. 2.2.5.1]{Lurie09} (with Ken Brown's lemma \ref{KenBrownLemma})
  and therefore $N(\mathbf{f})^\ast$ is an equivalence by \cite[Prop. 1.2.7.3 (3)]{Lurie09}, 
  hence in particular is an equivalence
  of homotopy categories, whence so is $\mathbb{R}\mathbf{f}^\ast$, which finally means that $\mathbf{f}_! \dashv \mathbf{f}^\ast$ is a Quillen equivalence. 

  \hfill $\square$

  \item[2.] {\bf If $f$ (resp. $\mathbf{f}$) is an acyclic fibration then $\mathbf{G}(f)^\ast$ (resp. $\mathbf{f}^\ast$) preserves weak equivalences. }
  
  This is immediate and in fact holds for all maps $\mathbf{f}$, since $\mathbf{f}^\ast$ acts by precomposition and weak equivalences are given objectwise.

  \item[3.] {\bf If $f$ (resp. $\mathbf{f}$) is an acyclic cofibration then $\mathbf{G}(f)_!$ (resp. $\mathbf{f}_!$) preserves weak equivalences. }

  Since $\mathbf{G}$, being a left Quillen functor \eqref{RigidificationQuillenAdjunction},
  preserves acyclic cofibrations, it is sufficient to show the claim for any acyclic cofibration $\mathbf{f}$.

  We will only need that, thereby:
  \begin{equation}
  \label{AssumptionOnAcyclicCofibrationsNeededInExistenceProof}
  \mbox{
  on objects, 
  $\mathbf{f}$ is (i) injective and (ii) essentially surjective}
  \end{equation}
  (a condition which holds also in the variant situation of Thm. \ref{ExternalMonoidalModelStructureOnLocalSystemsOverOneTypes} that we consider in \cref{ExternalModulesOverOneTypes}).
  
  Namely, to check that $\mathbf{f}_!$ preserves equivalences, which are defined objectwise, means we must check that for all objects $x \,\in\, \mathbf{X}$ the map $\iota_{x}^\ast \circ \mathbf{f}_!$ preserves equivalences, where $\iota_x$ is the inclusion of the full subgroupoid $\mathbf{B}\big(\mathbf{X}(x,x)\big)$ at that point. But in fact, since a morphism of simplicial local systems is a weak equivalence over some $x$ iff it is a weak equivalence over any other object in the same connected component (by functoriality and 2-out-of-3), it is sufficient to check that $\iota_{x}^\ast \circ \mathbf{f}_!$ preserves weak equivalences as $x$ ranges over any choice of representatives for each connected component of $\mathbf{X}$. 
  
  Now by the assumption that $\mathbf{f}$ is injective and essentially surjective on objects, we may find such representatives $x$ such that each has a unique preimage $x'$, giving rise to a pullback square of this form:
  \begin{equation}
    \begin{tikzcd}
      \mathbf{B}
      \big(\mathbf{X}'(x',x')\big)
      \ar[
        rr,
        "{
          \mathbf{f}_{\vert \{x'\}}
        }"
      ]
      \ar[
        d, 
        hook,
        "{ \iota_{x'} }"
      ]
      \ar[
        drr,
        phantom,
        "{
          \scalebox{.7}{
            \rm (pb)
          }
        }"{pos=.3}
      ]
      &&
      \mathbf{B}
      \big(
        \mathbf{X}(x,x)
      \big)
      \ar[
        d, 
        hook,
        "{ \iota_x }"
      ]
      \\
      \mathbf{X}'
      \ar[
        rr,
        "{
          \mathbf{f}
        }"
      ]
      &&
      \mathbf{X}
    \end{tikzcd}
  \end{equation}
  But to this diagram the Beck-Chevalley condition \eqref{BeckChevalleyForSimplicialLocalSystemsAlongEmbeddings} applies and implies that it is now equivalent to check that $\big(\mathbf{f}_{\vert x'}\big)_! \circ \iota_{x'}^\ast$ is a weak equivalence for all objects $x'$ of $\mathbf{X}'$. 
  
  Finally, since $\iota_x^\ast$ preserves all weak equivalences (by the previous item), it is now sufficient to show that pushforward along maps of delooping groupoids  
  \[
    \mathbf{f}
    \,:\,
    \mathbf{B}\mathcal{H}
    \xrightarrow{\;}
    \mathbf{B}\mathcal{G}
  \]
  preserves weak equivalences.
  This is the case we check now, using the identification of simplicial local systems over simplicial delooping groupoids with simplicial group representations (Rem. \ref{SimplicialGroupActions}).
  
  On general grounds (reviewed eg. in \cite[Lem. 1.1.7]{EquBundles}), in this case $\mathbf{f}_!$ acts by forming left-induced representations, namely by
  $$
   \mathbf{f}_! 
   \,:\,
   \mathscr{V} \,\mapsto\,
   \mathcal{G} \cdot_{\!{}_{\mathcal{H}}}
   \mathscr{V}
   \,:=\,
   \big(\mathcal{G} \cdot \mathscr{V}\big)
   /\mathcal{H}
   \;:\;
   [n] 
   \,\mapsto\,
   \big(\mathcal{G}_n \cdot \mathscr{V}_n\big)
   / \mathcal{H}_n
   \,,
  $$
  where on the right the tensoring $\mathcal{G} \cdot \mathscr{V}$ is equipped with the diagonal $\mathscr{H}$-action which on $\mathcal{G}$ is given by right inverse multiplication; and we have notationally highlighted that quotients of simplicial objects are computed degreewise.

  Now we use that $\mathbf{f} : \mathbf{B}\mathcal{H} \longrightarrow \mathbf{B}\mathcal{G}$ is simplicial-degreewise a group homomorphism
  \[
    n : \mathbb{N}
    \hspace{1cm}
    \vdash
    \hspace{1cm}
    \phi_n
    \,:\,
    \mathcal{H}_n
    \xrightarrow{\phantom{--}}
    \mathcal{G}_n
    \,.
  \]
  Since the weak equivalences in the local model structure $\mathrm{sCh}_{\mathbb{K}}$ 
  (from Thm. \ref{ModelCategoryOfSimplicialChainComplexes})
  {\it include} 
  (either by definition of left Bousfield localization or else by \cite[Prop. 3.1.5]{Hirschhorn02}) the global
  Reedy equivalences which are the simplicial-degreewise weak equivalences between objects $\mathscr{V}_{\!n} \in \mathrm{Ch}_{\mathbb{K}}$, it is sufficient now to observe that for each $n \in \mathbb{N}$ there is even an isomorphism 
  ({\it not} necessarily natural in $n$, but it does not need to be) of the form
  \begin{equation}
    \label{QuotientIsomorphism}
    \big(
      \mathcal{G}_n 
        \cdot 
      \mathscr{V}_n
    \big)
    /\mathscr{H}_n
    \;\simeq\;
    \big(
      \mathcal{G}_n/\mathcal{H}_n
    \big) 
        \cdot 
    \mathscr{V}_n
    \;\;\;\;
    \in
    \;
    \mathrm{Ch}_{\mathbb{K}}
    \,.
  \end{equation} 
  This concludes the argument, because the tensoring with any set --- as on the right of \eqref{QuotientIsomorphism} --- is a left Quillen functor on $\mathrm{Ch}_{\mathbb{K}}$, 
  and since all objects in $\mathrm{Ch}_{\mathbb{K}}$  are cofibrant (both by Thm. \ref{ModelCategoryOfSimplicialChainComplexes}) so that this Quillen functor preserves all weak equivalences, by Ken Brown's lemma \ref{KenBrownLemma} (or more concretely: because a direct sum of quasi-isomorphisms is itself a quasi-isomorphism). 
 
  \medskip
  
  For completeness, we spell out the isomorphism \eqref{QuotientIsomorphism}.
      Under the identifications \eqref{EquivalentWaysOfTensoringVectorSpaces}, this is the familiar statement from representation theory that the tensor product of any group representation $V$ with the regular $G$-representation is isomorphic to the $\mathrm{dim}(V)$-fold direct sum of the regular representation with itself; but for the record we make the isomorphism explicit by an elementary argument:
  
  Using the Axiom of Choice in our underlying category $\mathrm{Set}$, we may choose a section as follows 
  (and the arbitrariness in this choice makes the construction be non-natural):
  $$
    \begin{tikzcd}[
      row sep=15pt, 
      column sep=large
    ]
      &
      \mathcal{G}_n
      \ar[
        d,
        "{
          [-]
        }"{pos=.4}
      ]
      \\
      \mathcal{G}_n/\mathcal{H}_n
      \ar[r, equals]
      \ar[ur, "\sigma_n", dashed]
      &
      \mathcal{G}_n/\mathcal{H}_n      
    \end{tikzcd}
  $$
  which determines a function
  $$
    \begin{tikzcd}[row sep=-4pt, column sep=small]
      \mathllap{
        \sigma[\mbox{-}]
        \backslash
        (\mbox{-})
      \,:\,\;
      }
      \mathcal{G}_n
      \ar[rr]
      &&
      \phi_n(\mathcal{H}_n)
      \mathrlap{
        \;\subset\,
        \mathcal{G}_n
      }
      \\
      g 
        &\longmapsto& 
      \big(\sigma[g]\big)^{-1} 
        \cdot
      g
      \mathrlap{\,.}
    \end{tikzcd}
  $$
  Using this, the desired isomorphism and its inverse are given by the diagonal morphisms in the following diagram (the top of which shows the coequalizer defining the global quotient, just for context):
  $$
    \begin{tikzcd}[column sep=62pt]
      \mathcal{H}_n
      \cdot
      \big(
      \mathcal{G}_n
        \cdot
      \mathscr{V}_{\!n}
      \big)
      \ar[
        rr,
        shift left=5pt,
        "{
          \scalebox{1.1}{$($}
            h
            ,\,
            (
            g
            ,\,
            v
            )
          \scalebox{1.1}{$)$}
          \,\mapsto\,
          \scalebox{1.1}{$($}
            g
            ,\
            v
          \scalebox{1.1}{$)$}
        }"
      ]
      \ar[
        rr,
        shift right=5pt,
        "{
          \scalebox{1.1}{$($}
            h
            ,\,
            (
            g
            ,\,
            v
            )
          \scalebox{1.1}{$)$}
          \,\mapsto\,
          (
            g \cdot h^{-1}
            ,\
            h \cdot v
          )
        }"{swap}
      ]
      &&
      \mathcal{G}_n
        \cdot
      \mathscr{V}_n
      \ar[
        dd,
        "{
          (g,\,v)
          \,\mapsto\,
          \scalebox{1.3}{$($} 
            [g]
              ,\, 
            \sigma[g] \backslash g \,\cdot\, v
          \scalebox{1.3}{$)$} 
        }"{description}
      ]
      \ar[
        rr,
        "{
          (g,\,v)
          \,\mapsto\,
          [g,\,v]
        }",
        "{
          \mathrm{coeq}
        }"{swap}
      ]
      &&
      (\mathcal{G}_n \cdot \mathscr{V}_n)
        /
      \mathcal{H}_n \;.
      \ar[
        ddll, 
        dashed,
        "{
          [g,\, v]
          \,\mapsto\,
          \scalebox{1.1}{$($} 
            [g]
              ,\, 
             \sigma[g] \backslash g  
               \,\cdot\, 
             v
          \scalebox{1.1}{$)$} 
        }"{description, sloped}
      ]
      \\
      \\
      &&
      (\mathcal{G}_n / \mathcal{H}_n)
      \cdot
      \mathscr{V}_{\!n}
      \ar[
        uurr,
        shift right=17pt,
        shorten <= 10pt,
        shorten >= -20pt,
        "{
          \scalebox{1.1}{$($} 
            [g]
            ,\,
            v
          \scalebox{1.1}{$)$} 
          \,\mapsto\,
          \scalebox{1.1}{$[$}
            \sigma[g]
            ,\, 
            v
          \scalebox{1.1}{$]$}
        }"{swap, sloped}
      ]
    \end{tikzcd}
  $$
  \end{itemize}
This shows that both integral model structures exist.  
It remains to see the Quillen equivalence in \eqref{Loc}:

The underlying pair of adjoint functors $\widehat{\mathbf{G}} \dashv \widehat{\mathrm{W}}$ is given by Ex. \ref{InducedAdjunctionBetweenGrothendieckConstructions}. 
Furthermore, $\widehat{\mathrm{W}}$ is clearly a right Quillen functor because (recall Def. \ref{IntegralModelStructure}) the underlying functor is right Quillen by Prop. \ref{DwyerKanModelStructures} while the action on component morphisms by pullback is right Quillen by Rem. \ref{BaseChangeBetweenModelStructuresOfSimplicialLocalSystems}. Finally to see that this Quillen adjunction is a Quillen equivalence it is sufficient for cofibrant $\mathscr{V}_{\!\!\mathcal{X}} \,\in\, \mathbf{Loc}^{\mathrm{sSet}}_{\mathbb{K}}$ and fibrant $\mathscr{V}'_{\mathbf{X}'} \,\in\, \mathbf{Loc}_{\mathbb{K}}$ to check that
$$
  \mbox{
  $
  \mathscr{V}_{\!\!\mathcal{X}}
  \xrightarrow{ \phi_{f} }
  \widehat{R}\big(
    \mathscr{V}'_{\mathbf{X}'}
  \big)
  $
  \;\;is a weak equivalence iff its adjunct\;\;
  $
  \widehat{\mathbf{G}}\big(
    \mathscr{V}_{\!\!\mathcal{X}}
  \big)
  \xrightarrow{ \phi_{\tilde{f}} }
  \mathscr{V}'_{\mathbf{X}'}
  $
  \;\;
  is a weak equivalence
  }
$$
But on underlying morphisms this is the case because $\mathbf{G} \dashv \overline{\mathrm{W}}$ is a Quillen equivalence \eqref{DKGroupoidQuillenEquivalence}, while on component morphisms both $\widehat{G}(-)$ \eqref{CounitOfLiftOfAdjunctionToGrothendieckConstruction} as well as $\epsilon \circ (-)$ \eqref{CounitOfLiftOfAdjunctionToGrothendieckConstruction} are the identity operation, whence so is, on components, the passage $\epsilon \circ \widehat{\mathbf{G}}(-)$ to adjuncts.
\end{proof}

\begin{definition}[Notation for simplicial local systems]
  We denote the objects and morphisms in the category $\mathbf{Loc}_{\mathbb{K}}$ \eqref{Loc} 
  of simplicial local systems as follows:
  \begin{itemize}
  \item objects are denoted
  \begin{equation}
    \label{NotationForObjectsAmongSimplicialLocalSystems}
    \mathscr{V}_{\mathbf{X}}
    \;\;
    :\equiv
    \;\;
    \Big(
      \mathbf{X}
      \,\in\,
      \mathrm{sSet}\mbox{-}\mathrm{Grpd}
      ,\;
      \mathscr{V}
      \,\in\,
      \mathbf{sCh}^{\mathbf{X}}_{\mathbb{K}}
    \Big)
  \end{equation}
  which is suggestively read as ``$\mathscr{V}$ is a simplicial local system over $\mathbf{X}$'';
  \item
  morphisms are denoted by their components in the underlying {\it contravariant} pseudofunctor
  \begin{equation}
    \label{NotationForMorphismsOfSimplicialLocalSystems}
    \big(
    \phi_{\mathbf{f}}
    \,:\,
    \mathscr{V}_{\mathbf{X}}
    \xrightarrow{\phantom{--}}
    \mathscr{V}'_{\mathbf{X}'}
    \big)
    \;\;\;
      :\equiv
    \;\;\;
    \Bigg(\!\!\!
       \begin{tikzcd}[row sep=-3pt]
        \mathscr{V}
        \ar[r, "{ \phi }"]
        &
        \mathbf{f}^\ast
        \mathscr{V}'
        \\
        \mathbf{X}
        \ar[r, "{ \mathbf{f} }"]
        &
        \mathbf{X}'
      \end{tikzcd}
         \!\!\! \Bigg)
    \mathrlap{\,,}
  \end{equation}
  as opposed to the adjunct form of the component $\phi$, which we indicate by a $\widetilde{\phi}$:
  \begin{equation}
    \label{AdjunctComponentMaps}
    \begin{tikzcd}[row sep=0pt]
      \mathscr{V}
      \ar[r, "{ \phi }"{description}]
      &
      \mathbf{f}^\ast \mathscr{V}'
      \\
      \hline
      \mathbf{f}_!\mathscr{V}
      \ar[
        r, 
        "{ \widetilde{\phi} }"{description}
      ]
      &
      \mathscr{W}      
    \end{tikzcd}
  \end{equation}
  \end{itemize}
\end{definition}

\begin{remark}[Choice of component morphisms]
  The  choice \eqref{NotationForMorphismsOfSimplicialLocalSystems} is motivated by the fact that $\mathbf{f}^\ast$ but not $\mathbf{f}_!$ is a (strong) monoidal 
  functor (Prop. \ref{FrobeniusReciprocityForSimplicialLocalSystems} below), which means that in the notational convention 
  \eqref{NotationForMorphismsOfSimplicialLocalSystems}
  the external tensor product (Def. \ref{ExternalTensorProductOfSimplicialLocalSystems}) below is {\it manifestly} given by the obvious formula. 
  On the other hand, some computation shows (Prop. \ref{PullPushAdjunctOfExternalTensorProducts} below) that with the other convention, the analogous 
  obvious formula will still hold (even if far from manifestly so) so that in the end the choice in \eqref{NotationForMorphismsOfSimplicialLocalSystems} 
  is as arbitrary as one would hope it is.
\end{remark}

\begin{example}[Decomposing group representations]
\label{DecompositionGroupRepresentations}
In $\mathbf{Loc}_{\mathbb{K}}$
\eqref{Loc},
every simplicial group representation
(Rem. \ref{SimplicialGroupActions})
decomposes along a homotopy cartesian square of this form
\begin{equation}
  \label{HomotopyQuotientSquares}
  \left.
  \def\arraystretch{1}
  \begin{array}{l}
    \mathcal{G} \,\in\, \mathrm{sGrp}
    \\
    \mathscr{V} 
    \,\in\,
    \big(
      \mathbf{sCh}_{\mathbb{K}}^{\mathbf{B}\mathcal{G}}
    \big)^{\mathrm{fib}}
  \end{array}
  \right\}
  \hspace{1.2cm}
    \vdash
  \hspace{1.2cm}
  \begin{tikzcd}
    \mathrm{pt}^\ast
    \mathscr{V}
    \ar[r]
    \ar[d]
    \ar[
      dr,
      phantom,
      "{
        \scalebox{.7}{(pb)}
      }"
    ]
    &
    \mathscr{V}_{\mathbf{B}\mathcal{G}}
    \ar[
      d,
      "{
        0_{\mathrm{id}}
      }"{swap},
      "{ \in \mathrm{Fib} }"
    ]
    \\
    0_{\mathrm{pt}}
    \ar[
      r,
      "{
        0_{\mathrm{pt}}
      }"{swap}
    ]
    &
    0_{\mathbf{B}\mathcal{G}}
  \end{tikzcd}
  \;\;\;\;\;\;
  \defneq:
  \;\;\;\;\;\;
  \begin{tikzcd}
    \mathscr{V}
    \ar[r]
    &
    \mathscr{V} \!\sslash\! \mathcal{G}
    \ar[d]
    \\
    & 
    \mathbf{B}\mathcal{G}
  \end{tikzcd}
\end{equation}
exhibiting $\mathscr{V}_{\mathbf{B}\mathcal{G}}$ as a homotopy quotient of $\mathscr{V}_{\mathrm{pt}}$ by a $\mathcal{G}$-action.
\end{example}
\begin{proof}
  First, that the square is Cartesian follows 
  by Prop. \ref{ColimitsInAGrothendieckConstruction}:
  \[
    \begin{tikzcd}
      \mathrm{pt}
      \ar[r]
      \ar[d, "{ \mathrm{id} }"{swap}]
      \ar[
        dr,
        phantom,
        "{ \scalebox{.7}{(pb)} }"
      ]
      &
      \mathbf{B}\mathcal{G}
      \ar[d, "{ \mathrm{id} }"]
      \\
      \mathrm{pt}
      \ar[r]
      &
      \mathbf{B}\mathcal{G}
    \end{tikzcd}
    \hspace{1cm}
    \mbox{and}
    \hspace{1cm}
    \begin{tikzcd}
      \mathrm{pt}^\ast \mathscr{V}
      \ar[r, "{ \mathrm{id} }"]
      \ar[d, "{ 0 }"{swap}]
      \ar[dr, phantom, "{ \scalebox{.7}{(pb)} }"]
      &
      \mathrm{pt}^\ast \mathscr{V}
      \ar[
        d, 
        "{ \mathrm{pt}^\ast 0 }"
      ]
      \\
      0
      \ar[r, "{ 0 }"]
      &
      \mathrm{pt}^\ast 0
    \end{tikzcd}
  \]
  Finally, the right vertical map is a fibration since the identity on $\mathbf{B}\mathcal{G}$ is a fibration in $\mathrm{sSet}\mbox{-}\mathrm{Grpd}$ and since $\mathscr{V} \to 0$ is a fibration in $\mathbf{sCh}^{\mathbf{B}\mathcal{G}}_{\mathbb{K}}$ iff it is in $\mathbf{sCh}_{\mathbb{K}}$, which is the case by assumption.
\end{proof}

\subsection{\texorpdfstring{The external tensor of flat $\infty$-vector bundles}{The external tensor of flat infinity-vector bundles}}
\label{ExternalTensorOnSimplicialLocalSystems}

We discuss here the construction (Def. \ref{ExternalTensorProductOfSimplicialLocalSystems}) 
and its homotopical properties (Thm. \ref{ExternalTensorProductIsHomotopical})
of the external tensor product on $\mathbf{Loc}_{\mathbb{K}}$ \eqref{Loc}, covering the Cartesian product on $\mathrm{sSet}\mbox{-}\mathrm{Grpd}$ (Prop. \ref{CartesianClosureOfSSetEnrichedGroupoids}).

\medskip

\noindent
{\bf Motivic yoga on simplicial local systems.}
The abstract form of the following structures and conditions was essentially all first formulated and named (``Frobenius reciprocity'', ``Beck-Chevalley condition'') 
in discussions of (categorical semantics for) formal logic/type theory \cite{Lawvere70}\cite{Seely83}\cite[\S 1]{Pavlovic91}\cite{Pavlovic96},
even though the same structures govern what came to be known as {\it Grothendieck's yoga of six operations} and as such must have originated at 
around the same time but have been systematically recorded only much later (notably \cite{FHM03}, in whose terminology we are dealing with 
the {\it Wirthm{\"u}ller-form} of the yoga) especially once Grothendieck's idea of ``motives'' was felt to be nailed down by ``motivic homotopy theory'' \cite[\S A.5]{CisinskiDeglise19}\cite[p. 4]{Hoyois17}. 

\smallskip 
On the other hand, the original discussion in logic was entirely in {\it classical} logic, while Grothendieck's ``yoga'' that concerns us 
now always focused on non-cartesian (hence non-classical, i.e.: linear, ``quantum'') monoidal categories dependent on classical base 
spaces --- whence here we speak of the ``motivic yoga'' for short. More recently, essentially the same is referred to as 
``indexed closed monoidal enriched categories with indexed coproducts'' \cite{Shulman13}, which in its category-theoretic sobriety is 
again more suggestive (for the cognoscenti) of the logical/computational meaning of such structures: they serve as categorical semantics 
for the {\it multiplicative fragment} of {\it dependent linear/quantum homotopy type theory}
\cite[\S 3.2]{Schreiber14}\cite[\S 2.4]{Riley22}\cite{QS}; for more on this perspective see \cref{DiscussionAndOutlook} below.

\medskip

Recall from \eqref{KanExtensionAdjointTriple}
that for $\mathbf{f} : \mathbf{X} \xrightarrow{\phantom{-}} \mathbf{Y}$ a morphism in $\mathrm{sSet}\mbox{-}\mathrm{Cat}$ we have an associated adjoint triple of 
$\mathrm{sSet}$-enriched base change functors of simplicial local systems (Def. \ref{MonoidalSimplicialFunctorCategoryIntoSimplicialChainComplexes}):
\begin{equation}
  \label{BaseChangeAdjointTripleOnSimplicialLocalSystems}
  \begin{tikzcd}
    \mathbf{sCh}_{\mathbb{K}}^{\mathbf{X}}
    \ar[
      rr,
      shift left=14pt,
      "{ \mathbf{f}_! }"
    ]
    \ar[
      from=rr,
      "{ \mathbf{f}^\ast }"{description}
    ]
    \ar[
      rr,
      shift right=14pt,
      "{ \mathbf{f}_\ast }"{swap}
    ]
    \ar[
      rr,
      phantom,
      shift left=8pt,
      "{ \scalebox{.7}{$\bot$} }"
    ]
    \ar[
      rr,
      phantom,
      shift right=8pt,
      "{ \scalebox{.7}{$\bot$} }"
    ]
    &&
    \mathbf{sCh}_{\mathbb{K}}^{\mathbf{Y}}
  \end{tikzcd}
\end{equation}
given by precomposition $\mathbf{f}^\ast \,\defneq\, (-) \circ \mathbf{f}$ and its left $\mathbf{f}_!$ and right $\mathbf{f}_\ast$ 
Kan extension, respectively \eqref{CoEndFormulasForKanExtension}; and that for each $\mathbf{X}$ we have symmetric closed monoidal 
category structure \eqref{InternalHomAdjunctionForSimplicialLocalSystems}:
\begin{equation}
  \label{ClosedMonoidalStructureRecalled}
  \mathbf{X}
  \,\in\,
  \mathrm{sSet}\mbox{-}\mathrm{Grpd}
  ,\;\;
  \mathscr{V}
  \,\in\,
  \mathbf{sCh}^{\mathbf{X}}_{\mathbb{K}}
  \hspace{1.1cm}
    \vdash
  \hspace{1.1cm}
  \begin{tikzcd}
    \mathbf{sCh}_{\mathbb{K}}^{\mathbf{X}}
    \ar[
      rr,
      shift left=6pt,
      "{ \mathscr{V} \otimes_{\mathbf{X}} (\mbox{-}) }"
    ]
    \ar[
      from=rr,
      shift left=6pt,
      "{ [\mathscr{V} ,\,\mbox{-}]_{\mathbf{X}} }"
    ]
    \ar[
      rr,
      phantom,
      "{ \scalebox{.7}{$\bot$} }"
    ]
    &&
    \mathbf{sCh}_{\mathbb{K}}^{\mathbf{X}} \;.
  \end{tikzcd}
\end{equation}

\begin{proposition}[Frobenius reciprocity for simplicial local systems]
\label{FrobeniusReciprocityForSimplicialLocalSystems}
  Pullback of simplicial local systems along maps of simplicial groupoids 
  \[
    \begin{tikzcd}[row sep=-1pt]
      \mathbf{X}
      \ar[rr, "{ \mathbf{f} }"]
      &&
      \mathbf{Y}
      \\
      \mathbf{sCh}^{\mathbf{X}}_{\mathbb{K}}
      \ar[from=rr, "{ \mathbf{f}^\ast }"{swap}]
      &&
      \mathbf{sCh}^{\mathbf{Y}}_{\mathbb{K}}
    \end{tikzcd}
  \]
  is: 
  \begin{itemize}
    \item[{\bf(i)}] strong monoidal, in that there are 
      natural isomorphisms of this form:
      \begin{equation}
        \label{PullbackOfSimplicialLocalSystemsPreservesTensorUnit}
        \mathrlap{
          \mathbf{f}^\ast(\mathbbm{1})
          \;\simeq\;
          \mathbbm{1}
        }
      \end{equation}
      \begin{equation}
      \label{PullbackOfSimplicialLocalSystemsIsStrongMonoidal}
      \hspace{-4cm}
      \mathllap{
      \mathscr{V},\, \mathscr{W}
      \;\in\;
      \mathbf{sCh}^{\mathbf{Y}}_{\mathbb{K}}
      \hspace{1cm}
      }
      \vdash
      \mathrlap{
      \hspace{1cm}
      \mathbf{f}^\ast(\mathscr{V} \otimes_{\mathbf{Y}} \mathscr{W})
      \;\;\simeq\;\;
      (\mathbf{f}^\ast \mathscr{V}) 
      \otimes_{\mathbf{X}} (f^\ast\mathscr{W})
      }
      \end{equation}
    \item[{\bf(ii)}]  strong closed, in that there are 
      natural isomorphisms of this form:
      \begin{equation} 
      \label{StrongClosedPullback}
      \hspace{-4cm}
      \mathllap{
      \mathscr{V},\, \mathscr{W}
      \;\in\;
      \mathbf{sCh}^{\mathbf{Y}}_{\mathbb{K}}
      \hspace{1cm}
      }
      \vdash
      \mathrlap{
      \hspace{1cm}
      \mathbf{f}^\ast [\mathscr{V},\,\mathscr{W}]_{\mathbf{Y}}
      \;\;\simeq\;\;
      [\mathbf{f}^\ast \mathscr{V},\, \mathbf{f}^\ast\mathscr{W}]_{\mathbf{X}}
      }
      \end{equation}
    \item[{\bf(iii)}] 
    and satisfies projection, in that there are natural isomorphisms of this form:
    \begin{equation}
      \label{ProjectionFormulaForSimplicialLocalSystems}
      \hspace{-4cm}
      \mathllap{
      \mathscr{R}
      \,\in\,
      \mathbf{sCh}^{\mathbf{X}}_{\mathbb{K}}
      ,\;
      \mathscr{V}
      \,\in\,
      \mathbf{sCh}^{\mathbf{Y}}_{\mathbb{K}}
      \hspace{1cm}
      }
      \vdash
      \mathrlap{
      \hspace{1cm}
     \mathbf{f}_!\big(
      \mathscr{R}
      \otimes_{\mathbf{X}}
      \mathbf{f}^\ast \mathscr{V}
    \big)
    \;\;
    \simeq
    \;\;
    (\mathbf{f}_! \mathscr{R})
    \otimes_{\mathbf{Y}}
    \mathscr{V}\;.
    }
    \end{equation}
  \end{itemize}
\end{proposition}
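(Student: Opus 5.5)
Parts (i) and (ii) are checked objectwise. For (i), since $\otimes_{\mathbf{Y}}$ is defined by precomposing $\mathscr{V}\times\mathscr{W}$ with the diagonal $\mathbf{Y}\to\mathbf{Y}\times\mathbf{Y}$ and then applying $\otimes$ \eqref{CupTensorProductOnsChValuedSimplicialFunctors}, and since $\mathrm{diag}_{\mathbf{Y}}\circ\mathbf{f} = (\mathbf{f}\times\mathbf{f})\circ\mathrm{diag}_{\mathbf{X}}$, the two sides of \eqref{PullbackOfSimplicialLocalSystemsIsStrongMonoidal} are literally the same composite of simplicial functors; likewise the unit $\mathbbm{1}$ is the constant functor, which is preserved by precomposition. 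So (i) holds even as an equality.

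For (ii) the internal hom $[\mathscr{V},\mathscr{W}]_{\mathbf{Y}}$ is an end involving $\mathbf{Y}(y',y)$, so it is not manifestly objectwise. Here I would use that $\mathbf{Y}$ is a groupoid. Because every $\mathbf{Y}(y',y)$ is a torsor under the automorphism simplicial groups, the end over $y'$ collapses, via the enriched Yoneda lemma, to the objectwise mapping complex $[\mathscr{V}_y,\mathscr{W}_y]$. The functoriality in $y$ comes from conjugation: $\mathscr{V}$ is applied with inverse morphisms and $\mathscr{W}$ with direct ones, which is exactly the formula in Prop. \ref{ClosedMonoidalStructureOfLocalSystemsOverSimplicialDelooping}(ii). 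Concretely, I would exhibit the conjugation-functor $y\mapsto[\mathscr{V}_y,\mathscr{W}_y]$ and verify the adjunction \eqref{InternalHomAdjunctionForSimplicialLocalSystems} for it directly. The verification is that a natural transformation $\mathscr{U}\otimes_{\mathbf{Y}}\mathscr{V}\to\mathscr{W}$ corresponds objectwise to maps $\mathscr{U}_y\to[\mathscr{V}_y,\mathscr{W}_y]$, and naturality on one side matches naturality for the conjugation action on the other, because $\mathbf{Y}$ is a groupoid. Once the internal hom is known to be objectwise with conjugation functoriality, it commutes with precomposition by $\mathbf{f}$ on the nose, which gives \eqref{StrongClosedPullback}. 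This identification of the internal hom over groupoids is the main obstacle: it is where the groupoid hypothesis enters, and the simplicial enrichment must be handled so that inversion in $\mathbf{Y}$ is a simplicial map.

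For (iii) I would argue formally from (i) and (ii) by Yoneda. For any $\mathscr{W}\in\mathbf{sCh}^{\mathbf{Y}}_{\mathbb{K}}$ there are natural bijections
$\mathrm{Hom}(\mathbf{f}_!(\mathscr{R}\otimes\mathbf{f}^\ast\mathscr{V}),\mathscr{W})\simeq\mathrm{Hom}(\mathscr{R}\otimes\mathbf{f}^\ast\mathscr{V},\mathbf{f}^\ast\mathscr{W})\simeq\mathrm{Hom}(\mathscr{R},[\mathbf{f}^\ast\mathscr{V},\mathbf{f}^\ast\mathscr{W}]_{\mathbf{X}})\simeq\mathrm{Hom}(\mathscr{R},\mathbf{f}^\ast[\mathscr{V},\mathscr{W}]_{\mathbf{Y}})\simeq\mathrm{Hom}(\mathbf{f}_!\mathscr{R},[\mathscr{V},\mathscr{W}]_{\mathbf{Y}})\simeq\mathrm{Hom}((\mathbf{f}_!\mathscr{R})\otimes\mathscr{V},\mathscr{W})$,
using symmetry of $\otimes$ where needed. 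This yields \eqref{ProjectionFormulaForSimplicialLocalSystems}. Running the same chain with simplicial hom-objects instead of hom-sets makes the isomorphism $\mathrm{sSet}$-enriched. I would also remark that the canonical comparison map $\mathbf{f}_!(\mathscr{R}\otimes\mathbf{f}^\ast\mathscr{V})\to\mathbf{f}_!\mathscr{R}\otimes\mathscr{V}$, obtained as the adjunct of the unit tensored with $\mathbf{f}^\ast\mathscr{V}$ and using (i), is the isomorphism produced by this argument.
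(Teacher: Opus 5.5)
Your proof is correct. For (iii) it coincides with the paper's argument: the same Yoneda chain through $\mathbf{f}_!\dashv\mathbf{f}^\ast$, the tensor--hom adjunction and (ii).

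For (i) and (ii) you take a different route. The paper first reduces to maps between skeletal groupoids $\coprod_i\mathbf{B}\mathcal{G}_i$, via the skeletization equivalences of Lem.~\ref{SkeletalizationOfLocalSystems} and the square \eqref{SkeletizationOfLocalSystemsAlongAMap}. There $\mathbf{f}^\ast$ is restriction of representations along simplicial group homomorphisms, and strong closedness is read off from the conjugation formula for the internal hom over a delooping groupoid (Prop.~\ref{ClosedMonoidalStructureOfLocalSystemsOverSimplicialDelooping}).

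You instead establish that conjugation formula directly over an arbitrary simplicial groupoid: $y\mapsto[\mathscr{V}_y,\mathscr{W}_y]$, with $g$ acting by $[\mathscr{V}_{g^{-1}},\mathscr{W}_g]$. Concretely, one untwists $\mathbf{Y}(y,-)\cdot\mathscr{V}\cong\mathbf{Y}(y,-)\cdot\mathscr{V}_y$ using inverses and the diagonal, and then applies enriched Yoneda. After that, (ii) holds on the nose because $\mathbf{f}$ preserves inverses. You also rightly observe that (i) needs no reduction at all, since $\otimes_{\mathbf{Y}}$ is objectwise and $\mathrm{diag}_{\mathbf{Y}}\circ\mathbf{f}=(\mathbf{f}\times\mathbf{f})\circ\mathrm{diag}_{\mathbf{X}}$.

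What your route buys is that it is choice-free and canonical. It never has to choose skeleta compatibly with $\mathbf{f}$, which in general is possible only up to natural isomorphism. For example, if two components of $\mathbf{X}$ map to distinct objects of one component of $\mathbf{Y}$, then $\mathbf{f}\circ\iota_{\mathbf{X}}$ cannot factor through a skeleton of $\mathbf{Y}$. So the paper's reduction really transports the statement along equivalences up to isomorphism, which still suffices.

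The paper's route, in exchange, requires no new computation at this point. The delooping-groupoid formula and the skeletization lemma are already in place and are reused elsewhere, e.g.\ in Prop.~\ref{MonoidalModelStructureOnLinearSimplicialGroupRepresentations}.
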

\begin{proof}
 By the adjoint equivalences
 \eqref{SkeletizationOfLocalSystemsAlongAMap} it is sufficient to check this for maps between skeletal simplicial groupoids.
 That in this case precomposition $\mathbf{f}^\ast$ is a strong monoidal closed functor
 is manifest by Prop. \ref{ClosedMonoidalStructureOfLocalSystemsOverSimplicialDelooping}. The projection formula then follows by
 adjointness (cf. \cite{FHM03}), namely for any $\mathscr{W} \,\in\, \mathbf{sCh}^{\mathbf{Y}}_{\mathbb{K}}$ we have natural isomorphisms
 \[
   \def\arraystretch{2}
   \begin{array}{lll}
     \mathbf{sCh}_{\mathbb{K}}^{\mathbf{Y}}
     \Big(
       \mathbf{f}_!
       \big(
         \mathscr{R} 
           \otimes_{\mathbf{X}}
         \mathbf{f}^\ast
         \mathscr{V}
       \big)
       ,\,
       \mathscr{W}
     \Big)
     &
     \;\simeq\;
     \mathbf{sCh}_{\mathbb{K}}^{\mathbf{X}}
     \Big(
         \mathscr{R} 
           \otimes_{\mathbf{X}}
         \mathbf{f}^\ast
         \mathscr{V}
       ,\,
       \mathbf{f}^\ast
       \mathscr{W}
     \Big)
     &
     \proofstep{
       by \eqref{BaseChangeAdjointTripleOnSimplicialLocalSystems}
     }
     \\
   &  \;\simeq\;
     \mathbf{sCh}_{\mathbb{K}}^{\mathbf{X}}
     \Big(
         \mathscr{R} 
       ,\,
       \big[
         \mathbf{f}^\ast
         \mathscr{V}
         ,\,
         \mathbf{f}^\ast
         \mathscr{W}
       \big]_{\mathbf{X}}
     \Big)
     &
     \proofstep{
       by \eqref{ClosedMonoidalStructureRecalled}
     }
     \\
    & \;\simeq\;
     \mathbf{sCh}_{\mathbb{K}}^{\mathbf{X}}
     \Big(
         \mathscr{R} 
       ,\,
       \mathbf{f}^\ast
       \big[
         \mathscr{V}
         ,\,
         \mathscr{W}
       \big]_{\mathbf{Y}}
     \Big)
     &
     \proofstep{
       by
       \eqref{StrongClosedPullback}
     }
     \\
   &  \;\simeq\;
     \mathbf{sCh}_{\mathbb{K}}^{\mathbf{Y}}
     \Big(
       \mathbf{f}_!
         \mathscr{R} 
       ,\,
       \big[
         \mathscr{V}
         ,\,
         \mathscr{W}
       \big]_{\mathbf{Y}}
     \Big)
     &
     \proofstep{
       by
       \eqref{BaseChangeAdjointTripleOnSimplicialLocalSystems}
     }
     \\
     &   
     \;\simeq\;
     \mathbf{sCh}_{\mathbb{K}}^{\mathbf{Y}}
     \big(
       (
       \mathbf{f}_!
         \mathscr{R} 
       )
         \otimes_{\mathbf{Y}}
         \mathscr{V}
       ,\,
         \mathscr{W}
     \big)
     &
     \proofstep{
       by 
       \eqref{ClosedMonoidalStructureRecalled},
     }
   \end{array}
 \]
  and since these are natural in $\mathscr{W}$, the projection formula \eqref{ProjectionFormulaForSimplicialLocalSystems} follows by the Yoneda Lemma.
\end{proof}

We will need the Beck-Chevalley condition for simplicial local systems, but it will be sufficient to consider the following very special cases 
(the first is item (b) in \cite[p. 511]{Seely83}, for $B = \ast$).

To that end, we denote the projections out of a  Cartesian product of simplicial groupoids as follows:
\[
   \mathbf{X} ,\, \mathbf{Y} 
   \;\in\; 
   \mathrm{sSet}\mbox{-}\mathrm{Grpd}
   \hspace{1.2cm}
     \vdash
   \hspace{1.2cm}
   \begin{tikzcd}[row sep=2pt, column sep=25pt] 
    & 
    \mathbf{X} \times \mathbf{Y}
    \ar[
      dl, 
      "{ \mathrm{pr}_{\mathbf{X}} }"{swap, pos=.35}
    ]
    \ar[
      dr, 
      "{ \mathrm{pr}_{\mathbf{Y}} }"{pos=.35}
    ]
    \\
    \mathbf{X}
    &&
    \mathbf{Y}
    \mathrlap{\,.}
  \end{tikzcd}
\]

\begin{lemma}[Beck-Chevalley for simplicial local systems along product projections]
\label{BeckChevalleyForSimplicialLocalSystemsAlongProductProjections}
Given a diagram in $\mathrm{sSet}\mbox{-}\mathrm{Grpd}$ of the form
\[
  \begin{tikzcd}[column sep=large]
    \mathbf{X} \times \mathbf{Y}
    \ar[
      r,
      "{ \mathbf{f} \times \mathrm{id} }"
    ]
    \ar[
      d,
      "{ \mathrm{pr}_{\mathbf{X}} }"
    ]
    &
    \mathbf{X}' \times \mathbf{Y}
    \ar[
      d,
      "{ \mathrm{pr}_{\mathbf{X}'} }"
    ]
    \\
    \mathbf{X}
    \ar[r, "{ \mathbf{f} }"]
    &
    \mathbf{Y}
  \end{tikzcd}
\]
then the following two ways of pull/push 
\eqref{BaseChangeAdjointTripleOnSimplicialLocalSystems}
of local systems through this diagram  are naturally isomorphic:
\begin{equation}
  \label{BeckChevalleyAlongProducts}
  (f \times \mathrm{id})_!
  \circ 
  \mathrm{pr}_{\mathbf{X}}^\ast
  \;\simeq\;
  \mathrm{pr}_{\mathbf{X}'}^\ast
    \circ 
  f_!
  \;\;
  :
  \;\;
  \mathbf{sCh}^{\mathbf{X}}_{\mathbb{K}}
  \longrightarrow
  \mathbf{sCh}^{\mathbf{X}' \times \mathbf{Y}}_{\mathbb{K}}.
\end{equation}
\end{lemma}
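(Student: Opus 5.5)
Note first that the diagram in the statement should read $\mathbf{f}:\mathbf{X}\to\mathbf{X}'$ in the bottom row. The plan is to prove the isomorphism \eqref{BeckChevalleyAlongProducts} by passing to right adjoints, which are all given by precomposition. Both sides of \eqref{BeckChevalleyAlongProducts} are left adjoints: $(\mathbf{f}\times\mathrm{id})_!\circ\mathrm{pr}_{\mathbf{X}}^\ast$ has right adjoint $(\mathrm{pr}_{\mathbf{X}})_\ast\circ(\mathbf{f}\times\mathrm{id})^\ast$, and $\mathrm{pr}_{\mathbf{X}'}^\ast\circ\mathbf{f}_!$ has right adjoint $\mathbf{f}^\ast\circ(\mathrm{pr}_{\mathbf{X}'})_\ast$. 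By uniqueness of adjoints, it suffices to exhibit a natural isomorphism
\[
  (\mathrm{pr}_{\mathbf{X}})_\ast\circ(\mathbf{f}\times\mathrm{id})^\ast
  \;\simeq\;
  \mathbf{f}^\ast\circ(\mathrm{pr}_{\mathbf{X}'})_\ast .
\]
Alternatively, one can verify \eqref{BeckChevalleyAlongProducts} directly on the left Kan extensions via the coend formulas \eqref{CoEndFormulasForKanExtension}; both routes reduce to the same computation.

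I would carry out the direct coend computation, since it is more transparent. For $\mathscr{V}\in\mathbf{sCh}^{\mathbf{X}}_{\mathbb{K}}$ and $(x',y)\in\mathbf{X}'\times\mathbf{Y}$, the left side evaluates to
\[
  \int^{(x,y_0)\in\mathbf{X}\times\mathbf{Y}}
  \big(\mathbf{X}'(\mathbf{f}x,x')\times\mathbf{Y}(y_0,y)\big)\cdot\mathscr{V}_x .
\]
Since the tensoring is separately cocontinuous and $\mathrm{sSet}$-enriched Fubini for coends holds, this splits as an iterated coend. The inner coend over $y_0$ is
\[
  \int^{y_0}\mathbf{Y}(y_0,y)\cdot(\text{constant in }y_0).
\]
Here the integrand is constant because $\mathscr{V}_x$ and $\mathbf{X}'(\mathbf{f}x,x')$ do not depend on $y_0$.

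The key step is to evaluate that inner coend, which is $\big(\int^{y_0}\mathbf{Y}(y_0,y)\big)\cdot(-)$. The coend $\int^{y_0}\mathbf{Y}(y_0,y)$ is the left Kan extension of the terminal functor along $\mathbf{Y}\to\ast$, evaluated at $y$. This is the colimit of the representable $\mathbf{Y}(-,y)$, which is the point $\ast$ by co-Yoneda (a representable has colimit the terminal object). Substituting this back, the expression becomes
\[
  \int^{x}\mathbf{X}'(\mathbf{f}x,x')\cdot\mathscr{V}_x
  \;=\;
  (\mathbf{f}_!\mathscr{V})_{x'}
  \;=\;
  (\mathrm{pr}_{\mathbf{X}'}^\ast\mathbf{f}_!\mathscr{V})_{(x',y)} .
\]
The action on morphisms of $\mathbf{Y}$ is trivial on both sides, as required.

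The main obstacle is to make the identification $\mathrm{colim}\,\mathbf{Y}(-,y)\simeq\ast$ hold correctly in the enriched sense, i.e.\ as a coend weighted by simplicial sets. I would check it at the level of hom-objects: maps out of $\int^{y_0}\mathbf{Y}(y_0,y)\cdot\mathscr{U}$ into any $\mathscr{W}$ correspond to enriched natural transformations $\mathbf{Y}(-,y)\to\mathbf{sCh}(\mathscr{U},\mathscr{W})$ into a constant functor, and by enriched Yoneda these are just elements of $\mathbf{sCh}(\mathscr{U},\mathscr{W})$. The remaining points are naturality of the isomorphism in $\mathscr{V}$ and in $(x',y)$, together with compatibility with the $\mathbf{X}'\times\mathbf{Y}$ functoriality; these are formal consequences of the coend calculus and do not need the groupoid hypothesis. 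If desired, one could instead use the skeletal reduction of Lem. \ref{SkeletalizationOfLocalSystems}, where on delooping groupoids the statement is $(\mathcal{G}'\times\mathcal{K})\cdot_{\mathcal{G}\times\mathcal{K}}\mathscr{V}\simeq\mathcal{G}'\cdot_{\mathcal{G}}\mathscr{V}$ with trivial $\mathcal{K}$-action, an evident quotient isomorphism.
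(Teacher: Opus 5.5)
Your argument is correct and is essentially the paper's proof: expand $(\mathbf{f}\times\mathrm{id})_!\,\mathrm{pr}_{\mathbf{X}}^\ast\mathscr{V}$ by the coend formula, split it by Fubini, and collapse $\int^{y_0}\mathbf{Y}(y_0,y)\cdot(-)$ by co-Yoneda. The only difference is that you evaluate the two coends in the opposite order, and you rightly read the bottom map as $\mathbf{f}:\mathbf{X}\to\mathbf{X}'$ and the weight as $\mathbf{X}'(\mathbf{f}x,x')$, both of which the paper mistypes.

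One small slip: $\int^{y_0}\mathbf{Y}(y_0,y)$ is the left Kan extension of the terminal functor along the identity of $\mathbf{Y}$, equivalently the conical colimit of $\mathbf{Y}(-,y)$. It is not the left Kan extension along $\mathbf{Y}\to\ast$, which would instead produce $\pi_0(\mathbf{Y})$. Your enriched-Yoneda verification is the correct justification.
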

\begin{proof}
By coend calculus \eqref{CoEndFormulasForKanExtension}, we have for $(x',y) \,\in\, \mathbf{X}' \times \mathbf{Y}$ the following sequence of natural isomorphisms:
  \begin{align*}
    \big(
    (\mathbf{f} \times \mathrm{id})_!
    \,\circ\, 
    \mathrm{pr}_{\mathbf{X}}^\ast
    \mathscr{V}
    \big)_{(x', y)}
    &
    \;\simeq\;
    \int^{ (x, y_0) \in \mathbf{X} \times \mathbf{Y} }
    \mathbf{X}\big(\mathbf{f}(x),x'\big)
    \times
    \mathbf{Y}\big(\mathrm{id}(y_0), y\big)
    \cdot
    \big(
      \mathrm{pr}^\ast_{\mathbf{X}} 
      \mathscr{V}
    \big)_{ (x,y_0) }
    \\
  &  \;\simeq\;
    \int^{ (x, y_0) \in \mathbf{X} \times \mathbf{Y} }
    \mathbf{X}\big(\mathbf{f}(x),x'\big)
    \times
    \mathbf{Y}(y_0, y)
    \cdot
    \mathscr{V}_{\!x}
    \\
  &  \;\simeq\;
    \int^{y_0 \in \mathbf{Y}}
    \int^{x \in \mathbf{X}}
    \mbox{\bf{Y}}(y_0, y)
    \times
    \mbox{\bf{X}}\big(\mathbf{f}(x),x'\big)
    \cdot
    \mathscr{V}_{\!x}
    \\
  &  \;\simeq\;
    \int^{y_0 \in \mathbf{Y}}
    \mbox{\bf{Y}}(y_0, y)
    \cdot
    \int^{x \in \mathbf{X}}
    \mbox{\bf{X}}\big(\mathbf{f}(x),x'\big)
    \cdot
    \mathscr{V}_{\!x}
    \\
  &  \;\simeq\;
    \int^{y_0 \in \mathbf{Y}}
    \mbox{\bf{Y}}(y_0, y)
    \cdot
    (\mathbf{f}_! \mathscr{V})_{x'}
    \\
 &   \;\simeq\;
    (\mathbf{f}_! \mathscr{V})_{x'}
    \\
 &   \;\simeq\;
    \big(
    \mathrm{pr}_{\mathbf{X}'}
    (\mathbf{f}_! \mathscr{V})
    \big)_{(x',y)}\;.
    \qedhere
  \end{align*}
\end{proof}

\begin{lemma}[Beck-Chevalley for simplicial local systems along embeddings]
\label{BeckChevalleyForSimplicialLocalSystemsAlongEmbeddings}
Given a diagram in $\SimplicialGroupoids$ of the form
\begin{equation}
  \label{PullbackOfInjectionIntoSimplicialGroupoid}
  \begin{tikzcd}
    \mathbf{X}'
    \ar[rr, "{ \mathbf{f}' }"]
    \ar[
      d, 
      hook, "{ \iota_{\mathbf{X}} }"
    ]
    \ar[
      drr,
      phantom,
      "{
        \scalebox{.7}{
          \rm (pb)
        }
      }"
    ]
    &&
    \mathbf{Y}'
    \ar[
      d, 
      hook, "{ \iota_{\mathbf{Y}} }"
    ]
    \\
    \mathbf{X}
    \ar[rr, "{ \mathbf{f} }"]
    &&
    \mathbf{Y}
    \,,
  \end{tikzcd}
\end{equation}
where the vertical maps are full simplicial sub-groupoid embedding (injective on objects, fully faithful on hom-objects), then the following two ways of pull/push of simplicial local systems through this diagram \eqref{BaseChangeAdjointTripleOnSimplicialLocalSystems} are naturally isomorphic:
\begin{equation}
  \label{BeckChevalleyAlongEmbeddings}
  \mathbf{f}'_!
  \,\circ\,
  \iota_{\mathbf{X}}^\ast
  \;\;
  \simeq
  \;\;
  \iota_{\mathbf{Y}}
  \,\circ\,
  \mathbf{f}_!
  \;\;:\;\;
  \mathbf{sCh}^{\mathbf{X}}_{\GroundField}
  \xrightarrow{\phantom{--}}
  \mathbf{sCh}^{\mathbf{Y}'}_{\GroundField}
  \,.
\end{equation}
\end{lemma}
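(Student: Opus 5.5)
We check the natural isomorphism objectwise, computing both sides at an object $y' \in \mathbf{Y}'$ with the coend formula for left Kan extension \eqref{CoEndFormulasForKanExtension}, just as in the proof of Lem. \ref{BeckChevalleyForSimplicialLocalSystemsAlongProductProjections}. (The right-hand side of \eqref{BeckChevalleyAlongEmbeddings} is to be read as $\iota_{\mathbf{Y}}^\ast \circ \mathbf{f}_!$.) For $\mathscr{V} \in \mathbf{sCh}^{\mathbf{X}}_{\GroundField}$ we have
\[
  \big(\iota_{\mathbf{Y}}^\ast \mathbf{f}_! \mathscr{V}\big)_{y'}
  \;\simeq\;
  \int^{x \in \mathbf{X}} \mathbf{Y}\big(\mathbf{f}(x),\, \iota_{\mathbf{Y}}(y')\big) \cdot \mathscr{V}_{\!x}
  \,,
  \qquad
  \big(\mathbf{f}'_! \iota_{\mathbf{X}}^\ast \mathscr{V}\big)_{y'}
  \;\simeq\;
  \int^{x' \in \mathbf{X}'} \mathbf{Y}'\big(\mathbf{f}'(x'),\, y'\big) \cdot \mathscr{V}_{\!\iota_{\mathbf{X}}(x')}
  \,.
\]
There is a canonical comparison map from the second coend to the first. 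It is induced by $\iota_{\mathbf{X}}$ on the indexing category, together with $\iota_{\mathbf{Y}}$ on hom-objects, and the commutativity $\iota_{\mathbf{Y}} \circ \mathbf{f}' = \mathbf{f} \circ \iota_{\mathbf{X}}$. Equivalently, this comparison is the Beck-Chevalley mate of the square, which gives naturality for free. So it remains to show that this map is an isomorphism.

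The key step is to restrict the first coend to objects of $\mathbf{X}'$. For this we use that $\mathbf{Y}$ is a simplicial \emph{groupoid}. Consider the term $\mathbf{Y}(\mathbf{f}(x), \iota_{\mathbf{Y}}(y'))$ for some $x \in \mathbf{X}$. If it is nonempty, then $\mathbf{f}(x)$ lies in the same connected component as some object of $\mathbf{Y}'$. We therefore need $\mathbf{Y}'$ to be a union of connected components of $\mathbf{Y}$; this holds in the application in item 3 of the proof of Thm. \ref{GlobalModelStructure}, where $\iota_{\mathbf{Y}}$ is the inclusion $\mathbf{B}(\mathbf{X}(x,x))$ of a skeletal piece. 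Where the statement's hypothesis does not literally give this, I would instead argue by a cofinality (``final functor'') argument for the restriction of the weighted coend along the fully faithful inclusion $\iota_{\mathbf{X}}$, with $\mathbf{X}' = \mathbf{f}^{-1}(\mathbf{Y}')$ supplied by the pullback square. Concretely, I would reduce via Lem. \ref{SkeletalizationOfLocalSystems}(ii) and the skeletization lemmas to skeletal groupoids. In the skeletal case the map $\mathbf{f}$ sends each component $\mathbf{B}\mathcal{H}$ into one component $\mathbf{B}\mathcal{G}$, and the pullback $\mathbf{X}'$ is exactly the disjoint union of those components of $\mathbf{X}$ that map into $\mathbf{Y}'$. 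Both coends then split as coproducts over components: $\mathbf{f}_!$ is computed componentwise as induced representations $\mathcal{G}\cdot_{\mathcal{H}}(-)$ summed over the preimage components, and the components of $\mathbf{X}$ not lying over $\mathbf{Y}'$ contribute the empty hom-object, hence zero.

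With this reduction, the coend over $\mathbf{X}$ collapses to a coend over $\mathbf{X}'$. The hom-objects agree because $\iota_{\mathbf{Y}}$ is fully faithful: $\mathbf{Y}(\iota_{\mathbf{Y}}\mathbf{f}'(x'), \iota_{\mathbf{Y}} y') \simeq \mathbf{Y}'(\mathbf{f}'(x'), y')$. Likewise the coend relations over $\mathbf{X}$ restrict to those over $\mathbf{X}'$ because $\iota_{\mathbf{X}}$ is fully faithful. This identifies the comparison map with an isomorphism.

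The main obstacle is precisely this restriction of the coend. Specifically, one must ensure that no object of $\mathbf{X}$ outside $\mathbf{X}'$ contributes while $\mathbf{f}(x)$ is still connected to $\mathbf{Y}'$. I would handle it first in the skeletal, componentwise setting as described, and then transport the result along the strictly commuting squares of Lem. \ref{SkeletalizationOfLocalSystems}(ii), which is legitimate because those squares commute strictly.
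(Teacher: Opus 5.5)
Your setup is sound: objectwise coend formulas, the comparison map as the mate, and reading the right-hand side of \eqref{BeckChevalleyAlongEmbeddings} as $\iota_{\mathbf{Y}}^\ast\circ\mathbf{f}_!$. You also correctly isolate the crux, namely objects $x\in\mathbf{X}$ outside $\mathbf{X}'$ with $\mathbf{f}(x)$ connected to $\iota_{\mathbf{Y}}(\mathbf{Y}')$. But neither of your ways of disposing of them works, and no argument can, because the lemma as stated is false.

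Let $\mathbf{Y}$ be the codiscrete groupoid on objects $a,b$, let $\mathbf{Y}'=\{a\}$, and let $\mathbf{X}=\ast$ with $\mathbf{f}(\ast)=b$. The square is a pullback with $\mathbf{X}'=\varnothing$, so $\mathbf{f}'_!\iota_{\mathbf{X}}^\ast\mathscr{V}=0$. On the other hand, $(\iota_{\mathbf{Y}}^\ast\mathbf{f}_!\mathscr{V})_a\simeq\mathbf{Y}(b,a)\cdot\mathscr{V}_\ast\simeq\mathscr{V}_\ast$.

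This example shows concretely where your reduction breaks. After skeletalizing $\mathbf{Y}$ to $\{a\}$, the pullback becomes $\ast$ rather than $\varnothing$. So the pullback hypothesis is not stable under passing to skeleta. Lem. \ref{SkeletalizationOfLocalSystems}(ii) only treats a single map, so it gives you no way to transport the (true) skeletal statement back. Your fallback cofinality argument fails for the same reason: a fully faithful inclusion is final only onto the connected components it meets.

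Your claim that in item 3 of the proof of Thm. \ref{GlobalModelStructure} the subgroupoid $\mathbf{Y}'$ is a union of components is also incorrect. There $\mathbf{Y}'=\mathbf{B}(\mathbf{X}(x,x))$ is the one-object full subgroupoid of a generally non-skeletal groupoid. An example is $\mathbf{G}$ applied to $\Delta[0]\hookrightarrow\Delta[1]$, which is $\{0\}\hookrightarrow\{0\leftrightarrow 1\}$.

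The paper's own first step (``it is clear that the statement follows as soon as it holds for $\mathbf{X}$ and $\mathbf{Y}$ replaced by the connected components of the images'') silently discards exactly these components, so it has the same gap. What is missing is an extra hypothesis: every connected component of $\mathbf{X}$ that $\mathbf{f}$ maps into a component of $\mathbf{Y}$ meeting $\iota_{\mathbf{Y}}(\mathbf{Y}')$ must itself meet $\iota_{\mathbf{X}}(\mathbf{X}')$.

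Granting this hypothesis, no skeletalization is needed:
\begin{itemize}
\item The coend over $\mathbf{X}$ splits over its connected components.
\item Components not meeting $\mathbf{X}'$ have empty weights $\mathbf{Y}(\mathbf{f}(x),\iota_{\mathbf{Y}}(y'))$ and so contribute $0$.
\item On a component $\mathbf{X}_c$ meeting $\mathbf{X}'$, the inclusion $\mathbf{X}'\cap\mathbf{X}_c\hookrightarrow\mathbf{X}_c$ is fully faithful and essentially surjective, hence final.
\item Full faithfulness of $\iota_{\mathbf{Y}}$ then identifies the weights, which is exactly the paper's coend computation.
\end{itemize}

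Where the lemma is applied in Thm. \ref{GlobalModelStructure}, the hypothesis does hold, but not because $\mathbf{Y}'$ is a union of components. It holds because the map being pushed forward along is a Dwyer-Kan equivalence, hence injective on $\pi_0$, and the chosen representative $x$ is the image of $x'$.
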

\begin{proof}
It is clear that the statement follows as soon as it holds for $\mathbf{X}$ and $\mathbf{Y}$ replaced by the connected components of the images of $\iota_{\mathbf{X}}$ and $\iota_{\mathbf{Y}}$, respectively (this is where the assumption enters that \eqref{PullbackOfInjectionIntoSimplicialGroupoid} is a pullback, hence that $\mathbf{f}'$ is the corestriction of $\mathbf{f}$ to $\mathbf{Y}'$). Therefore we may assume, without restriction, that $\iota_{\mathbf{X}}$ and $\iota_{\mathbf{Y}}$ are {\it in addition} also essentially surjective on objects. 

In this case, since they are also assumed to be fully faithful, they are in fact equivalences of enriched categories and as such are {\it final} as enriched functors, meaning that their precomposition preserves weighted colimits such as coends (\cite[\S 4.5]{Kelly82}, see also \cite{Campion20}):
\[
  \def\arraystretch{1.6}
  \begin{array}{ll}
    \big(
      \mathbf{f}'_!
      \circ
      \iota_{\mathbf{X}}^\ast(\mathscr{V})
    \big)_{y'}
    \\
    \;\simeq\; 
    \int^{x' \in \mathbf{X}'}
    \mathbf{Y}'\big(\mathbf{f}'(x'), y'\big)
    \cdot
    (\iota_{\mathbf{X}}^\ast\mathscr{V})_{x'}
    &
    \proofstep{
      \eqref{CoEndFormulasForKanExtension}
    }
    \\
    \;\simeq\; 
    \int^{x' \in \mathbf{X}'}
    \mathbf{Y}\Big(
      \iota_{\mathbf{Y}} 
      \big( 
        \mathbf{f}'(x')
      \big)
      ,
    \iota_{\mathbf{Y}}(y')
    \Big)
    \cdot
    \mathscr{V}_{\iota_{\mathbf{X}}(x')}
    &
    \proofstep{
      $\iota_{\mathbf{Y}}$
      fully faithful
    }
    \\
    \;\simeq\;
    \int^{x' \in \mathbf{X}'}
    \mathbf{Y}\Big(
      \mathbf{f}\big(
        \iota_{\mathbf{X}}(x')
      \big), 
      \iota_{\mathbf{Y}}(y')
    \Big)
    \cdot
    \mathscr{V}_{\iota_{\mathbf{X}}(x')}
    &
    \proofstep{
      \eqref{PullbackOfInjectionIntoSimplicialGroupoid}
    }
    \\
    \;\simeq\;
    \int^{x \in \mathbf{X}}
    \mathbf{Y}\Big(
      \mathbf{f}(
        x
      ), 
      \iota_{\mathbf{Y}}(y')
    \Big)
    \cdot
    \mathscr{V}_{x}
    &
    \proofstep{ finality of $\iota_{\mathbf{X}}$ }
    \\
    \;\simeq\;
    \big(
      \iota_{\mathbf{X}}^\ast
      \circ
      \mathbf{f}_! \mathscr{V}
    \big)_{ y' }
    &
    \proofstep{
      \eqref{CoEndFormulasForKanExtension}.
    }
  \end{array}
\]
These natural isomorphisms establish the claim.
\end{proof}

\noindent
{\bf External tensor of simplicial local systems.}

\begin{definition}[External tensor product of simplicial local systems]
\label{ExternalTensorProductOfSimplicialLocalSystems}
$\,$ \newline
The {\it external tensor product} on the category of simplicial local systems \eqref{Loc} is the following functor:
\begin{equation}
  \label{FormulaForExternalTensorOfSimplicialLocalSystems}
  \begin{tikzcd}[row sep=-25pt, column sep=large]
    \mathbf{Loc}_{\mathbb{K}}
    \times
    \mathbf{Loc}_{\mathbb{K}}
    \ar[rr, "{ \boxtimes }"]
    &&
    \mathbf{Loc}_{\mathbb{K}}
    \\[20pt]
    \big(
      \mathscr{V}_{\!\mathbf{X}}
      ,\,
      \mathscr{W}_{\!\mathbf{Y}}
    \big)
    \ar[
      d,
      "{
        (
          \phi_f
          ,\,
          \gamma_g
        )
      }"{left}
    ]
    &\longmapsto&
    \Big(
    \big(
      \mathrm{pr}_{\mathbf{X}}^\ast
        \mathscr{V}
    \big)
    \otimes_{\mathbf{X} \times \mathbf{Y}}
    \big(
      \mathrm{pr}_{\mathbf{Y}}^\ast
        \mathscr{W}
    \big)    
    \Big)_{\mathbf{X} \times \mathbf{Y}}
    \ar[
      d,
      "{
        \big(
        (
          \mathrm{pr}_{\mathbf{X}}^\ast
          \phi
        )
        \otimes_{\mathbf{X} \times \mathbf{Y}}
        (
          \mathrm{pr}_{\mathbf{Y}}^\ast
          \gamma
        )
        \big)_{ f \times g }
      }"{right}
    ]
    \\[+50pt]
    \big(
      \mathscr{V}'_{\!\mathbf{X}'}
      ,\,
      \mathscr{W}'_{\!\mathbf{Y}'}
    \big)
    &\longmapsto&
    \Big(
    \big(
      \mathrm{pr}_{\mathbf{X}'}^\ast
        \mathscr{V}'
    \big)
    \otimes_{\mathbf{X}' \times \mathbf{Y}'}
    \big(
      \mathrm{pr}_{\mathbf{Y}'}^\ast
        \mathscr{W}'
    \big)    
    \Big)_{
      \mathbf{X}' \times \mathbf{Y}'
      \mathrlap{\,,}
    }
  \end{tikzcd}
\end{equation}
where on the right we are leaving the strong monoidal structure isomorphism notationally implicit. In more detail, 
the component map of the morphism on the right is this composite:
\[
  \begin{tikzcd}[column sep=53pt]
  \big(
    \mathrm{pr}_{\mathbf{X}}^\ast
      \mathscr{V}
  \big)
  \otimes_{\mathbf{X} \times \mathbf{Y}}
  \big(
    \mathrm{pr}_{\mathbf{Y}}^\ast
      \mathscr{W}
  \big)      
  \ar[
    r,
    "{
      (
      \mathrm{pr}_{\mathbf{X}}^\ast
      \phi
      )
      \otimes
      (
      \mathrm{pr}_{\mathbf{Y}}^\ast
      \gamma
      )
    }"{yshift=2pt}
  ]
  &
  \Big(\!
  \big(\,
  \underset{
    \mathclap{
      (f\times g)^\ast 
      \mathrm{pr}_{\mathbf{X}'}^\ast   
    }
  }{
  \underbrace{
    \mathrm{pr}_{\mathbf{X}}^\ast
    f^\ast
  }
  }
    \mathscr{V}'
  \big)
  \otimes_{\mathbf{X} \times \mathbf{Y}}
  \big(\,
  \underset{
    \mathclap{
      (f\times f')^\ast 
      \mathrm{pr}_{\mathbf{Y}'}^\ast   
    }
  }{
  \underbrace{
    \mathrm{pr}_{\mathbf{Y}}^\ast
    g^\ast
  }
  }
      \mathscr{W}'
  \big)      
 \! \Big)
  \,\simeq\,
  (f \times g)^\ast
  \Big(\!
  \big(
    \mathrm{pr}_{\mathbf{X}'}^\ast
      \mathscr{V}'
  \big)
  \otimes_{\mathbf{X}' \times \mathbf{Y}'}
  \big(
    \mathrm{pr}_{\mathbf{Y}'}^\ast
      \mathscr{W}'
  \big)      
  \!\Big)
  \,,
  \end{tikzcd}
\]
where the under-braces indicate equalities exhibiting the commutativity of this diagram
\begin{equation}
  \label{MapOfProductsOfSimplicialGroupoids}
  \begin{tikzcd}[
    row sep=15pt,
    column sep=40pt
  ]
    \mathbf{X}
    \ar[r, "{ f }"]
    &
    \mathbf{X}'
    \\
    \mathbf{X}\times \mathbf{Y}
    \ar[u, "{ \mathrm{pr}_{\mathbf{X}} }"{left}]
    \ar[d, "{ \mathrm{pr}_{\mathbf{Y}} }"{left}]
    \ar[
      r,
      "{ f \times g }"
    ]
    &
    \mathbf{X}' \times \mathbf{Y}'
    \ar[u, "{ \mathrm{pr}_{\mathbf{X}'} }"{right}]
    \ar[d, "{ \mathrm{pr}_{\mathbf{Y}'} }"{right}]
    \\
    \mathbf{Y}
    \ar[r, "{ g }"]
    &
    \mathbf{Y}'
    \mathrlap{\,,}
  \end{tikzcd}
\end{equation}
while the isomorphism ``$\simeq$'' is the strong monoidal structure of $(f \times f)^\ast$ 
(which is in fact still an actual equality, objectwise).
\end{definition}

\begin{remark}[External tensor with a unit system]
Since pullback preserves tensor units \eqref{PullbackOfSimplicialLocalSystemsPreservesTensorUnit}, 
the pullback of a simplicial local system to a Cartesian product is isomorphic to its external tensor product \eqref{FormulaForExternalTensorOfSimplicialLocalSystems}
with the unit system on the other factor:
\begin{equation}
  \label{ExternalTensorWithUnitIsPullback}
  \mathscr{V}_{\mathbf{X}} 
    \boxtimes 
  \mathbb{K}_{\mathbf{Y}}
  \;\simeq\;
  \big(
  \mathrm{pr}_{\mathbf{X}}^\ast\mathscr{V}
  \big)_{\mathbf{X} \times \mathbf{Y}}
  \,.
\end{equation}
In particular, the external tensor product in general may be expressed in terms of external tensoring with tensor units, as:
\begin{equation}
  \label{ExternalTensorFromExternalTensorWithUnits}
  \mathscr{V}_{\mathbf{X}}
  \,\boxtimes\,
  \mathscr{W}_{\mathbf{Y}}
  \;\;\simeq\;\;
  \big(
    \mathscr{V}_{\mathbf{X}}
    \,\boxtimes\,
    \mathbb{K}_{\mathbf{Y}}
  \big)_{\mathbf{X} \times \mathbf{Y}}
  \;\otimes_{\mathbf{X} \times \mathbf{Y}}\;
  \big(
    \mathbb{K}_{\mathbf{X}}
    \,\boxtimes\,
    \mathscr{W}_{\mathbf{Y}}
  \big)_{\mathbf{X} \times \mathbf{Y}}\;.
\end{equation}
\end{remark}

\begin{lemma}[Pull/push of external tensors along product maps]
Given maps of simplicial groupoids
$\mathbf{f} \,:\, \mathbf{X} \to \mathbf{X}'$
and
$\mathbf{g} \,:\, \mathbf{Y} \to \mathbf{Y}'$
there are natural isomorphisms
\footnote{The isomorphism \eqref{PullbackOfExternalTensorAlongProductOfFunctions} is a fairly immediate consequence of
\eqref{PullbackOfSimplicialLocalSystemsPreservesTensorUnit}, but \eqref{PushOfExternalTensorAlongProductOfFunctions} 
is not so immediate, it appears mentioned in generality but without proof in \cite[p. 624]{Shulman13}, while in models for parameterized spectra it appears in \cite[Rem. 2.5.8, Prop. 13.7.2]{MaySigurdsson06}, \cite[Lem. 3.4.1]{Malkiewich19}, \cite[Lem. 2.5.1]{Malkiewich23}.}
\begin{align}
  \label{PullbackOfExternalTensorAlongProductOfFunctions}
  (\mathbf{f} \times \mathbf{g})^\ast
  \big(
    \mathscr{V} \boxtimes \mathscr{W}
  \big)
  \;\;
 & \simeq
  \;\;
  (\mathbf{f}^\ast \mathscr{V})
  \boxtimes
  (\mathbf{g}^\ast \mathscr{W})\;,
\\
  \label{PushOfExternalTensorAlongProductOfFunctions}
  (\mathbf{f} \times \mathbf{g})_!
  \big(
    \mathscr{V} \boxtimes \mathscr{W}
  \big)
  \;\;
   & \simeq
  \;\;
  (\mathbf{f}_! \mathscr{V})
  \boxtimes
  (\mathbf{g}_! \mathscr{W})\;.
\end{align}
\end{lemma}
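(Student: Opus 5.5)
For \eqref{PullbackOfExternalTensorAlongProductOfFunctions} the plan is to unwind Def.~\ref{ExternalTensorProductOfSimplicialLocalSystems} and use the strong monoidality of pullback \eqref{PullbackOfSimplicialLocalSystemsIsStrongMonoidal}:
\[
  (\mathbf{f}\times\mathbf{g})^\ast\big(\mathrm{pr}_{\mathbf{X}'}^\ast\mathscr{V}\otimes\mathrm{pr}_{\mathbf{Y}'}^\ast\mathscr{W}\big)
  \;\simeq\;
  (\mathbf{f}\times\mathbf{g})^\ast\mathrm{pr}_{\mathbf{X}'}^\ast\mathscr{V}\otimes(\mathbf{f}\times\mathbf{g})^\ast\mathrm{pr}_{\mathbf{Y}'}^\ast\mathscr{W}.
\]
Then the equalities $\mathrm{pr}_{\mathbf{X}'}\circ(\mathbf{f}\times\mathbf{g})=\mathbf{f}\circ\mathrm{pr}_{\mathbf{X}}$ (and likewise for $\mathbf{Y}$) from \eqref{MapOfProductsOfSimplicialGroupoids} rewrite the right-hand side as $\mathrm{pr}_{\mathbf{X}}^\ast\mathbf{f}^\ast\mathscr{V}\otimes\mathrm{pr}_{\mathbf{Y}}^\ast\mathbf{g}^\ast\mathscr{W}$. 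Alternatively, one can compute objectwise: at $(x,y)$ both sides are literally $\mathscr{V}_{\mathbf{f}(x)}\otimes\mathscr{W}_{\mathbf{g}(y)}$.

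For \eqref{PushOfExternalTensorAlongProductOfFunctions} I would factor $\mathbf{f}\times\mathbf{g}=(\mathrm{id}\times\mathbf{g})\circ(\mathbf{f}\times\mathrm{id})$. Since $(-)_!$ is pseudofunctorial, this reduces the claim to the case $\mathbf{g}=\mathrm{id}$, the other case being symmetric. For that case, write $\mathscr{V}\boxtimes\mathscr{W}=\mathrm{pr}_{\mathbf{X}}^\ast\mathscr{V}\otimes(\mathbf{f}\times\mathrm{id})^\ast\mathrm{pr}_{\mathbf{Y}}^\ast\mathscr{W}$, noting that $\mathrm{pr}_{\mathbf{Y}}=\mathrm{pr}_{\mathbf{Y}}\circ(\mathbf{f}\times\mathrm{id})$. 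The projection formula \eqref{ProjectionFormulaForSimplicialLocalSystems} applied to $\mathbf{f}\times\mathrm{id}$ then gives
\[
  (\mathbf{f}\times\mathrm{id})_!\big(\mathscr{V}\boxtimes\mathscr{W}\big)
  \;\simeq\;
  \big((\mathbf{f}\times\mathrm{id})_!\,\mathrm{pr}_{\mathbf{X}}^\ast\mathscr{V}\big)\otimes_{\mathbf{X}'\times\mathbf{Y}}\mathrm{pr}_{\mathbf{Y}}^\ast\mathscr{W}.
\]
Beck--Chevalley along product projections (Lem.~\ref{BeckChevalleyForSimplicialLocalSystemsAlongProductProjections}) identifies $(\mathbf{f}\times\mathrm{id})_!\,\mathrm{pr}_{\mathbf{X}}^\ast\mathscr{V}\simeq\mathrm{pr}_{\mathbf{X}'}^\ast\mathbf{f}_!\mathscr{V}$, which yields $(\mathbf{f}_!\mathscr{V})\boxtimes\mathscr{W}$. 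Running the same argument for $\mathrm{id}\times\mathbf{g}$ with the roles of the factors swapped completes the proof. Naturality follows because each step is a natural isomorphism.

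The main obstacle is making sure the cited results really apply in this generality. Frobenius reciprocity (Prop.~\ref{FrobeniusReciprocityForSimplicialLocalSystems}) was proved via skeletal reduction, but it is stated for arbitrary maps of simplicial groupoids, so it can be used as stated. Lemma~\ref{BeckChevalleyForSimplicialLocalSystemsAlongProductProjections} is stated with the bottom map written as $\mathbf{X}\to\mathbf{Y}$; I would read it as $\mathbf{f}:\mathbf{X}\to\mathbf{X}'$, which is what its coend proof actually establishes.

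If any doubt remains, a direct fallback is the coend computation
\[
  \int^{(x,y)}\mathbf{X}'(\mathbf{f}x,x')\times\mathbf{Y}'(\mathbf{g}y,y')\cdot(\mathscr{V}_x\otimes\mathscr{W}_y),
\]
using the identity $(S\times T)\cdot(V\otimes W)\simeq(S\cdot V)\otimes(T\cdot W)$ together with the fact that $\otimes$ preserves colimits (coends) separately in each variable. Fubini for coends then splits this into $(\mathbf{f}_!\mathscr{V})_{x'}\otimes(\mathbf{g}_!\mathscr{W})_{y'}$.
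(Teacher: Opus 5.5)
Your proposal is correct and follows essentially the paper's route. The pullback isomorphism comes from strong monoidality of $(\mathbf{f}\times\mathbf{g})^\ast$ together with the commuting projection squares \eqref{MapOfProductsOfSimplicialGroupoids}, and the pushforward isomorphism from factoring $\mathbf{f}\times\mathbf{g}$ through $\mathbf{f}\times\mathrm{id}$ and $\mathrm{id}\times\mathbf{g}$ and applying the projection formula plus Beck--Chevalley along product projections to each factor. The paper merely packages these same steps into a single chain via the unit decomposition \eqref{ExternalTensorFromExternalTensorWithUnits}, which is just the definition of $\boxtimes$ rewritten, and your reading of Lemma~\ref{BeckChevalleyForSimplicialLocalSystemsAlongProductProjections} with bottom map $\mathbf{X}\to\mathbf{X}'$ is the intended one.
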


\begin{proof}
  The isomorphism \eqref{PullbackOfExternalTensorAlongProductOfFunctions} may be obtained as the following composite of natural isomorphisms:
  \[
    \def\arraystretch{1.6}
    \begin{array}{lll}
      (\mathbf{f} \times \mathbf{g})^\ast
      \big(
        \mathscr{V} 
        \,\boxtimes\,
        \mathscr{W}
      \big)
      &
      \;\simeq\;
      (\mathbf{f} \times \mathbf{g})^\ast
      \Big(
        \big(
          \mathrm{pr}_{\mathbf{Y}}^\ast 
          \mathscr{V}
        \big)
        \,\otimes_{\mathbf{Y} \times \mathbf{Y}'}\,
        \big(
          \mathrm{pr}_{\mathbf{Y}'}^\ast 
          \mathscr{W}
        \big)        
      \Big)
      &
      \proofstep{
        by 
        \eqref{FormulaForExternalTensorOfSimplicialLocalSystems}
      }
      \\
   &   \;\simeq\;
      \big(
        (\mathbf{f} \times \mathbf{g})^\ast
        \mathrm{pr}_{\mathbf{Y}}^\ast
        \mathscr{V}
      \big)
      \otimes_{\mathbf{X} \times \mathbf{X}'}
      \big(
        (\mathbf{f} \times \mathbf{g})^\ast
        \mathrm{pr}_{\mathbf{Y}'}^\ast
        \mathscr{W}
      \big)
      &
      \proofstep{
        by
        \eqref{PullbackOfSimplicialLocalSystemsIsStrongMonoidal}
      }
      \\
   &   \;\simeq\;
      \big(
        \mathrm{pr}_{\mathbf{X}}^\ast
        \mathbf{f}^\ast
        \mathscr{V}
      \big)
      \otimes_{\mathbf{X} \times \mathbf{X}'}
      \big(
        \mathrm{pr}_{\mathbf{X}'}^\ast
        \mathbf{g}^\ast
        \mathscr{W}
      \big)
      &
      \proofstep{
        by \eqref{MapOfProductsOfSimplicialGroupoids}
        \&
        \eqref{CompositionalCoherenceIsomorphism}
      }
      \\
    &  \;\simeq\;
      \big(\mathbf{f}^\ast \mathscr{V}\big)
      \boxtimes
      \big(\mathbf{g}^\ast \mathscr{W}\big)
      &
      \proofstep{
        by 
        \eqref{FormulaForExternalTensorOfSimplicialLocalSystems}
      }.
    \end{array}
  \]
    For the second isomorphism \eqref{PushOfExternalTensorAlongProductOfFunctions}, first observe the special case
  \begin{equation}
    \label{PushExternalTensorAlongFTimesId}
    \def\arraystretch{1.7}
    \begin{array}{lll}
      (\mathbf{f} \times \mathrm{id})_!
      \big(
        \mathscr{V}_{\mathbf{X}}
        \boxtimes
        \mathbb{K}_{\mathbf{Y}}
      \big)
      &
      \;\simeq\;
      \Big(
      (\mathbf{f} \times \mathrm{id})_!
      \big(
      \mathrm{pr}_{\mathbf{X}}^\ast
      \mathscr{V}
      \big)_{\mathbf{X}' \times \mathbf{Y}'}
      \Big)
      &
      \proofstep{
        by \eqref{ExternalTensorWithUnitIsPullback}
      }
      \\
     & \;\simeq\;
      \big(
      \mathrm{pr}_{\mathbf{Y}}^\ast
      (\mathbf{f}_! \mathscr{V})
      \big)_{\mathbf{X}' \times \mathbf{Y}}
      & 
      \proofstep{
        by \eqref{BeckChevalleyAlongProducts}
      }
      \\
     & \;\simeq\;
      (\mathbf{f}_! \mathscr{V})_{\mathbf{X}'}
        \boxtimes 
      \mathbb{K}_{\mathbf{Y}}
      &
      \proofstep{
        by \eqref{ExternalTensorWithUnitIsPullback}
      }
    \end{array}
  \end{equation}
  from which we obtain the general isomorphism as the following composite:
  \[
    \def\arraystretch{1.7}
    \begin{array}{lll}
      (\mathbf{f} \times \mathbf{g})_!
      (\mathscr{V} \boxtimes \mathscr{W})
      &
      \;\simeq\;
      (\mathbf{f} \times \mathbf{g})_!      
      \big(
        (\mathscr{V} \boxtimes \mathbb{K}_{\mathbf{Y}})
        \otimes
        (\mathbb{K}_{\mathbf{X}} \boxtimes \mathscr{W})
      \big)
      &
      \proofstep{
        by \eqref{ExternalTensorFromExternalTensorWithUnits}
      }
      \\
&    \;\simeq\;
      (\mathbf{f} \times \mathbf{g})_!
      \Big(
        \big(
          (\mathrm{id} \times g)^\ast
          (\mathscr{V} \boxtimes \mathbb{K}_{\mathbf{Y}'})
        \big)
        \otimes
        (\mathbb{K}_{\mathbf{X}} \boxtimes \mathscr{W})
      \Big)
      &
      \proofstep{
        by
        \eqref{PullbackOfExternalTensorAlongProductOfFunctions}
        \&
        \eqref{PullbackOfSimplicialLocalSystemsPreservesTensorUnit}
      }
      \\
    &  \;\simeq\;
      (\mathbf{f} \times \mathrm{id})_!
      (\mathrm{id} \times \mathbf{g})_!
      \Big(
        \big(
          (\mathrm{id} \times \mathbf{g})^\ast
          (\mathscr{V} \boxtimes \mathbb{K}_{\mathbf{Y}'})
        \big)
        \otimes
        (\mathbb{K}_{\mathbf{X}} \boxtimes \mathscr{W})
      \Big)
      &
      \proofstep{
        by \eqref{CompositionalCoherenceIsomorphism}
      }
      \\
    &  \;\simeq\;
      (\mathbf{f} \times \mathrm{id})_!
      \Big(
          (\mathscr{V} \boxtimes \mathbb{K}_{\mathbf{Y}'})
        \otimes
        \big(
        (\mathrm{id} \times \mathbf{g})_!
        (\mathbb{K}_{\mathbf{X}} \boxtimes \mathscr{W})
        \big)
      \Big)
      &
      \proofstep{
       by \eqref{ProjectionFormulaForSimplicialLocalSystems}
      }
      \\
   &   \;\simeq\;
      (\mathbf{f} \times \mathrm{id})_!
      \Big(
          (\mathscr{V} \boxtimes \mathbb{K}_{\mathbf{Y}'})
        \otimes
        \big(
        \mathbb{K}_{\mathbf{X}} \boxtimes (\mathbf{g}_! \mathscr{W})
        \big)
      \Big)
      &
      \proofstep{
        by \eqref{PushExternalTensorAlongFTimesId}
      }
      \\
   &   \;\simeq\;
      (\mathbf{f} \times \mathrm{id})_!
      \Big(
          (\mathscr{V} \boxtimes \mathbb{K}_{\mathbf{Y}'})
        \otimes
        (\mathbf{f} \times \mathrm{id})^\ast
        \big(
        \mathbb{K}_{\mathbf{X}'} \boxtimes (\mathbf{g}_! \mathscr{W})
        \big)
      \Big)
      &
      \proofstep{
        by
        \eqref{PullbackOfExternalTensorAlongProductOfFunctions}
        \& 
        \eqref{PullbackOfSimplicialLocalSystemsPreservesTensorUnit}
      }
      \\
  &    \;\simeq\;
      \big(
          (\mathbf{f} \times \mathrm{id})_!
          (\mathscr{V} \boxtimes \mathbb{K}_{\mathbf{Y}'})
      \big)
        \otimes
      \big(
        \mathbb{K}_{\mathbf{X}'} \boxtimes (\mathbf{g}_! \mathscr{W})
      \big)
      &
      \proofstep{
        by
        \eqref{ProjectionFormulaForSimplicialLocalSystems}
      }
      \\
 &     \;\simeq\;
      \big(
        (\mathbf{f}_!\mathscr{V}) \boxtimes \mathbb{K}_{\mathbf{Y}'}
      \big)
        \otimes
      \big(
        \mathbb{K}_{\mathbf{X}'} \boxtimes (\mathbf{g}_! \mathscr{W})
      \big)
      &
      \proofstep{
        by 
        \eqref{PushExternalTensorAlongFTimesId}
      }
      \\
  &    \;\simeq\;
      (\mathbf{f}_! \mathscr{V})
      \boxtimes
      (\mathbf{g}_! \mathscr{W})
      &
      \proofstep{
        by
        \eqref{ExternalTensorFromExternalTensorWithUnits}.
      }
    \end{array}
  \]
  This concludes the proof.
\end{proof}

As a direct consequence:
\begin{proposition}[Pull-push adjunct of external tensor products]
\label{PullPushAdjunctOfExternalTensorProducts}
The push/pull adjunct of an external tensor product of morphisms into pullbacks
\[
  \phi \boxtimes \gamma
  \,:\,
  \mathscr{V} \boxtimes \mathscr{W}
  \xrightarrow{\phantom{--}}
  (\mathbf{f}^\ast \mathscr{V}')
  \boxtimes 
  (\mathbf{g}^\ast \mathscr{W}')
  \,\simeq\,
  (\mathbf{f} \times \mathbf{g})^\ast
  (\mathscr{V}' \boxtimes \mathscr{W}')
\]
is the external tensor product of the separate adjuncts:
\begin{equation}
  \label{PullPushAdjuncOfExternalTensorMap}
  \widetilde{\phi \boxtimes \gamma}
  \;\simeq\;
  \widetilde \phi \,\boxtimes\, \widetilde \gamma
  \;:\;
  (\mathbf{f} \times \mathbf{g})_!
  (\mathscr{V} \boxtimes \mathscr{W})
  \,\simeq\,
  (\mathbf{f}_! \mathscr{V}) \boxtimes (\mathbf{g}_! \mathscr{W})
  \xrightarrow{\phantom{--}}
  \mathscr{V}' \,\boxtimes\, \mathscr{W}'
  \,.
\end{equation}
\end{proposition}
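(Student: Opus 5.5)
The plan is to identify both sides as morphisms out of $(\mathbf{f}\times\mathbf{g})_!(\mathscr{V}\boxtimes\mathscr{W})$ and compare them through the adjunction. That is, I will show that both correspond, under $(\mathbf{f}\times\mathbf{g})_! \dashv (\mathbf{f}\times\mathbf{g})^\ast$, to the same morphism $\mathscr{V}\boxtimes\mathscr{W}\to(\mathbf{f}\times\mathbf{g})^\ast(\mathscr{V}'\boxtimes\mathscr{W}')$. Equivalently, it suffices to check that the composite
\[
  \mathscr{V}\boxtimes\mathscr{W}
  \xrightarrow{\eta}
  (\mathbf{f}\times\mathbf{g})^\ast(\mathbf{f}\times\mathbf{g})_!(\mathscr{V}\boxtimes\mathscr{W})
  \simeq
  (\mathbf{f}\times\mathbf{g})^\ast\big((\mathbf{f}_!\mathscr{V})\boxtimes(\mathbf{g}_!\mathscr{W})\big)
  \xrightarrow{(\mathbf{f}\times\mathbf{g})^\ast(\widetilde\phi\boxtimes\widetilde\gamma)}
  (\mathbf{f}\times\mathbf{g})^\ast(\mathscr{V}'\boxtimes\mathscr{W}')
\]
equals $\phi\boxtimes\gamma$. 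Here the middle isomorphism is \eqref{PushOfExternalTensorAlongProductOfFunctions}.

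\textbf{Step 1: compatibility with units.} The key step is to show that the isomorphism \eqref{PushOfExternalTensorAlongProductOfFunctions}, composed with $\eta$, equals $\eta_{\mathscr{V}}\boxtimes\eta_{\mathscr{W}}$ under \eqref{PullbackOfExternalTensorAlongProductOfFunctions}. Granting this, Step 2 below is straightforward.

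\textbf{Step 2: conclusion from naturality.} By naturality of \eqref{PullbackOfExternalTensorAlongProductOfFunctions} and functoriality of $\boxtimes$, the displayed composite becomes
\[
  (\mathbf{f}^\ast\widetilde\phi\circ\eta_{\mathscr{V}})\boxtimes(\mathbf{g}^\ast\widetilde\gamma\circ\eta_{\mathscr{W}})
  \;=\;
  \phi\boxtimes\gamma,
\]
which is the claim.

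For Step 1 I would factor $\mathbf{f}\times\mathbf{g}=(\mathbf{f}\times\mathrm{id})\circ(\mathrm{id}\times\mathbf{g})$. Using \eqref{ExternalTensorFromExternalTensorWithUnits}, I reduce to two ingredients. The first is that the Beck–Chevalley isomorphism \eqref{BeckChevalleyAlongProducts} is the mate of the commuting square; this holds by construction, since the coend computation is the canonical comparison map. The second is that the projection-formula isomorphism \eqref{ProjectionFormulaForSimplicialLocalSystems} is compatible with units. This follows because it was obtained by Yoneda from the adjunction chain, so its adjunct is $\eta\otimes\mathrm{id}$ composed with the strong monoidal structure of $\mathbf{f}^\ast$.

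Each of the isomorphisms used in the proof of \eqref{PushOfExternalTensorAlongProductOfFunctions} is therefore a mate of a strong-monoidal or pullback-compatibility isomorphism, and mates compose; hence the composite is the mate of \eqref{PullbackOfExternalTensorAlongProductOfFunctions}. This mate statement is exactly the unit compatibility required in Step 1.

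A cleaner alternative, which I would try first, avoids the mate bookkeeping. By Lem.~\ref{SkeletalizationOfLocalSystems}, with choices compatible along $\mathbf{f}$ and $\mathbf{g}$, one may assume the groupoids are skeletal, and then by components that they are deloopings $\mathbf{B}\mathcal{H}\to\mathbf{B}\mathcal{G}$. There $\mathbf{f}_!$ is induction $\mathcal{G}\cdot_{\mathcal{H}}(-)$, and everything is explicit. The isomorphism
\[
  (\mathcal{G}\times\mathcal{G}')\cdot_{\mathcal{H}\times\mathcal{H}'}(\mathscr{V}\otimes\mathscr{W})
  \;\simeq\;
  (\mathcal{G}\cdot_{\mathcal{H}}\mathscr{V})\otimes(\mathcal{G}'\cdot_{\mathcal{H}'}\mathscr{W})
\]
is $[(g,g'),v\otimes w]\mapsto[g,v]\otimes[g',w]$. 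The adjunct of $\phi$ is $[g,v]\mapsto g\cdot\phi(v)$, so both sides of \eqref{PullPushAdjuncOfExternalTensorMap} send $[(g,g'),v\otimes w]$ to $g\phi(v)\otimes g'\gamma(w)$, and they agree.

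The main obstacle is verifying that the abstractly defined isomorphism \eqref{PushOfExternalTensorAlongProductOfFunctions} really is this explicit map. I would settle this by checking that the explicit map is compatible with the units $v\otimes w\mapsto[e,v]\otimes[e,w]$, and that it is natural. Since maps out of a left adjoint image are determined by their restriction along the unit, this pins it down uniquely.
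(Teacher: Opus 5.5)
Your argument follows the paper's route. The paper records this proposition as a direct consequence of the isomorphisms \eqref{PullbackOfExternalTensorAlongProductOfFunctions} and \eqref{PushOfExternalTensorAlongProductOfFunctions} with no further proof; your Step~1 (that \eqref{PushOfExternalTensorAlongProductOfFunctions} is compatible with the adjunction units, i.e.\ is the canonical comparison map) is exactly the coherence this tacitly needs, and your Step~2 is then the intended naturality argument.

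One small slip in your delooping alternative: checking that the explicit map is unit-compatible identifies it with the canonical comparison map, but not with the paper's composite isomorphism unless Step~1 is already known. This is harmless, since the proposition only requires the canonical comparison map to be invertible, and your explicit formula shows that directly in the delooping case.
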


\begin{proposition}[External pushout-product]
  \label{ExternalPushoutProduct}
  The pushout-product (Def. \ref{PushoutProduct}) of the external tensor (Def. \ref{ExternalTensorProductOfSimplicialLocalSystems}) 
  in $\mathbf{Loc}_{\mathbb{K}}$ is given by the following formula:
  \[
    (\phi_{\mathbf{f}})
    \,\widehat{\boxtimes}\,
    (\gamma_{\mathbf{g}})
    \;\;
      \simeq
    \;\;
    \Big(
    \widetilde{
    \big(
      (\mathrm{pr}_{\mathrm{X}'})^\ast 
      \widetilde{\phi}
    \, \big)
    \,\widehat{\otimes}\,
    \big(
      (\mathrm{pr}_{\mathrm{Y}'})^\ast 
      \widetilde{\gamma}
   \, \big)
    }
    \Big)_{ \mathbf{f} \,\widehat{\times}\, \mathbf{g}}
    \,.
  \]
\end{proposition}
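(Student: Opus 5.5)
The plan is to compute the pushout-product in the Grothendieck construction $\mathbf{Loc}_{\mathbb{K}}$ in two stages: first on base simplicial groupoids, then on fibers. Given $\phi_{\mathbf{f}} : \mathscr{V}_{\mathbf{X}} \to \mathscr{V}'_{\mathbf{X}'}$ and $\gamma_{\mathbf{g}} : \mathscr{W}_{\mathbf{Y}} \to \mathscr{W}'_{\mathbf{Y}'}$, the domain of $\phi_{\mathbf{f}} \,\widehat{\boxtimes}\, \gamma_{\mathbf{g}}$ is the pushout in $\mathbf{Loc}_{\mathbb{K}}$ of
\[
  \mathscr{V}'\boxtimes\mathscr{W}
  \;\leftarrow\;
  \mathscr{V}\boxtimes\mathscr{W}
  \;\rightarrow\;
  \mathscr{V}\boxtimes\mathscr{W}'\,.
\]
By the description of colimits in a Grothendieck construction (Prop. \ref{ColimitsInAGrothendieckConstruction}), such a pushout lies over the pushout $\mathbf{P} := (\mathbf{X}'\times\mathbf{Y}) \coprod_{\mathbf{X}\times\mathbf{Y}} (\mathbf{X}\times\mathbf{Y}')$ of base groupoids. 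Its fiber is the pushout in $\mathbf{sCh}^{\mathbf{P}}_{\mathbb{K}}$ of the left pushforwards $\iota_!(-)$ of the three fibers along the canonical maps into $\mathbf{P}$. The base map of the pushout-product is then the pushout-product $\mathbf{f}\,\widehat{\times}\,\mathbf{g} : \mathbf{P} \to \mathbf{X}'\times\mathbf{Y}'$ in $\mathrm{sSet}\mbox{-}\mathrm{Grpd}$.

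For the fiber component, I will pass to adjunct form \eqref{AdjunctComponentMaps}. I need the adjunct $\mathbf{P}$-pushforward of the canonical fiber map into $(\mathbf{f}\,\widehat{\times}\,\mathbf{g})^\ast(\mathscr{V}'\boxtimes\mathscr{W}')$. Pushing forward along $\mathbf{f}\,\widehat{\times}\,\mathbf{g}$ commutes with the pushout, since $(-)_!$ is a left adjoint, and composes with the $\iota_!$ to give pushforwards along $\mathbf{f}\times\mathrm{id}$, $\mathrm{id}\times\mathbf{g}$ and $\mathbf{f}\times\mathbf{g}$ into $\mathbf{X}'\times\mathbf{Y}'$. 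Using \eqref{PushOfExternalTensorAlongProductOfFunctions}, these become
\[
  (\mathbf{f}_!\mathscr{V})\boxtimes\mathscr{W}',\qquad
  \mathscr{V}'\boxtimes(\mathbf{g}_!\mathscr{W}),\qquad
  (\mathbf{f}_!\mathscr{V})\boxtimes(\mathbf{g}_!\mathscr{W})\,.
\]
By Prop. \ref{PullPushAdjunctOfExternalTensorProducts}, the connecting maps are $\widetilde\phi\boxtimes\mathrm{id}$, $\mathrm{id}\boxtimes\widetilde\gamma$, and so on. Rewriting each external tensor via \eqref{FormulaForExternalTensorOfSimplicialLocalSystems} as $\mathrm{pr}_{\mathbf{X}'}^\ast(-)\otimes_{\mathbf{X}'\times\mathbf{Y}'}\mathrm{pr}_{\mathbf{Y}'}^\ast(-)$ identifies the resulting square with the one defining the internal pushout-product
\[
  (\mathrm{pr}_{\mathbf{X}'}^\ast\widetilde\phi)\,\widehat{\otimes}\,(\mathrm{pr}_{\mathbf{Y}'}^\ast\widetilde\gamma)
\]
in $\mathbf{sCh}^{\mathbf{X}'\times\mathbf{Y}'}_{\mathbb{K}}$. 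Taking its adjunct along $\mathbf{f}\,\widehat{\times}\,\mathbf{g}$ back gives the stated formula.

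The main obstacle is the bookkeeping in the first step: justifying that the fiber of the pushout in $\mathbf{Loc}_{\mathbb{K}}$ is indeed the pushout of the $\iota_!$-pushforwards. I will take this from the general colimit formula in a Grothendieck construction of a bivariant pseudofunctor, where colimits are computed by pushing forward to the colimit of bases. I also need to check that the coherence isomorphisms $(\mathbf{f}\,\widehat{\times}\,\mathbf{g})_!\,\iota_! \simeq (\mathbf{f}\times\mathrm{id})_!$ etc. \eqref{CompositionalCoherenceIsomorphism} are compatible with the naturality of \eqref{PushOfExternalTensorAlongProductOfFunctions}. I would verify this compatibility by checking that both sides corepresent the same functor on maps into $\mathscr{V}'\boxtimes\mathscr{W}'$, using the adjunctions $(-)_!\dashv(-)^\ast$. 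The isomorphism is then natural, and hence respects the pushout.
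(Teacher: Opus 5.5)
Your proposal is correct and follows the paper's proof essentially step for step. The paper likewise takes the base to be the Cartesian pushout-product via Prop.~\ref{ColimitsInAGrothendieckConstruction}. It pushes the cospan of fiber components forward along $\mathbf{f}\,\widehat{\times}\,\mathbf{g}$, commuting this with the pushout because $(-)_!$ is a left adjoint. It then rewrites the composite pushforwards as $(\mathrm{id}\times\mathbf{g})_!$ and $(\mathbf{f}\times\mathrm{id})_!$, and applies \eqref{PushOfExternalTensorAlongProductOfFunctions} together with Prop.~\ref{PullPushAdjunctOfExternalTensorProducts} to arrive at $(\mathrm{pr}_{\mathbf{X}'}^\ast\widetilde{\phi})\,\widehat{\otimes}\,(\mathrm{pr}_{\mathbf{Y}'}^\ast\widetilde{\gamma})$. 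The paper leaves implicit your extra check that the compositional coherence isomorphisms are compatible with the naturality of \eqref{PushOfExternalTensorAlongProductOfFunctions}.
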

\begin{proof}
  By the general formula for colimits in Grothendieck constructions (Prop. \ref{ColimitsInAGrothendieckConstruction}), the underlying colimit 
  is the Cartesian pushout-product of simplicial groupoids
  \begin{equation}
    \label{UnderlyingCartesianPushoutProductOfExternalPushoutProduct}
    \begin{tikzcd}[row sep=large, column sep=huge]
      \mathbf{X} \times \mathbf{Y}
      \ar[r, "{ \mathrm{id} \,\times\, \mathbf{g} }"{description}]
      \ar[d, "{ \mathbf{f} \,\times\, \mathrm{id} }"{description}]
      \ar[dr, phantom, "{ \scalebox{.6}{(po)} }"]
      &
      \mathbf{X} \times \mathbf{Y}'
      \ar[d, "{ q_r }"{description}]
      \ar[ddr, bend left=15, "{ \mathbf{f} \,\times\, \mathrm{id} }"{sloped, description}]
      &[-30pt]
      \\
      \mathbf{X}' \times \mathbf{Y}
      \ar[r, "{ q_l}"{description}]
      \ar[drr, bend right=15, "{ \mathrm{id} \,\times\, \mathbf{g} }"{sloped, description}]
      &
      \mathbf{f} \,\widehat{\times}\, \mathbf{g}
      \ar[dr, dashed]
      \\[-30pt]
      && \mathbf{X}' \times \mathbf{Y}'
    \end{tikzcd}
  \end{equation}
  and the linear colimiting component map over the dashed morphism is obtained by pushing the separate linear 
components along the  coprojections $q_\cdot$ to form a cospan in $\mathbf{sCh}_{\mathbb{K}}^{\mathbf{f} \,\widehat{\times}\, \mathbf{g}}$, 
whose universal pushout-product morphism, in turn, is the further pushforward of that cocone along the dashed morphism:
\[
  \def\arraystretch{2.15}
  \begin{array}{ll}
  \big(\mathbf{f} \widehat{\times} \mathbf{g}\big)_!
  \Bigg(\!\!
    \bigg(\!\!
      (q_l)_! 
      \Big(
        \big(
          (\mathrm{pr}_{\mathbf{X}'})^\ast 
          \widetilde{\phi} \,
        \big)
        \otimes
        \big(
          (\mathrm{pr}_{\mathbf{Y}})^\ast
          \mathrm{id}_{\scalebox{.7}{$\mathscr{W}$}}
        \big)
      \Big)
   \!\! \bigg)
    \wedge
    \bigg(\!\!
      (q_r)_! 
      \Big(
        \big(
          (\mathrm{pr}_{\mathbf{X}})^\ast
          \mathrm{id}_{\scalebox{.7}{$\mathscr{V}$}}
        \big)
        \otimes
        \big(
          (\mathrm{pr}_{\mathbf{Y}'})^\ast 
          \widetilde{\gamma}
     \,   \big)
      \Big)
   \!\! \bigg)
  \!\! \Bigg)
  &
  \proofstep{by \eqref{PullPushAdjuncOfExternalTensorMap}}
  \\
  \;\simeq\;
  \Bigg(\!\!
    \bigg(\!\!
      \big(\mathbf{f} \widehat{\times} \mathbf{g}\big)_!
      (q_l)_! 
      \Big(
        \big(
          (\mathrm{pr}_{\mathbf{X}'})^\ast 
          \widetilde{\phi}\,
        \big)
        \otimes
        \big(
          (\mathrm{pr}_{\mathbf{Y}})^\ast
          \mathrm{id}_{\scalebox{.7}{$\mathscr{W}$}}
        \big)
      \Big)
   \!\! \bigg)
    \wedge
    \bigg(\!\!
      \big(\mathbf{f} \widehat{\times} \mathbf{g}\big)_!
      (q_r)_! 
      \Big(
        \big(
          (\mathrm{pr}_{\mathbf{X}})^\ast
          \mathrm{id}_{\mathscr{V}}
        \big)
        \otimes
        \big(
          (\mathrm{pr}_{\mathbf{Y}'})^\ast 
          \widetilde{\gamma}
      \,  \big)
      \Big)
   \!\! \bigg)
 \!\! \Bigg)
  &
  \proofstep{by \eqref{BaseChangeAdjointTripleOnSimplicialLocalSystems}}
  \\
  \;\simeq\;
  \Bigg(\!\!
    \bigg(\!\!
      (\mathrm{id} \otimes \mathbf{g})_!
      \Big(\!
        \big(
          (\mathrm{pr}_{\mathbf{X}'})^\ast 
          \widetilde{\phi}\,
        \big)
        \otimes
        \big(
          (\mathrm{pr}_{\mathbf{Y}})^\ast
          \mathrm{id}_{\scalebox{.6}{$\mathscr{W}$}}
        \big)
   \!\!   \Big)
   \!\! \bigg)
    \wedge
    \bigg(\!\!
      (\mathbf{f} \otimes \mathrm{id})_!
      \Big(\!\!
        \big(
          (\mathrm{pr}_{\mathbf{X}})^\ast
          \mathrm{id}_{\scalebox{.7}{$\mathscr{V}$}}
        \big)
        \otimes
        \big(
          (\mathrm{pr}_{\mathbf{Y}'})^\ast 
          \widetilde{\gamma}
       \, \big)
   \!\!   \Big)
   \!\! \bigg)
 \!\! \Bigg)
  &
  \proofstep{by \eqref{UnderlyingCartesianPushoutProductOfExternalPushoutProduct}}
  \\
  \;\simeq\;
  \Bigg(\!\!
    \bigg(\!\!
      \Big(\!\!
            \big(
          (\mathrm{pr}_{\mathbf{X}'})^\ast 
          \widetilde{\phi}\,
        \big)
        \otimes
        \big(
          (\mathrm{pr}_{\mathbf{Y}})^\ast
          \mathrm{id}_{\scalebox{.7}{$\mathbf{f}_!\mathscr{W}$}}
        \big)
      \!\!\Big)
    \!\!\bigg)
    \wedge
    \bigg(\!\!
      \Big(\!\!
        \big(
          (\mathrm{pr}_{\mathbf{X}})^\ast
          \mathrm{id}_{\scalebox{.7}{$\mathbf{f}_!\mathscr{V}$}}
        \big)
        \otimes
        \big(
          (\mathrm{pr}_{\mathbf{Y}'})^\ast 
          \widetilde{\gamma}
       \, \big)
      \Big)
  \!\!  \bigg)
  \!\!\Bigg)
  &
  \proofstep{by 
    \eqref{PushOfExternalTensorAlongProductOfFunctions}
  }
  \\[-3pt]
  \;\simeq\;
  \big(
    (\mathrm{pr}_{\mathbf{X}'})^\ast 
    \widetilde{\phi}
  \, \big)
  \,\displaystyle{\widehat{\otimes}}\,
  \big(
    (\mathrm{pr}_{\mathbf{Y}'})^\ast 
    \widetilde{\gamma}
  \,\big)
  &
  \proofstep{
    by def.
  }
  \end{array}
\]
This concludes the proof.
\end{proof}

\medskip

\noindent
{\bf External internal hom of simplicial local systems.} We discuss the right adjoint to external tensoring with a simplicial local system. Since right adjoints to tensoring functors are called {\it internal homs} this would by default be named the {\it external internal hom}, for better or worse.

\begin{proposition}
  \label{ExternalTensorWithLocalSystemOverDiscreteSpacePreservesColimits}
  The external tensor product (Def. \ref{ExternalTensorProductOfSimplicialLocalSystems})
  with a simplicial local system preserves colimits:
  \begin{equation}
    \label{ExternalTensorWithSystemOverDiscreteSpacePreservingColimits}
    \left.
    \def\arraystretch{1.4}
    \begin{array}{l}
    \mathscr{W}_{\mathbf{Y}} \,\in\, \mathbf{Loc}_{\mathbb{K}},
    \\
    \mathcal{V}_{\mathbf{X}}
    \,:\, I
    \xrightarrow{\;}
    \mathbf{Loc}_{\mathbb{K}}
    \end{array}
    \right\}
    \hspace{1cm}
      \vdash
    \hspace{1cm}
    \Big(
      \underset{\underset{i \in I}{\longrightarrow}}{\lim}
      \mathscr{V}(i)_{\mathbf{X}_i}
    \Big)
    \,\boxtimes\,
    \mathscr{W}_{\mathbf{Y}}
    \;\;\;
    \simeq
    \;\;\;
    \underset{\underset{i \in I}{\longrightarrow}}{\lim}    
    \big(
      \mathscr{V}(i)_{\mathbf{X}_i}
     \,\boxtimes\,
    \mathscr{W}_{\mathbf{Y}}
    \big).
  \end{equation}
\end{proposition}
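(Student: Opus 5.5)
Since $\mathbf{Loc}_{\mathbb{K}}$ is a Grothendieck construction, I will use the general formula for colimits in it (Prop.~\ref{ColimitsInAGrothendieckConstruction}). The colimit of $i \mapsto \mathscr{V}(i)_{\mathbf{X}_i}$ has base $\mathbf{X} := \mathrm{colim}_i \mathbf{X}_i$ in $\mathrm{sSet}\mbox{-}\mathrm{Grpd}$, with coprojections $q_i : \mathbf{X}_i \to \mathbf{X}$. Its fiber component is the colimit in $\mathbf{sCh}^{\mathbf{X}}_{\mathbb{K}}$ of the pushed-forward diagram $i \mapsto (q_i)_! \mathscr{V}(i)$, with transition maps given by the adjuncts $\widetilde{\phi}$ of the component maps.

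The argument then has two steps: a base part and a fiber part.

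\textbf{Base part.} Tensoring with $\mathscr{W}_{\mathbf{Y}}$ covers $(-)\times \mathbf{Y}$ on bases. I need
\[
(\mathrm{colim}_i \mathbf{X}_i) \times \mathbf{Y} \;\simeq\; \mathrm{colim}_i (\mathbf{X}_i \times \mathbf{Y}).
\]
I expect this to be the main obstacle. I would argue that $\mathrm{sSet}\mbox{-}\mathrm{Grpd}$ is cartesian closed (Prop.~\ref{CartesianClosureOfSSetEnrichedGroupoids}), so $(-)\times\mathbf{Y}$ is a left adjoint and preserves all colimits. If that proposition only gives closure in a restricted sense, the fallback is to verify the claim degreewise, using cartesian closure of ordinary groupoids together with the fact that colimits of simplicial groupoids are computed from their degreewise groupoid structure.

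\textbf{Fiber part.} With the base identified as $\mathbf{X}\times\mathbf{Y}$ and coprojections $q_i \times \mathrm{id}$, the fiber component of the right-hand side is
\[
\mathrm{colim}_i\, (q_i\times\mathrm{id})_!\big(\mathscr{V}(i)\boxtimes \mathscr{W}\big).
\]
The pushforward formula \eqref{PushOfExternalTensorAlongProductOfFunctions}, with $\mathbf{g}=\mathrm{id}$, rewrites each term as $\big((q_i)_!\mathscr{V}(i)\big)\boxtimes \mathscr{W}$. By Prop.~\ref{PullPushAdjunctOfExternalTensorProducts}, the transition maps become $\widetilde{\phi}\boxtimes \mathrm{id}_{\mathscr{W}}$. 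It then remains to show that, for fixed $\mathbf{X}$ and $\mathbf{Y}$, the functor
\[
(-)\boxtimes\mathscr{W} \;=\; \mathrm{pr}_{\mathbf{X}}^\ast(-)\otimes_{\mathbf{X}\times\mathbf{Y}}\mathrm{pr}_{\mathbf{Y}}^\ast\mathscr{W}
\;:\; \mathbf{sCh}^{\mathbf{X}}_{\mathbb{K}}\to \mathbf{sCh}^{\mathbf{X}\times\mathbf{Y}}_{\mathbb{K}}
\]
preserves colimits. This holds because $\mathrm{pr}_{\mathbf{X}}^\ast$ is a left adjoint (to $(\mathrm{pr}_{\mathbf{X}})_\ast$, by \eqref{BaseChangeAdjointTripleOnSimplicialLocalSystems}), and $(-)\otimes_{\mathbf{X}\times\mathbf{Y}}\mathscr{U}$ is a left adjoint by the closed monoidal structure \eqref{ClosedMonoidalStructureRecalled}.

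Finally, I will check that the comparison map produced by the universal property is the canonical one. The naturality of the isomorphisms \eqref{PushOfExternalTensorAlongProductOfFunctions} should make the identifications compatible with the cocone maps, so the two sides of \eqref{ExternalTensorWithSystemOverDiscreteSpacePreservingColimits} agree as cocones in $\mathbf{Loc}_{\mathbb{K}}$, as claimed.
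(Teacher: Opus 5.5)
Your proposal is correct and follows the paper's proof. Both identify the base via cartesian closure of $\mathrm{sSet}\mbox{-}\mathrm{Grpd}$ (Prop.~\ref{CartesianClosureOfSSetEnrichedGroupoids}), and both handle the fiber via Prop.~\ref{ColimitsInAGrothendieckConstruction} together with the fact that $\mathrm{pr}^\ast$ and $\otimes$ are left adjoints. The only difference is packaging: where the paper inlines Beck--Chevalley \eqref{BeckChevalleyAlongProducts} and the projection formula \eqref{ProjectionFormulaForSimplicialLocalSystems}, you cite the pushforward formula \eqref{PushOfExternalTensorAlongProductOfFunctions} with $\mathbf{g}=\mathrm{id}$, which is proved from exactly those two ingredients.
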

\begin{proof}
First notice that the statement holds for the underlying colimit in $\mathrm{sSet}\mbox{-}\mathrm{Grpd}$, since $(-) \times \mathbf{Y}$ is a left adjoint (Prop. \ref{CartesianClosureOfSSetEnrichedGroupoids}):
\[
  \big(
  \underset{\underset{j \in I}{\longrightarrow}}{\lim}
  \;
  \mathbf{X}_j
  \big)
  \times
  \mathbf{Y}
  \;\;\;\;
  \simeq
  \;\;\;\;
  \underset{\underset{j \in I}{\longrightarrow}}{\lim}
  \big(
    \mathbf{X}_j
    \times
    \mathbf{Y}
  \big)
  \,.
\]
Now denoting the coprojections of this  underlying colimit by:
\begin{equation}
  \label{TowardsProvingExternalTensorPreservesColimits}
  \begin{tikzcd}
    \mathbf{X}_i
    \ar[
      r,
      "{
        q^{\mathbf{X}_i}
      }"
    ]
    &
    \underset{
      \underset{j \in I}{\longrightarrow}
    }{\lim}
    \,
      \mathbf{X}_j
    \,,
  \end{tikzcd}
  \hspace{2cm}
  \begin{tikzcd}[row sep=15pt]
    \mathbf{X}_i 
    \times
    \mathrm{Y}
    \ar[
      rr,
      "{
        q^{\mathbf{X}_i}
        \,\times\,
        \mathrm{id}
      }"
    ]
    \ar[
      d,
      "{
        \mathrm{pr}_{\mathbf{X}_i}
      }"
    ]
    &&
    \big(
    \underset{\underset{j \in I}{\longrightarrow}}{\lim} \mathbf{X}_j
    \big)
    \times 
    \mathrm{Y}
    \ar[
      d,
      shorten <=-10pt,
      "{
        \scalebox{.7}{$
        \mathrm{pr}_{\underset{\longrightarrow}{\lim} \mathbf{X}}
        $}
      }"
    ]
    \\
    \mathbf{X}_i 
    \ar[
      rr,
      "{
        q^{\mathbf{X}_i}
      }"
    ]
    &&
    \big(
    \underset{\underset{j \in I}{\longrightarrow}}{\lim} \mathbf{X}_j
    \big)
  \end{tikzcd}
\end{equation}
we identify, via Prop. \ref{ColimitsInAGrothendieckConstruction}, the full colimit by the  following sequence of natural isomorphisms:
\[
  \def\arraystretch{1.8}
  \begin{array}{lll}
    \underset{\longrightarrow}{\lim}
    \big(
      \mathscr{V}_{\mathbf{X}}
    \big)
    \,\boxtimes\,
    \mathscr{W}_{\mathbf{Y}}
    &
    \;\simeq\;
    \Big(
      \underset{\longrightarrow}{\lim}
      \,
      (q^{\mathbf{X}})_! \mathscr{V}
    \Big)_{\!
      \underset{\longrightarrow}{\lim}
      \mathbf{X}
    }
    \,\boxtimes\,
    \mathscr{W}_{\mathbf{Y}}
    &
    \proofstep{
      by \eqref{ColimitsInAGrothendieckConstruction}
    }
    \\
 &   \;\simeq\;
    \Big(
    \big(
      (\mathrm{pr}_{
        \underset{\longrightarrow}{\lim}
        \mathbf{X}})^\ast
      \underset{\longrightarrow}{\lim}
      \,
      (q^{\mathbf{X}})_! \mathscr{V}
    \big)
    \otimes
    \big(
    (\mathrm{pr}_{\mathbf{Y}})^\ast
    \mathscr{W}
    \big)
    \Big)_{
      \underset{\longrightarrow}{\lim}
      \mathbf{X}
      \times \mathbf{Y}
    }
    &
    \proofstep{
      by def. \eqref{ExternalTensorProductOfSimplicialLocalSystems}
    }
    \\
 &   \;\simeq\;
    \Big(
    \big(
      \underset{\longrightarrow}{\lim}
      \,
      (\mathrm{pr}_{
        \underset{\longrightarrow}{\lim}
        \mathbf{X}})^\ast
      \,
      (q^{\mathbf{X}})_! 
      \mathscr{V}
    \big)
    \otimes
    \big(
    (\mathrm{pr}_{\mathbf{Y}})^\ast
    \mathscr{W}
    \big)
    \Big)_{
      \underset{\longrightarrow}{\lim}
      \mathbf{X}
      \times \mathbf{Y}
    }
    &
    \proofstep{
      since $(\mbox{-})^\ast$ is left adjoint
      \eqref{BaseChangeAdjointTripleOnSimplicialLocalSystems}
    }
    \\
  &  \;\simeq\;
    \Big(
    \big(
      \underset{\longrightarrow}{\lim}
      \,
      (
        q^{\mathbf{X}}
        \,\times\,
        \mathrm{id}_{\mathbf{Y}}
      )_! 
      \,
      (\mathrm{pr}_{\mathbf{X}})^\ast
      \mathscr{V}
    \big)
    \otimes
    \big(
    (\mathrm{pr}_{\mathbf{Y}})^\ast
    \mathscr{W}
    \big)
    \Big)_{
      \underset{\longrightarrow}{\lim}
      \mathbf{X}
      \times \mathbf{Y}
    }
    &
    \proofstep{
      by Beck-Chevalley \eqref{BeckChevalleyAlongProducts}
      for \eqref{TowardsProvingExternalTensorPreservesColimits}
    }
    \\
 &   \;\simeq\;
    \bigg(
      \underset{\longrightarrow}{\lim}
    \Big(
    \big(
      (
        q^{\mathbf{X}}
        \,\times\,
        \mathrm{id}_{\mathbf{Y}}
      )_! 
      \,
      (\mathrm{pr}_{\mathbf{X}})^\ast
      \mathscr{V}
    \big)
    \otimes
    \big(
    (\mathrm{pr}_{\mathbf{Y}})^\ast
    \mathscr{W}
    \big)
    \Big)
   \! \bigg)_{
      \underset{\longrightarrow}{\lim}
      \mathbf{X}
      \times \mathbf{Y}
    }
    &
    \proofstep{
      since $(\mbox{-})\otimes\cdots$ preserves colimits
    }
    \\
&    \;\simeq\;
    \bigg(
      \underset{\longrightarrow}{\lim}
      \,
      (
        q^{\mathbf{X}}
        \,\times\,
        \mathrm{id}_{\mathbf{Y}}
      )_! 
    \Big(
    \big(
      \,
      (\mathrm{pr}_{\mathbf{X}})^\ast
      \mathscr{V}
    \big)
    \otimes
    \big(
    (\mathrm{pr}_{\mathbf{Y}})^\ast
    \mathscr{W}
    \big)
    \Big)
    \!\bigg)_{
      \underset{\longrightarrow}{\lim}
      \mathbf{X}
      \times \mathbf{Y}
    }
    &
    \proofstep{
      projection formula 
      \eqref{ProjectionFormulaForSimplicialLocalSystems}
    }
    \\
 &   \;\simeq\;
      \underset{\longrightarrow}{\lim}
    \bigg(\!
    \Big(
    \big(
      \,
      (\mathrm{pr}_{\mathbf{X}})^\ast
      \mathscr{V}
    \big)
    \otimes
    \big(
    (\mathrm{pr}_{\mathbf{Y}})^\ast
    \mathscr{W}
    \big)
    \Big)_{
      \mathbf{X}
      \times 
      \mathbf{Y}
    }    
    \bigg)
    &
    \proofstep{
      by \eqref{ColimitsInAGrothendieckConstruction}
    }
    \\
 &   \;\simeq\;
     \underset{\longrightarrow}{\lim}
     \big(
       \mathscr{V}_{\mathbf{X}}
       \,\boxtimes\,
       \mathscr{W}_{\mathbf{Y}}
     \big)
     &
    \proofstep{
      by def. \eqref{ExternalTensorProductOfSimplicialLocalSystems}\,.
    }
  \end{array}
\]
This concludes the proof.
\end{proof}

\begin{proposition}[Distributive coproducts of simplicial local systems]
  \label{ExternalTensorDistributesOverCoproductsOfSystemsOverDiscreteBaseSpaces}
  Any simplicial local system over a skeletal simplicial groupoid is the coproduct in $\mathbf{Loc}_{\mathbb{K}}$ of 
  its restrictions to the connected components:
  \begin{equation}
    \label{LocalSystemOverDiscreteSpaceIsCoproduct}
    \begin{array}{l}
    \mathbf{Y}
    \,\defneq\,
    \underset{i \in I}{\coprod}
    \mathbf{Y}_i
    \,\in\,
    \mathrm{sSet}\mbox{-}\mathrm{Grpd}_{\mathrm{skl}}
    \,,
    \\
    \mathscr{W}
    \,\in\,
    \mathbf{sCh}_{\mathbb{K}}^{\mathbf{Y}}
    \end{array}
    \hspace{1.2cm}
      \vdash
    \hspace{1.2cm}
    \mathscr{W}_{\mathbf{Y}}
    \;\;
    \simeq
    \;\;
    \underset{i \in I}{\coprod}
    \,
    \mathscr{W}_{\mathbf{Y}_i}
    \;\;\;
    \in
    \;
    \mathbf{Loc}_{\mathbb{K}}
    \,.
  \end{equation}
  and the external tensor product 
  (Def. \ref{ExternalTensorProductOfSimplicialLocalSystems})
  with any 
  $\mathscr{V}_{\mathbf{X}} \,\in\, \mathbf{Loc}_{\mathbb{K}}$ 
  distributes over (these) coproducts:
  \begin{equation}
    \label{ExternalTensorWithSystemOverDiscreteSpace}
    \def\arraystretch{1.6}
    \begin{array}{lll}
      \mathscr{V}_{\mathbf{X}}
      \,\boxtimes\,
      \underset{i \in I}{\coprod}
      \mathscr{W}_{\mathbf{Y}_i}
      \;\simeq\;
      \underset{i \in I}{\coprod}
      \big(
      \mathscr{V}_{\mathbf{X}}
      \,\boxtimes\,
      \mathscr{W}_{\mathbf{Y}_i}
      \big).
    \end{array}
  \end{equation}
\end{proposition}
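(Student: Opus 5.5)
For the first claim \eqref{LocalSystemOverDiscreteSpaceIsCoproduct}, I would use the formula for colimits in the Grothendieck construction (Prop. \ref{ColimitsInAGrothendieckConstruction}). Let $\iota_i : \mathbf{Y}_i \hookrightarrow \mathbf{Y}$ be the inclusions of the components. The coproduct $\coprod_i \mathscr{W}_{\mathbf{Y}_i}$ in $\mathbf{Loc}_{\mathbb{K}}$ then has underlying base $\coprod_i \mathbf{Y}_i \simeq \mathbf{Y}$, the coproduct in $\mathrm{sSet}\mbox{-}\mathrm{Grpd}$. Its fiber component is $\coprod_i (\iota_i)_!\, \iota_i^\ast \mathscr{W}$, a coproduct in $\mathbf{sCh}^{\mathbf{Y}}_{\mathbb{K}}$.

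So the claim reduces to the natural isomorphism $\coprod_i (\iota_i)_! \iota_i^\ast \mathscr{W} \simeq \mathscr{W}$. I would check this objectwise using the coend formula \eqref{CoEndFormulasForKanExtension}. Since there are no morphisms between distinct components, $(\iota_i)_!\mathscr{R}$ restricts to $\mathscr{R}$ on $\mathbf{Y}_i$ and to $0$ on $\mathbf{Y}_j$ for $j\neq i$. Equivalently, this is the product decomposition $\mathbf{sCh}^{\mathbf{Y}}_{\mathbb{K}} \simeq \prod_i \mathbf{sCh}^{\mathbf{Y}_i}_{\mathbb{K}}$ of Rem. \ref{SimplicialLocalySystemsOnSkeletalGroupoids}, under which $(\iota_i)_!$ is extension by zero.

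Skeletality is not really essential here; only the decomposition into connected components is used. I would remark this and then proceed as stated.

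For the distributivity \eqref{ExternalTensorWithSystemOverDiscreteSpace}, I would not reprove anything. The claim is the special case of Prop. \ref{ExternalTensorWithLocalSystemOverDiscreteSpacePreservesColimits} for coproducts, together with the symmetry of $\boxtimes$. That proposition shows $(-)\boxtimes \mathscr{W}_{\mathbf{Y}}$ preserves colimits, but here the colimit sits in the \emph{second} variable. So I need the natural isomorphism $\mathscr{V}_{\mathbf{X}}\boxtimes\mathscr{W}_{\mathbf{Y}} \simeq \sigma^\ast(\mathscr{W}_{\mathbf{Y}}\boxtimes \mathscr{V}_{\mathbf{X}})$ along the swap isomorphism $\sigma : \mathbf{X}\times\mathbf{Y} \xrightarrow{\sim} \mathbf{Y}\times\mathbf{X}$. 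This follows from the symmetry of $\otimes$ in $\mathbf{sCh}_{\mathbb{K}}$ and the relations $\mathrm{pr}\circ\sigma = \mathrm{pr}$.

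Since $\sigma$ is an isomorphism, transport along it (pullback $\sigma^\ast$ together with the base change $\sigma$, equivalently pushforward $\sigma_! = (\sigma^{-1})^\ast$) is an automorphism of $\mathbf{Loc}_{\mathbb{K}}$ and so preserves coproducts. Applying it to the colimit statement of Prop. \ref{ExternalTensorWithLocalSystemOverDiscreteSpacePreservesColimits} then gives \eqref{ExternalTensorWithSystemOverDiscreteSpace}.

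Alternatively, one can rerun that proposition's coend argument with the roles of the factors swapped. Beck--Chevalley \eqref{BeckChevalleyAlongProducts} and the projection formula \eqref{ProjectionFormulaForSimplicialLocalSystems} hold symmetrically.

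The main obstacle I expect is the bookkeeping that makes the swap an honest natural automorphism of $\mathbf{Loc}_{\mathbb{K}}$ compatible with $\boxtimes$. In particular, the strong monoidal isomorphisms of Prop. \ref{FrobeniusReciprocityForSimplicialLocalSystems} and the symmetry isomorphism must be natural in the base maps, so that the colimit cocones match. Since all of these isomorphisms hold objectwise (pullback is literally precomposition), I expect this to reduce to a routine check.
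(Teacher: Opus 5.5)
Your proof is correct. For \eqref{LocalSystemOverDiscreteSpaceIsCoproduct} you follow the paper's argument: the colimit formula in the Grothendieck construction plus extension by zero along the coprojections. Your remark that skeletality plays no role is right; the paper's proof also works for an arbitrary binary coproduct $\mathbf{X}_1 \sqcup \mathbf{X}_2$.

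For the distributivity \eqref{ExternalTensorWithSystemOverDiscreteSpace} you take a different route. The paper does not invoke Prop.~\ref{ExternalTensorWithLocalSystemOverDiscreteSpacePreservesColimits}. It proves directly the Beck--Chevalley isomorphism \eqref{BeckChevalleyForProductsOfCoproductInclusions} for the projection onto the second factor along the coproduct inclusions, and the identity \eqref{TensorProductWithExtensionByZero}, which is an instance of the projection formula. It then chains these with the component formula for coproducts in $\mathbf{Loc}_{\mathbb{K}}$. This is in effect your ``alternative'': the previous proposition's argument rerun in the second variable and specialized to coproducts. Your main route instead reuses that proposition wholesale via the symmetry of $\boxtimes$.

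Your route is shorter and gives the stronger statement that $\mathscr{V}_{\mathbf{X}}\boxtimes(-)$ preserves all colimits. It also makes explicit the symmetry of $\boxtimes$, which the paper uses without proof in Thm.~\ref{ExternalTensorProductIsHomotopical}(i). The paper's route is self-contained and never has to set up a braiding inside the Grothendieck construction.

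One phrasing fix: ``transport along $\sigma$ is an automorphism of $\mathbf{Loc}_{\mathbb{K}}$'' is not quite right, since there is no single swap functor on $\mathbf{Loc}_{\mathbb{K}}$. What you actually use is the following. The morphisms $\mathscr{V}_{\mathbf{X}}\boxtimes\mathscr{W}_{\mathbf{Y}} \to \mathscr{W}_{\mathbf{Y}}\boxtimes\mathscr{V}_{\mathbf{X}}$ covering $\sigma$, with component the symmetry of $\otimes$, are isomorphisms in $\mathbf{Loc}_{\mathbb{K}}$ and are natural in both arguments. Hence $\mathscr{V}_{\mathbf{X}}\boxtimes(-)\simeq(-)\boxtimes\mathscr{V}_{\mathbf{X}}$ as endofunctors, and colimit preservation transfers across this isomorphism. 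Since pullback is strict precomposition, naturality reduces at each $(x,y)$ to naturality of $\mathscr{V}_{\!x}\otimes\mathscr{W}_{\!y}\simeq\mathscr{W}_{\!y}\otimes\mathscr{V}_{\!x}$, so the bookkeeping you anticipate is indeed routine.
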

\begin{proof}
  This is essentially the general phenomenon of free coproduct completion of connected objects (Prop. \ref{CategoriesBeingCoproductCompletionOfTheirConnectedObjects})
  only that base sets are now replaced by skeletal simplicial groupoids. 
  For the record, we spell it out. In the following we discuss binary coproducts just for convenience of notation; the argument immediately generalizes to
  general set-indexed coproducts.
  
First, observe --- readily by adjointness \eqref{BaseChangeAdjointTripleOnSimplicialLocalSystems}, alternatively by the Kan extension formula
\eqref{CoEndFormulasForKanExtension} --- that pushforward of simplicial local systems along a coprojection into a coproduct of simplicial groupoids
is extension by zero to the other connected component:
\[
  \left.
  \begin{array}{l}
    \mathbf{X}_1,\,\mathbf{X}_2
    \,\in\,
    \mathrm{sSet}\mbox{-}\mathrm{Grpd}
    \\
    \mathscr{V} 
      \,\in\,
    \mathbf{sCh}_{\mathbb{K}}^{\mathbf{X}_1}
    ,\,
    \mathscr{V}' 
      \,\in\,
    \mathbf{sCh}_{\mathbb{K}}^{\mathbf{X}_2}
  \end{array}
  \right\}
  \hspace{.6cm}
    \vdash
  \hspace{.6cm}
  \begin{tikzcd}[column sep=35pt]
    \mathbf{X}_1
    \ar[d, hook', "{ q_1 }"{swap}]
    \ar[drr, "{ \mathscr{V} }"]
    \\
    \mathbf{X}_1
    \sqcup
    \mathbf{X}_2
    \ar[
      rr,
      "{
        (q_1)_!
        \mathscr{V}
      }"{pos=.4, description}
    ]
    &&
    \mathbf{sCh}_{\mathbb{K}}    
    \\
    \mathbf{X}_2
    \ar[u, hook, "{ q_2 }"]
    \ar[urr, "{ 0 }"{swap}]
  \end{tikzcd}
  \;\;\;\;
  \mbox{and}
  \;\;\;\;
  \begin{tikzcd}[column sep=35pt]
    \mathbf{X}_1
    \ar[d, hook', "{ q_1 }"{swap}]
    \ar[drr, "{ 0 }"]
    \\
    \mathbf{X}_1
    \sqcup
    \mathbf{X}_2
    \ar[
      rr,
      "{
        (q_2)_!
        \mathscr{V}
      }"{pos=.4, description}
    ]
    &&
    \mathbf{sCh}_{\mathbb{K}}    
    \\
    \mathbf{X}_2
    \ar[u, hook, "{ q_2 }"]
    \ar[urr, "{ \mathscr{V}' }"{swap}]
  \end{tikzcd}
\]
from which it follows that the coproduct of a pair of such push-forwards is given by
\[
  \begin{tikzcd}[column sep=55pt]
    \mathbf{X}_1
    \ar[d, hook', "{ q_1 }"{swap}]
    \ar[drr, "{ \mathscr{V} }"]
    \\
    \mathbf{X}_1
    \sqcup
    \mathbf{X}_2
    \ar[
      rr,
      "{
        (q_1)_!
        \mathscr{V}
        \;\sqcup\;
        (q_2)_!
        \mathscr{V}'
      }"{pos=.4, description}
    ]
    &&
    \mathbf{sCh}_{\mathbb{K}}    
    \,.
    \\
    \mathbf{X}_2
    \ar[u, hook, "{ q_2 }"]
    \ar[urr, "{ \mathscr{V}' }"{swap}]
  \end{tikzcd}
\]
This implies the first formula \eqref{ExternalTensorWithSystemOverDiscreteSpace} by the general formula for colimits in Grothendieck constructions (Prop. \ref{ColimitsInAGrothendieckConstruction}), which gives that 
\begin{equation}
  \label{ComponentFormulaForCoproductsInLoc}
  \def\arraystretch{1.6}
  \begin{array}{ll}
    \big(
      (q_1)_! \mathscr{V}
      \,\sqcup\,
      (q_2)_! \mathscr{V}'
    \big)_{ \mathbf{X}_1 \sqcup \mathbf{X}_2}
    \;\simeq\;
    \mathscr{V}_{\mathbf{X}_1}
    \,\coprod\,
    \mathscr{V}'_{\mathbf{X}_2}
  \end{array}
\end{equation}
Moreover, in the situation
\[
  \begin{tikzcd}[column sep=large, row sep=small]
    & &[-30pt] 
    \mathbf{X} \,\times\, \mathbf{Y}_i
    \ar[dl, "{ \; q_i }"
    ]
    \ar[dr, "{ \mathrm{pr}_{\mathbf{Y}_i} }"]
    \ar[ddll, bend right=33, "{ \mathrm{pr}_{\mathbf{X}} }"{swap}]
    &[-10pt]
    \\
    &
    \mathbf{X} 
      \times
    \big( 
      \mathbf{Y}_1 \sqcup \mathbf{Y}_2
    \big)
    \ar[dr, "{\mathrm{pr}_{\mathbf{Y}_1 \sqcup \mathbf{Y}_2}}"{swap}]
    \ar[dl, "{\;\; \mathrm{pr}_{\mathbf{X}} }"]
    &&
    \mathbf{Y}_i
    \ar[dl, "{\; q_i }"{swap}]
    \\[+10pt]
    \mathbf{X}
    &&
    \mathbf{Y}_1 \sqcup \mathbf{Y}_2
  \end{tikzcd}
\]
it follows that the corresponding Beck-Chevalley condition is satisfied
\begin{equation}
  \label{BeckChevalleyForProductsOfCoproductInclusions}
  (\mathrm{pr}_{\mathbf{Y}_1 \sqcup \mathbf{Y}_2})^\ast
  \circ
  (q_i)_!
  \;\;
  \simeq
  \;\;
  (q_i)_! 
  \circ
  (\mathrm{pr}_{\mathbf{Y}_i})^\ast
\end{equation}
and that
\begin{equation}
  \label{TensorProductWithExtensionByZero}
  \big(
    (\mathrm{pr}_{\mathbf{X}})^\ast
   \mathscr{V}
  \big)
  \,\otimes\,
  \big(
    (q_i)_! (\mathrm{pr}_{\mathbf{Y}_i})^\ast
    \mathscr{W}
  \big)
  \;\;
  \simeq
  \;\;
  (q_i)_! 
  \Big(\!
  \big(
    (\mathrm{pr}_{\mathbf{X}})^\ast
   \mathscr{V}
  \big)
  \,\otimes\,
  \big(
    (\mathrm{pr}_{\mathbf{Y}_i})^\ast
    \mathscr{W}
  \big)
  \!\Big).
\end{equation}
With this, we establish the second statement:
\[
  \def\arraycolsep{-5pt}
  \def\arraystretch{1.7}
  \begin{array}{ll}
  \mathscr{V}_{\mathbf{X}} \,\boxtimes\, 
  \big(
    \mathscr{W}_{\mathbf{Y}_1}
    \coprod
    \mathscr{W}'_{\mathbf{Y}_2}
  \big)
  \\
  \;\simeq
  \Big(\!
    \big(
      (\mathrm{pr}_{\mathbf{X}})^\ast
      \mathscr{V}
    \big)
    \otimes
    (\mathrm{pr}_{\mathbf{Y}_1 \sqcup \mathbf{Y}_2})^\ast
    \big(
      (q_1)_! \mathscr{W}
      \,\sqcup\,
      (q_2)_! \mathscr{W}'
    \big)
 \! \Big)_{ \mathbf{X} \times \big( \mathbf{Y}_1 \sqcup \mathbf{Y}_2 \big) }
  &
 \;\;\;\;   \proofstep{
    by 
    \eqref{ComponentFormulaForCoproductsInLoc}
  }
  \\
  \;\simeq
  \bigg(\!\!
    \big(
      (\mathrm{pr}_{\mathbf{X}})^\ast
      \mathscr{V}
    \big)
    \otimes
    \Big(
      \big(
      (\mathrm{pr}_{\mathbf{Y}_1 \sqcup \mathbf{Y}_2})^\ast
      (q_1)_! \mathscr{W}
      \big)
      \,\sqcup\,
      \big(
      (\mathrm{pr}_{\mathbf{Y}_1 \sqcup \mathbf{Y}_2})^\ast
      (q_2)_! \mathscr{W}'
      \big)
    \Big)
  \!\! \bigg)_{ \mathbf{X} \times \mathbf{Y}_1 \,\sqcup\, \mathbf{X} \times \mathbf{Y}_1 }
  &
  \;\;\;\;  \proofstep{
    $(\mbox{-})^\ast$ is left adjoint
  }
  \\
  \;\simeq
  \bigg(\!\!
    \Big(\!
      \big(
        (\mathrm{pr}_{\mathbf{X}})^\ast
        \mathscr{V}
      \big)
      \otimes
      \big(
      (\mathrm{pr}_{\mathbf{Y}_1 \sqcup \mathbf{Y}_2})^\ast
      (q_1)_! \mathscr{W}
      \big)
    \! \Big)
      \sqcup
     \Big(\!
      \big(
        (\mathrm{pr}_{\mathbf{X}})^\ast
        \mathscr{V}
      \big)
      \otimes
      \big(
      (\mathrm{pr}_{\mathbf{Y}_1 \sqcup \mathbf{Y}_2})^\ast
      (q_2)_! \mathscr{W}'
      \big)
    \!\Big)
  \!\!\bigg)_{\! \mathbf{X} \times \mathbf{Y}_1 \,\sqcup\, \mathbf{X} \times \mathbf{Y}_1 }
  &
 \;\;\;\; \proofstep{
    $\cdots \otimes (\mbox{-})$ is left adjoint
  }
  \\
  \;\simeq
  \bigg(\!\!
    \Big(
      \big(
        (\mathrm{pr}_{\mathbf{X}})^\ast
        \mathscr{V}
      \big)
      \otimes
      \big(
        (q_1)_!
        (\mathrm{pr}_{\mathbf{Y}_1})^\ast
        \mathscr{W}
      \big)
   \!  \Big)
      \,\sqcup\,
     \Big(\!
      \big(
        (\mathrm{pr}_{\mathbf{X}})^\ast
        \mathscr{V}
      \big)
      \otimes
      \big(
        (q_2)_!
        (\mathrm{pr}_{\mathbf{Y}_2})^\ast
        \mathscr{W}'
      \big)
    \Big)
  \!\!\bigg)_{ \mathbf{X} \times \mathbf{Y}_1 \,\sqcup\, \mathbf{X} \times \mathbf{Y}_1 }
  &
 \; \;\;\;  \proofstep{
    by 
    \eqref{BeckChevalleyForProductsOfCoproductInclusions}
  }
  \\
  \;\simeq
  \bigg(\!\!
    (q_1)_!
    \Big(
      \big(
        (\mathrm{pr}_{\mathbf{X}})^\ast
        \mathscr{V}
      \big)
      \otimes
      \big(
        (\mathrm{pr}_{\mathbf{Y}_1})^\ast
        \mathscr{W}
      \big)
     \!\Big)
      \,\sqcup\,
     (q_2)_!
     \Big(\!
      \big(
        (\mathrm{pr}_{\mathbf{X}})^\ast
        \mathscr{V}
      \big)
      \otimes
      \big(
        (\mathrm{pr}_{\mathbf{Y}_2})^\ast
        \mathscr{W}'
      \big)
    \Big)
  \!\!\bigg)_{ \mathbf{X} \times \mathbf{Y}_1 \,\sqcup\, \mathbf{X} \times \mathbf{Y}_1 }
  &
 \;  \;\;\; \proofstep{
    by 
    \eqref{TensorProductWithExtensionByZero}
  }
  \\
  \;\simeq
  \big(
  \mathscr{V}_{\mathbf{X}}
  \,\boxtimes\,
  \mathscr{W}_{\mathbf{Y}_1}
  \big)
  \,\coprod\,
  \big(
  \mathscr{V}_{\mathbf{X}}
  \,\boxtimes\,
  \mathscr{W}'_{\mathbf{Y}_2}
  \big)
  &
  \;\;\;\;  \proofstep{
    by 
    \eqref{ComponentFormulaForCoproductsInLoc}\;.
  }
  \end{array}
\]
This concludes the proof.
\end{proof}

\begin{proposition}[External internal hom of simplicial local systems]
  \label{ExternalInternalHomOfSimplicialLocalSystems}
  $\,$

\begin{itemize}
\item[{\bf (i)}] Forming the external tensor product with a simplicial local system over a discrete space 
 has a right adjoint functor:
  \begin{equation}
    \label{ExternalInternalHomAdjunction}
    \begin{array}{l}
    \mathrm{Y}
    \,\in\,
    \mathrm{Set}
    \xhookrightarrow{\quad}
    \mathrm{sSet}\mbox{-}\mathrm{Grpd}_{\mathrm{skl}}
    \\
    \mathscr{R} \,\in\,
    \mathbf{sCh}_{\mathbb{K}}^{\mathrm{Y}}
    \end{array}
    \hspace{1.2cm}
      \vdash
    \hspace{1.2cm}
    \begin{tikzcd}
      \mathbf{Loc}_{\mathbb{K}}
      \ar[
        rr,
        shift left=5pt,
        "{
          \mathscr{R}_{\mathrm{Y}}
          \,\boxtimes\,
          (\mbox{-})
        }"
      ]
      \ar[
        from=rr,
        shift left=5pt,
        "{
          \scalebox{1}{$
          \mathscr{R}_{\mathrm{Y}}
          \,\extmap\,
          (\mbox{-})
          $}
        }"
      ]
      \ar[
        rr,
        phantom,
        "{
          \scalebox{.7}{$\bot$}
        }"
      ]
      &&
      \mathbf{Loc}_{\mathbb{K}}\;.
    \end{tikzcd}
  \end{equation}

\item[{\bf (ii)}] 
This is given by
  \begin{equation}
    \label{ExternalInternalHomFormula}
    \mathscr{W}_{\mathbf{Z}}
    \,\in\,
    \mathbf{Loc}_{\mathbb{K}}
    \hspace{1cm}
    \vdash
    \hspace{1cm}
    \mathscr{R}_{\mathrm{Y}}
    \,\extmap\,
    \mathscr{W}_{\mathbf{Z}}
    \;\;
      :\defneq
    \;\;
    \bigg(\,
    \underset{y \in \mathrm{Y}}{\prod}
    \mathrm{ev}_y^\ast
    \big[
      (p_{\mathbf{Z}})^\ast
      \mathscr{R}_{\{y\}}
      ,\,
      \mathscr{W}
    \big]
   \! \bigg)_{\mathbf{Z}^{\mathrm{Y}}}
    \,,
  \end{equation}

\item[{\bf (iii)}]  which is such that over a map
  $f : \mathrm{Y} \longrightarrow \mathbf{Z}$, hence when restricted along $\{f\} \xhookrightarrow{\;} \mathbf{Z}^{\mathrm{Y}}$, it is given by
  \begin{equation}
    \label{ExternalInternalHomOverASinleMap}
    \{f\}^\ast
    \big(
    \mathscr{R}_{\mathrm{Y}}
    \,\extmap\,
    \mathscr{W}_{\mathbf{Z}}
    \big)
    \;\;
    \simeq
    \;\;
    (p_{\mathrm{Y}})_\ast
    \,
    \big[
      \mathscr{R}
      ,\,
      f^\ast
      \mathscr{W}
    \big]
    \,.
  \end{equation}
\end{itemize}
\end{proposition}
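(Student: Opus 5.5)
The plan is to verify the natural hom-set bijection directly. I take the formula \eqref{ExternalInternalHomFormula} as the candidate right adjoint, which settles (i) and (ii) together, and then read off (iii) by restriction.

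\textbf{Setup.} Since $\mathrm{Y}$ is discrete, Prop. \ref{ExternalTensorDistributesOverCoproductsOfSystemsOverDiscreteBaseSpaces} gives $\mathscr{R}_{\mathrm{Y}} \simeq \coprod_{y \in \mathrm{Y}} \mathscr{R}_{\{y\}}$. It follows that
\[
  \mathscr{R}_{\mathrm{Y}} \boxtimes \mathscr{V}_{\mathbf{X}}
  \;\simeq\;
  \coprod_{y \in \mathrm{Y}} \big( \mathscr{R}_{\{y\}} \boxtimes \mathscr{V}_{\mathbf{X}} \big) ,
\]
using the symmetry of $\boxtimes$, and $\{y\} \times \mathbf{X} \simeq \mathbf{X}$. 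Hence a morphism $\mathscr{R}_{\mathrm{Y}} \boxtimes \mathscr{V}_{\mathbf{X}} \to \mathscr{W}_{\mathbf{Z}}$ in $\mathbf{Loc}_{\mathbb{K}}$ amounts to two pieces of data:
\begin{itemize}
\item a family of maps $\mathbf{g}_y : \mathbf{X} \to \mathbf{Z}$ of simplicial groupoids, equivalently one map $\mathbf{g} : \mathbf{X} \to \mathbf{Z}^{\mathrm{Y}} = \prod_y \mathbf{Z}$, with $\mathbf{g}_y = \mathrm{ev}_y \circ \mathbf{g}$;
\item for each $y$, a component map $\mathscr{R}_y \otimes \mathscr{V} \to \mathbf{g}_y^\ast \mathscr{W}$ in $\mathbf{sCh}^{\mathbf{X}}_{\mathbb{K}}$. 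Here $\mathscr{R}_y$ denotes the constant local system $(p_{\mathbf{X}})^\ast \mathscr{R}_{\{y\}}$.
\end{itemize}
This is exactly how the underlying map in Cartesian closed $\mathrm{sSet}\mbox{-}\mathrm{Grpd}$ (Prop. \ref{CartesianClosureOfSSetEnrichedGroupoids}) matches the base map $\mathbf{X} \to \mathbf{Z}^{\mathrm{Y}}$ of the proposed right adjoint.

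\textbf{Componentwise adjunction.} Next I transform each component using the fiberwise closed structure \eqref{ClosedMonoidalStructureRecalled} and strong closedness of pullback \eqref{StrongClosedPullback}:
\[
  \mathscr{R}_y \otimes \mathscr{V} \to \mathbf{g}^\ast \mathrm{ev}_y^\ast \mathscr{W}
  \;\;\leftrightarrow\;\;
  \mathscr{V} \to \big[ \mathbf{g}^\ast \mathrm{ev}_y^\ast (p_{\mathbf{Z}})^\ast \mathscr{R}_{\{y\}} ,\, \mathbf{g}^\ast \mathrm{ev}_y^\ast \mathscr{W} \big]
  \simeq \mathbf{g}^\ast \mathrm{ev}_y^\ast \big[ (p_{\mathbf{Z}})^\ast \mathscr{R}_{\{y\}} ,\, \mathscr{W} \big] .
\]
In the first step I use that $(p_{\mathbf{X}})^\ast = \mathbf{g}^\ast \mathrm{ev}_y^\ast (p_{\mathbf{Z}})^\ast$. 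Here I read $(p_{\mathbf{Z}})^\ast$ as pulling back along $\mathbf{Z} \to \ast$, with the convention that $\mathrm{ev}_y^\ast$ in \eqref{ExternalInternalHomFormula} lands over $\mathbf{Z}^{\mathrm{Y}}$. The product over $y$ of these maps is a single map $\mathscr{V} \to \mathbf{g}^\ast \prod_y \mathrm{ev}_y^\ast [\cdots]$, since $\mathbf{g}^\ast$ preserves limits, being a right adjoint by \eqref{BaseChangeAdjointTripleOnSimplicialLocalSystems}. This is precisely a morphism $\mathscr{V}_{\mathbf{X}} \to (\mathscr{R}_{\mathrm{Y}} \extmap \mathscr{W}_{\mathbf{Z}})$ over $\mathbf{g}$.

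\textbf{Naturality.} Naturality in $\mathscr{V}_{\mathbf{X}}$ along maps $\phi_{\mathbf{h}}$ follows because every isomorphism used commutes with $\mathbf{h}^\ast$. For the closed structure this is \eqref{StrongClosedPullback} again, and the compositional coherence isomorphisms handle the base maps. This is the main obstacle: showing that the strong-closed isomorphisms and the Cartesian-closure bijection fit together coherently with composition in the Grothendieck construction. My first step would be to reduce to skeletal base groupoids via Lem. \ref{SkeletalizationOfLocalSystems}, where the pullback isomorphisms are actual equalities objectwise, so that coherence becomes manifest.

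\textbf{Part (iii).} For (iii) I restrict along $\{f\} \hookrightarrow \mathbf{Z}^{\mathrm{Y}}$. This means composing with $\mathrm{ev}_y$, which picks out the point $f(y)$, giving
\[
  \prod_y \big[ \mathscr{R}_{\{y\}} ,\, \mathscr{W}_{f(y)} \big] .
\]
Since $\mathrm{Y}$ is discrete, $(p_{\mathrm{Y}})_\ast$ is the product over $y$, so this equals $(p_{\mathrm{Y}})_\ast [\mathscr{R}, f^\ast \mathscr{W}]$. The Kan extension formula \eqref{CoEndFormulasForKanExtension} confirms this over the discrete groupoid $\mathrm{Y}$.
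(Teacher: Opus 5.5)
Your argument is correct and is essentially the paper's proof. The paper first does the singleton case $\mathrm{Y}=\{y\}$ using the hom-sets of the Grothendieck construction, the fiberwise closed adjunction, $(p_{\mathbf{X}})^\ast=\mathbf{f}^\ast(p_{\mathbf{Z}})^\ast$ and strong closedness. It then assembles the general case from the coproduct decomposition, distributivity of $\boxtimes$, and the limit formula in the Grothendieck construction, which is exactly what your inlined hom-set computation over $\mathbf{Z}^{\mathrm{Y}}$ checks; part (iii) is derived the same way.

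Naturality is less of an obstacle than you suggest. The comparison $\mathbf{f}^\ast[\mathscr{A},\mathscr{B}]\to[\mathbf{f}^\ast\mathscr{A},\mathbf{f}^\ast\mathscr{B}]$ is the adjunct of $\mathbf{f}^\ast$ applied to evaluation. Since precomposition is strictly monoidal and strictly functorial, its compatibility with composition of base maps is automatic. So the detour through skeleta is unnecessary, and it would not be available uniformly on $\mathbf{Loc}_{\mathbb{K}}$ anyway, because skeletalization involves non-natural choices.
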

\begin{proof}
  First, consider the special case where $\mathrm{Y}$ is a singleton set $\{y\}$.
  Then we have the following sequence of natural isomorphisms:
\begin{equation}
  \label{AdjointnessForExternalTensorWithSystemOverSingleton}
  \def\arraystretch{1.6}
  \begin{array}{lll}
    \mathbf{Loc}_{\mathbb{K}}
    \big(
    \mathscr{V}_{\mathbf{X}}
    \,\boxtimes\,
    \mathscr{R}_{\{y\}}
    ,\,
    \mathscr{W}_{\mathbf{Z}}
    \big)
    &
    \;\simeq\;
    \big(
      f \,\in\,
      \mathrm{sSet}\mbox{-}\mathrm{Grpd}(\mathbf{X},\mathbf{Z})
    \big)
    \,\times\,
    \mathbf{sCh}_{\mathbb{K}}^{\mathbf{X}}
    \Big(
      \mathscr{V}
      \otimes
      \big(
      (\mathrm{pr}_{\{y\}})^\ast
      \mathscr{R}
      \big)
      ,\,
      f^\ast \mathscr{W}
    \Big)
    &
    \proofstep{
      by \eqref{HomSetsOfContravariantGrothendieckConstruction}
    }
    \\
  &  \;\simeq\;
    \big(
      f \,\in\,
      \mathrm{sSet}\mbox{-}\mathrm{Grpd}(\mathbf{X},\mathbf{Z})
    \big)
    \,\times\,
    \mathbf{sCh}_{\mathbb{K}}^{\mathbf{X}}
    \Big(
      \mathscr{V}
      \otimes
      \big(
      (p_{\mathbf{X}})^\ast
      \mathscr{R}
      \big)
      ,\,
      f^\ast \mathscr{W}
    \Big)
    &
    \proofstep{
      by 
      \eqref{TowardsRightAdjointOfexternalTensor}
    }
    \\
 &   \;\simeq\;
    \big(
      f \,\in\,
      \mathrm{sSet}\mbox{-}\mathrm{Grpd}(\mathbf{X},\mathbf{Z})
    \big)
    \,\times\,
    \mathbf{sCh}_{\mathbb{K}}^{\mathbf{X}}
    \Big(
      \mathscr{V}
      ,\,
      \big[
        (p_{\mathbf{X}})^\ast
        \mathscr{R}
        ,\,
        f^\ast \mathscr{W}
      \big]
    \Big)
    &
    \proofstep{
      by
      \eqref{InternalHomAdjunctionForSimplicialLocalSystems}
    }
    \\
 &   \;\simeq\;
    \big(
      f \,\in\,
      \mathrm{sSet}\mbox{-}\mathrm{Grpd}(\mathbf{X},\mathbf{Z})
    \big)
    \,\times\,
    \mathbf{sCh}_{\mathbb{K}}^{\mathbf{X}}
    \Big(
      \mathscr{V}
      ,\,
      \big[
        f^\ast
        (p_{\mathbf{Z}})^\ast
        \mathscr{R}
        ,\,
        f^\ast \mathscr{W}
      \big]
    \Big)
    &
    \proofstep{
      by
      \eqref{TowardsRightAdjointOfexternalTensor}
    }
    \\
 &   \;\simeq\;
    \big(
      f \,\in\,
      \mathrm{sSet}\mbox{-}\mathrm{Grpd}(\mathbf{X},\mathbf{Z})
    \big)
    \,\times\,
    \mathbf{sCh}_{\mathbb{K}}^{\mathbf{X}}
    \Big(
      \mathscr{V}
      ,\,
      f^\ast
      \big[
        (p_{\mathbf{Z}})^\ast
        \mathscr{R}
        ,\,
        \mathscr{W}
      \big]
    \Big)
    &
    \proofstep{
      by 
      \eqref{StrongClosedPullback}
    }
    \\
&    \;\simeq\;
    \mathbf{Loc}_{\mathbb{K}}
    \big(
      \mathscr{V}_{\mathbf{X}}
      ,\,
      \big[
        (p_{\mathbf{Z}})^\ast
        \mathscr{R}
        ,\,
        \mathscr{W}
      \big]_{\mathbf{Z}}
    \big)
    &
    \proofstep{
      by 
      \eqref{HomSetsOfContravariantGrothendieckConstruction},
    }
  \end{array}
\end{equation}
where we have made use of notation corresponding to the following commuting diagram:
\begin{equation}
  \label{TowardsRightAdjointOfexternalTensor}
  \begin{tikzcd}[row sep=small]
    \mathbf{X}
    \times \{y\}
    \ar[
      d,
      "{
        \mathrm{pr}_{\{y\}}
      }"{swap}
    ]
    \ar[r, "{ \sim }"]
    &
    \mathbf{X}
    \ar[r, "{ f }"]
    \ar[
      d, 
      "{
        p_{\mathbf{X}} 
      }"{swap}
    ]
    &
    \mathbf{Z}
    \ar[
      d,
      "{
        p_{\mathbf{Z}}
      }"
    ]
    \\
    \{y\}
    \ar[r, "{ \sim }"]
    & 
    \ast
    \ar[r, equals]
    &
    \ast
    \mathrlap{\,.}
  \end{tikzcd}
\end{equation}
From this, we obtain the general statement \eqref{ExternalInternalHomFormula} as follows:
\[
  \def\arraystretch{1.7}
  \begin{array}{lll}
    \mathbf{Loc}_{\mathbb{K}}
    \big(
    \mathscr{V}_{\mathbf{X}}
    \boxtimes
    \mathscr{R}_{\mathrm{Y}}
    ,\,
    \mathscr{W}_{\mathbf{Z}}
    \big)
    &
    \;\simeq\;
    \mathbf{Loc}_{\mathbb{K}}
    \bigg(
    \mathscr{V}_{\mathbf{X}}
    \boxtimes
    \Big(
    \underset{y \in Y}{\coprod}
    \mathscr{R}_{\{y\}}
    \Big)
    ,\,
    \mathscr{W}_{\mathbf{Z}}
    \bigg)
    &
    \proofstep{
      by
      \eqref{LocalSystemOverDiscreteSpaceIsCoproduct}
    }
    \\
   & \;\simeq\;
    \mathbf{Loc}_{\mathbb{K}}
    \Big(
    \underset{y \in Y}{\coprod}
    \big(
    \mathscr{V}_{\mathbf{X}}
    \boxtimes
    \mathscr{R}_{\{y\}}
    \big)
    ,\,
    \mathscr{W}_{\mathbf{Z}}
    \Big)
    &
    \proofstep{
      by \eqref{ExternalTensorWithSystemOverDiscreteSpace}
    }
    \\
   & \;\simeq\;
    \underset{y \in Y}{\prod}
    \mathbf{Loc}_{\mathbb{K}}
    \Big(
    \mathscr{V}_{\mathbf{X}}
    \boxtimes
    \mathscr{R}_{\{y\}}
    ,\;
    \mathscr{W}_{\mathbf{Z}}
    \Big)
    &
    \proofstep{
      hom-functors preserve limits
    }
    \\
  &  \;\simeq\;
    \underset{y \in Y}{\prod}
    \mathbf{Loc}_{\mathbb{K}}
    \Big(
    \mathscr{V}_{\mathbf{X}}
    ,\;
    \big[
      (p_{\mathbf{Z}})^\ast
      \mathscr{R}_{\{y\}}
      ,\,
      \mathscr{W}
    \big]_{\mathbf{Z}}
    \Big)
    &
    \proofstep{
      by
      \eqref{AdjointnessForExternalTensorWithSystemOverSingleton}
    }
    \\
 &   \;\simeq\;
    \mathbf{Loc}_{\mathbb{K}}
    \bigg(\!
    \mathscr{V}_{\mathbf{X}}
    ,\;
    \underset{y \in Y}{\prod}
    \Big(
    \big[
      (p_{\mathbf{Z}})^\ast
      \mathscr{R}_{\{y\}}
      ,\,
      \mathscr{W}
    \big]_{\mathbf{Z}}
    \Big)
   \! \bigg)
    &
    \proofstep{
      hom-functors preserve limits
    }
    \\
 &   \;\simeq\;
    \mathbf{Loc}_{\mathbb{K}}
    \bigg(\!
    \mathscr{V}_{\mathbf{X}}
    ,\;
    \Big(
    \underset{y \in \mathrm{Y}}{\prod}
    \mathrm{ev}_y^\ast
    \big[
      (p_{\mathbf{Z}})^\ast
      \mathscr{R}_{\{y\}}
      ,\,
      \mathscr{W}
    \big]
    \Big)_{\mathbf{Z}^{\mathrm{Y}}}
    \!\bigg)
    &
    \proofstep{
      by Prop. \ref{ColimitsInAGrothendieckConstruction}.
    }
  \end{array}
\]
Finally, the formula \eqref{ExternalInternalHomOverASinleMap} is obtained as follows:
\[
  \def\arraystretch{1.4}
  \begin{array}{lll}
    \{f\}^\ast
    \underset{y \in \mathrm{Y}}{\prod}
    \mathrm{ev}_y^\ast
    \big[
      (p_{\mathbf{Z}})^\ast
      \mathscr{R}_{\{y\}}
      ,\,
      \mathscr{W}
    \big]
    &
    \;\simeq\;
    \underset{y \in \mathrm{Y}}{\prod}
    \{f\}^\ast
    \mathrm{ev}_y^\ast
    \big[
      (p_{\mathbf{Z}})^\ast
      \mathscr{R}_{\{y\}}
      ,\,
      \mathscr{W}
    \big]
    &
    \proofstep{
      since $(\mbox{-})^\ast$
      is right adjoint \eqref{BaseChangeAdjointTripleOnSimplicialLocalSystems}
    }
    \\
&    \;\simeq\;
    \underset{y \in \mathrm{Y}}{\prod}
    \{f(y)\}^\ast
    \big[
      (p_{\mathbf{Z}})^\ast
      \mathscr{R}_{\{y\}}
      ,\,
      \mathscr{W}
    \big]    
    &
    \proofstep{
     by
     \eqref{AFunctionEvaluation}
    }
    \\
 &   \;\simeq\;
    \underset{y \in \mathrm{Y}}{\prod}
    \big[
      \{f(y)\}^\ast
      (p_{\mathbf{Z}})^\ast
      \mathscr{R}_{\{y\}}
      ,\,
      \{f(y)\}^\ast
      \mathscr{W}
    \big]    
    &
    \proofstep{
      by
      \eqref{StrongClosedPullback}
    }
    \\
 &   \;\simeq\;
    \underset{y \in \mathrm{Y}}{\prod}
    \big[
      \mathscr{R}_{\{y\}}
      ,\,
      \mathscr{W}_{\{f(y)\}}
    \big]  
    &
    \proofstep{
      by
      \eqref{AFunctionEvaluation}
    }
    \\
&    \;\simeq\;
    \underset{y \in \mathrm{Y}}{\int}
    \big[
      \mathscr{R}_{\{y\}}
      ,\,
      (f^\ast \mathscr{W})_{\{y\}}
    \big]  
    \\
  &  \;\simeq\;
    (p_{\mathrm{Y}})_\ast
    \,
    \big[
      \mathscr{R}
      ,\,
      f^\ast \mathscr{W}
    \big]  
    &
    \proofstep{
      by
      \eqref{CoEndFormulasForKanExtension}\,,
    }
  \end{array}
\]
where we made use of the notation in the following commuting diagram
\begin{equation}
  \label{AFunctionEvaluation}
  \begin{tikzcd}[row sep=3pt]
    \{f\}
    \ar[rr, hook]
    \ar[dr, "{f(y) }"{swap}]
    &&
    \mathbf{Z}^{\mathrm{Y}}
    \ar[dl, "{ \mathrm{ev}_y }"]
    \\
    & 
    \mathbf{Z}
    \ar[dd, "{ p_{\mathbf{Z}} }"{left}]
    \\
    \\[5pt]
    &
    \ast
  \end{tikzcd}
\end{equation}
This concludes the proof.
\end{proof}

\begin{example}[External internal hom of vector bundles over discrete spaces]
  \label{ExternalInternalHomOverDiscreteSpaces}
  In the special case when the local systems in question take values in plain vector spaces
  \[
    \mathscr{R}
    ,\,
    f^\ast \mathscr{W}
    \,:\,
    \mathrm{Y}
    \longmapsto 
    \mathrm{Mod}_{\mathbb{K}}
    \xhookrightarrow{\quad}
    \mathrm{Ch}_{\mathbb{K}}
    \xhookrightarrow{\; \mathrm{const} \;}
    \mathrm{sCh}_{\mathbb{K}}
  \]
  then the expression in \eqref{ExternalInternalHomOverASinleMap} reduces to the vector space of vector bundle morphisms 
  $\mathscr{R} \to f^\ast \mathscr{W}$
  over $\mathrm{Y}$. This way we recover the expression for the $\boxtimes$-adjoint internal hom given in \cite[p. 6]{QS} 
  (there denoted  ``$\multimap$'' instead of ``$\extmap$'').
\end{example}

\begin{proposition}
\label{ExternalTensorPreservesHomotopyQuotientSquares}
  The external tensor product preserves the Cartesian squares \eqref{HomotopyQuotientSquares}.
  \[
    \begin{tikzcd}
      \mathscr{V}_{\mathrm{pt}}
      \,\boxtimes\,
      \mathscr{W}_{\mathbf{Y}}
      \ar[r]
      \ar[d]
      \ar[
        dr,
        phantom,
        "{
          \scalebox{.7}{\rm(pb)}
        }"
      ]
      &
      \mathscr{V}_{\mathbf{B}\mathcal{G}}
      \,\boxtimes\,
      \mathscr{W}_{\mathbf{Y}}
      \ar[d]
      \\
      0_{\mathrm{pt}}
      \,\boxtimes\,
      \mathscr{W}_{\mathbf{Y}}
      \ar[r]
      &
      0_{\mathbf{B}\mathcal{G}}
      \,\boxtimes\,
      \mathscr{W}_{\mathbf{Y}}
    \end{tikzcd}
 \]
\end{proposition}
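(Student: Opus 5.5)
The plan is to compute both the underlying square of simplicial groupoids and the component square of chain complexes explicitly, and then recognize the result as a pullback in $\mathbf{Loc}_{\mathbb{K}}$ via the general formula for limits in a Grothendieck construction (Prop. \ref{ColimitsInAGrothendieckConstruction}), exactly as in the proof of Ex. \ref{DecompositionGroupRepresentations}.

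First, apply $\boxtimes\,\mathscr{W}_{\mathbf{Y}}$ to the square \eqref{HomotopyQuotientSquares}. By Def. \ref{ExternalTensorProductOfSimplicialLocalSystems}, the underlying square of base groupoids is the original base square with $(-)\times\mathbf{Y}$ applied. Since the original base square has identity vertical maps, this is the square with vertical identities on $\mathrm{pt}\times\mathbf{Y}\simeq\mathbf{Y}$ and $\mathbf{B}\mathcal{G}\times\mathbf{Y}$, and horizontal maps $\mathrm{pt}\times\mathrm{id}$. It is therefore trivially a pullback in $\mathrm{sSet}\mbox{-}\mathrm{Grpd}$, because a square with two parallel identities is always Cartesian.

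Second, compute the component data over $\mathbf{Y}$. Limits in the Grothendieck construction over such a base pullback are formed by pulling all components back to the apex $\mathbf{Y}$ and taking the limit in $\mathbf{sCh}^{\mathbf{Y}}_{\mathbb{K}}$. To do this, use \eqref{PullbackOfExternalTensorAlongProductOfFunctions} with $\mathbf{f}=\mathrm{pt}$ and $\mathbf{g}=\mathrm{id}$: pulling $\mathscr{V}_{\mathbf{B}\mathcal{G}}\boxtimes\mathscr{W}$ back along $\mathrm{pt}\times\mathrm{id}$ gives $(\mathrm{pt}^\ast\mathscr{V})\boxtimes\mathscr{W}$, and likewise for $0$. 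Since $0\otimes(-)\simeq 0$, the bottom corners are zero objects. The component square in $\mathbf{sCh}^{\mathbf{Y}}_{\mathbb{K}}$ then becomes
\[
  (\mathrm{pt}^\ast\mathscr{V})\otimes \mathscr{W}
  \xrightarrow{\;\mathrm{id}\;}
  (\mathrm{pt}^\ast\mathscr{V})\otimes\mathscr{W}
\]
over $0\to 0$. This is a pullback, since it is the tensoring by $\mathscr{W}$ of the component pullback square of Ex. \ref{DecompositionGroupRepresentations}, and that square has identity horizontal maps.

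The main care needed is in matching the component morphisms with the ones produced by the formula in Prop. \ref{ColimitsInAGrothendieckConstruction}, including the strong monoidal isomorphisms implicit in Def. \ref{ExternalTensorProductOfSimplicialLocalSystems}. Identifying the top-left corner $\mathscr{V}_{\mathrm{pt}}\boxtimes\mathscr{W}_{\mathbf{Y}}$ with the apex over $\mathrm{pt}\times\mathbf{Y}$ uses the unit isomorphism $\mathrm{pt}\times\mathbf{Y}\simeq\mathbf{Y}$ together with $\mathrm{pr}_{\mathbf{Y}}^\ast$ being essentially the identity there. Once this bookkeeping is done, Cartesianness follows because the square is an identity-against-zero square.

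If the homotopy-Cartesian property is also wanted, it additionally requires the right vertical map to be a fibration. That map is $0_{\mathrm{id}\times\mathrm{id}}$ from $\mathscr{V}\boxtimes\mathscr{W}$. The base map is an identity, and the component map to $0$ is a fibration iff $\mathscr{V}\otimes\mathscr{W}$ is objectwise fibrant. This need not hold in general, so I would state the result only as the strict Cartesian property, which is what the proposition asserts.
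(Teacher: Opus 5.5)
Your argument is correct, and it is the route the paper takes for the untensored square in the proof of Ex.~\ref{DecompositionGroupRepresentations}; the paper gives no separate proof of this proposition. After applying $(-)\times\mathbf{Y}$ the base square still has parallel identities, and by \eqref{PullbackOfExternalTensorAlongProductOfFunctions} the component square pulled back to $\mathrm{pt}\times\mathbf{Y}$ has parallel isomorphisms, so Prop.~\ref{ColimitsInAGrothendieckConstruction} gives Cartesianness. Only your closing aside overstates things: fibrancy of the right leg is one sufficient criterion for homotopy-Cartesianness, not a requirement. This does not affect the strict statement you prove.
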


\medskip

\noindent
{\bf Homotopical properties of the external tensor product.}
We establish in Thm. \ref{ExternalTensorProductIsHomotopical} homotopical properties of the external tensor product $\boxtimes$ of simplicial 
local systems (from Def. \ref{ExternalTensorProductOfSimplicialLocalSystems}). The first key point is that $\boxtimes$ preserves all weak 
equivalences and in this sense already coincides with its derived functor.

\begin{lemma}[Homotopical properties of $\mathrm{Set}$-tensoring of simplicial groupoids]
\label{HomotopicalPropertiesOfSetTensoringOfSimplicialGroupoids}
The tensoring of $\mathrm{sSet}\mbox{-}\mathrm{Grpd}$ over sets
$$
  (-) \cdot (-)
  \;:\;
  \begin{tikzcd}
  \mathrm{Set}
  \,\times\,
  \mathrm{sSet}\mbox{-}\mathrm{Grpd}
  \ar[r, hook]
  &
  \mathrm{sSet}\mbox{-}\mathrm{Grpd}
  \,\times\,
  \mathrm{sSet}\mbox{-}\mathrm{Grpd}
  \ar[rr, "{ (-) \times (-) }"]
  &&
  \mathrm{sSet}\mbox{-}\mathrm{Grpd}
  \end{tikzcd}
$$
is a restricted Quillen bifunctor in that (recalling from Prop. \ref{DwyerKanModelStructures} that the cofibrations in 
$\mathrm{sSet}\mbox{-}\mathrm{Grpd}$ are in particular injections on sets of objects and weak equivalences are in particular
bijections on connected components):
\[
  \mathrm{X} \xhookrightarrow{f} \mathrm{X}'
  \;\in\;
  \mathrm{Inj}(\mathrm{Set}),
  \;
  \mathbf{Y} \xrightarrow{\mathbf{g}} \mathbf{Y}'
  \;\in\;\;
  \mathrm{Cof}\big(\mathrm{sSet}\mbox{-}\mathrm{Grpd}\big)
  \hspace{1.2cm}
  \vdash
  \hspace{1.2cm}
  f \widehat{\times} \mathbf{g}
  \;\in\;
  \mathrm{Cof}\big(\mathrm{sSet}\mbox{-}\mathrm{Grpd}\big)
\]
and the pushout-product on the right is in addition a weak equivalence if $\mathbf{g}$ is in addition a weak equivalent or if $f$ is an isomorphism.

In particular, the Cartesian product with a fixed set is a left Quillen functor:
  \begin{equation}
    \label{SetTensoringQuillenAdjunctionOnSSetGrpd}
    \begin{tikzcd}[column sep=large]
       S \,\in\, \mathrm{Set}
       \xhookrightarrow{\quad}
       \mathrm{sSet}\mbox{-}\mathrm{Grpd}
       \hspace{1cm}
       \vdash
       \hspace{1cm}
      \mathrm{sSet}\mbox{-}\mathrm{Grpd}
      \ar[
        rr,
        shift left=7pt,
        "{ 
           S \times (-)
           \;\simeq\;
           \underset{s \in S}{\coprod}(-)
         }"
      ]
      \ar[
        from=rr,
        shift left=7pt,
        "{ 
          (-)^S
           \;\simeq\;
           \underset{s \in S}{\prod}(-)
        }"
      ]
      \ar[
        rr,
        phantom,
        "{
          \scalebox{.7}{$\bot_{\mathrlap{\mathrm{Qu}}}$}
        }"
      ]
      &&
      \mathrm{sSet}\mbox{-}\mathrm{Grpd} \;. 
    \end{tikzcd}
  \end{equation}
\end{lemma}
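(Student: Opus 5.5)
The plan is to compute the pushout-product explicitly. Because $f$ is an injection of sets, it splits as a coproduct inclusion, and the whole construction then reduces to coproducts of copies of $\mathbf{g}$ and of identities.

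First I would fix notation. For a set $\mathrm{X}$ regarded as a discrete simplicial groupoid, $\mathrm{X}\times\mathbf{Y}\simeq\coprod_{x\in \mathrm{X}}\mathbf{Y}$. Writing $\mathrm{C}:=\mathrm{X}'\setminus f(\mathrm{X})$, the injection $f$ is isomorphic to the coproduct inclusion $\mathrm{X}\hookrightarrow \mathrm{X}\sqcup \mathrm{C}$. Since $(-)\times\mathbf{Y}$ preserves colimits (Prop. \ref{CartesianClosureOfSSetEnrichedGroupoids}), this gives $\mathrm{X}'\times\mathbf{Y}\simeq(\mathrm{X}\times\mathbf{Y})\sqcup(\mathrm{C}\times\mathbf{Y})$, and similarly for $\mathbf{Y}'$.

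The pushout of $\mathrm{X}\times\mathbf{Y}'\leftarrow \mathrm{X}\times\mathbf{Y}\rightarrow \mathrm{X}'\times\mathbf{Y}$ is then a pushout along the summand inclusion $\mathrm{X}\times\mathbf{Y}\hookrightarrow(\mathrm{X}\times\mathbf{Y})\sqcup(\mathrm{C}\times\mathbf{Y})$. By the general fact that pushouts along coproduct inclusions just replace the summand, it is $(\mathrm{X}\times\mathbf{Y}')\sqcup(\mathrm{C}\times\mathbf{Y})$. The comparison map to $\mathrm{X}'\times\mathbf{Y}'\simeq(\mathrm{X}\times\mathbf{Y}')\sqcup(\mathrm{C}\times\mathbf{Y}')$ is therefore identified with
\[
  f\,\widehat{\times}\,\mathbf{g}
  \;\simeq\;
  \mathrm{id}_{\mathrm{X}\times\mathbf{Y}'}
  \,\sqcup\,
  \coprod_{c\in \mathrm{C}}\mathbf{g}\,.
\]
The main point to check carefully here is that coproducts in $\mathrm{sSet}\mbox{-}\mathrm{Grpd}$ are disjoint unions, so that these identifications of the pushout and of the comparison map hold on the nose. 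I expect this to be routine.

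With this formula the claims follow from closure properties. Cofibrations, and likewise acyclic cofibrations, are characterized by a left lifting property. The class of maps with a given left lifting property is closed under coproducts and contains identities. Hence $f\,\widehat{\times}\,\mathbf{g}$ is a cofibration whenever $\mathbf{g}$ is, and an acyclic cofibration whenever $\mathbf{g}$ is. If instead $f$ is an isomorphism, then $\mathrm{C}=\varnothing$, so $f\,\widehat{\times}\,\mathbf{g}$ is an identity and in particular a weak equivalence. One could also argue directly that a coproduct of Dwyer-Kan equivalences is again one, since hom-complexes and $\pi_0$ are computed summandwise, but the lifting argument avoids this.

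For the final statement \eqref{SetTensoringQuillenAdjunctionOnSSetGrpd}, take $\mathrm{X}=\varnothing$ and $f:\varnothing\hookrightarrow S$. Then $f\,\widehat{\times}\,\mathbf{g}\simeq S\times\mathbf{g}$, so $S\times(-)\simeq\coprod_{s\in S}(-)$ preserves cofibrations and acyclic cofibrations. Its right adjoint is the $S$-fold product $(-)^S\simeq\prod_{s\in S}(-)$, by the universal property of coproducts, and so the adjunction is Quillen.
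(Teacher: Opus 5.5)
Your proposal is correct and follows essentially the same route as the paper: you split the injection $f$ as a summand inclusion $\mathrm{X}\hookrightarrow\mathrm{X}\sqcup\mathrm{C}$, identify $f\,\widehat{\times}\,\mathbf{g}$ with $\mathrm{id}_{\mathrm{X}\times\mathbf{Y}'}\sqcup\coprod_{c\in\mathrm{C}}\mathbf{g}$, and conclude from closure of (acyclic) cofibrations under coproducts, with the Quillen adjunction as the case $\mathrm{X}=\varnothing$. Your worry about disjointness of coproducts is unnecessary, since the identification $(A\sqcup C)\sqcup_A B\simeq B\sqcup C$ and of the comparison map is a formal consequence of colimits commuting with colimits, valid in any category with these colimits.
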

\begin{proof}
It is immediate that \eqref{SetTensoringQuillenAdjunctionOnSSetGrpd} is a Quillen adjunction, since the class of (acyclic) 
cofibrations is closed under coproducts in the arrow category. Moreover, the pushout-product diagram in question is of the form
$$
  \begin{tikzcd}[
    column sep=50pt
  ]
    \mathrm{X} \!\times\! \mathbf{Y} 
    \ar[
      rr,
      "{
        \mathrm{id}_{\mathrm{X}} \,\times\, \mathbf{g}
      }"
    ]
    \ar[d, hook]
    &&
    \mathrm{X} \!\times\! \mathbf{Y}'
    \ar[d, hook]
    \ar[
      ddr,
      bend left=30pt,
      "{
        f \,\times\, \mathrm{id}_{\mathbf{Y}'}
      }"{sloped}
    ]
    \\
    \mathrm{X} \!\times\! \mathbf{Y}
    \mathrlap{
      \;\coprod\;
      (\mathrm{X'} \setminus \mathrm{X})
      \!\times\! 
      \mathbf{Y}
    }
    \ar[rr, shorten <=70pt]
    \ar[
      drrr, 
      bend right=15, 
      "{ 
        \mathrm{id} \times \mathbf{g} 
      }"{sloped}
    ]
    &&
    \mathrm{X} \!\times\! \mathbf{Y}'
    \mathrlap{
      \;\coprod\;
      (\mathrm{X}'\setminus\mathrm{X})
      \!\times\! 
      \mathbf{Y}
    }
    \ar[
      dr, 
      dashed, 
      shorten <=14pt, 
      end anchor={[yshift=3pt]}
    ]
    \\
    & & &
    \mathrm{X} \!\times\! \mathbf{Y}'
    \;\coprod\;
    (\mathrm{X}'\setminus\mathrm{X})
    \!\times\! 
    \mathbf{Y}'
  \end{tikzcd}
$$
showing that the dashed pushout-product morphism is a coproduct (in the arrow category) of $\mathrm{id}_{X \times \mathbf{Y}'}$ 
(which is trivially an acyclic cofibration) with copies of $\mathbf{g}$, hence is itself a cofibration, by the previous comment, 
and an acyclic cofibration if $\mathbf{g}$ is.
\end{proof}

\begin{theorem}[Homotopical properties of external tensor product on simplicial local systems]
  \label{ExternalTensorProductIsHomotopical}
  The external tensor product (Def. \ref{ExternalTensorProductOfSimplicialLocalSystems}) on the integral model category of simplicial local
  systems (Prop. \ref{GlobalModelStructure})
  \[
    \begin{tikzcd}
      \mathbf{Loc}_{\mathbb{K}}
        \times
      \mathbf{Loc}_{\mathbb{K}}
      \ar[
        r,
        "{ \boxtimes }"
      ]
      &
      \mathbf{Loc}_{\mathbb{K}}
    \end{tikzcd}
  \]
  has the following properties:
  \begin{itemize}
    \item[{\bf (i)}]\!\!
    It is a homotopical functor, in that it sends weak equivalences in $\mathbf{Loc}_{\mathbb{K}} \times \mathbf{Loc}_{\mathbb{K}}$ 
    to weak equivalences in $\mathbf{Loc}_{\mathbb{K}}$:
    \begin{equation}
      \label{ExternalTensorIsHomotopical}
      \mathrm{W} \boxtimes \mathrm{W}
      \;\subset\;
      \mathrm{W}
      \,,
    \end{equation}
    hence it passes immediately to its derived functor.
    \item[{\bf (ii)}]\!\! 
    It is ``linear-componentwise a Quillen bifunctor'' in that it satisfies the pushout-product axiom \eqref{PushoutProductAxiom} 
    on linear component maps:
    \begin{equation}
      \label{LinearComponentwiseQuillenPropertyOfExtenralTensor}
      \def\arraystretch{1.5}
      \begin{array}{lcl}
      \phi_{\mathbf{f}} 
        \,\in\, 
      \mathrm{Cof}_{\mathbf{f}},
      \;\;
      \gamma_{\mathbf{g}} 
        \,\in\, 
      \mathrm{Cof}_{\mathbf{g}}
      &
      \hspace{1cm}
      \vdash
      \hspace{1cm}
      &
      (\phi_{\mathbf{f}})
        \,\widehat{\boxtimes}\, 
      (\gamma_{\mathbf{g}})  
      \;\in\;
      \mathrm{Cof}_{
        \mathbf{f} 
          \,\widehat{\times}\, 
        \mathbf{g}
      }
      \\
      \phi_{\mathbf{f}} 
        \,\in\, 
      \mathrm{Cof}_{\mathbf{f}},
      \;\;
      \gamma_{\mathbf{g}} 
        \,\in\, 
      (\mathrm{Cof}\cap \mathrm{W})_{\mathbf{g}}
      &
      \vdash
      &
      (\phi_{\mathbf{f}})
        \,\widehat{\boxtimes}\, 
      (\gamma_{\mathbf{g}})  
      \;\in\;
      (\mathrm{Cof}\cap \mathrm{W})_{
        \mathbf{f} 
          \,\widehat{\times}\, 
        \mathbf{g}
     }\;.
     \end{array}
  \end{equation}
  In particular, for fixed base objects this means that:
  \begin{equation}
    \label{ExternalTensorIsQuillenBifunctorOverFixedBaseObjects}
    \mathbf{X}
    ,\,
    \mathbf{Y}
    \;\;
    \in
    \;
    \mathrm{sSet}\mbox{-}\mathrm{Grpd}_{\mathrm{skl}}
    \hspace{1cm}
    \vdash
    \hspace{1cm}
    \begin{tikzcd}
    \mathbf{sCh}^{\mathbf{X}}_{\mathbb{K}}
    \times
    \mathbf{sCh}^{\mathbf{Y}}_{\mathbb{K}}
    \ar[r, "{ \boxtimes }"]
    &
    \mathbf{sCh}^{\mathbf{X} \times \mathbf{Y}}_{\mathbb{K}}    
    \end{tikzcd}
    \;\;
    \mbox{is a Quillen bifunctor}.
  \end{equation}

  \item[{\bf (iii)}]\!\! It is a left Quillen functor 
    when restricted in one argument to local systems over a discrete space, hence with right Quillen adjoint \eqref{ExternalInternalHomAdjunction}:
    \[
      \def\arraystretch{1.3}
      \begin{array}{l}
      \mathrm{X} \,\in\,
      \mathrm{Set}
      \hookrightarrow
      \mathrm{sSet}\mbox{-}\mathrm{Grpd}
      ,
      \\
      \mathscr{V}_{\mathrm{X}}
      \in \mathbf{sCh}^{X}_{\mathbb{K}}
      \end{array}
      \hspace{1.2cm}
      \vdash
      \hspace{1.2cm}
      \begin{tikzcd}[column sep=large]
        \mathbf{Loc}_{\mathbb{K}}
        \ar[
          rr,
          shift left=6pt,
          "{
            \mathscr{V}_{\mathrm{X}}
            \,\boxtimes\,
            (\mbox{-})
          }",
        ]
        \ar[
          from=rr,
          shift left=6pt,
          "{
            \mathscr{V}_{\mathrm{X}}
            \,\extmap\,
            (\mbox{-})
          }",
        ]
        \ar[
          rr,
          phantom,
          "{
            \scalebox{.7}{$\bot_{\mathrlap{\mathrm{Qu}}}$}
          }"
        ]
        &&
        \mathbf{Loc}_{\mathbb{K}} \,.
      \end{tikzcd}
   \]
\end{itemize}
\end{theorem}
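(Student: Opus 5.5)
The plan is to reduce all three claims to statements over fixed bases. For this I use the description of the integral model structure (Def. \ref{IntegralModelStructure}) in terms of base and component data. A morphism $\phi_{\mathbf{f}}$ is a cofibration iff $\mathbf{f}$ is a cofibration in $\mathrm{sSet}\mbox{-}\mathrm{Grpd}$ and the adjunct $\widetilde\phi : \mathbf{f}_!\mathscr{V} \to \mathscr{V}'$ is a cofibration in $\mathbf{sCh}^{\mathbf{X}'}_{\mathbb{K}}$. It is a weak equivalence iff $\mathbf{f}$ is a Dwyer-Kan equivalence and the derived adjunct $\mathbf{f}_! Q\mathscr{V} \to \mathbf{f}_!\mathscr{V} \to \mathscr{V}'$ is an objectwise weak equivalence. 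The two fiberwise facts I lean on are these. First, by Thm. \ref{ModelCategoryOfSimplicialChainComplexes}, all objects of $\mathbf{sCh}_{\mathbb{K}}$ are cofibrant, so $\mathscr{V}\otimes(-)$ is left Quillen and, by Ken Brown's lemma \ref{KenBrownLemma}, preserves all weak equivalences. Second, Prop. \ref{PullPushAdjunctOfExternalTensorProducts} and \eqref{PushOfExternalTensorAlongProductOfFunctions} let me commute pushforward with $\boxtimes$.

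I would prove (ii) first, since (i) and (iii) use it. By Lem. \ref{SkeletalizationOfLocalSystems} and Rem. \ref{SimplicialLocalySystemsOnSkeletalGroupoids} I may work componentwise over $\mathbf{X}\times\mathbf{Y}$. The key computation is on generators \eqref{GeneratingCofibrationsOfSimplicialFunctors}. The coend formula gives
\[
  \big(\mathbf{X}(x,-)\cdot i\big)\boxtimes\big(\mathbf{Y}(y,-)\cdot j\big)
  \;\simeq\;
  (\mathbf{X}\times\mathbf{Y})\big((x,y),-\big)\cdot (i\otimes j)\,,
\]
because objectwise one has $\mathbf{X}(x,x')\cdot A \otimes \mathbf{Y}(y,y')\cdot B \simeq (\mathbf{X}(x,x')\times\mathbf{Y}(y,y'))\cdot(A\otimes B)$. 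It follows that the pushout-product of two generating (acyclic) cofibrations is a representable tensored with $i\,\widehat{\otimes}\,j$. This is a projective (acyclic) cofibration, since $\mathbf{sCh}_{\mathbb{K}}$ is monoidal model and tensoring with a representable is left Quillen.

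Since $\boxtimes$ preserves colimits in each variable (Prop. \ref{ExternalTensorWithLocalSystemOverDiscreteSpacePreservesColimits}), the usual cell-complex and retract argument extends this to all (acyclic) cofibrations over fixed bases, which yields \eqref{ExternalTensorIsQuillenBifunctorOverFixedBaseObjects}. For general $\phi_{\mathbf{f}},\gamma_{\mathbf{g}}$ I invoke Prop. \ref{ExternalPushoutProduct}. It expresses the linear component of $\phi_{\mathbf{f}}\,\widehat\boxtimes\,\gamma_{\mathbf{g}}$ over $\mathbf{f}\,\widehat\times\,\mathbf{g}$ as the pushout-product of the fixed-base maps $\widetilde\phi\boxtimes\mathrm{id}$ and $\mathrm{id}\boxtimes\widetilde\gamma$ over $\mathbf{X}'\times\mathbf{Y}'$, so the fixed-base case applies. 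Only the linear component is claimed, which is consistent with the fact that $\mathbf{f}\,\widehat\times\,\mathbf{g}$ itself need not be a cofibration.

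For (i), let $(\phi_{\mathbf{f}},\gamma_{\mathbf{g}})$ be weak equivalences. The base $\mathbf{f}\times\mathbf{g}$ is a DK-equivalence, because products of simplicial weak equivalences are weak equivalences and $\pi_0$ commutes with products. For the component, I choose projective cofibrant replacements $Q\mathscr{V}, Q\mathscr{W}$. By the fixed-base Quillen bifunctor property, $Q\mathscr{V}\boxtimes Q\mathscr{W}$ is cofibrant. Its comparison map to $\mathscr{V}\boxtimes\mathscr{W}$ is an objectwise weak equivalence, since it is objectwise $\otimes$ of weak equivalences between cofibrant objects of $\mathbf{sCh}_{\mathbb{K}}$. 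So $Q\mathscr{V}\boxtimes Q\mathscr{W}$ serves as cofibrant replacement of $\mathscr{V}\boxtimes\mathscr{W}$. By \eqref{PushOfExternalTensorAlongProductOfFunctions} and \eqref{PullPushAdjuncOfExternalTensorMap}, the derived adjunct is then $\widetilde{\phi Q}\boxtimes\widetilde{\gamma Q}$, an external tensor of two weak equivalences. This is again objectwise a tensor of weak equivalences, hence a weak equivalence.

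For (iii), the base functor $\mathrm{X}\times(-)$ is left Quillen by Lem. \ref{HomotopicalPropertiesOfSetTensoringOfSimplicialGroupoids}. For the component, I apply (ii) with $\phi := (0_{\mathrm{X}}\to\mathscr{V}_{\mathrm{X}})$, which is a cofibration with identity base. In that case $\mathrm{id}\,\widehat\times\,\mathbf{g}=\mathrm{X}\times\mathbf{g}$, so (ii) shows that $\mathscr{V}_{\mathrm{X}}\boxtimes(-)$ preserves (acyclic) cofibrations. The right adjoint is the one from Prop. \ref{ExternalInternalHomOfSimplicialLocalSystems}.

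The step I expect to be the main obstacle is (i). It depends on pinning down precisely the weak equivalences of the integral model structure, and on justifying that $Q\mathscr{V}\boxtimes Q\mathscr{W}$ legitimately computes the derived pushforward along $\mathbf{f}\times\mathbf{g}$, even though $\mathbf{f}\times\mathbf{g}$ need not be a cofibration of bases. If this is delicate, I would first pass to skeletal bifibrant bases via Lem. \ref{BifibrantResolutionBySkeletalSimplicialGroupoids}, and then use that weak equivalences are checked objectwise per connected component.
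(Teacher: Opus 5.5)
Your treatment of (ii) is the paper's argument. You use the generator identity $(\underline{x}\cdot i)\boxtimes(\underline{y}\cdot j)\simeq\underline{(x,y)}\cdot(i\otimes j)$, the fact that $\underline{(x,y)}\cdot(-)$ is left Quillen, and the reduction of general base maps via Prop. \ref{ExternalPushoutProduct}.

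For (i) you take a different and heavier route, though it is valid. The paper uses the pullback form ($\overline{\mathrm{b}}$) of integral weak equivalences, which is available because $\mathbf{f}_!\dashv\mathbf{f}^\ast$ is a Quillen equivalence for DK-equivalences $\mathbf{f}$. Since $\mathbf{f}^\ast$ preserves all objectwise weak equivalences, $\phi_{\mathbf{f}}$ is a weak equivalence iff $\mathbf{f}$ is a DK-equivalence and $\phi:\mathscr{V}\to\mathbf{f}^\ast\mathscr{V}'$ is an objectwise weak equivalence. The contravariant component of $\phi_{\mathbf{f}}\boxtimes\gamma_{\mathbf{g}}$ is objectwise $\phi_x\otimes\gamma_y$. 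So (i) follows at once from $\otimes$ preserving weak equivalences in $\mathbf{sCh}_{\mathbb{K}}$. This needs no cofibrant replacement, no appeal to (ii), and no push formula.

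Your pushforward-criterion argument also works. $Q\mathscr{V}\boxtimes Q\mathscr{W}$ is cofibrant by the fixed-base bifunctor property, and by \eqref{PushOfExternalTensorAlongProductOfFunctions} and \eqref{PullPushAdjuncOfExternalTensorMap} the derived adjunct is $\widetilde{\phi p}\boxtimes\widetilde{\gamma q}$. But it makes (i) depend on (ii) and look like the hard step, when it is the easiest. The obstacle you flag is moot in either route: $(\mathbf{f}\times\mathbf{g})_!$ is left Quillen for every base map (Rem. \ref{BaseChangeBetweenModelStructuresOfSimplicialLocalSystems}), so no cofibrancy of $\mathbf{f}\times\mathbf{g}$ is needed.

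In (iii) there is a slip. For $\phi=(0_{\mathrm{X}}\to\mathscr{V}_{\mathrm{X}})$ over $\mathrm{id}_{\mathrm{X}}$, the base pushout-product is $\mathrm{id}_{\mathrm{X}}\,\widehat{\times}\,\mathbf{g}\simeq\mathrm{id}_{\mathrm{X}\times\mathbf{Y}'}$ by \eqref{PushoutProductWithAnIdentity}, not $\mathrm{X}\times\mathbf{g}$. Correspondingly, $\phi\,\widehat{\boxtimes}\,\gamma_{\mathbf{g}}$ is not $\mathscr{V}_{\mathrm{X}}\boxtimes\gamma_{\mathbf{g}}$. It is only the second factor of that map through the pushout $\big((\mathrm{id}\times\mathbf{g})_!(\mathscr{V}\boxtimes\mathscr{W})\big)_{\mathrm{X}\times\mathbf{Y}'}$.

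There are two easy fixes:
\begin{itemize}
\item Use the initial morphism $0_{\varnothing}\to\mathscr{V}_{\mathrm{X}}$ over $\varnothing\hookrightarrow\mathrm{X}$. By \eqref{PushoutProductWithAnInitialMorphism}, its pushout-product with $\gamma_{\mathbf{g}}$ is $\mathscr{V}_{\mathrm{X}}\boxtimes\gamma_{\mathbf{g}}$, lying over $\mathrm{X}\times\mathbf{g}$.
\item Argue directly. By \eqref{PullPushAdjuncOfExternalTensorMap}, $\mathscr{V}_{\mathrm{X}}\boxtimes\gamma_{\mathbf{g}}$ has base $\mathrm{X}\times\mathbf{g}$, which is a cofibration by Lem. \ref{HomotopicalPropertiesOfSetTensoringOfSimplicialGroupoids}. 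Its covariant component is $\mathscr{V}\boxtimes\widetilde{\gamma}$, a cofibration by your fixed-base bifunctor property, since $\mathscr{V}$ is cofibrant in $\mathbf{sCh}^{\mathrm{X}}_{\mathbb{K}}\simeq\prod_{x\in\mathrm{X}}\mathbf{sCh}_{\mathbb{K}}$.
\end{itemize}
Preservation of acyclic cofibrations then follows from this together with (i).
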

\begin{proof}
\noindent {\bf (i)} By symmetry of the external tensor product and definition of product categories, it is sufficient to check 
that external tensor product 
 $\mathscr{W}_{\mathbf{X}} \boxtimes (-)$ with a fixed object preserves weak equivalences in the other argument.
 Now, due to the objectwise definition of the weak equivalences and fibrations in the projective model structure on each
 $\mathbf{sCh}^{\mathbf{Y}}_{\mathbb{K}}$, the precomposition functors $\mathbf{f}^\ast$ preserve fibrant replacements.  
 Therefore a morphism $\phi_{\mathbf{f}} \,:\, \mathscr{V}_{\mathbf{X}} \longrightarrow \mathscr{V}'_{\mathbf{X}'}$ is an integral weak equivalence 
 (Def. \ref{IntegralModelStructure}) iff $\mathbf{f} \,:\, \mathbf{X} \longrightarrow \mathbf{X}'$ is a weak equivalence in
 $\mathrm{sSet}\mbox{-}\mathrm{Grpd}$ and $\phi : \mathscr{V} \longrightarrow \mathbf{f}^\ast \mathscr{V}$ is 
 a weak equivalence in $\mathbf{sCh}_{\mathbb{K}}^{\mathbf{X}}$.
 Given such an integral weak equivalence, we need to see that also:
 \begin{itemize}
  \item[(a)]
  $
    \mathbf{f} \times \mathrm{id}_{\mathbf{Y}}
    \,:\,
    \mathbf{X} \times \mathbf{Y}
    \xrightarrow{\;\;}
    \mathbf{X}' \times \mathbf{Y}
  $
  is a weak equivalence in $\mathrm{sSet}\mbox{-}\mathrm{Grpd}$, hence a Dwyer-Kan equivalence. This is hom-wise the condition that 
  Cartesian product with a simplicial set preserves weak equivalences of simplicial sets,
  which is the case (for instance by Ken Brown's Lemma \ref{KenBrownLemma} using that $\mathrm{sSet}$ is cartesian monoidal model and 
  all objects are cofibrant).

  (Beware that even so $\mathbf{X} \times (-)$ is homotopical, it is far from being left Quillen on $\mathrm{sSet}\mbox{-}\mathrm{Grpd}$.)

  \item[(b)]
  $
  \big((\mathrm{pr}_{\mathbf{X}})^\ast\phi\big) \otimes \mathrm{id}_{\scalebox{.7}{$(\mathrm{pr}_{\mathbf{Y}})^\ast \mathscr{W}$}}$ 
  is a weak equivalence 
  in $\mathbf{sCh}^{\mathbf{X}}_{\mathbb{K}}$.

  First, $(\mathrm{pr}_{\mathbf{X}})^\ast\phi$ itself is a weak equivalence, since these are defined objectwise and hence preserved by the 
  precomposition functors $(-)^\ast$. Similarly, the tensor product over $\mathbf{X} \times \mathbf{Y}$ is a weak equivalence if and only if for all 
  $(x,y) \in \mathbf{X} \times \mathbf{Y}$ its component $\phi_x \otimes \mathrm{id}_{\scalebox{.7}{$\mathscr{W}_y$}}$ is a weak equivalence, hence if 
  tensoring $\mathscr{W} \otimes (-)$ preserves weak equivalences in $\mathbf{sCh}_{\mathbb{K}}$. This follows by Ken Brown's Lemma \ref{KenBrownLemma} 
  since $\mathscr{W} \otimes (-)$ is a left Quillen functor on and all objects are cofibrant in $\mathbf{sCh}_{\mathbb{K}}$, 
  by Prop. \ref{ModelCategoryOfSimplicialChainComplexes}.
 \end{itemize}

 \noindent {\bf (ii)} By Prop. \ref{ExternalPushoutProduct}, we are immediately reduced to proving the special case  
  \eqref{ExternalTensorIsQuillenBifunctorOverFixedBaseObjects}.
  For this, it is sufficient to check the pushout-product axiom on generating (acyclic) cofibrations. But for these 
  \eqref{GeneratingCofibrationsOfSimplicialChainComplexes},
  the relevant diagram (where we are abbreviating representable simplicial copresheaves by 
  $\underline{x} := \mathbf{X}(x,-) : \mathbf{X} \longrightarrow \mathbf{sSet}$)
$$ 
  \begin{tikzcd}[row sep=25pt, column sep=60pt]
    \big(
      \underline{x}
      \cdot
      \mathscr{V}
    \big)
    \boxtimes
    \big(
    \underline{y}
    \cdot
    \mathscr{W}
    \big)
    \ar[
      r, 
      "{
        \mathrm{Id}
        \,\boxtimes\,
        (\mathrm{id} \cdot g)
      }"
    ]
    \ar[
      d,
      "{
        (\mathrm{id}_{\underline{x}} \cdot f)
        \,\boxtimes\,
        \mathrm{Id}
      }"{swap}
    ]
    \ar[
      dr,
      phantom,
      "{ \scalebox{.6}{(po)} }"
    ]
    &
    \big(
      \underline{x} 
      \cdot
      \mathscr{V}
    \big)
    \boxtimes
    \big(
    \underline{y}
    \cdot
    \mathscr{W}'
    \big)
    \ar[d]
    \ar[
      ddr,
      bend left=15,
      "{
        (\mathrm{id}_{\underline{x}}\cdot g)      
        \,\boxtimes\,
        \mathrm{Id}
      }"{sloped}
    ]
    &[-40pt]
    \\
    \big(
      \underline{x}
      \cdot
      \mathscr{V}'
    \big)
    \boxtimes
    \big(
    \underline{y}
    \cdot
    \mathscr{W}
    \big)
    \ar[r]
    \ar[
      drr,
      bend right=15,
      "{
        \mathrm{Id}
        \,\boxtimes\,
        (\mathrm{id}_{\underline{y}}\cdot g)
      }"{sloped}
    ]
    &
    \big(
      \mathrm{id}_{\underline{x}}
      \cdot
      f
    \big)
    \widehat{\boxtimes}
    \big(
      \mathrm{id}_{\underline{y}}
      \cdot
      g
    \big)
    \ar[dr, dashed]
    \\[-20pt]
    &&
    \big(
      \underline{x}
      \cdot
      \mathscr{V}'
    \big)
    \,\boxtimes\,
    \big(
      \underline{y}
      \cdot
      \mathscr{W}'
    \big)
  \end{tikzcd}
$$
reduces to the tensoring with $\underline{(x,y)}$ of the analogous diagram for the tensor product in $\mathbf{sCh}_{\mathbb{K}}$:
$$ 
  \begin{tikzcd}[
    sep=32pt
  ]
    \underline{(x,y)}
    \cdot
    \mathscr{V} \otimes \mathscr{W}
    \ar[
      r, 
      "{
        \mathrm{id} 
          \cdot
        \mathrm{id}
        \otimes g
      }"
    ]
    \ar[
      d,
      "{
        \mathrm{id}
          \cdot
        f
          \otimes
        \mathrm{id}
      }"{swap}
    ]
    \ar[
      dr,
      phantom,
      "{ \scalebox{.6}{(po)} }"
    ]
    &
    \underline{(x,y)}
    \cdot
    \mathscr{V} 
      \otimes
    \mathscr{W}'
    \ar[d]
    \ar[
      ddr,
      bend left=15,
      "{
        \mathrm{id}
        \cdot
        f
        \otimes
        \mathrm{id}
      }"{sloped}
    ]
    &[-40pt]
    \\
    \underline{(x,y)}
    \cdot
    \mathscr{V}'
    \otimes
    \mathscr{W}
    \ar[r]
    \ar[
      drr,
      bend right=15,
      "{
        \mathrm{id}
        \cdot
        \mathrm{id}
        \otimes 
        g
      }"{sloped}
    ]
    &
    \underline{(x,y)}
    \cdot 
    (f \,\widehat{\otimes}\, g)
    \ar[dr, dashed]
    \\[-20pt]
    &&
    \underline{(x,y)}
    \cdot
    \mathscr{V}'
    \otimes
    \mathscr{W}'
  \end{tikzcd}
  \hspace{-.2cm}
  =
  \quad 
  \underline{(x,y)}
  \cdot
  \left(\!\!\!
  \begin{tikzcd}[
    sep=32pt
  ]
    \mathscr{V} \otimes \mathscr{W}
    \ar[
      r, 
      "{
        \mathrm{id}
        \otimes g
      }"
    ]
    \ar[
      d,
      "{
        f
          \otimes
        \mathrm{id}
      }"{swap}
    ]
    \ar[
      dr,
      phantom,
      "{ \scalebox{.6}{(po)} }"
    ]
    &
    \mathscr{V} 
      \otimes
    \mathscr{W}'
    \ar[d]
    \ar[
      ddr,
      bend left=15,
      "{
        f
        \otimes
        \mathrm{id}
      }"{sloped}
    ]
    &[-40pt]
    \\
    \mathscr{V}'
    \otimes
    \mathscr{W}
    \ar[r]
    \ar[
      drr,
      bend right=15,
      "{
        \mathrm{id}
        \otimes 
        g
      }"{sloped}
    ]
    &
    f \,\widehat{\otimes}\, g
    \ar[dr, dashed]
    \\[-20pt]
    &&
    \mathscr{V}'
    \otimes
    \mathscr{W}'
  \end{tikzcd}
  \!\!\!\!\right).
$$
Since $(\mathbf{sCh}_{\mathbb{K}}, \otimes)$ is $\mathrm{sSet}$-enriched monoidal model (by Prop. \ref{ModelCategoryOfSimplicialChainComplexes}), 
the tensoring $\underline{(x,y)}\cdot(-)$ on the right is a left Quillen functor and hence the claim follows.

\medskip

\noindent {\bf (iii)} The previous item (ii) establishes that the only obstacle to $\boxtimes$ being a left Quillen bifunctor is the 
failure of the underlying Cartesian product being left Quillen bifunctorial in $\mathrm{sSet}\mbox{-}\mathrm{Grpd}$. But restricted 
in one argument to discrete groupoids (sets) it is by Lem. \ref{HomotopicalPropertiesOfSetTensoringOfSimplicialGroupoids}, 
and so claim (iii) follows.
\end{proof}

\begin{remark}[Issue of full external monoidal model structure]
\label{IssueOfFullExternalMonoidalStructure}
Beyond Thm. \ref{HomotopicalPropertiesOfSetTensoringOfSimplicialGroupoids}, one  would wish that $\boxtimes$ were a Quillen bifunctor (Def. \ref{LeftQuillenBifunctor}) so that also its right adjoint 
internal hom were homotopically well behaved. However, on $\mathbf{Loc}_{\mathbb{K}}$ this fails in general simply because the underlying category $\mathrm{sSet}\mbox{-}\mathrm{Grpd}$ is, while cartesian monoidal as a category, not cartesian monoidal as a {\it model category}, i.e.,
the Cartesian product here in general fails the Quillen bifunctor property (already the product of two free simplicial groupoids is in general not itself free). 

However, the second item of Thm. \ref{ExternalTensorProductIsHomotopical}  shows that 
this is the only problem in that $\boxtimes$ generally satisfies the Quillen bifunctor property on linear components and hence satisfies 
it genuinely whenever the Cartesian product on $\mathrm{sSet}\mbox{-}\mathrm{Grpd}$ does so, which is the case at least when one argument 
is restricted to simplicial local systems over discrete spaces. 
\footnote{For the Quillen equivalent model category $\mathbf{Loc}^{\mathrm{sSet}}_{\mathbb{K}}$ \eqref{Loc}
the situation is somewhat complementary: Here the underlying Cartesian product is a Quillen bifunctor but now there is little control over 
the action on linear components, since the transfer functor $\mathbf{G} : \mathrm{sSet} \to \mathrm{sSet}\mbox{-}\mathrm{Grpd}$ 
does not preserve products.}

Moreover, with a little tweak to the base model structure, we do get full external monoidal model structure over base spaces which are 1-types (meaning: 1-groupoids, i.e., simplicial groupoids whose $\mathrm{sSet}$-enrichment happens to be in $\mathrm{Set}$), this we discuss in \cref{ExternalModulesOverOneTypes}, see Thm. \ref{ExternalMonoidalModelStructureOnLocalSystemsOverOneTypes} there.

While less strong than what one might have hoped for, the results of Thm. \ref{ExternalTensorProductIsHomotopical} still compare favorably with 
the state of the art in the literature on parameterized spectra. 
\end{remark}

\medskip

\subsection{External monoidal model over homotopy 1-types}
\label{ExternalModulesOverOneTypes}

We now restrict the discussion to $\infty$-local systems over just homotopy 1-types, by
restricting the base category from simplicial to plain groupoids, keeping the fiber category that of chain complexes. While evidently somewhat more restrictive, notice that this is exactly the infrastructure sufficient for interpreting the formalization of anyonic braid gates in \cite{TQP} (where the 1-types are those of configuration spaces of points whose $\mathbf{B}\ComplexNumbers^\times$-fibered mappings into chain complexes concentrated in degree $n$ constitute anyonic ground state wavefunctions, cf. \cite[Thm. 6.8]{TQP}).

\medskip

In this special case we may make use of the ``canonical'' model structure on $\Groupoids$ (Def. \ref{CanonicalModelStructureOnGroupoids}) which, in contrast to the structure on $\SimplicialGroupoids$, is cartesian monoidal as a model category. This finally makes the external tensor product of $\infty$-local systems over 1-types into a monoidal model category structure, in fact equivariantly so (Thm. \ref{ExternalMonoidalModelStructureOnLocalSystemsOverOneTypes}).

\medskip 
The upshot is that for every (equivariant) commutative monoid internal to $\infty$-local systems over 1-types we obtain a model category of (equivariant $\infty$-local systems which are) {\it modules} over this monoid (Cor. \ref{ModelStructureOnMonoidsInEquivariantLocalSystems}), thus producing quite a proliferation of, if you will, models of parameterized quantum types, albeit restricted to base 1-types.
In particular, taking the equivariant monoid to be the complex numbers regarded as an $\RealNumbers$-module equipped with $\ZTwo$-equivariance via complex conjugation, $\ZTwo \acts \, \ComplexNumbers$, its modules are the ``Real'' local systems including the (flat) Real vector bundles in the sense of KR-theory (Ex. \ref{TheRealLocalSystems}).
In \cite{Reality} we explain how this is a good context for interpreting finite-dimensional quantum types not just as linear types but as linear types with Hermitian inner product structure, hence with Hilbert-space structure, see the outlook at the end of \cref{DiscussionAndOutlook}.

\medskip

\begin{proposition}[Canonical model structure on groupoids]
\label{CanonicalModelStructureOnGroupoids}
The category $\Groupoids$ carries a model category structure where a functor $f \,\isa\,\AGroupoid{W} \to \AGroupoid{W}'$ is
\begin{itemize}
  \item a weak equivalences iff it is an equivalence of categories, hence
  a homotopy equivalence of groupoids, 
  hence
  inducing isomorphisms on connected components and on fundamental groups;
  \item a fibration iff for every morphism in $\mathcal{W}'$ and every lift of its domain to $\mathcal{W}$ also
  the morphism itself has a lift with that lifted domain:
  \[
    \begin{tikzcd}[
      row sep=4pt
    ]
      \AGroupoid{W}
      \ar[
        dd,
        "{ \; f }"
      ]
      &
      \mathllap{\forall}
      \,
      \widehat{w}_1
      \ar[
        r,
        dashed,
        "{ 
          \exists 
          \,
          \widehat{\phi}
        }"
      ]
      &
      {\color{gray} w_2 }
      \\
      \\
      \AGroupoid{W}'
      &
      \mathllap{
        \forall \,
      }
      w_1 
      \ar[
        r,
        "{ \phi }"
      ]
      &
      w_2
    \end{tikzcd}
  \]
  \item 
  a cofibration iff it is injective on objects.
\end{itemize}
This model structure is
\begin{itemize}
  \item[{\bf (i)}]
  proper,
  \item[{\bf (ii)}]
  combinatorial,
  \item[{\bf (iii)}]
  cartesian monoidal.\footnote{Meaning that the Cartesian product is a left Quillen bifunctor, Def. \ref{LeftQuillenBifunctor}.}
\end{itemize}
\end{proposition}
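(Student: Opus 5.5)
I will establish the model structure by exhibiting it as cofibrantly generated and checking the standard recognition conditions, and then verify (i)--(iii) separately. First I would fix generating sets: $I$ consisting of $\emptyset \to \ast$, $\ast \sqcup \ast \to \mathbf{I}$ (the free-living isomorphism) and $\mathbf{B}\mathbb{Z} \to \ast$ (equivalently the map from the parallel pair $\ast \rightrightarrows \ast$ generated groupoid collapsing the two arrows), and $J = \{\ast \to \mathbf{I}\}$. Lifting against $J$ is exactly the stated fibration condition (isofibrations), and lifting against $I$ characterizes functors that are surjective on objects, full and faithful, i.e.\ acyclic fibrations are surjective-on-objects equivalences. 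Then the maps with the left lifting property against acyclic fibrations are exactly the injective-on-objects functors: given injective-on-objects $i:\mathcal{A}\to\mathcal{B}$ and a surjective equivalence $p$, choose lifts of the new objects (Axiom of Choice) and use full faithfulness of $p$ to define the functor on morphisms uniquely. Factorizations come from the small object argument, and $\Groupoids$ is locally presentable (it is a reflective, accessibly embedded subcategory of $\Categories$, itself locally finitely presentable), which gives (ii) once the model structure exists. The remaining recognition condition is that $J$-cell complexes are equivalences, which holds since pushouts of $\ast \to \mathbf{I}$ just freely adjoin an object isomorphic to an existing one, and transfinite composites of injective-on-objects equivalences remain equivalences.

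For properness (i): every groupoid is fibrant and cofibrant (every functor to the terminal groupoid is an isofibration; $\emptyset \to \mathcal{W}$ is injective), so both properness statements follow from the general fact that a model category with all objects fibrant is right proper and all objects cofibrant is left proper (Reedy/Hirschhorn, via Ken Brown's lemma \ref{KenBrownLemma}-style arguments on pushouts along cofibrations). I would cite this rather than compute.

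For (iii), the cartesian monoidal property, it suffices by cofibrant generation to check the pushout-product axiom on generators. The key observation is that cofibrations are detected on object sets, and the object set of a product is the product of object sets, while the object set of a pushout along an injective-on-objects map is the pushout of object sets. Hence for injective-on-objects $f,g$ the pushout-product $f\,\widehat{\times}\,g$ is, on objects, the pushout-product of injections of sets, which is injective. For acyclicity, if in addition $g$ is an equivalence, then $\mathcal{A}\times g$ and $\mathcal{A}'\times g$ are equivalences (products of equivalences are equivalences), the pushout of $\mathcal{A}\times g$ along the cofibration $f\times \mathcal{B}$ is an equivalence by left properness, and 2-out-of-3 gives that $f\,\widehat{\times}\,g$ is an equivalence. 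The unit $\ast$ is cofibrant, so no unit axiom issue arises.

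The step I expect to be the main obstacle is the identification of pushouts of groupoids on objects in the pushout-product argument: pushouts in $\Groupoids$ can identify morphisms in complicated ways, so I must be careful that object-level behavior is nevertheless the set-level pushout. I would handle this by noting that the objects functor $\Groupoids\to\Set$ has a right adjoint (the codiscrete groupoid), hence preserves all colimits, which settles it cleanly.
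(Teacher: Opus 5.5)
Your proposal is correct and follows essentially the paper's route: properness from the bifibrancy of all objects, combinatoriality from local presentability with the same generating sets $I$ and $J$ (your $\mathbf{B}\mathbb{Z}\to\ast$ detects faithfulness of groupoid functors just as well as the paper's parallel-pair generator), and the pushout-product axiom via the colimit-preserving objects functor together with a pushout and 2-out-of-3 argument, where you invoke left properness and the paper instead uses that the pushout of the acyclic cofibration $F\times\mathrm{Id}$ is again an acyclic cofibration. The paper only cites the literature for existence, so your recognition-theorem sketch is extra; to make it self-contained, add the easy converse that retracts of $I$-cell complexes are injective on objects, and the check that an isofibration which is an equivalence is surjective on objects.
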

Since the proof is rather elementary, this model structure (and its analog for categories instead of groupoids) had been folklore
before being recorded in publications, whence some authors came to refer to it as the ``folk model structure''. 
\begin{proof}
The existence of the canonical model structure was first noted by \cite[p. 783]{Anderson78}, published proofs are due to \cite[Thm. 2]{JoyalTierney91}\cite[Thm. 6.7]{Strickland00}, see also the unpublished note \cite{Rezk96}. For the record, 
we spell out the proof of the above three properties. To start with, notice that:
\begin{equation}
\label{ImmediatePropertiesOfCanonicalModelStructure}
\begin{minipage}{15cm}
\begin{itemize}
\item[{\bf (1)}] All objects in the model structure are evidently bifibrant (both cofibrant as well as fibrant).
\item[{\bf (2)}] The acyclic fibrations are precisely the surjective-on-objects equivalences of categories.
\item[{\bf (3)}] The acyclic cofibrations are precisely the injective-on-objects equivalences of categories.
\end{itemize}
\end{minipage}
\end{equation}

What drives the proof of monoidal model structure below is that forming the Cartesian product with a fixed groupoid preserves 
all equivalences of groupoids. In fact, since all objects are fibrant we have, more generally, that the {\it fiber product} 
with a fibration preserves equivalences between fibrations (\cite[p. 428, 431]{Brown73}):

\newpage 
\begin{equation}
  \label{CartesianProductOfGroupoidsPreservesEquivalences}
  \left.
  \def\arraystretch{1.2}
  \begin{array}{l}
  \mathcal{B} \,\in\, \Groupoids
  \,,
  \\
  p_{\mathcal{X}}, 
  p_{\mathcal{Y}},
  p_{\mathcal{Z}}
  \,\in\,
  \big(
    \Groupoids_{/\mathcal{B}}
  \big)^{\mathrm{fib}}
  \,,
  \\
  \hspace{15pt}
  \begin{tikzcd}[
    column sep=17pt,
    row sep=10pt
  ]
  \mathllap{
    F \,:\;\;
  }
  \mathcal{Y} 
  \ar[rr, "{ \mathrm{equiv} }"{swap}]
  \ar[
    dr,
    "{ p_{\mathcal{X}} }"{swap}
  ]
  &&
  \mathcal{Z}
  \ar[
    dl,
    "{ p_{\mathcal{Y}} }"
  ]
  \\
  &
  \mathcal{B}
  \end{tikzcd}
  \end{array}
  \right\}
  \hspace{.7cm}
  \vdash
  \hspace{2cm}
  \begin{tikzcd}[
    column sep=16pt,
    row sep=6pt
  ]
    \mathllap{
      \mathcal{X}
      \times_{{}_{\mathcal{B}}} 
      F
      \,:\;    
    }
    \mathcal{X}
    \times_{{}_{\mathcal{B}}}
    \mathcal{Y}
    \ar[rr, "{ \mathrm{equiv} }"{swap}]
    \ar[dr]
    &&
    \mathcal{X}
    \times_{{}_{\mathcal{B}}}
    \mathcal{Z}
    \ar[dl]
    \\
    &
    \mathcal{B}
  \end{tikzcd}
\end{equation}

\noindent
For {\bf (i)}: Since all objects are bifibrant \eqref{ImmediatePropertiesOfCanonicalModelStructure}, properness 
follows by \cite[Prop. 13.1.2]{Hirschhorn02}.

\noindent
For {\bf (ii)}: Since $\Groupoids$  is the category of models of a limit sketch (this being the diagram shapes 
for internal groupoids, cf. \cite[\S XII.1]{MacLane97}) it is locally presentable (by \cite[Cor. 1.52]{AdamekRosicky94}).
For the sets of generating cofibrations ($I$) and generating acyclic cofibrations ($J$) we may take the following evident functors
\begin{equation}
  \label{GeneratingCofibrationsOfCanonicalModelStructure}
  I
  \;:\defneq\;
  \left\{
  \def\arraystretch{1.2}
  \begin{array}{ccc}
    \varnothing &\longrightarrow& \{0\},
    \\
    \{0\} \sqcup \{1\} &\longrightarrow& \{0 \xleftrightarrow{\sim} 1\},
    \\
    \{
      \hspace{-4pt}
      \begin{tikzcd}[
        column sep=13pt
      ]
        0\!
        \ar[
          r,
          <->,
          shift left=4pt,
          "{ \sim }"
        ]
        \ar[
          r,
          <->,
          shift right=4pt,
          "{ \sim }"{swap}
        ]
        & 
        \!1
      \end{tikzcd}
      \hspace{-4pt}
    \} &\longrightarrow& \{0 \xleftrightarrow{\sim} 1\} 
  \end{array}
  \right\}
  ,
  \hspace{1cm}
  J \,:\defneq\,
  \big\{ 
    \{0\}
    \longrightarrow
    \{ 0 \xleftrightarrow{\sim} 1 \}
  \big\}
  \,,
\end{equation}
because the right lifting property against $I$ clearly characterizes the functors which are (a) surjective on objects, (b) full,
and (c) faithful, which are exactly the acyclic fibrations \eqref{ImmediatePropertiesOfCanonicalModelStructure}; 
while the right lifting property against $J$ characterizes the isofibrations by definition. 

\noindent
For {\bf (iii)}: Given a diagram in $\Groupoids$ of the form
\begin{equation}
  \label{PushoutProductForGroupoids}
  \begin{tikzcd}[row sep=small, column sep=large]
    &
    \AGroupoid{X}
    \times
    \AGroupoid{Y}
    \ar[
      dl,
      "{
        F \times \mathrm{Id}
      }"{swap}
    ]
    \ar[
      dr,
      "{
        \mathrm{Id} \times G
      }"
    ]
    \ar[
      dd,
      phantom,
      "{
        \scalebox{.65}{
          (po)
        }
      }"
    ]
    \\
    \AGroupoid{X}' 
      \times 
    \AGroupoid{Y}
    \ar[
      dr,
      "{ l }"{description}
    ]
    \ar[
      ddr,
      "{
        \mathrm{Id} \times G
      }"{swap}
    ]
    &&
    \AGroupoid{X}
      \times 
    \AGroupoid{Y}'
    \ar[
      dl,
      "{ r }"{description}
    ]
    \ar[
      ddl,
      "{
        F \times \mathrm{Id}
      }"
    ]
    \\
    &
    \AGroupoid{X}'\!\times\!\AGroupoid{Y}
      \,
      \underset{
        \mathclap{
          \AGroupoid{X} \times \AGroupoid{Y}
        }
      }{\coprod}
      \,
    \AGroupoid{X}\!\times\!\AGroupoid{Y}'
    \ar[
      d,
      dashed,
      "{
        F \widehat{\times} G
      }"{pos=.4, description}
    ]
    \\[10pt]
    &
    \AGroupoid{X}'
      \times
    \AGroupoid{Y}'
  \end{tikzcd}
\end{equation}
we need to show (Def. \ref{LeftQuillenBifunctor}):
\begin{itemize}
\item[(1)] That $F \widehat{\times} G$ is a cofibration if $F$ and $G$ are, hence that $(F \widehat{\times} G)_0$ is an injection of sets of objects if $F_0$ and $G_0$ are. But $(-)_0 \,:\, \Categories \to \Sets$ preserves limits and colimits, so that we are reduced to checking the pushout-product axiom on sets, which holds by \eqref{FibersOfPushoutProductMapInSets}.
\item[(2)] That, moreover, $F \widehat{\times} G$ is an equivalence of categories if $F$ is. But with $F$ clearly also $F \times \mathrm{Id}$ is an injective-on-objects equivalence of categories \eqref{CartesianProductOfGroupoidsPreservesEquivalences}, hence an acyclic cofibration. Therefore the model category structure implies that also its pushout $r$ in \eqref{PushoutProductForGroupoids} is an acyclic cofibration, in particular an equivalence, and then --- by the 2-out-of-3 property applied to the bottom right triangle --- that also $F \widehat{\times} G$ is an equivalence.\qedhere
\end{itemize}
\end{proof}

\begin{theorem}[External monoidal model structure on simplicial local systems over 1-types]
\label{ExternalMonoidalModelStructureOnLocalSystemsOverOneTypes}
  The integral model structure on 
  \begin{equation}
    \label{IntegralModelStructureOverGroupoids}
    \mathrm{Loc}^{\Groupoids}_{\GroundField}
    \;\;\;
    \defneq
    \;\;
    \underset{
      \AGroupoid{X}
      \,\isa\,
      \Groupoid
    }{\int}
    \mathrm{sCh}^{\AGroupoid{X}}_{\GroundField}
  \end{equation}
  (as in Thm. \ref{GlobalModelStructure} but now based on the canonical model structure on groupoids,
  Prop. \ref{CanonicalModelStructureOnGroupoids})
  \begin{itemize}
  \item[{\bf (i)}] exists and is 
  \item[{\bf (ii)}] combinatorial,
  \item[{\bf (iii)}]
  monoidal model with respect to the external tensor $\boxtimes$ (as in Def. \ref{ExternalTensorProductOfSimplicialLocalSystems}).
  \end{itemize}
More generally, for $G \,\in\, \Groups(\Sets)$, these statements generalize to the projective model structure 
of $G$-actions in local systems (cf. Rem. \ref{SimplicialGroupActions}):
\begin{equation}
  \label{ModelStructureOnEquivariantLocalSystemsOvergroupoids}
  \Actions{G}\big(
    \mathrm{Loc}_{\GroundField}^{\Groupoids}
  \big)
  \;\;
    \defneq
  \;\;
  \mathrm{Func}\big(
    \mathbf{B}G
    ,\,
    \mathrm{Loc}_{\GroundField}^{\Groupoids}    
  \big).
\end{equation}
\end{theorem}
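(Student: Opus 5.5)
We follow the three-step pattern of the proof of Thm. \ref{GlobalModelStructure}, now over $\Groupoids$ with its canonical model structure (Prop. \ref{CanonicalModelStructureOnGroupoids}) regarded inside $\SimplicialGroupoids$ via the inclusion of 1-groupoids.

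\textbf{(i) Existence.} We check the three hypotheses of Prop. \ref{ExistenceOfIntegralModelStructure} for the pseudofunctor $\mathcal{X} \mapsto \mathbf{sCh}^{\mathcal{X}}_{\GroundField}$ on $\Groupoids$.
\begin{itemize}
  \item[(1)] An equivalence of groupoids is a Dwyer-Kan equivalence between $\mathrm{KanCplx}$-enriched (discretely enriched) categories. The Lemma inside the proof of Thm. \ref{GlobalModelStructure} then gives that $f_! \dashv f^\ast$ is a Quillen equivalence.
  \item[(2)] $f^\ast$ preserves all weak equivalences, since these are objectwise.
  \item[(3)] Acyclic cofibrations are, by \eqref{ImmediatePropertiesOfCanonicalModelStructure}, the injective-on-objects equivalences. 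These are exactly of the form \eqref{AssumptionOnAcyclicCofibrationsNeededInExistenceProof}, so the argument given there applies verbatim. That argument reduces, via Beck-Chevalley \eqref{BeckChevalleyAlongEmbeddings}, to induction along group homomorphisms $\mathbf{B}H \to \mathbf{B}G$, handled by the non-natural isomorphism \eqref{QuotientIsomorphism}.
\end{itemize}

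\textbf{(ii) Combinatoriality.} The base $\Groupoids$ is combinatorial by Prop. \ref{CanonicalModelStructureOnGroupoids}(ii), and each fiber is combinatorial by Prop. \ref{ModelStructureOnSimplicialFunctors}. We invoke the general statement, in the appendix on integral model structures, that a Grothendieck construction of an accessible pseudofunctor into combinatorial model categories is locally presentable and cofibrantly generated. The generating sets are images of $I, J$ from \eqref{GeneratingCofibrationsOfCanonicalModelStructure} equipped with zero local systems, together with the fiberwise generators $\mathcal{X}(x,-)\cdot i$ over the finitely many small groupoids appearing. If no such general statement is available, we would exhibit these generators by hand: right lifting against them characterizes integral (acyclic) fibrations, namely base fibration plus fiberwise fibration of $\phi$.

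\textbf{(iii) Monoidality, the main step.} By Prop. \ref{ExternalPushoutProduct}, the pushout-product $\phi_F \widehat{\boxtimes} \gamma_G$ lies over $F \widehat{\times} G$ and has linear component the adjunct of a pushout-product of pulled-back adjunct components. The check therefore splits into a base part and a linear part.
\begin{itemize}
  \item \emph{Base part.} This is exactly Prop. \ref{CanonicalModelStructureOnGroupoids}(iii): $\Groupoids$ is cartesian monoidal model. This is precisely what failed for $\SimplicialGroupoids$ (Rem. \ref{IssueOfFullExternalMonoidalStructure}).
  \item \emph{Linear part.} This is Thm. \ref{ExternalTensorProductIsHomotopical}(ii), \eqref{LinearComponentwiseQuillenPropertyOfExtenralTensor}, whose proof used only the fiberwise Quillen bifunctor property \eqref{ExternalTensorIsQuillenBifunctorOverFixedBaseObjects} and applies verbatim over 1-groupoids.
\end{itemize}

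The obstacle is to assemble these two parts. An integral cofibration is a base cofibration $\mathbf{h}$ together with a component whose adjunct $\mathbf{h}_!(\text{source}) \to \text{target}$ is a cofibration. Here the adjunct has been pushed forward along the base map $F\widehat{\times}G$ out of a pushout groupoid. One must show that it is a cofibration in $\mathbf{sCh}^{\mathcal{X}'\times\mathcal{Y}'}$, and an acyclic one in the mixed acyclic cases, including the case where only the base map $F$ is acyclic.

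For that mixed case we plan to argue as follows.
\begin{itemize}
  \item Factor through the pushout as in the proof of Prop. \ref{CanonicalModelStructureOnGroupoids}(iii).
  \item The pushout leg $r$ is an acyclic cofibration of groupoids.
  \item By item (3) of the existence proof, $r_!$ preserves weak equivalences.
  \item Conclude by 2-out-of-3, using that integral weak equivalences are detected as base equivalence plus fiberwise equivalence after pullback (as in the proof of Thm. \ref{ExternalTensorProductIsHomotopical}(i)).
\end{itemize}
It suffices to check all of this on generators, using the generating sets identified in (ii).

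\textbf{Equivariant generalization.} For $\Actions{G}$ we transfer the projective model structure along the forgetful functor to $\mathrm{Loc}^{\Groupoids}_{\GroundField}$, with generators $G\cdot(-)$ applied to the generators above; combinatoriality is inherited. Monoidality follows by the argument in the proof of Prop. \ref{MonoidalModelStructureOnLinearSimplicialGroupRepresentations}: free $G$-actions are left adjoint to the forgetful functor, so lifting problems for pushout-products of generators reduce to the underlying nonequivariant ones just established. Here $G\cdot(-)$ commutes with $\widehat{\boxtimes}$ up to the diagonal action, which is again free.
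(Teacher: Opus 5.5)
Your overall route is the paper's. Existence rechecks the three conditions of Prop.~\ref{ExistenceOfIntegralModelStructure} as in Thm.~\ref{GlobalModelStructure}, using only \eqref{AssumptionOnAcyclicCofibrationsNeededInExistenceProof}. Combinatoriality uses explicit generators. Monoidality splits into a base part (Prop.~\ref{CanonicalModelStructureOnGroupoids}(iii)) and a linear part (Thm.~\ref{ExternalTensorProductIsHomotopical}(ii)), and the $G$-action case uses Berger--Moerdijk.

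\textbf{The ``assembly obstacle'' in (iii).} This is a misconception, and the step you plan for it would fail. An integral acyclic cofibration $\phi_F$ has $F$ an acyclic cofibration, so $F_!$ preserves all weak equivalences by item (3) of your existence proof. Hence the cofibrant replacement in Def.~\ref{IntegralModelStructure}(i)(b) is harmless, and the adjunct $\widetilde{\phi}$ is itself an acyclic cofibration. So there is no case with ``only $F$ acyclic''. The conclusion you aim for there is in fact false: take $F=G=\mathrm{id}_{\mathrm{pt}}$ and $\widetilde{\phi}=\widetilde{\gamma}=(0\to\GroundField)$. Then the pushout-product is $0\to\GroundField$ over $\mathrm{id}_{\mathrm{pt}}$, which is not a weak equivalence even though $F$ is one. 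Nothing needs assembling. By Prop.~\ref{ExternalPushoutProduct}, the adjunct component of $\phi_F\,\widehat{\boxtimes}\,\gamma_G$ over $F\,\widehat{\times}\,G$ is exactly the fixed-base pushout-product $\widetilde{\phi}\,\widehat{\boxtimes}\,\widetilde{\gamma}$ in $\mathrm{sCh}^{\mathcal{X}'\times\mathcal{Y}'}_{\GroundField}$. So \eqref{ExternalTensorIsQuillenBifunctorOverFixedBaseObjects}, whose proof never uses skeletality, gives cofibrancy and acyclicity at once. The base is handled by Prop.~\ref{CanonicalModelStructureOnGroupoids}(iii). That is all the paper does, so drop the 2-out-of-3 detour through the pushout leg $r$.

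\textbf{Local presentability in (ii).} The appendix contains no general combinatoriality result for Grothendieck constructions. Your by-hand fallback supplies generators but not local presentability. The paper obtains local presentability from Makkai--Par\'e's Prop.~5.3.1(4): $\Groupoids$ and each fiber are locally presentable, and $\mathcal{X}\mapsto\mathrm{sCh}^{\mathcal{X}}_{\GroundField}=\mathrm{Cat}(\mathcal{X},\mathrm{sCh}_{\GroundField})$ sends colimits to 2-limits. This last fact is the accessibility you assumed without checking. Your generating sets themselves are fine. Already the generators of $\Groupoids$ with zero local systems, together with $i$ (resp.\ $j$) over the point with identity base map, detect base (acyclic) fibrations plus objectwise (acyclic) fibrations. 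The paper instead places $i,j$ over the codiscrete groupoid $\{0\simeq 1\}$, covering the groupoid generators by identities. This works for the same reason: lifts are determined at a single object.
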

\begin{proof} 
  For {\bf (i)}:
  Existence of the model structure follows verbatim as in the proof of Thm. \ref{GlobalModelStructure}, noticing that:
\begin{itemize}

\item[{\bf (a)}] The condition for weak equivalences is strictly a special case of that in the proof of Thm. \ref{GlobalModelStructure} using that the inclusion $\Groupoids \hookrightarrow \SimplicialGroupoids$ preserves weak equivalences (an ordinary equivalence of ordinary groupoids is a Dwyer-Kan equivalence when regarded in simplicial groupoids).

\item[{\bf (b)}] The condition for acyclic fibrations in the proof of Thm. \ref{GlobalModelStructure} actually holds irrespective of the properties of acyclic fibrations (it only relies on the fact that precomposition always preserves projective equivalences),

\item[{\bf (c)}] The only assumption on acyclic cofibrations in the base category that are actually used in the proof of Thm. \ref{GlobalModelStructure},
  namely \eqref{AssumptionOnAcyclicCofibrationsNeededInExistenceProof}, is satisfied also in the canonical model structure on $\Groupoids$, by \eqref{ImmediatePropertiesOfCanonicalModelStructure}.
\end{itemize}

  \smallskip

  \noindent
  For {\bf (ii)}: We need to show local presentability and cofibrant generation:
  
  \noindent{\bf  (a)}
  Local presentability follows by \cite[Prop. 5.3.1(4)]{MakkaiPare89}\footnote{Compare \href{https://mathoverflow.net/a/102083/381}{MO:102083/381}.} from the fact that $\Groupoids$ is locally presentable (by Prop. \ref{CanonicalModelStructureOnGroupoids}), each $\mathrm{Ch}_{\GroundField}^{\AGroupoid{X}}$ is locally presentable (by \cite[Cor. 1.54]{AdamekRosicky94}, since $\mathrm{Ch}_{\GroundField}$ is locally presentable by Thm. \ref{ModelCategoryOfSimplicialChainComplexes}), and the contravariant functor 
  \[
    \begin{tikzcd}[
      row sep=0pt
    ]
      \Groupoids^{\mathrm{op}}
      \ar[rr]
      &&
      \Categories
      \\
      \AGroupoid{X}
      &\mapsto&
      \mathrm{sCh}^{\AGroupoid{X}}
      \mathrlap{
        \;=\;
        \Categories\big(
          \AGroupoid{X}
          ,\,
          \mathrm{sCh}_{\GroundField}
        \big)
      }
    \end{tikzcd}
  \]
  (being the hom-functor of the 2-category of categories)
  sends colimits in $\Groupoids$ to 2-limits in $\Categories$. 

 \noindent {\bf  (b) }
  We claim that (acyclic) generating cofibrations are given by covering the generating (acyclic) cofibrations of $\Groupoids$ \eqref{GeneratingCofibrationsOfCanonicalModelStructure} with those of $\mathrm{sCh}_{\GroundField}$ (which exist by Thm. \ref{ModelCategoryOfSimplicialChainComplexes}, now to be denoted $i : \mathscr{D}_i \to \mathscr{C}_i$
  and $j \,:\, \mathscr{D}_j \to \mathscr{C}_j$, respectively)
  as follows:
\begin{equation}
  \label{GeneratingCofibrationsForIntegralModelStructureOverGroupoids}
  \def\arraystretch{1}
  \begin{array}{l}
   I_{
     {}_{ 
       \scalebox{.65}{$
         \mathrm{Loc}_\GroundField^{\Groupoids}
       $}
    }
  }
  \;\;
  :\defneq
  \;\;
  \left\{
  \begin{tikzcd}[
    column sep=20pt,
    row sep=5pt  
  ]  
    &[-30pt]
    0
    \ar[rrrr]
    &
    && 
    &[-30pt]
    0
    \\
    \varnothing
    \ar[rrrr]
    &&&&
    \{0\}
  \end{tikzcd}
  ,\,
  \begin{tikzcd}[
    column sep=30pt,
    row sep=5pt,
  ]
    &[-40pt]
    &[+20pt]
    &[-40pt]
    0
    \ar[rrrr]
    &[-15pt]
    &[-40pt]
    &[+20pt]
    &[-40pt]
    0
    \\
    &&
    1 \}
    \ar[
      from=ddll,
      shorten =-2pt,
      shift left=4pt,
      "{ \sim }"{sloped, pos=.75, swap},
    ]
    \ar[
      from=ddll,
      shorten =-2pt,
      shift right=4pt
    ]
    \ar[
      rrrr,
      |->
    ]
    &&&&
    1 \}
    \\
    & 
    0
    \ar[
      uurr,
      shorten=-2pt,
      equals,
      shift left=3pt
    ]
    \ar[
      uurr,
      shorten=-2pt,
      equals,
      shift right=3pt
    ]
    \ar[
      rrrr,
      shorten <=-2pt,
      crossing over
    ]
    &&&&
    0
    \ar[
      uurr,
      crossing over,
      equals
    ]
    \\
    \{ 0
    \ar[
      rrrr,
      |->
    ]
    &&&&
    \{ 0
    \ar[
      uurr,
      "{ \sim }"{sloped}
    ]
  \end{tikzcd}
  \right\}
  \\
  \\
  \hspace{2cm}
  \coprod
  \;
  \left\{
  \begin{tikzcd}[
    column sep=30pt,
    row sep=5pt
  ]
    &[-40pt]
    &
    & [-40pt]
    0
    \ar[rrrr]
    &[-20pt]
    &[-40pt]
    &[20pt]
    &[-40pt]
    \mathscr{C}_i
    \\
    &&
    \{ 1 \}
    \ar[
      rrrr,
      |->,
      shorten >=-3pt
    ]
    &&&& 
    1 \}
    \\
    &
    \mathscr{D}_i
    \ar[
      rrrr,
      "{ i }"{description}
    ]
    &&&&
    \mathscr{C}_i
    \ar[
      uurr,
      equals,
      crossing over
    ]
    \\
    \{ 0 \}
    \ar[
      rrrr,
      |->
    ]
    &&&&
    \{ 0
    \ar[
      uurr,
      "{ \sim }"{sloped}
    ]
  \end{tikzcd}
   \middle\vert\;
   i 
     \in 
   I_{
     {}_{ 
       \scalebox{.65}{$
         \mathrm{sCh}_\GroundField
       $}
    }
  }
  \right\}
  \\
  \\
   J_{
     {}_{ 
       \scalebox{.65}{$
         \mathrm{Loc}_\GroundField^{\Groupoids}
       $}
    }
  }
  \;=\;
 \left\{
  \begin{tikzcd}[
    column sep=30pt,
    row sep=7pt
  ]
    &[-40pt]
    &
    &[-20pt]
    &
    &[-40pt]
    &[+10pt]
    &[-40pt] 
    \mathscr{C}_j
    \\
    &&&&&& 
    1 \}
    \\
    &
    \mathscr{D}_j
    \ar[rrrr, "{ j }"]
    &&& &
    \mathscr{C}_j
    \ar[uurr, equals]
    \\
    \{ 0 \}
    \ar[
      rrrr,
      |->
    ]
    &&&&
    \{ 0
    \ar[
      uurr,
      "{ \sim }"{sloped, swap}
    ]
  \end{tikzcd}
  \;\middle\vert\;
   \;
   j
     \in
   J_{{}_{\scalebox{.65}{$\mathrm{sCh}_\GroundField$}}}
   \;
  \right\}
  \end{array}
\end{equation}
Namely, by existence of the model structure and local presentability it is now sufficient to show that a morphism on $\mathrm{Loc}_{\GroundField}^{\Groupoids}$ is a fibration (acyclic fibration) iff it has the right lifting property against $J$ ($I$). 
It is immediately clear that this is the case on the underlying base morphisms in $\Groupoid$. That it also holds for the linear 
component maps is due to the fact that the codomains of all the generating (acyclic) cofibrations of $\Groupoids$ are codiscrete 
groupoids which in 
\eqref{GeneratingCofibrationsForIntegralModelStructureOverGroupoids} 
are covered by identity morphisms between linear components: Together this implies that the relevant lifts are uniquely fixed by 
lifts over a single object, where the lifting condition is thus the objectwise one in the projective model structure.

Concretely, a lifting problem of $J_{{}_{\scalebox{.65}{$\mathrm{Loc}_\GroundField^\Groupoids$}}}$ against a morphism
$\phi_f \,:\, \mathscr{V}_{\mathcal{X}} \to \mathscr{W}_{\mathcal{Y}}$ covering an isofibration $f$ is, after lifting
the underlying morphism in $\Groupoids$, in components of this form:
\[
  \begin{tikzcd}[
    column sep=30pt,
    row sep=5pt
  ]
    &[-35pt]
    &[20pt]
    &[-35pt]
    &
    &[-35pt]
    &
    &[-35pt]     
    \mathscr{V}_{x_1}
    \ar[
      dddddd,
      "{ \phi_{y_1} }"{pos=.3}
    ]
    \\[+10pt]
    &
    &
    &
    &
    &
    &
    x_1 \}
    \ar[
      from=dddll,
      shorten <=-5pt,
      "{ \sim }"{sloped, pos=.75},
      "{ g }"{swap, pos=.5}
    ]
    \ar[
      dddddd,
      |->,
      shift right=3pt,
      shorten >=-6pt
    ]
    &[-40pt]     
    \\
    \\
    &
    \mathscr{D}_j
    \ar[
      rrrr
    ]
    \ar[
      ddddd,
      "{ j }"{description}
    ]
    &
    &
    &
    &
    \mathscr{V}_{\!x_0}
    \ar[
      uuurr,
      shorten =-5pt,
      "{
        \mathscr{V}_{\!g}
      }"
    ]
    &
    & 
    \\[+10pt]
    \{0\}
    \ar[
      ddddd,
      |->,
    ]
    \ar[
      rrrr,
      shorten <=-2pt,
      crossing over,
      |->
    ]
    &&&&
    \{ x_0
    \\
    \\
    &&& 
    \mathscr{C}_j
    \ar[
      rrrr,
      crossing over,
      shorten >=-4pt,
      "{
        \mathscr{W}_{\!h}(w)
      }"{pos=.75}
    ]
    &&&&
    \mathscr{W}_{\!y_1}
    \\[+10pt]
    &&
    1 \}
    \ar[
      rrrr,
      |->,
      shorten >=-4pt
    ]
    \ar[
      from=ddll,
      "{ 
        \sim 
      }"{sloped, pos=.8}
    ]
    &&&&
    y_1 \}
    \ar[
      from=ddll,
      "{ 
        \sim  
      }"{sloped, pos=.8},
      "{
        h
      }"{swap}
    ]
    \ar[
      from=uuuuuu,
      -,
      shift right=3pt,
      crossing over,
      shorten >=20pt
    ]
    \\
    & 
    \mathscr{C}_j
    \ar[
      uurr,
      equals
    ]
    \ar[
      rrrr,
      crossing over,
      shorten >=-6pt,
      "{
        w
      }"
    ]
    &&&&
    \mathscr{W}_{\!y_0}
    \ar[
      from=uuuuu,
      crossing over,
      shorten <=-3pt,
      "{
        \phi_{y_0}
      }"{pos=.3}
    ]
    \ar[
      uurr,
      shorten =-5pt,
      crossing over,
      "{
        \mathscr{W}_{\!h}
      }"{pos=.65}
    ]
    \\[+10pt]
    \mathllap{\{} 0
    \ar[
      rrrr,
      |->
    ]
    &&&&
    \{ y_0
    \ar[
      from=uuuuu,
      |->,
      crossing over
    ]
    &
    &
  \end{tikzcd}
\]
and if a lift $\widehat{w} \,:\, \mathbb{D}^n \to \mathscr{V}_{x_0}$ of $w$ (in the front) exists, then a compatible lift of 
$\mathcal{W}_g(g)$ (in the rear) exists uniquely, given by $\mathscr{V}_{\! g}(\widehat w)$. Therefore all such lifts exist 
iff $\phi \,:\, \mathscr{V}_{\mathcal{X}} \to f^\ast \mathscr{W}_{\mathcal{Y}}$ is objectwise and degreewise surjective, 
hence projectively fibrant.

Similarly, a lifting problem of the non-trivially covered generating cofibration in \eqref{GeneratingCofibrationsForIntegralModelStructureOverGroupoids} against a morphism $\phi_f$ covering an isofibration $f$ is, after lifting on underlying groupoids, in components of this form:
\[
  \begin{tikzcd}[
    column sep=30pt,
    row sep=5pt
  ]
    &[-35pt]
    &[20pt]
    &[-35pt]
    0
    \ar[
      rrrr
    ]
    \ar[
      dddddd
    ]
    &
    &[-35pt]
    &
    &[-35pt]     
    \mathscr{V}_{x_1}
    \ar[
      dddddd,
      "{ \phi_{y_1} }"{pos=.3}
    ]
    \\[+10pt]
    &
    &
    \{1\}
    \ar[
      dddddd,
      |->
    ]
    \ar[
      rrrr,
      |->,
      shorten =-3pt,
      crossing over
    ]
    &
    &
    &
    &
    x_1 \}
    \ar[
      from=dddll,
      shorten <=-5pt,
      "{ \sim }"{sloped, pos=.75},
      "{ g }"{swap, pos=.5}
    ]
    \ar[
      dddddd,
      |->, 
      shift right=3pt,
      shorten >=-6pt
    ]
    &[-40pt]     
    \\
    \\
    &
    \mathscr{D}_i
    \ar[
      rrrr,
      crossing over
    ]
    \ar[
      ddddd,
      "{ i }"{description}
    ]
    &
    &
    &
    &
    \mathscr{V}_{\!x_0}
    \ar[
      uuurr,
      shorten =-5pt,
      shift left=2pt,
      crossing over,
      "{
        \mathscr{V}_{\!g}
      }"{pos=.75}
    ]
    &
    & 
    \\[+10pt]
    \{0\}
    \ar[
      ddddd,
      |->,
    ]
    \ar[
      rrrr,
      shorten <=-2pt,
      crossing over,
      |->
    ]
    &&&&
    \{ x_0
    \\
    \\
    &&& 
    \mathscr{C}_i
    \ar[
      rrrr,
      crossing over,
      shorten >=-4pt,
      "{
        \mathscr{W}_{\!h}(w)
      }"{pos=.75}
    ]
    \ar[
      ddll,
      equals,
      shorten =-3pt
    ]
    &&&&
    \mathscr{W}_{\!y_1}
    \\[+10pt]
    &&
    1 \}
    \ar[
      rrrr,
      |->,
      shorten >=-4pt
    ]
    \ar[
      from=ddll,
      "{ 
        \sim 
      }"{sloped, pos=.8}
    ]
    \ar[
      from=uuuuuu,
      shorten <=65pt,
      crossing over
    ]
    &&&&
    y_1 \}
    \ar[
      from=ddll,
      "{ 
        \sim  
      }"{sloped, pos=.8},
      "{
        h
      }"{swap}
    ]
    \ar[
      from=uuuuuu,
      -,
      shift right=3pt,
      crossing over,
      shorten >=20pt
    ]
    \\
    & 
    \mathscr{C}_i
    \ar[
      rrrr,
      crossing over,
      shorten >=-6pt,
      "{
        w
      }"
    ]
    &&&&
    \mathscr{W}_{\!y_0}
    \ar[
      from=uuuuu,
      crossing over,
      shorten <=-3pt,
      "{
        \phi_{y_0}
      }"{pos=.3}
    ]
    \ar[
      uurr,
      shorten =-5pt,
      crossing over,
      "{
        \mathscr{W}_{\!h}
      }"{pos=.65}
    ]
    \\[+10pt]
    \mathllap{\{} 0
    \ar[
      rrrr,
      |->
    ]
    &&&&
    \{ y_0
    \ar[
      from=uuuuu,
      |->,
      crossing over
    ]
    &
    &
  \end{tikzcd}
\]
and by the same argument as before any lift of $w$ (in the front) uniquely induces a lift of $\mathscr{W}_{h}(w)$ (in the rear). 
Therefore all such lifts exists iff $\phi \,:\, \mathscr{V}_{\mathcal{X}} \to f^\ast \mathscr{W}_{\mathcal{Y}}$ is objectwise 
a surjective weak equivalence, hence an acyclic fibration.

\smallskip

  \noindent
  For {\bf (iii)}:
  For the monoidal model structure, we need to check the pushout-product property (Def. \ref{LeftQuillenBifunctor}). So consider a diagram in $\mathrm{Loc}^\Groupoids_{\GroundField}$ of the form
  \[
    \begin{tikzcd}[column sep=large]
      & 
      \mathscr{V}_{\!\mathcal{X}}
      \otimes
      \mathscr{W}_{\mathcal{Y}}
      \ar[
        dl,
        "{
          \phi_{f}
          \,\boxtimes\,
          \mathrm{Id}
        }"{swap}
      ]
      \ar[
        dr,
        "{
          \mathrm{Id}
          \,\boxtimes\,
          \gamma_{g}
        }"
      ]
      \ar[
        dd,
        phantom,
        "{ \scalebox{.7}{(po)} }"
      ]
      \\
      \mathscr{V}'_{\!\mathcal{X}'}
      \otimes
      \mathscr{W}_{\mathcal{Y}}      
      \ar[
        dr,
        "{ 
          \lambda_l
        }"
      ]
      \ar[
        ddr,
        "{
          \mathrm{id}
          \,\boxtimes\,
          \gamma_g
        }"{swap}
      ]
      &&
      \mathscr{V}_{\!\mathcal{X}}
      \otimes
      \mathscr{W}'_{\mathcal{Y}'}
      \ar[
        dl,
        "{ \rho_r }"{swap}
      ]
      \ar[
        ddl,
        "{
          \phi_f
          \,\boxtimes\,
          \mathrm{id}
        }"
      ]
      \\
      &
      \mathscr{V}'_{\mathcal{X}'}
      \boxtimes
      \mathscr{W}_{\mathcal{Y}}
      \;\;
      \underset{
        \mathclap{
          \mathscr{V}_{\mathcal{X}}
          \boxtimes
          \mathscr{W}_{\mathcal{Y}}      
        }
      }{\coprod}
      \;\;
      \mathscr{V}'_{\mathcal{X}'}
      \boxtimes
      \mathscr{W}_{\mathcal{Y}}
      \ar[
        d,
        dashed,
        "{
          \phi_f 
          \widehat{\boxtimes}
          \gamma_g
        }"{description, pos=.4}
      ]
      \\[+20pt]
      &
      \mathscr{V}'_{\!\mathcal{X}'}
      \boxtimes
      \mathscr{W}'_{\mathcal{Y}'}
      \,,
    \end{tikzcd}
  \]
where $\phi_f$ and $\gamma_g$ are integral cofibrations, which means (Def. \ref{IntegralModelStructure}) that 
$f$ and $g$ are cofibrations in $\Groupoids$, while $\phi : f_!(\mathscr{V}) \to \mathscr{V}'$ is a cofibration in 
$\mathrm{Ch}_{\GroundField}^{\mathcal{X}}$ and $\gamma : g_!(\mathscr{W}) \to \mathscr{W}'$ is a cofibration in 
$\mathrm{Ch}_{\GroundField}^{\mathcal{Y}}$.
We need to show that $\phi_f \,\widehat{\boxtimes}, \gamma_g$ is a cofibration, which, by the same token and using Prop. \ref{ColimitsInAGrothendieckConstruction}, means to show that:
\begin{itemize}
\item[{\bf (a)}]
$f \widehat{\times} g$ is a cofibration in $\Groupoids$.
This holds by Prop. \ref{CanonicalModelStructureOnGroupoids}, see around \eqref{PushoutProductForGroupoids}.
\item[{\bf (b)}]
$\big( (l_! \phi) \otimes \mathrm{id}\big)  \,\widehat{\otimes}\, \big( \mathrm{id} \otimes (r_! \gamma) \big)$ is a cofibration in $\mathrm{Ch}_{\GroundField}^{\mathcal{X}' \times \mathcal{Y}'}$.
This follows by Thm. \ref{ExternalTensorProductIsHomotopical}(ii), since the functors $l_!$ and $r_!$ are left Quillen (by Rem. \ref{BaseChangeBetweenModelStructuresOfSimplicialLocalSystems}) and hence preserve cofibrations. 
\end{itemize}  
By the directly analogous argument, it follows that $\phi_f \,\boxtimes\, \gamma_g$ is moreover a weak equivalence if $\phi_f$ or $\gamma_g$ is.

\smallskip

\noindent
Finally to see that these statements generalize to $G$-actions: Existence and cofibrant generation is implied from {\bf (i)}
and {\bf (ii)} by general facts \cite[Thm. 11.6.1]{Hirschhorn02} about the projective model structure on functors, 
and monoidal model structure is implied from {\bf (iii)} by the argument of \cite[p. 6]{BergerMoerdijk06} that we already reviewed in the proof of Prop. \ref{MonoidalModelStructureOnLinearSimplicialGroupRepresentations}, now using just ordinary groups and the default $\Set$-enrichment of $\mathrm{Loc}_{\GroundField}^{\Groupoids}$ (for lack of an established simplicial model structure on $\mathrm{Loc}_{\GroundField}^{\Groupoids}$).
\end{proof}

\begin{corollary}[Model structure on modules internal to equivariant local systems]
  \label{ModelStructureOnMonoidsInEquivariantLocalSystems}
  For a group $G \,\in\, \Groups(\Sets)$ and 
  \[
    G \acts \;
    \mathscr{R}_{\mathcal{R}}
    \;\;
    \in
    \;\;
    \mathrm{CMon}
    \Big(\!
      \Actions{G}\big(
        \mathrm{Loc}^{\Groupoids}_{\GroundField}
      \big)
    \!\! \Big)
  \]
  a commutative monoid internal to $G$-actions on local systems over groupoids \eqref{IntegralModelStructureOverGroupoids} whose underlying object is cofibrant,
  its category of modules
  \begin{equation}
    \label{ModelStructureOnModulesOnEquivariantLocalSystems}
    \Modules{
      \scalebox{.7}{$
        G \acts \, \mathscr{R}_{\mathcal{R}}
      $}
    }
    \;\;
    \defneq
    \;\;
    \Modules{
      \scalebox{.7}{$
        G \acts \, \mathscr{R}_{\mathcal{R}}
      $}
    }
    \Big(\!
      \Actions{G}\big(
        \mathrm{Loc}^{\Groupoids}_{\GroundField}
      \big)
   \!\! \Big)
  \end{equation}
  carries a cofibrantly generated model structure whose fibrations and weak equivalences are those of the underlying integral model structure $\mathrm{Loc}_{\GroundField}^{\Groupoids}$ \eqref{ExternalMonoidalModelStructureOnLocalSystemsOverOneTypes}, and which is monoidal model 
  with respect to the induced tensor product of modules.
\end{corollary}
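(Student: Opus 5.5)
This corollary should follow from the Schwede--Shipley theorem on model structures for modules over monoids in a monoidal model category (Schwede--Shipley 2000, Thm.~4.1). Its hypotheses are: the ambient category is cofibrantly generated and monoidal model, and it satisfies the \emph{monoid axiom}. Since we only need modules over a commutative monoid whose underlying object is cofibrant, we may alternatively use the simpler variant for modules over a cofibrant commutative monoid (as in Hovey's or Schwede--Shipley's treatment). In that variant the module category is right-transferred along the forgetful functor $\mathrm{undrl} : \mathrm{Mod}_{\mathscr{R}} \to \mathcal{C}$ with left adjoint $\mathscr{R}\boxtimes(-)$.

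The plan has three steps.

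\textbf{Step 1: the ambient category.} Take $\mathcal{C} :\defneq \mathrm{Act}_G(\mathrm{Loc}^{\mathrm{Grpd}}_{\mathbb{K}})$. By Thm.~\ref{ExternalMonoidalModelStructureOnLocalSystemsOverOneTypes}, this is a combinatorial model category that is monoidal model under $\boxtimes$, with all structure formed projectively over $\mathbf{B}G$. Its generating cofibrations are $G\cdot I$ and its generating acyclic cofibrations are $G\cdot J$, where $I$ and $J$ are as in \eqref{GeneratingCofibrationsForIntegralModelStructureOverGroupoids}.

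\textbf{Step 2: transfer along the free--forgetful adjunction.} We transfer along $\mathscr{R}\boxtimes(-) \dashv \mathrm{undrl}$, taking generating sets $\mathscr{R}\boxtimes (G\cdot I)$ and $\mathscr{R}\boxtimes(G\cdot J)$. Local presentability of $\mathrm{Mod}_{\mathscr{R}}$ holds because it is the category of algebras for a finitary monad on a locally presentable category. By Kan's transfer theorem, the only remaining condition is that relative $\mathscr{R}\boxtimes(G\cdot J)$-cell complexes are weak equivalences.

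We verify this as follows. Since $\mathscr{R}$ is cofibrant, the pushout-product axiom (applied with $\varnothing \to \mathscr{R}$) shows that $\mathscr{R}\boxtimes(-)$ sends acyclic cofibrations to acyclic cofibrations in $\mathcal{C}$. The forgetful functor from modules creates pushouts along free maps up to the usual filtration, and since $\mathscr{R}$ is commutative, pushouts of free maps are computed underlying. Hence relative cell complexes are, underlying, transfinite composites of pushouts of acyclic cofibrations, and therefore acyclic. If this direct argument runs into trouble, the fallback is to verify the monoid axiom. This should be easy here because $\boxtimes$ is homotopical by Thm.~\ref{ExternalTensorProductIsHomotopical}(i): every object of each $\mathbf{sCh}^{\mathcal{X}}_{\mathbb{K}}$-fiber tensors weak equivalences to weak equivalences. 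That theorem is stated for simplicial groupoids, but it restricts to 1-types because the weak equivalences coincide.

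The delicate point in this step is that transfinite composites of underlying acyclic cofibrations remain weak equivalences. This holds automatically because they stay acyclic cofibrations.

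\textbf{Step 3: monoidal model structure on modules.} The tensor product of modules is $\mathscr{M}\boxtimes_{\mathscr{R}}\mathscr{N}$, the coequalizer of the two actions. For generating cofibrations $\mathscr{R}\boxtimes i$ and $\mathscr{R}\boxtimes i'$ we have
\[
(\mathscr{R}\boxtimes i)\,\widehat{\boxtimes}_{\mathscr{R}}\,(\mathscr{R}\boxtimes i') \simeq \mathscr{R}\boxtimes (i\,\widehat{\boxtimes}\, i'),
\]
by commutativity and associativity. By Step~1 the pushout-product $i\,\widehat{\boxtimes}\, i'$ is an (acyclic) cofibration, and $\mathscr{R}\boxtimes(-)$ is left Quillen, so the pushout-product axiom holds on generators and hence in general. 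The unit condition holds because the unit $\mathscr{R}$ is cofibrant as a module.

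I expect the main obstacle to be Step~2, namely confirming that cell attachments in modules along $\mathscr{R}\boxtimes (G\cdot J)$ are computed underlying, given that colimits in the Grothendieck construction also move the base groupoid. For this I would invoke Prop.~\ref{ColimitsInAGrothendieckConstruction} together with the fact (Prop.~\ref{ExternalTensorWithLocalSystemOverDiscreteSpacePreservesColimits}) that $\boxtimes$ preserves colimits in each variable.
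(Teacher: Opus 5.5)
Your proposal follows the same route as the paper. The paper's proof is simply an appeal to Schwede--Shipley's theorem on modules over a commutative monoid in a cofibrantly generated monoidal model category, together with their remark that cofibrancy of the underlying object of $\mathscr{R}$ replaces the monoid axiom. Your Steps 2--3 unpack exactly that argument and are sound, modulo two small slips that the argument does not depend on:
\begin{itemize}
\item colimits of modules are computed underlying because $\mathscr{R}\boxtimes(-)$ is cocontinuous, not because $\mathscr{R}$ is commutative;
\item homotopicality of $\boxtimes$ alone would not yield the monoid axiom in your fallback.
\end{itemize}

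The one line lacking justification is the unit condition. $\mathscr{R}\simeq\mathscr{R}\boxtimes\mathbbm{1}$ is cofibrant as a module over itself because $\mathbbm{1}=\mathbb{K}_{\mathrm{pt}}$ is cofibrant. That holds for trivial $G$, but for $G\neq 1$ the trivial-action unit is not projectively cofibrant. This is harmless, because for $G\neq 1$ the hypothesis is vacuous: the unit map of a monoid forces a $G$-fixed object in its base groupoid, whereas projectively cofibrant objects have free $G$-action on base objects.
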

\begin{proof}
  By the fact that the monoidal model category $\Actions{G}\big( \mathrm{Loc}_{\GroundField}^{\Groupoids} \big)$ is cofibrantly generated (Thm. \ref{ExternalMonoidalModelStructureOnLocalSystemsOverOneTypes}) this is the statement of \cite[Thm. 3.1(1)(2) \& Rem. 3.2]{SchwedeShipley00}.
\end{proof}

\begin{example}[Parameterized dg-modules over homotopy 1-types]
  \label{ParameterizedDGModulesOverHomotopyOneTypes}
  A commutative monoid in chain complexes
  \begin{equation}
    \label{MonoidInChainComplexes}
    \mathscr{R}
    \,\in\,
    \mathrm{Mon}\big(
      \mathrm{Ch}_{\GroundField}
    \big)
    \longhookrightarrow
    \mathrm{Mon}
    \big(
      \mathrm{Loc}_{\GroundField}^\Groupoids
    \big)
  \end{equation} 
  is a graded-commutative dg-algebra over $\GroundField$, and its modules in $\mathrm{Loc}_{\GroundField}^{\Groupoids}$ are local systems of dg-$\mathscr{R}$-modules:
  \begin{equation}
    \label{ModelStructureOnDGModules}
    \mathrm{Loc}
      _{\scalebox{.7}{$\mathscr{R}$}}
      ^{\Groupoids}
    \;\;
    \simeq
    \;\;
    \Modules{
      \scalebox{.7}{$\mathscr{R}_\ast$}
    }\big(
     \mathrm{Loc}_{\GroundField}^{\Groupoids}
    \big)
    \,.
  \end{equation}
  Since the underlying object $\mathscr{R}_\ast$ in $\mathrm{Loc}_{\GroundField}^{\Groupoids}$ is always cofibrant (because its base space 
  is the point and since every object of $\mathrm{Ch}_{\GroundField}$ is cofibant, by Thm. \ref{ModelCategoryOfSimplicialChainComplexes})
  the model structure from Cor. \ref{ModelStructureOnMonoidsInEquivariantLocalSystems} on \eqref{ModelStructureOnDGModules} exists and 
  presents the homotopy theory of $H\mathscr{R}$-module spectra \eqref{ChainComplexesAsModuleSpectra} parameterized over homotopy 1-types.
\end{example}

\begin{example}[Equivariant parameterized dg-modules over equivariant parameterized 1-types]
  More generally, for $G \,\in\, \Groups(\Sets)$ acting on a monoid $\mathscr{R}$ \eqref{MonoidInChainComplexes} as in Ex. \ref{ParameterizedDGModulesOverHomotopyOneTypes}, the underlying equivariant object $G \acts \mathscr{R}_{\ast}$ is rarely cofibrant,
  but a useful cofibrant replacement is given by its tensor product with the normalized chain complex $N_\bullet(-)$ of the degree-wise 
  $\mathbb{R}$-linear span of the simplicial group $\mathbf{E}G \,\defneq\, G^{\times^{\bullet + 1}}$:
  \begin{equation}
    \label{EquivariantCofibrantReplacement}
    G \acts \;
    \big(
    N_\bullet \mathbb{R}[\mathbf{E}G]
    \,\otimes\,
    \mathscr{R}_\ast
    \big)
    \;\;\in\;\;
    \Actions{G}\big(
      \mathrm{Loc}_{\GroundField}^\Groupoids
    \big)_{\mathrm{cof}}
    \,.
  \end{equation}
  and via the Eilenberg-Zilber map on $N_\bullet$ this carries a compatible monoid structure
  \[
    G \acts \;
    \big(
    N_\bullet \mathbb{R}[\mathbf{E}G]
    \,\otimes\,
    \mathscr{R}_\ast
    \big)
    \;\;\in\;\;
    \mathrm{Mon}\Big( \!
    \Actions{G}\big(
      \mathrm{Loc}_{\GroundField}^\Groupoids
    \big)
    \!\! \Big)
    \,.
  \]
  Therefore, Cor. \ref{ModelStructureOnMonoidsInEquivariantLocalSystems} now provides a model structure on the corresponding
  category of modules, which presents the homotopy theory of $H\mathscr{R}$-modules spectra parameterized over 1-types with
  $G$-equivariant structure (both on the base space and compatibly on the spectra). 
\end{example}

Finally, as a simple but interesting special case of these general examples:

\begin{example}[The Real local systems]
  \label{TheRealLocalSystems}
  Consider the complex numbers, equipped with their  involution by complex conjugation, as a monoid in $\ZTwo$-equivariant 
  $\mathbb{R}$-chain complexes:
  \[
    \ZTwo \acts \; \ComplexNumbers
    \;\in\;
    \mathrm{Mon}\Big(
      \Actions{\ZTwo}\big(
        \mathrm{Loc}_{\GroundField}^{\Groupoids}
      \big)
   \!\! \Big)
    \,.
  \] 
  Its modules are the Real local systems subsuming, as their heart, the flat Real vector bundles \cite[p. 368]{Atiyah67}  (with a capital ``$R$'') 
  in the sense of Atiyah. We will discuss these in \cite{Reality} as good categorical models for finite-dimensional Hilbert spaces (see also the outlook \cref{DiscussionAndOutlook}).

  In this case, the cofibrant replacement \eqref{EquivariantCofibrantReplacement} has a simple explicit description, 
  thus giving rise to a decent model of the homotopy theory of Real local systems over homotopy 1-types.
\end{example}

\section{Applications and Outlook}
\label{DiscussionAndOutlook}

We close by highlighting some applications of our results to the field of mathematical quantum physics. 

\medskip

\noindent
{\bf $\infty$-Local systems and Realistic Topological Quantum Field Theory.}
The seminal understanding of the applicability of homotopy theory to  \emph{topological quantum field theory} has been so successful (since \cite{Lurie09TQGT}), that among all the abstract mathematical TQFTs considered now it may be easy to miss that much remains to be understood regarding the description of physically realistic TQFTs, for instance describing the experimentally observed \emph{fractional quantum Hall effect} (cf. the algebro-topological discussion in \cite{SS25-FQH}). In these realistic situations (arising notably in condensed matter theory), the \emph{quantum adiabatic theorem} implies (cf. \cite[p. 21]{TQP}) that Hilbert spaces of gapped quantum ground states form local systems over the space of external parameters of a quantum system. Moreover, these Hilbert spaces typically arise under a quantization procedure from $\infty$-local systems of chain complexes (jargon: \emph{hypergeometric construction of KZ-connections}, \cite{EFK98}), as made explicit in the main theorem of \cite{TQP}. Generally, such $\mathbb{C}$-linear $\infty$-local systems appear in the quantization of topological sectors of higher gauge theories ([\S 2.3.1]{SS25-Complete}), and are part of extended TQFTs in Lurie's sense by the results of \cite{Stefanich25}. 

Beyond the dynamics encoded by quantum field theory, also the \emph{quantum information theory} of realistic quantum states (such as notably their measurement processes) is controlled by the 6-functor motivic yoga of parameterized Hilbert spaces (\cite{QS}) and quantum circuits modeled on chain complexes of Hilbert spaces have been considered \cite{Zucchini25}.

This motivates the question for a formal quantum programming language which is a formal language for homotopy types the way HoTT is (\cite{Shulman21}) but enhanced to parameterized stable homotopy types. Based on our original suggestions along these lines \cite{Schreiber14}, such a \emph{Linear Homotopy Type Theory} (LHoTT)  has been developed \cite{RFL21}\cite{Riley22} and is now in need of formal ``semantics'' tying it to the traditional set-theoretic formulation of parameterized stable homotopy theory:

\medskip

\noindent
{\bf Candidate Categorical Semantics for a Fragment of LHoTT.}
Broadly speaking, higher categorical semantics for the homotopically-typed quantum programming language {\tt LHoTT} \cite{RFL21}\cite{Riley22}
ought to be given by $\infty$-categories of parameterized $R$-module spectra, regarded as 
equipped with the bireflective sub-$\infty$-category of plain homotopy types and as doubly closed monoidal with respect to (external-)cartesian 
and external $R$-tensor product. 
But available notions of formalization of this idea (for pointers see \cite[around (107)]{TQP})
must proceed through suitable model 1-categories which on the one hand present such
$\infty$-categories under simplicial localization while on the other hand compatibly providing ordinary 1-categorical semantics 
(the only kind of categorical semantics currently understood) for the type theory.

\medskip 
The model category $\mathbf{Loc}_{\mathbb{K}}$ constructed in Thm. \ref{GlobalModelStructure} and equipped with the homotopical external 
tensor product of Thm. \ref{ExternalTensorProductIsHomotopical}
provides candidate such semantics for the ``Motivic Yoga fragment'' of {\tt LHoTT} (cf. p. \pageref{ExternalTensorOnSimplicialLocalSystems} and \cite[Def. 2.17]{QS} --- essentially the 
content of \cite[\S 2.4]{Riley22} as envisioned in \cite[\S 3.2]{Schreiber14}) --- and (only) rudimentarily for the remaining classical
cartesian fragment (cf. Prop. \ref{ExternalInternalHomOfSimplicialLocalSystems}). The reason for (both of) these is the use of
the model of classical base types by $\mathrm{sSet}$-enriched groupoids instead of simplicial sets (cf. Rem. \ref{IssueOfFullExternalMonoidalStructure}): 
\begin{equation}
  \begin{tikzcd}
    \scalebox{.7}{  
      \color{gray}
      \begin{tabular}{l}
      Classical 
      \\
      homotopy types
      \\
      (Prop. \ref{DwyerKanModelStructures})
      \end{tabular}
    }
    &[-30pt]
    \mathrm{sSet}\mbox{-}\mathrm{Grpd}
    \ar[
      from=rr,
      shift left=14pt
    ]
    \ar[
      rr,
      hook
    ]
    \ar[
      from=rr,
      shift right=14pt
    ]
    \ar[
      rr,
      phantom,
      shift left=8pt,
      "{
        \scalebox{.7}{$\bot_{\mathrlap{\mathrm{Qu}}}$}
      }"
    ]
    \ar[
      rr,
      phantom,
      shift right=8pt,
      "{
        \scalebox{.7}{$\bot_{\mathrlap{\mathrm{Qu}}}$}
      }"
    ]
    &&
    \mathbf{Loc}_{\mathbb{K}}
    \ar[
        out=-38, 
        in=+38, 
        decorate,
        looseness=3.5,
        shift left=+2pt,
        "{ \natural }"{description}
    ]
    &[-20pt]
    \scalebox{.7}{
      \color{gray}
      \begin{tabular}{l}
        Classical modality on
        \\
        parameterized quantum
        \\
        homotopy types
      \end{tabular}
    }
  \end{tikzcd}
\end{equation}
This choice makes the theory of parameterized linear/quantum homotopy types modeled as simplicial local systems {\it over}
classical homotopy types flow  naturally via simplicial model category theory, but the model category $\mathrm{sSet}\mbox{-}\mathrm{Grpd}$ 
(in contrast to $\mathrm{sSet}$) is not itself cartesian monoidal 
model: only its  $\mathrm{Set}$-tensoring remains a Quillen bifunctor.
(In the Quillen equivalent model $\mathbf{Loc}^{\mathrm{sSet}}_{\mathbb{K}}$ 
\eqref{Loc} the situation is complementary:
Here the pushout-product axiom holds on 
the underlying base types, but now it fails for the linear components since the fundamental simplicial groupoid functor $\mathbf{G}(-)$ 
does not preserve products.)

\medskip 
On the other hand, discrete parameter base spaces are all that traditional quantum information theory has ever used so far (cf. Ex. \ref{ExternalInternalHomOverDiscreteSpaces}), so that the semantics 
for the homotopically-multiplicative \& rudimentarily-classical fragment of {\tt LHoTT} provided by $\mathbf{Loc}_{\mathbb{K}}$ is still a 
considerable homotopy-theoretic generalization of previously existing models of dependent linear types
(such as for the Proto-{\tt Quipper} language, cf. \cite{RiosSelinger18}\cite{FuKishidaSelinger20}).

\smallskip

Beyond that, the variant model structure from \cref{ExternalModulesOverOneTypes} (Thm. \ref{ExternalMonoidalModelStructureOnLocalSystemsOverOneTypes})
\begin{equation}
  \begin{tikzcd}
    \scalebox{.7}{  
      \color{gray}
      \begin{tabular}{l}
      Classical 
      \\
      homotopy 1-types
      \end{tabular}
    }
    &[-30pt]
    \Groupoids
    \ar[
      from=rr,
      shift left=14pt
    ]
    \ar[
      rr,
      hook
    ]
    \ar[
      from=rr,
      shift right=14pt
    ]
    \ar[
      rr,
      phantom,
      shift left=8pt,
      "{
        \scalebox{.7}{$\bot_{\mathrlap{\mathrm{Qu}}}$}
      }"
    ]
    \ar[
      rr,
      phantom,
      shift right=8pt,
      "{
        \scalebox{.7}{$\bot_{\mathrlap{\mathrm{Qu}}}$}
      }"
    ]
    &&
    \mathbf{Loc}
      ^\Groupoids
      _{\mathbb{K}}
    \ar[
        out=-38, 
        in=+38, 
        decorate,
        looseness=3.5,
        shift left=+2pt,
        "{ \natural }"{description}
    ]
    &[-20pt]
    \scalebox{.7}{
      \color{gray}
      \begin{tabular}{l}
        Classical modality on
        \\
        parameterized quantum
        \\
        homotopy types
        \\
        over classical 1-types
      \end{tabular}
    }
  \end{tikzcd}
\end{equation}
does make the external tensor product into a Quillen bifunctor, now at the cost of restricting the entire model to parameter 
spaces which are homotopy 1-types.
This model category $\mathrm{Loc}_{\GroundField}^\Groupoids$ may hence be thought of as a linear enhancement of the original 
``groupoid interpretation'' \cite{HofmannStreicher94}\cite{HofmannStreicher98}\cite{LewisBohnet17} of homotopy type theory.

\medskip 
While again less general than what one might hope for, notice that $\mathrm{Loc}^{\Groupoids}_{\mathbb{K}}$ provides sufficient 
infrastructure for interpreting the formalization of topological quantum gates described in \cite[Thm. 6.8]{TQP}: The parameter 
base spaces there are homotopy types of configurations spaces of points in the plane, which indeed are homotopy 1-types 
(equivalent to the delooping groupoids of braid groups). This groupoid model of parameterized quantum information is further discussed in \cite{EoS}.

\medskip

\noindent
{\bf Hilbert space structure.}
The linear structure of quantum data reflected in fiberwise stable homotopy types is actually only one of the characteristics of quantum data: The other half is what one might call the {\it metricity} structure, ultimately encoded in the Hermitian inner product on spaces of quantum states (Hilbert spaces). 
But the natural category-theoretic model for inner products (on finite-dimensional state spaces), namely by tensor self-duality, {\it fails} when applied naively in the category of complex vector spaces (or complex $\infty$-local systems) because here this yields complex bi-linear instead of the required Hermitian sesqui-linear pairings. This subtlety is ultimately the reason why the categorical quantum information community takes recourse to ``dagger-category''-structure (cf. \cite{StehouwerSteinebrunner23}), which however serves more to axiomatize the problem than to provide a reason for why to choose sesquilinear structure.

\smallskip 
With our final Corollary \ref{ModelStructureOnMonoidsInEquivariantLocalSystems} we get access to a wealth of variant models of local systems, by imposing module structure. In \cite{Reality} we explain that particularly the ``Real modules'' of Ex. \ref{TheRealLocalSystems} are relevant for quantum information theory: Internal to the resulting model category for Real $\infty$-local systems  
$\Modules{ \scalebox{.7}{$\ZTwo \acts \, \ComplexNumbers$} }\big( \Actions{\ZTwo}\big(\mathrm{Loc}_{\mathbb{R}}^{\Groupoids}\big) \! \big)$ from \eqref{ModelStructureOnModulesOnEquivariantLocalSystems}, Hilbert spaces (finite-dimensional) do exist as self-dual objects, and such that 
the operator (``dagger''-)adjoints are subsumed as data of what internally are just orthogonal maps between these. 

\smallskip 
This way, all the above statements about categorical semantics for quantum programming language may be enhanced from pure states with quantum gates between them to mixed states with quantum channels between them, along the lines of \cite[\S 2.5]{QS}. We discuss this further in \cite{Reality}.

\appendix

\section{Appendix: Some definitions and facts}

For reference, we record some basic facts from the literature
and highlight some immediate examples that we use in the main text.

\medskip

\noindent
{\bf Categories, groupoids and simplicial enrichment.} We use basic concepts from category theory (cf. \cite{MacLane97}) and enriched
category theory (cf. \cite{Kelly82}).

\begin{definition}[Categories and groupoids]
 \label{GroupoidsAndCategories}
 
With respect to any fixed Grothendieck universe $\mathfrak{U}$ of sets \cite[\S 3.2]{Schubert72} of which we assume at least two 
$\mathfrak{U} < \mathfrak{U}'$, cf. \cite[p. 4]{Levy18}\cite[p. 18]{Shulman08}:

 \begin{itemize}
 \item[{\bf (i)}] We write
 \begin{equation}
   \label{LocalizationAndCore}
   \begin{tikzcd}[column sep=large]
     \mathrm{Grpd}
     \ar[from=r, shift right=12pt, "{ \mathrm{Loc} }"{swap}]
     \ar[r, hook]
     \ar[from=r, shift left=12pt, "{ \mathrm{core} }"]
     \ar[r, phantom, shift left=6pt, "{ \scalebox{.7}{$\bot$} }"]
     \ar[r, phantom, shift right=6pt, "{ \scalebox{.7}{$\bot$} }"]
     &
     \mathrm{Cat}
   \end{tikzcd}
 \end{equation}
 for the full inclusion of the 1-category of $\mathfrak{U}$-small groupoids into the 1-category of $\mathfrak{U}$-small categories (cf. \cite[\S 3]{Schubert72}), with left adjoint $\mathrm{Loc}$ being the {\it localization}-construction that universally inverts all morphisms \cite[\S 1.5.4]{GabrielZisman67}.

(The $\mathfrak{U}'$-small categories are called {\it $\mathfrak{U}$-large}, whence $\mathrm{Cat}$ in this case is ``very large'' \cite[p. 18]{Shulman08}.

\item[{\bf (ii)}] 
More generally, we write
 \begin{equation}
   \label{sSetLocalizationAndCore}
   \begin{tikzcd}[column sep=large]
     \mathrm{sSet}\mbox{-}\mathrm{Grpd}
     \ar[from=r, shift right=12pt, "{ \mathrm{Loc} }"{swap}]
     \ar[r, hook]
     \ar[from=r, shift left=12pt, "{ \mathrm{core} }"]
     \ar[r, phantom, shift left=6pt, "{ \scalebox{.7}{$\bot$} }"]
     \ar[r, phantom, shift right=6pt, "{ \scalebox{.7}{$\bot$} }"]
     &
     \mathrm{sSet}\mbox{-}\mathrm{Cat}
   \end{tikzcd}
 \end{equation}
 for the categories of $\mathcal{V}$-enriched categories
 \cite{Kelly82} over \cite{DwyerKan80} the category $\mathcal{V} = \mathrm{sSet}$ of simplicial sets \cite[\S II]{GabrielZisman67} (review includes \cite{Riehl14})
 and for the enriched groupoids 
 \cite[\S 3]{EPR21}
 among these,
 traditionally regarded as ``simplicial groupoids'' with discrete simplicial sets of objects
 \cite[\S 5.5]{DwyerKan80}\cite{DwyerKan84}\cite[\S V.7]{GoerssJardine09}\cite[\S 9.3]{Jardine15}.
 Here the three functors in \eqref{sSetLocalizationAndCore} are degreewise those of \eqref{LocalizationAndCore}, cf. \cite[Def. 2.7]{MRZ23}.
 \end{itemize}
\end{definition}

\begin{proposition}[Cartesian closure of $\mathrm{sSet}$-enriched groupoids]
  \label{CartesianClosureOfSSetEnrichedGroupoids}
  Both $\mathrm{sSet}\mbox{-}\mathrm{Cat}$ and $\mathrm{sSet}\mbox{-}\mathrm{Grpd}$ are cartesian closed \cite[\S IV.6]{MacLane97}, with cartesian product given by forming enriched product categories \cite[\S 1.4]{Kelly82} and internal hom given by enriched functor categories \cite[\S 2.2]{Kelly82}.
\end{proposition}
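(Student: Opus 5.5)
The statement is that $\mathrm{sSet}\mbox{-}\mathrm{Cat}$ and $\mathrm{sSet}\mbox{-}\mathrm{Grpd}$ are cartesian closed. The plan is to first settle $\mathrm{sSet}\mbox{-}\mathrm{Cat}$ using Kelly's general enriched category theory, and then show that the full subcategory of groupoids is closed under both the product and the exponential.

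\textbf{Products.} The product $\mathbf{X}\times\mathbf{Y}$ has
\begin{itemize}
  \item objects $\mathrm{Obj}(\mathbf{X})\times\mathrm{Obj}(\mathbf{Y})$;
  \item hom-objects $\mathbf{X}(x,x')\times\mathbf{Y}(y,y')$;
  \item composition and identities taken componentwise.
\end{itemize}
This uses only that $\mathrm{sSet}$ is symmetric monoidal, here cartesian (cf. \cite[\S 1.4]{Kelly82}). The universal property is checked on objects and on each hom-object separately, which is straightforward.

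\textbf{Exponentials.} For the internal hom I take $\mathbf{Z}^{\mathbf{Y}}$ to be the simplicial category with
\begin{itemize}
  \item objects the $\mathrm{sSet}$-enriched functors $\mathbf{Y}\to\mathbf{Z}$;
  \item hom-objects the end $\int_{y}\mathbf{Z}(Fy,Gy)$, which is the object of enriched natural transformations.
\end{itemize}
This exists because $\mathrm{sSet}$ is complete and $\mathbf{Y}$ is small \cite[\S 2.2]{Kelly82}. The key step is the natural bijection
$$\mathrm{sSet}\mbox{-}\mathrm{Cat}(\mathbf{X}\times\mathbf{Y},\mathbf{Z})\simeq\mathrm{sSet}\mbox{-}\mathrm{Cat}(\mathbf{X},\mathbf{Z}^{\mathbf{Y}}).$$
I would construct it as follows. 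An enriched functor $H:\mathbf{X}\times\mathbf{Y}\to\mathbf{Z}$ restricts, for each $x$, to a functor $H(x,-)$. On homs it gives maps $\mathbf{X}(x,x')\times\mathbf{Y}(y,y')\to\mathbf{Z}(H(x,y),H(x',y'))$. Setting the $\mathbf{Y}$-argument to the identity vertex and currying yields maps $\mathbf{X}(x,x')\to\int_y\mathbf{Z}(H(x,y),H(x',y))$.

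The main obstacle is verifying that this correspondence is bijective. This is the standard "functors of two variables" argument \cite[\S 1.4, \S 2.3]{Kelly82}: an enriched bifunctor is the same as a family of functors that is separately functorial in each variable and satisfies the interchange law. Enriched naturality of the curried map is exactly that interchange law, and the dinaturality condition of the end encodes naturality in $y$. The point that makes this work, and where I would be careful, is that $\mathrm{sSet}$ is cartesian. The hom-object map out of $\mathbf{X}(x,x')\times\mathbf{Y}(y,y')$ is recovered from its two restrictions by composing in $\mathbf{Z}$. This uses the decomposition $(f,g)=(f,\mathrm{id})\circ(\mathrm{id},g)$, which holds at the level of simplices because products in $\mathrm{sSet}$ are computed degreewise on sets. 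So at each simplicial level one may reduce to the ordinary statement that $\mathrm{Cat}$ is cartesian closed \cite[\S IV.6]{MacLane97}.

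\textbf{Restricting to groupoids.} The product of two simplicial groupoids is again a groupoid, since inverses can be taken componentwise. For $\mathbf{Z}^{\mathbf{Y}}$, I need that when $\mathbf{Y}$ is a simplicial groupoid (in fact when $\mathbf{Z}$ is), every $k$-simplex of $\int_y\mathbf{Z}(Fy,Gy)$ is invertible. Such a $k$-simplex is a family $\eta_y\in\mathbf{Z}(Fy,Gy)_k$ satisfying the naturality equations. The componentwise inverses $\eta_y^{-1}$ exist in $\mathbf{Z}$ and satisfy the inverted naturality equations, so they define an element of $\int_y\mathbf{Z}(Gy,Fy)_k$. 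Hence $\mathbf{Z}^{\mathbf{Y}}$ is a groupoid whenever $\mathbf{Z}$ is.

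Since $\mathrm{sSet}\mbox{-}\mathrm{Grpd}\hookrightarrow\mathrm{sSet}\mbox{-}\mathrm{Cat}$ is full, closure under products and exponentials means the adjunction restricts. Therefore $\mathrm{sSet}\mbox{-}\mathrm{Grpd}$ is cartesian closed, with the same product and internal hom as $\mathrm{sSet}\mbox{-}\mathrm{Cat}$.
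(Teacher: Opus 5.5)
Your proposal is correct and takes the same route as the paper: the paper cites Kelly \S 2.3 for the cartesian closure of $\mathrm{sSet}\mbox{-}\mathrm{Cat}$ and says only that both constructions ``readily'' restrict to simplicial groupoids, and your componentwise-inverse argument for the end $\int_y \mathbf{Z}(Fy,Gy)$ (which needs only that $\mathbf{Z}$ is a groupoid) is exactly that omitted check. One minor caution if you make the levelwise reduction to $\mathrm{Cat}$ explicit: it is valid for verifying the currying bijection, but the exponential $\mathbf{Z}^{\mathbf{Y}}$ is not computed levelwise, because a $k$-simplex of the end must be natural against all $m$-simplices of $\mathbf{Y}$ via the simplicial operators $[m]\to[k]$, not just against the $k$-simplices.
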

\begin{proof}
  For $\mathrm{sSet}\mbox{-}\mathrm{Cat}$ this is the statement of \cite[\S 2.3]{Kelly82}. One readily checks that both constructions restrict to simplicial groupoids.
\end{proof}

\noindent
{\bf Pseudofunctors and the Grothendieck construction.}
Given a ``coherent system of categories and functors`` -- namely a pseudo-functorial diagram of categories, Def. \ref{Pseudofunctor} 
below -- the {\it Grothendieck construction} (Def. \ref{GrothendieckConstruction} below) is the natural way of merging this data into 
a single category whose morphisms subsume those of the individual categories but also transfers from one category to the other along one of the given functors.

\begin{definition}[Pseudofunctor {\cite[\S A.1]{Grothendieck60}, cf. \cite[Def. 3.10]{Vistoli05}}] 
  \label{Pseudofunctor}
  $\,$ \newline 
 \noindent {\bf (i)} For $\mathcal{B}$ a category, a {\it covariant pseudofunctor} to $\mathrm{Cat}$
  \begin{equation}
    \label{CovariantPseudofunctor}
    \begin{tikzcd}[row sep=-5pt, column sep=20pt]
      \mbox{\bf C}_{(-)}
      \;\colon\;
      &
      \mathcal{B}
      \ar[rr]
      &&
      \mathrm{Cat}
      \\
      &
      X_1 
      \ar[d, "{f}"]
      &\longmapsto&
      \mbox{\bf C}_{X_1}
      \ar[d, "{f_!}"]
      \\[+20pt]
      &
      X_2 
      &\longmapsto& 
      \mbox{\bf C}_{X_2}
    \end{tikzcd}
  \end{equation}
  is an assignment that sends 
  \begin{itemize} 
 \item  each object $B \in \mathrm{Obj}(\mathcal{B})$ to a category $\mathbf{C}_B$, 
 \item  each morphism $f\colon X_0 \to X_1$ to a functor $f_! \,\colon\,\mathbf{C}_{X_0} \to \mathbf{C}_{X_1}$, 
 \item  each pair of composable morphisms $X_0 \overset{f_{01}}{\to} X_1 \overset{f_{12}}{\to} X_2$ to a natural 
  isomorphism $(f_{12})_! \circ (f_{01})_! \Rightarrow (f_{12} \circ f_{01})_!$ 
  \begin{equation}
    \label{CompositionalCoherenceIsomorphism}
   \begin{tikzcd}
     & X_1
     \ar[
       dr, 
       "{ f_{12} }",
       "{\ }"{swap, pos=.1, name=s}
     ]
     \\
     X_0
     \ar[ur, "{f_{01}}"]
     \ar[
       rr, 
       "{ f_{02} }"{swap},
       "{\ }"{name=t}
      ]
     &&
     X_2
     \\
     \ar[from=s, to=t, shorten=3pt, equals]
   \end{tikzcd}
   \quad 
   \longmapsto
   \quad 
   \begin{tikzcd}
     & 
     \mathbf{C}_{X_1}
     \ar[
       dr, 
       "{ (f_{12})_! }",
       "{\ }"{swap, pos=.1, name=s}
     ]
     \\
     \mathbf{C}_{X_0}
     \ar[ur, "{ (f_{01})_! }"]
     \ar[
       rr, 
       "{ (f_{02})_! }"{swap},
       "{\ }"{name=t}
      ]
     &&
     \mathbf{C}_{X_2}
     \\
     \ar[
       from=s, 
       to=t, 
       shift left=10pt,
       shorten=3pt, 
       Rightarrow, 
       "{ \mu_{f_{01}, f_{12}} }"{swap},
       "{\sim}"{sloped, swap }
     ]
   \end{tikzcd}
  \end{equation}
 
  \item  and, finally, each 
  identity morphism $\mathrm{id}_X : X \to X$ to a natural isomorphism $(\mathrm{id}_X)_! \Rightarrow \mathrm{id}_{\mathbf{C}_X}$ 
  \[
    \begin{tikzcd}
      X
      \ar[rr, "{ \mathrm{id}_X }"]
      &&
      X
    \end{tikzcd}
    \;\;\;\;
    \mapsto
    \;\;\;\;
    \begin{tikzcd}[row sep=small]
      \mathbf{C}_X
      \ar[
        rr, 
        bend left=50, 
        "{ (\mathrm{id}_B)_! }",
        "{\ }"{name=s, swap}
      ]
      \ar[
        rr, 
        bend left=00, 
        "{\ }"{name=t},
        "{ \mathrm{id}_{\mathbf{C}_X} }"{swap}
      ]
      &&
      \mathbf{C}_X
      \ar[from=s, to=t, Rightarrow, "{\sim}"{sloped}]
    \end{tikzcd}    
  \]
  such that these natural isomorphisms satisfy evident associativity and unitality coherences.
\end{itemize} 
 \noindent {\bf (ii)}   Similarly, a contravariant pseudofunctor is such a pseudofunctor on the opposite category $\mathcal{B}^{\mathrm{op}}$.
  \begin{equation}
    \label{ContravariantPseudofunctor}
    \begin{tikzcd}[row sep=-5pt, column sep=20pt]
      \mbox{\bf C}_{(-)}
      \;\colon\;
      &
      \mathcal{B}^{\mathrm{op}}
      \ar[rr]
      &&
      \mathrm{Cat}
      \\
      &
      X_1 
      \ar[d, "{f}"]
      &\longmapsto&
      \mbox{\bf C}_{X_1}
      \ar[from=d, "{f^\ast}"]
      \\[+20pt]
      &
      X_2 
      &\longmapsto& 
      \mbox{\bf C}_{X_2}
    \end{tikzcd}
  \end{equation}
\end{definition}

\begin{definition}[Grothendieck construction {\cite[\S VI.8]{Grothendieck71}, cf.  \cite[\S 3.1.3]{Vistoli05}}]
  \label{GrothendieckConstruction}
 $\,$  

\begin{itemize}
\item[{\bf (i)}]  
The {\it Grothendieck construction} on a covariant pseudofunctor
$\mathbf{C}_{(-)} : \mathcal{B} \longrightarrow \mathrm{Cat}$ 
\eqref{CovariantPseudofunctor} 
is the category $\int_{X \in\mathcal{B}} \mathbf{C}_X$ whose 

  \begin{itemize}
    \item objects $\mathscr{V}_X$ are pairs $(X, \mathscr{V})$ with $X \in \mathrm{Obj}(\mathcal{B})$ and $\mathscr{V} \,\in\, \mathrm{Obj}(\mathbf{C}_{X})$,
    \item morphisms $\phi_f \,\colon\, \mathscr{V}_X \longrightarrow \mathscr{W}_{\mathrm{Y}}$ are pairs $(f,\phi)$ with $f : X \longrightarrow Y$ in $\mathcal{B}$ and 
    \fbox{$\phi \,:\, f_! \mathscr{V} \longrightarrow \mathscr{W}$ in $\mathbf{C}_{Y}$}\,,
  \end{itemize}
  hence the hom-sets of the 
  covariant Grothendieck construction
  are these dependent products:
  \begin{equation}
    \label{HomSetsOfCovariantGrothendieckConstruction}
    \Big(
    \int_{\mathcal{B}}
    \mathbf{C}
    \Big)
    \Big(
      \mathscr{V}_X
      ,\,
      \mathscr{W}_Y      
    \Big)
    \;\;:\defneq\;\;
    \big(
      f \in \mathcal{B}(X,Y)
    \big)
    \times
    \mathbf{C}_Y
    \big(
      f_! X
      ,\,
      Y
    \big).
  \end{equation}

  item[{\bf (ii)}]
  Dually, the  {\it Grothendieck construction} on a contra-variant pseudofunctor $\mathbf{C}_{(-)} : \mathcal{B}^{\mathrm{op}} \longrightarrow \mathrm{Cat}$
  \eqref{ContravariantPseudofunctor} is the category $\int_{X \in\mathcal{B}} \mathbf{C}_X$ whose 

  \begin{itemize}
    \item objects $\mathscr{V}_X$ are pairs $(X, \mathscr{V})$ with $X \in \mathrm{Obj}(\mathcal{B})$ and $\mathscr{V} \,\in\, \mathrm{Obj}(\mathbf{C}_{X})$,
    \item morphisms $\phi_f \,\colon\, \mathscr{V}_X \longrightarrow \mathscr{W}_{Y}$ are pairs $(f,\phi)$ with $f : X \longrightarrow Y$ in $\mathcal{B}$ 
    and \fbox{$\phi \,:\, \mathscr{V} \longrightarrow f^\ast \mathscr{W}$ in $\mathbf{C}_{X}$}\,,
  \end{itemize}
  hence the hom-sets of the 
  contravariant Grothendieck construction
  are these dependent products:
  \begin{equation}
    \label{HomSetsOfContravariantGrothendieckConstruction}
    \Big(
    \int_{\mathcal{B}}
    \mathbf{C}
    \Big)
    \Big(
      \mathscr{V}_X
      ,\,
      \mathscr{W}_Y      
    \Big)
    \;\;:\defneq\;\;
    \big(
      f \in \mathcal{B}(X,Y)
    \big)
    \times
    \mathbf{C}_Y
    \big(
      X
      ,\,
      f^\ast Y
    \big)
    \,.
  \end{equation}

\item[{\bf (iii)}] 
Finally, composition of morphisms 
$\!\!
  \begin{tikzcd}[sep=15pt]
    \mathscr{V}_X
    \ar[r, "{ \phi_f }"]
    &
    \mathscr{W}_{Y}
    \ar[r, "{ \psi_g }"]
    &
    \mathscr{R}_{Z}
  \end{tikzcd}
\!\!$
in the Grothendieck construction is defined by using the  pseudo-functoriality of $\mathbf{C}_{(-)}$ 
to coherently push (or pull) morphisms into the codomain or domain category:
\[
  \psi_g \circ \phi_f
  \;:=\;
  \big( 
    \psi 
      \,\circ\, 
    g_!(\phi) 
      \,\circ\,
    \mu_{f,g}(\mathscr{V})
  \big)_{g \circ f}
  \hspace{1cm}
  \mbox{or}
  \hspace{1cm}
  \psi_g \circ \phi_f
  \;:=\;
  \big(
    \mu_{f,g}(\mathscr{R})
      \,\circ\,
    g^\ast(\psi) 
      \,\circ\, 
    \phi 
  \big)_{g \circ f}
  \,.
\]
Here one is using the coherence isomorphisms \eqref{CompositionalCoherenceIsomorphism} 
to adjust for the identification of composite functors:
\[
  \begin{tikzcd}[sep=20pt]
    (g \,\circ\, f )_!(\mathscr{V})
    \ar[
      r, 
      "{ \mu_{f,g} }"{yshift=1pt}, 
      "{\sim}"{swap}
    ]
    &
    g_!\big(f_! (\mathscr{V}) \big)
    \ar[
      r,
      "{ g_!(\phi) }"
    ]
    &
    g_!\big( \mathscr{W} \big)
    \ar[r, "{ \psi }"]
    &
    \mathscr{R}
  \end{tikzcd}
  \hspace{.4cm}
  \mbox{or}
  \hspace{.4cm}
  \begin{tikzcd}[sep=20pt]
    \mathscr{V}
    \ar[
      r,
      "{ \phi }"
    ]
    &
    f^\ast\big( \mathscr{W} \big)
    \ar[
      r, 
      "{ f^\ast(\psi) }"
    ]
    &
    f^\ast
    \big(
      g^\ast(
        \mathscr{R}
      )
    \big)
    \ar[
      r,
      "{
        \mu_{f,g}
      }"{yshift=1pt}
    ]
    &
    (g \,\circ\, f)^\ast
    \mathscr{R}\;.
  \end{tikzcd}
\]
\end{itemize}
\end{definition}
\begin{remark}[Grothendieck fibration]
A key aspect of the Grothendieck construction is that it is a {\it fibered category} over the original diagram shape, 
and as such an equivalent incarnation of the pseudo-functor that induced it. While important, here we do not need this 
aspect and will regard the Grothendieck construction as a plain category, this being the domain category of 
the corresponding Gorthendieck fibration.
\end{remark}

\begin{example}[Categories of indexed sets of objects {\cite[\S 3]{Benabou85}}, Free coproduct completion {\cite[\S 2]{HT95}}]
\label{CategoriesOfIndexedSetsOfObjects}
$\,$
\newline
  For $\mathcal{C}$ any category,  there is the contravariant pseudofunctor (Def. \ref{Pseudofunctor}) on $\mathrm{Set}$ 
  which to a set $S$ assigns the $S$-fold product category of $\mathcal{C}$ with itself:
  \begin{equation}
    \label{PseudofunctorOfProductCategories}
    \hspace{-1cm}
    \mathcal{C} \,\in\, \mathrm{Cat}
    \hspace{1cm}
      \vdash
    \hspace{1cm}
    \begin{tikzcd}[row sep=-6pt, column sep=small]
      \mathrm{Set}^{\mathrm{op}}
      \ar[rr]
      &&
      \mathrm{Cat}
      \\[3pt]
      S
      \ar[
        dd,
        "{ f }"
      ]
      &\longmapsto&
      \mathrm{Func}(S,\,\mathcal{C})
      \ar[
        from=dd,
        "{ f^\ast }"{swap}
      ]
      \ar[r, phantom, "{ \defneq }"]
      &
      \mathcal{C}^S
      \;\simeq\;
      \underset{s \in S}{\prod}
      \mathcal{C}
      \\[+20pt]
      \\
      T &\longmapsto&
      \mathrm{Func}(T,\,\mathcal{C})
      \ar[r, phantom, "{ \defneq }"]
      &
      \mathcal{C}^T
      \;\simeq\;
      \underset{t \in T}{\prod}
      \mathcal{C}
      \,.
    \end{tikzcd}
  \end{equation}
  equivalently, the 
  functor category into $\mathcal{C}$ out of the discrete category on $S$:
  \[
    \begin{tikzcd}[row sep=-4pt, column sep=small]
    \mathrm{Func}(
      S
      ,\,
      \mathcal{C}
    )
    \ar[rr, "{ \sim }"]
    &&
    \underset{s \in S}{\prod}
    \mathcal{C}
    \\
    (
      s 
      \,\mapsto\,
      \mathscr{V}_s
    )
    &\longmapsto&
    (
      \mathscr{V}_s
    )_{s \in S}
    \,,
   \end{tikzcd}
  \]
  and whose base change functors are given by precomposition with, hence re-indexing by, the given map of sets:
  \[
  \hspace{-1cm} 
    f \,:\, S \longrightarrow T
    \hspace{1cm}
      \vdash
    \hspace{1cm}
    \begin{tikzcd}[row sep=-4pt, column sep=small]
      \mathrm{Func}(S,\,\mathcal{C})
      \ar[
        from=rr,
        "{ f^\ast }"{swap}
      ]
      &&
      \mathrm{Func}(T,\,\mathcal{C})
      \\
      \big(
        \mathscr{V}_{f(s)}
      \big)_{s \in S}
      &\longmapsfrom&
      (
        \mathscr{V}_{t}
      )_{t \in T}
      \,.
    \end{tikzcd}
  \]
  Accordingly, the Grothendieck construction (Def. \ref{GrothendieckConstruction}) on this pseudofunctor,

  \noindent
  \fbox{$\underset{S \in \mathrm{Set}}{\int} \!  \mathcal{C}^S$} has the following description:
  \begin{itemize}
    \item objects $\mathscr{V}_{\!S}$ are dependent pairs consisting of a set $S \in \mathrm{Set}$ and an $S$-tuple 
    $\big( \mathscr{V}_s \big)_{s \in S}$ of objects in $\mathcal{C}$,

    \item morphisms $\phi_f \,:\, \mathscr{V}_S \longrightarrow \mathscr{W}_T$ are $S$-tuples 
    $
      \big(
        \phi_s
        \,:\,
        \mathscr{V}_s
        \xrightarrow{\;\;}
        \mathscr{W}_{f(s)}
      \big)_{s \in S}
    $
    of morphisms in $\mathcal{C}$.
  \end{itemize}
  Independently of whether or how $\mathcal{C}$ has co-products,  this category has set-indexed coproducts
  ${\coprod}_i
    \mathscr{V}(i)_{S_i}$ with underlying set $\coprod_i S_i$ and components
  $\big({\coprod}_i
    \mathscr{V}(i)_{S_i}\big)_{s_j} \,=\, \mathscr{V}(j)_{s_j}$ for $s_j \in S_j$.
  \medskip
  
  But if the
  category $\mathcal{C}$ is {\it extensive}, in that it already has coproducts itself and the coproduct-functors between (products of)
  slice categories are equivalences
  \[
    S \,\in\, \mathrm{Set}
    \hspace{1cm}
      \vdash
    \hspace{1cm}
    \begin{tikzcd}[row sep=-4pt, column sep=small]
      \underset{s \in S}{\prod}
      \mathcal{C}_{/X_s}
      \ar[
        rr,
        "{ \sim }"
      ]
      &&
      \mathcal{C}_{/ \coprod_s X_s }
      \\
      \left(\!\!
      \begin{array}{c}
        E_s
        \\
        \downarrow
        \\
        X_s
      \end{array}
     \!\! \right)_{\!\!\! s \in S}
      &\longmapsto&
    \left(\!\!
      \begin{array}{c}
        \coprod_{\, s}
        E_s
        \\
        \downarrow
        \\
        \coprod_{\, s} X_s
      \end{array}
      \!\!\right)
    \end{tikzcd}
  \]
  then the construction yields the category of bundles in $\mathcal{C}$ over sets, the latter understood via the unique coproduct-preserving 
  inclusion $\iota_{\mathrm{Set}} \,\colon\, \mathrm{Set} \hookrightarrow \mathcal{C}$, hence the comma category $(\mathrm{id}_{\mathcal{C}},\,\iota_{\mathrm{Set}})$:
\[
  \mbox{$\mathcal{C}$ extensive}
  \hspace{1cm}
    \vdash
  \hspace{1cm}
  \underset{S \in \mathrm{Set}}{\int}
  \,
  \underset{s \in S}{\prod}
  \,
  \mathcal{C}
  \;\;\;
    \simeq
  \;\;\;
  (\mathrm{id}_{\mathcal{C}},\,\iota_{\mathrm{Set}})  
  \mathrlap{\,,}
\]
whose morphisms $\phi_f \,:\, X_S \longrightarrow Y_T$ are commuting diagrams in $\mathcal{C}$ of this form:
\[
  \begin{tikzcd}[row sep=small]
    \underset{s \in S}{\coprod}
    X_s
    \ar[rr, "\phi"]
    \ar[d]
    &&
    \underset{t \in T}{\coprod}
    Y_t
    \ar[d]
    \\
    S 
    \ar[rr, "{f}"]
      &&
    T
    \mathrlap{\,.}
  \end{tikzcd}
\]
Conversely, if $\mathcal{C}$ is not extensive, then we may understand $\int_{S \in \mathrm{Set}} \mathcal{C}^S$ as the stand-in for 
the would-be category of ``$\mathcal{C}$-fiber bundles'' over sets.
\end{example}
\begin{proposition}[{\cite[Lem. 4.2]{CarboniVitale98}}]
  \label{CategoriesBeingCoproductCompletionOfTheirConnectedObjects}
  A category $\mathcal{C}$ with all set-indexed coproducts each of whose objects is a coproduct of connected objects
  is the free coproduct completion (Ex. \ref{CategoriesOfIndexedSetsOfObjects}) of its full subcategory of connected objects 
  (i.e., of those objects $X \in \mathcal{C}$ for which $\mathcal{C}(X,-) : \mathcal{C} \to \mathcal{C}$ preserves coproducts).
\end{proposition}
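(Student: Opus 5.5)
The plan is to construct the equivalence explicitly in both directions and check that it is essentially surjective and fully faithful. Write $\mathcal{C}_0 \subset \mathcal{C}$ for the full subcategory of connected objects, and consider the comparison functor
\[
  \Phi \,:\, \underset{S \in \mathrm{Set}}{\int} \mathcal{C}_0^S \longrightarrow \mathcal{C},
  \qquad
  \big(X_s\big)_{s \in S} \longmapsto \underset{s \in S}{\coprod} X_s .
\]
On a morphism $(\phi_s : X_s \to Y_{f(s)})_{s \in S}$ covering $f : S \to T$, the functor sends it to the map out of the coproduct whose $s$-th component is $\phi_s$ followed by the coprojection $q_{f(s)}$. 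Functoriality of $\Phi$ is immediate from the universal property of coproducts.

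Essential surjectivity holds directly by hypothesis: every object of $\mathcal{C}$ is isomorphic to $\coprod_{s} X_s$ with each $X_s$ connected.

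The substantive step is full faithfulness. I will compute
\[
  \mathcal{C}\big(\textstyle\coprod_s X_s,\, \coprod_t Y_t\big)
  \;\simeq\; \textstyle\prod_s \mathcal{C}\big(X_s,\, \coprod_t Y_t\big)
  \;\simeq\; \textstyle\prod_s \coprod_t \mathcal{C}(X_s, Y_t).
\]
The first isomorphism is the universal property of the coproduct. The second uses connectedness of $X_s$, i.e.\ that $\mathcal{C}(X_s,-)$ preserves coproducts, and the canonical comparison map here is exactly postcomposition with the coprojections. An element of the right-hand side is a function $f : S \to T$ together with maps $\phi_s : X_s \to Y_{f(s)}$. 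By the description of Ex. \ref{CategoriesOfIndexedSetsOfObjects}, this is precisely the hom-set $\big(\int_{S} \mathcal{C}_0^S\big)\big(X_S, Y_T\big)$. It remains to check that the composite bijection coincides with the action of $\Phi$ on morphisms; this follows by unwinding, since both sides are described by the components $q_{f(s)} \circ \phi_s$.

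I expect the main obstacle to be bookkeeping rather than content. First, one must note that connected objects are in particular non-initial: if $X_s$ were initial, then $\mathcal{C}(X_s, \coprod_\varnothing) = \ast \neq \varnothing$, contradicting preservation of the empty coproduct. This guarantees that $f$ is uniquely determined. Second, one should confirm that the free coproduct completion in Ex. \ref{CategoriesOfIndexedSetsOfObjects} has coproducts computed by disjoint union, so that the equivalence is compatible with coproducts and hence exhibits $\mathcal{C}$ as the free coproduct completion of $\mathcal{C}_0$.
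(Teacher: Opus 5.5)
Your proposal is correct and follows essentially the same route as the paper: essential surjectivity holds by hypothesis, and full faithfulness comes from the same chain $\mathcal{C}\big(\coprod_s X_s, \coprod_t Y_t\big) \simeq \prod_s \coprod_t \mathcal{C}(X_s, Y_t) \simeq \coprod_{f : S \to T} \prod_s \mathcal{C}(X_s, Y_{f(s)})$. Your extra checks are harmless refinements of steps the paper leaves implicit. These are that the bijection is the action of $\Phi$ on morphisms, and the non-initiality remark, which is redundant because uniqueness of $f$ already follows from injectivity of the coproduct-preservation map.
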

\begin{proof}
  Since, by assumption, every object is already presented by an indexed set of connected objects, it remains to see that also the morphisms 
  $\big(\coprod_{\, s} X_s\big) \longrightarrow \big(\coprod_{\, t} Y_t\big)$ are in bijection to indexed sets of morphisms of connected 
  objects. This follows by
  $$
    \def\arraystretch{1.4}
    \begin{array}{ll}
      \mathcal{C}
      \big(
        \coprod_{\, s} X_s
        ,\,
        \coprod_{\, t} Y_t
      \big)
      &      \;\simeq\;
      \prod_{\, s} 
      \mathcal{C}
      \big(
        X_s
        ,\,
        \coprod_{\, t} Y_t
      \big)
      \\
    &  \;\simeq\;
      \underset{s \in S}{\prod} 
      \;
      \underset{t_s \in T}{\coprod}
      \mathcal{C}
      \big(
        X_s
        ,\,
        Y_{t_s}
      \big)
      \\
  &    \;\simeq\;
      \underset{f : S \to T}{\coprod}
      \;
      \underset{s \in S}{\prod}
      \mathcal{C}
      \big(
        X_s
        ,\,
        Y_{f(s)}
      \big)\,,
    \end{array}
  $$
  where the first bijection is by general properties of Hom-functors and the second is by the assumption that all $X_s$ are connected.
\end{proof}

\begin{example}[Induced adjunctions between Grothendieck constructions]
  \label{InducedAdjunctionBetweenGrothendieckConstructions}
  Given a contravariant pseudofunctor and a left adjoint functor into its domain
  \[
    \begin{tikzcd}
      \mathcal{C}
      \ar[
        rr,
        shift left=5pt,
        "{
          L
        }"
      ]
      \ar[
        from=rr,
        shift left=5pt,
        "{
          R
        }"
      ]
      \ar[
        rr,
        phantom,
        "{ \scalebox{.7}{$\bot$} }"
      ]
      &&
      \mathcal{B}
    \end{tikzcd}
    \hspace{1cm}
    \begin{tikzcd}
      \mathcal{B}^{\mathrm{op}}
      \ar[r, "{ \mathbf{C}_{(-)} }"]
      &
      \mathrm{Cat}
    \end{tikzcd}
  \]
  there is an induced adjunction between the Grothendieck constructions on $\mathbf{C}_{(-)}$ and on $\mathbf{C}_{L(-)}$, covering the given adjunction:
  \begin{equation}
    \label{}
    \begin{tikzcd}
    \Big(\,
    \underset{
      c \in \mathcal{C} 
    }{\int}  
    \mathbf{C}_{L(c)}
    \Big)
    \ar[
      rr,
      shift left=5pt,
      "{ \hat L }"
    ]
    \ar[
      from=rr,
      shift left=5pt,
      "{ \hat R }"
    ]
    \ar[
      rr,
      phantom,
      "{ \scalebox{.7}{$\bot$} }"
    ]
    \ar[d]
    &&
    \Big(\,
    \underset{
      b \in \mathcal{B} 
    }{\int}  
    \mathbf{C}_{b}
    \Big)
    \ar[d]
    \\
    \mathcal{C}
    \ar[
      rr,
      shift left=5pt,
      "{
        L
      }"
    ]
    \ar[
      from=rr,
      shift left=5pt,
      "{
        R
      }"
    ]
    \ar[
      rr,
      phantom,
      "{ \scalebox{.7}{$\bot$} }"
    ]
    &&
    \mathcal{B}
    \end{tikzcd}
  \end{equation}
  where on components in $\mathbf{C}_{(-)}$ the functor $\widehat{L}$ is the identity 
  while $\widehat{R}$ is pullback along the underlying adjunction counit $\epsilon^{L \dashv R}  \,:\, L \circ R \to \mathrm{id}$:
  \begin{equation}
    \label{LiftOfAdjunctionToGrothendieckConstruction}
    \begin{tikzcd}
      \mathscr{V}_{c}
      \ar[d, "{ \phi_f }"]
      \\
      \mathscr{V}'_{c'}
    \end{tikzcd}
    \;\;\;
    \overset{\widehat{L}}{\longmapsto}
    \;\;\;
    \begin{tikzcd}
      \mathscr{V}_{L(c)}
      \ar[d, "{ \phi_{L(f)} }"]
      \\
      \mathscr{V}'_{L(c')}
    \end{tikzcd}
    \hspace{1cm}
    \mbox{and}
    \hspace{1cm}
    \begin{tikzcd}
      \mathscr{V}_{b}
      \ar[d, "{ \phi_f }"]
      \\
      \mathscr{V}'_{b'}
    \end{tikzcd}
    \;\;\;
    \overset{\widehat{R}}{\longmapsto}
    \;\;\;
    \begin{tikzcd}
      \mathscr{V}_{R(b)}
      \ar[
        d, 
        "{ 
          (\epsilon^{L \dashv R}_{b})^\ast
          \phi_{L(f)} 
        }"
      ]
      \\
      \mathscr{V}'_{R(b')}
    \end{tikzcd}
  \end{equation}
  The counit of this adjunction is given by the identity component map covering the underlying counit:
  \begin{equation}
    \label{CounitOfLiftOfAdjunctionToGrothendieckConstruction}
    \epsilon^{\widehat{L}\dashv \, \widehat{R}}_{\mathscr{V}_{\mathbf{x}}}
    \;:\;
    \widehat{L}\, \widehat{R}
    \big(
      \mathscr{V}_{b}
    \big)
    \,=\,
    \big(
      \epsilon^{L \dashv \, R}_{L R(B)}
      \mathscr{v}
    \big)_{}.
  \end{equation}
\end{example}

\begin{proposition}[Colimits in a Grothendieck construction {\cite[\S 3.2, Thm. 2]{TBG91}\cite[Prop. 2.4.4]{HarpazPrasma15}}] 
\label{ColimitsInAGrothendieckConstruction}
$\,$

\begin{itemize}
 \item[{\bf (i)}] 
 The Grothendieck construction $\int_{X \in \mathcal{B}} \mathbf{C}_X$ (Def. \ref{GrothendieckConstruction})
 on a covariant pseudofunctor $\mathbf{C}_{(-)} : \mathcal{B} \xrightarrow{\phantom{-}} \mathrm{Cat}$ \eqref{Pseudofunctor} 
 is cocomplete as soon as the base category $\mathcal{B}$ 
  as well as all the fiber categories $\mathbf{C}_X$, $X \in \mathcal{B}$ are cocomplete.
  In this case the colimit of a small diagram
  \[
    \begin{tikzcd}[sep=0pt]
      I
      \ar[rr]
      && 
      \int_{X \in \mathcal{B}}\mathbf{C}_X
      \\
      i &\mapsto& \mathscr{V}(i)_{X_i}
    \end{tikzcd}
  \]
  is given by
  \begin{equation}
    \label{FormulaForColimitInGrothendieckConstruction}
    \underset{\underset{i \in I}{\longrightarrow}}{\lim}
    \big(
      \mathscr{V}(i)_{X_i}
    \big)
    \;\;\;\;
    \simeq
    \;\;\;\;
    \Big(
     \underset{\longrightarrow}{\mathrm{lim}}_i
    \big(
      q(i)_! \mathscr{V}(i)
    \big)
    \Big)_{
      \underset{\longrightarrow}{\mathrm{lim}}_i  
      X_i
    }
    \;\;\;\;\;
    \in
    \;
    \int_{X \in \mathcal{B}}
    \mathbf{C}_X
    \,,
  \end{equation}
  where
  \[
    i\,\in\, I
    \hspace{1cm}
    \vdash
    \hspace{1cm}
    q(i)
    \;:\;
    X_i 
    \xrightarrow{\phantom{--}}
    \underset{\longrightarrow}{\lim}_i X_i
    \;\;\;
    \in
    \;
    \mathcal{B}
  \]
  denote the coprojections into the underlying colimit in $\mathcal{B}$.
  
  \item[{\bf (ii)}] 
  The analogous dual statement holds for limits.
  \end{itemize}
\end{proposition}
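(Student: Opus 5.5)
The plan is to reduce the statement to the fiberwise (cocomplete) case plus a universal-property check. The statement concerns colimits in the covariant Grothendieck construction $\int_{X\in\mathcal{B}}\mathbf{C}_X$. Given a diagram $i\mapsto \mathscr{V}(i)_{X_i}$, I first form the colimit $X:=\underset{\longrightarrow}{\lim}_i X_i$ in $\mathcal{B}$, with coprojections $q(i)$. Then I push each $\mathscr{V}(i)$ forward to $\mathbf{C}_X$. Each morphism $\alpha:i\to j$ in $I$ has component $\phi_\alpha : (f_\alpha)_!\mathscr{V}(i)\to\mathscr{V}(j)$. This yields a diagram in $\mathbf{C}_X$ with maps
\[
  q(i)_!\mathscr{V}(i)\;\simeq\;q(j)_!(f_\alpha)_!\mathscr{V}(i)\xrightarrow{\;q(j)_!\phi_\alpha\;}q(j)_!\mathscr{V}(j),
\]
using the coherence isomorphisms $\mu$ from \eqref{CompositionalCoherenceIsomorphism} together with $q(j)\circ f_\alpha=q(i)$. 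The associativity and unit coherences of the pseudofunctor, together with functoriality of the diagram in $\int\mathbf{C}$, guarantee that this is a genuine functor $I\to\mathbf{C}_X$; I expect this verification to be routine but index-heavy. Its colimit exists since $\mathbf{C}_X$ is cocomplete. The cocone $q(i)_!\mathscr{V}(i)\to\underset{\longrightarrow}{\lim}\,q(\cdot)_!\mathscr{V}(\cdot)$, read as morphisms over $q(i)$, is the candidate colimiting cocone.

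The main step is the universal property. Take a cocone $\psi(i)_{g_i}:\mathscr{V}(i)_{X_i}\to\mathscr{W}_Y$. Its base parts $g_i$ form a cocone in $\mathcal{B}$, so they factor uniquely through some $g:X\to Y$ with $g\circ q(i)=g_i$. A morphism $\chi_g$ out of the candidate colimit is a map $g_!\,\underset{\longrightarrow}{\lim}\,q(\cdot)_!\mathscr{V}(\cdot)\to\mathscr{W}$. Here I need that $g_!$ preserves colimits. The stated hypotheses do not supply this directly, and this is the step I expect to be the main obstacle. Both references (and all applications in the paper, where $\mathbf{f}_!$ is a left adjoint by \eqref{BaseChangeAdjointTripleOnSimplicialLocalSystems}) work with bifibrations, i.e. each $f_!$ has a right adjoint $f^\ast$. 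So I would make this standing assumption explicit, or equivalently argue in the opposite (contravariant) fibration. In that setting, maps $g_!\,\mathrm{colim}\to\mathscr{W}$ correspond to maps $\mathrm{colim}\to g^\ast\mathscr{W}$ in $\mathbf{C}_X$. These in turn correspond to compatible families $q(i)_!\mathscr{V}(i)\to g^\ast\mathscr{W}$, equivalently $g_!q(i)_!\mathscr{V}(i)\simeq (g_i)_!\mathscr{V}(i)\to\mathscr{W}$, which are exactly the components $\psi(i)$. Compatibility with the transition maps corresponds to the cocone condition in $\int\mathbf{C}$, via the composition formula of Def.~\ref{GrothendieckConstruction}(iii).

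Combining the unique base factorization with this unique fiber factorization shows that the candidate cocone has the universal property, which establishes \eqref{FormulaForColimitInGrothendieckConstruction}. Part (ii) then follows dually: limits are formed over $\underset{\longleftarrow}{\lim}X_i$, with projections $p(i)$, by taking $\lim_i p(i)^\ast\mathscr{V}(i)$ in the contravariant picture. The same argument applies with the roles of $f_!$ and $f^\ast$ exchanged, now using that each $f^\ast$ is a right adjoint and hence preserves limits.
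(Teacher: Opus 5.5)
Your argument is correct. It is the standard direct verification behind the results the paper cites; the paper itself gives no proof beyond those citations (Tarlecki--Burstall--Goguen, Harpaz--Prasma), so your write-up supplies what the paper outsources rather than competing with it.

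You are also right that an extra hypothesis is needed, and it is not merely a convenience: as literally stated, (i) is false. Here is a counterexample.
\begin{itemize}
\item[(a)] Take $\mathcal{B}=\{0\xrightarrow{f}1\}$, a cocomplete poset, with $\mathbf{C}_0=\mathbf{C}_1=\mathrm{Set}$ and $f_!$ constant on a singleton.
\item[(b)] Then morphisms $(0,S)\to(1,T)$ in $\int\mathbf{C}$ are the elements of $T$. So no object over $0$ admits a map to $(1,\varnothing)$.
\item[(c)] No object over $1$ admits a map to anything over $0$, since there is no base arrow $1\to 0$.
\item[(d)] Hence $\int\mathbf{C}$ has no initial object, although the base and both fibers are cocomplete.
\end{itemize}

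The minimal fix is that each $f_!$ preserve small colimits. Your right-adjoint assumption is the convenient sufficient form of this. It is implicit in Harpaz--Prasma, whose pseudofunctors take values in $\mathrm{Cat}_{\mathrm{adj}}$. It also holds in every use in the paper, where $\mathbf{f}_!\dashv\mathbf{f}^\ast$ by Ex.~\ref{SystemsOfEnrichedFunctorCategories}.

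For (ii), the right adjoints $f^\ast$ are needed in any case, already to form $\lim_i p(i)^\ast\mathscr{V}(i)$, as you do.
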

\begin{example}[External cartesian product]
  \label{ExternalCartesianProduct}
  Given a contravariant pseudofunctor $\mathbf{C}_{(-)} : \mathcal{B}^{\mathrm{op}} \xrightarrow{\phantom{-}} \mathrm{Cat}$
  such that both $\mathcal{B}$ as well as all the $\mathbf{C}_{(-)}$ have Cartesian products, then its Gorthendieck 
  construction has cartesian products given by
  \begin{equation}
    \label{FormulaForExternalCartesianProduct}
    \mathscr{V}_{X}
    \times
    \mathscr{W}_{Y}
    \;\;
    \simeq
    \;\;
    \Big(\!
    \big( 
      (\mathrm{pr}_{X})^\ast \mathscr{V}
    \big)
    \times
    \big( 
      (\mathrm{pr}_{Y})^\ast \mathscr{W}
    \big)
    \!\Big)_{X \times Y}
    \,.
  \end{equation}
  More explicitly, the components of the external Cartesian product are
  \[
    \def\arraystretch{1.3}
    \begin{array}{lll}
      \big(
        \mathscr{V}_X \times \mathscr{W}_Y
      \big)_{(x,y)}
      &      \;\simeq\;
      \{(x,y)\}^\ast
      \Big(
        \big(
          (\mathrm{pr}_X)^\ast
          \mathscr{V}
        \big)
        \times
        \big(
          (\mathrm{pr}_Y)^\ast
          \mathscr{W}
        \big)
      \Big)
      \\
   &   \;\simeq\;
      \Big(
        \big(
          \{(x,y)\}^\ast
          (\mathrm{pr}_X)^\ast
          \mathscr{V}
        \big)
        \times
        \big(
          \{(x,y)\}^\ast
          (\mathrm{pr}_Y)^\ast
          \mathscr{W}
        \big)
      \Big)
      \\
    &  \;\simeq\;
      \big(
        \{x\}^\ast \mathscr{V}
      \big)
      \times
      \big(
        \{y\}^\ast \mathscr{W}
      \big)
      \\
  &    \;\simeq\;
      \mathscr{V}_{\!x}
      \times
      \mathscr{W}_{\!y}
    \end{array}
    \hspace{.7cm}
    \begin{tikzcd}
      \{x\}
      \ar[d]
      &
      \{(x,y)\}
      \ar[l, "{\sim}"{swap}]
      \ar[r, "{\sim}"]
      \ar[d, hook]
      &
      \{y\}
      \ar[d, hook]
      \\
      X 
      &
      X \times Y
      \ar[r, "{ \mathrm{pr}_Y }"]
      \ar[l, "{ \mathrm{pr}_X }"{swap}]
      &
      Y
      \mathrlap{\,.}
    \end{tikzcd}
  \]
\end{example}
This gives the following elementary fact, which is crucial in the main text:
\begin{proposition}[Free coproduct completion]
 If a category $\mathcal{C}$ has Cartesian products, then its free coproduct completion \textup{(Ex. \ref{CategoriesOfIndexedSetsOfObjectsWithCoproducts})} 
also has Cartesian products and those distribute over the coproducts.
\end{proposition}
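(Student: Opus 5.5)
We take the free coproduct completion of $\mathcal{C}$ to be the Grothendieck construction $\int_{S \in \mathrm{Set}} \mathcal{C}^S$ of Ex. \ref{CategoriesOfIndexedSetsOfObjects}. Both statements then become instances of the general facts just recorded. The base $\mathrm{Set}$ has Cartesian products, and each fiber $\mathcal{C}^S \simeq \prod_{s \in S}\mathcal{C}$ has Cartesian products computed componentwise. Reindexing $f^\ast$ is precomposition, so it preserves these products strictly. Hence Ex. \ref{ExternalCartesianProduct}, which is the limit half (ii) of Prop. \ref{ColimitsInAGrothendieckConstruction} specialized to binary products, gives
\[
  \mathscr{V}_S \times \mathscr{W}_T
  \;\simeq\;
  \big( (\mathscr{V}_s \times \mathscr{W}_t)_{(s,t)} \big)_{S \times T}.
\]
Only binary (and empty) products are needed, and those exist by hypothesis on $\mathcal{C}$. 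We do not need full completeness: the proof of Prop. \ref{ColimitsInAGrothendieckConstruction}(ii) uses only limits of the given shape, in the base and in the fibers. For the terminal object, if $\mathcal{C}$ has one, we take $(\ast)_{\{\ast\}}$. It is terminal because a morphism into it is a unique map of sets $S\to\{\ast\}$ together with unique component maps $\mathscr{V}_s\to\ast$.

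Second, we check directly that the formula above is a product, as a safeguard against the generality gap. By Ex. \ref{CategoriesOfIndexedSetsOfObjects}, a morphism $\mathscr{U}_R \to \mathscr{V}_S\times\mathscr{W}_T$ consists of a map $R\to S\times T$, i.e.\ a pair $(f,g)$, together with component maps $\mathscr{U}_r \to \mathscr{V}_{f(r)}\times\mathscr{W}_{g(r)}$. By the universal property in $\mathcal{C}$, these are exactly pairs of morphisms $\mathscr{U}_R\to\mathscr{V}_S$ and $\mathscr{U}_R\to\mathscr{W}_T$. This is a one-line check and confirms the formula.

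Third, we prove distributivity. Coproducts in $\int_S\mathcal{C}^S$ exist as described in Ex. \ref{CategoriesOfIndexedSetsOfObjects}: $\coprod_i \mathscr{W}(i)_{T_i}$ has index set $\coprod_i T_i$, and its component at $t\in T_j$ is $\mathscr{W}(j)_t$. The set $S\times\coprod_i T_i$ is naturally in bijection with $\coprod_i (S\times T_i)$, since $\mathrm{Set}$ is distributive. Under this bijection, the components of $\mathscr{V}_S\times\coprod_i\mathscr{W}(i)_{T_i}$ at $(s,t)$ with $t\in T_j$ are $\mathscr{V}_s\times\mathscr{W}(j)_t$. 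These are exactly the components of $\coprod_i(\mathscr{V}_S\times\mathscr{W}(i)_{T_i})$. The canonical comparison map
\[
  \coprod_i \big(\mathscr{V}_S\times\mathscr{W}(i)_{T_i}\big)
  \longrightarrow
  \mathscr{V}_S\times\coprod_i\mathscr{W}(i)_{T_i}
\]
is therefore the bijection of index sets together with identity components. Hence it is an isomorphism.

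The only point needing care is this last identification. We must check that the canonical comparison map, assembled from the coprojections and product projections, really has identity components. This follows by unwinding the composition rule of Def. \ref{GrothendieckConstruction}(iii) with strict reindexing. We expect this bookkeeping, rather than any conceptual step, to be the main obstacle.
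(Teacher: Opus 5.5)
Your proof is correct and is essentially the paper's own argument. The paper gives no separate proof: it presents this proposition as an immediate consequence of the external Cartesian product formula of Ex.~\ref{ExternalCartesianProduct}, applied to the indexed-families Grothendieck construction of Ex.~\ref{CategoriesOfIndexedSetsOfObjects}, with distributivity inherited from $\mathrm{Set}$ via the componentwise description of coproducts. Your direct checks spell out what the paper leaves implicit. These are the universal-property check, the check that reindexing preserves products (needed for Ex.~\ref{ExternalCartesianProduct} to apply), and the identification of the comparison map as a bijection of index sets with identity components.
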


\medskip

\noindent
{\bf The 2-category of categories with adjoint functors between them.}
We extract the gist of the discussion in \cite[p. 97-103]{MacLane97}.
\begin{definition}[Conjugate transformation of adjoints {\cite[p. 98]{MacLane97}}]
  \label{ConjugateTransformationOfAdjoints}
  Given a pair of pairs of adjoint functors between the same categories
  \[
    \begin{tikzcd}
      \mathcal{C}
      \ar[
        rr,
        shift left=5pt,
        "{ L_i }"
      ]
      \ar[
        from=rr,
        shift left=5pt,
        "{ R_i }"
      ]
      \ar[
        rr,
        phantom,
        "{ \scalebox{.7}{$\bot$} }"
      ]
      &&
      \mathcal{D}
    \end{tikzcd}
    \hspace{1cm}
    i \,\in\, \{1,2\}\,,
  \]
  then a {\it conjugate transformation} between them
  \[
    (\lambda ,\, \rho)
    \;:\;
    \begin{tikzcd}
    (L_1 \dashv R_1) 
    \ar[
      r,
      Rightarrow
    ]
    &
    (L_2 \dashv R_2)
    \end{tikzcd}
  \]
  is a pair of natural transformations of the form
  \[
    \lambda : L_1 \Rightarrow L_2
    ,\,
    \;\;\;
    \rho : R_2 \Rightarrow R_1
  \]
  such that they make the following square of natural transformations of hom-sets commute, 
  where the horizontal maps refer to the given hom-isomorphisms:
  \begin{equation}
    \label{ConjugacyOfTransformations}
    \begin{tikzcd}[row sep=small]
      \mathcal{C}\big(
        L_2(-)
        ,\,
        -
       \big)
     \ar[
       rr,
       "{ \sim }"
     ]
     \ar[
       d,
       "{
         \mathcal{C}\left(
           \lambda_{(-)} 
           ,\, 
           \mathrm{id}_{(-)}
        \right)
       }"{swap}
     ]
     &&
     \mathcal{D}\big(
       -
       ,\,
       R_2(-)
     \big)
     \ar[
       d,
       "{
         \mathcal{D}\left(
           \mathrm{id}_{(-)}
           ,\, 
           \rho_{(-)} 
        \right)
       }"
     ]
     \\[+10pt]
      \mathcal{C}\big(
        L_1(-)
        ,\,
        -
       \big)
     \ar[
       rr,
       "{ \sim }"
     ]
     &&
     \mathcal{D}\big(
       -
       ,\,
       R_1(-)
     \big)     
    \end{tikzcd}
  \end{equation}
  Such conjugate transformations compose via composition of their components $(\lambda, \rho)$, yielding a category of adjoint functors 
  with conjugate transformations between them, which we denote as follows:
  \begin{equation}
    \label{CategoryOfAdjointFunctorsWithConjugateTransformations}
    \mathcal{C},\, \mathcal{D}
    \,\in\,
    \mathrm{Cat}
    \;\;\;\;\;\;\;\;\;\;\;\;
    \vdash
    \;\;\;\;\;\;\;\;\;\;\;\;
    \mathrm{Cat}_{\mathrm{adj}}
    (\mathcal{C},\,\mathcal{D})
    \;\in\;
    \mathrm{Cat}
    \,.
  \end{equation}
\end{definition}

\begin{proposition}[Uniqueness of conjugate transformations {\cite[p. 98]{MacLane97}}]
  \label{UniquenessOfConjugateTransformations}

  Given $ L_i \dashv R_i \,:\, \mathcal{C} \rightleftarrows \mathcal{D}$ and $\lambda$
  in Def. \ref{ConjugateTransformationOfAdjoints}, there is a \emph{unique} $\rho$ that completes this data to a conjugate transformation.
  In other words, the forgetful functor from \eqref{CategoryOfAdjointFunctorsWithConjugateTransformations} to the functor category is 
  a fully faithful sub-category inclusion:
  \begin{equation}
    \label{FullEmbeddingOfCatAdhCD}
    \hspace{1.5cm} 
    \begin{tikzcd}[row sep=-4pt, column sep=small]
    \mathllap{
    \mathcal{C},\, \mathcal{D}
    \;\in\;
    \mathrm{Cat}
    \;\;\;\;\;\;\;\;\;\;\;\;
    \vdash
    \;\;\;\;\;\;\;\;\;\;\;\;    
    }
    \mathrm{Cat}_{\mathrm{adj}}(\mathcal{C},\,\mathcal{D})
    \ar[rr, hook]
    &&
    \mathrm{Cat}(\mathcal{C},\,\mathcal{D})
    \\
    (L_1 \dashv R_1)
    \ar[d, "{ (\lambda,\, \rho) }"]
    &\longmapsto&
    L_1
    \ar[
      d,
      "{ \lambda }"
    ]
    \\[23pt]
    (L_2 \dashv R_2)
    &\longmapsto&
    L_2
    \mathrlap{\,.}
    \end{tikzcd}
  \end{equation}
\end{proposition}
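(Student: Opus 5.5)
The plan is a direct Yoneda-lemma argument. Fix the two adjunctions $L_i \dashv R_i$ with their hom-isomorphisms $\theta_i$, and the transformation $\lambda : L_1 \Rightarrow L_2$. For fixed objects $d$ (the argument of the $L_i$) and $c$ (the argument of the $R_i$), the commutativity of the square \eqref{ConjugacyOfTransformations} says exactly that the map
\[
  \rho_c \circ (-) \;:\; \mathrm{Hom}\big(d ,\, R_2(c)\big) \longrightarrow \mathrm{Hom}\big(d ,\, R_1(c)\big)
\]
must coincide with the composite $\theta_1 \circ \big( (-)\circ \lambda_d \big) \circ \theta_2^{-1}$. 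The whole proof rests on this reformulation.

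\textbf{Step 1 (uniqueness and existence of components).} For fixed $c$, the composite $\theta_1 \circ \big( (-)\circ \lambda_{(-)} \big) \circ \theta_2^{-1}$ is a natural transformation $\mathrm{Hom}\big(-, R_2(c)\big) \Rightarrow \mathrm{Hom}\big(-, R_1(c)\big)$ between representable functors. This uses the naturality of $\theta_1$, $\theta_2$ in the first variable and the naturality of $\lambda$. By the Yoneda lemma it is therefore postcomposition with a unique morphism $\rho_c : R_2(c) \to R_1(c)$. Concretely, $\rho_c$ is the image of $\mathrm{id}_{R_2(c)}$, namely
\[
  \rho_c \;=\; R_1\big(\epsilon^{(2)}_c \circ \lambda_{R_2 c}\big)\circ \eta^{(1)}_{R_2 c}\,,
\]
where $\epsilon^{(2)}$ is the counit of the second adjunction and $\eta^{(1)}$ the unit of the first. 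This establishes both existence and uniqueness of each component.

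\textbf{Step 2 (naturality of $\rho$ in $c$).} For a morphism $c \to c'$, the composite of Step 1 is also natural in $c$, again by naturality of the $\theta_i$ in the second variable. Hence the two candidate morphisms $R_2(c) \to R_1(c')$, obtained by going around the naturality square of $\rho$, induce the same map of representables. By faithfulness of the Yoneda embedding they are equal.

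\textbf{Step 3 (full faithfulness of the forgetful functor).} The forgetful map on hom-sets, $(\lambda,\rho) \mapsto \lambda$, is bijective by Steps 1 and 2. It is compatible with composition, since the vertical pasting of two conjugacy squares \eqref{ConjugacyOfTransformations} is again a conjugacy square; uniqueness then forces the $\rho$ of a composite to be the composite of the $\rho$'s. It also sends identities to identities, because $(\mathrm{id},\mathrm{id})$ is conjugate. This gives the fully faithful inclusion \eqref{FullEmbeddingOfCatAdhCD}.

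I do not expect a real obstacle. The only delicate point is bookkeeping: the variance conventions in \eqref{ConjugacyOfTransformations} must be tracked so that the Yoneda lemma is applied to the correct representable, and the transformation of representables must be checked to be natural before invoking Yoneda.
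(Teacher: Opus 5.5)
Your argument is correct and is the standard proof behind the paper's citation of Mac Lane (the paper gives no proof of its own). Conjugacy at each $c$ says that $\rho_c\circ(-)$ equals a given natural transformation of representables $\mathrm{Hom}(-,R_2c)\Rightarrow\mathrm{Hom}(-,R_1c)$, so Yoneda forces $\rho_c = R_1(\epsilon^{(2)}_c\circ\lambda_{R_2c})\circ\eta^{(1)}_{R_2c}$, and naturality in $c$ and compatibility with composition then follow exactly as you describe. One remark on the statement rather than your proof: the functor is fully faithful but not injective on objects, since distinct adjunction data can share the same $L$. So ``sub-category inclusion'' should be read up to equivalence, with such adjunctions being canonically isomorphic in $\mathrm{Cat}_{\mathrm{adj}}(\mathcal{C},\mathcal{D})$.
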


\begin{proposition}[Horizontal composition of conjugate transformations {\cite[p. 102]{MacLane97}}]
  \label{HorizontalCompositionOfConjugateTransformations} 
  The horizontal composition $(-)\cdot(-)$ of 
  the underlying natural transformations of a pair of conjugate transformations Def. \ref{ConjugateTransformationOfAdjoints} is itself a conjugate transformation, so that the composition functor on functor categories restricts along the inclusions \eqref{FullEmbeddingOfCatAdhCD}:
  \[
    \hspace{3cm} 
    \begin{tikzcd}[sep=0pt]
      \mathllap{
    \mathcal{C}
    ,\,
    \mathcal{D}
    ,\,
    \mathcal{E}
    \;\in\;
    \mathrm{Cat}
    \;\;\;\;\;\;\;\;\;\;\;
    \vdash
    \;\;\;\;\;\;\;\;\;\;\;      
      }
      \mathrm{Cat}_{\mathrm{adj}}(\mathcal{D},\,\mathcal{E})
      \times
      \mathrm{Cat}_{\mathrm{adj}}(\mathcal{C},\,\mathcal{D})
      \ar[
        rr
      ]
      &&
      \mathrm{Cat}_{\mathrm{adj}}(\mathcal{C},\,\mathcal{E})
      \\
     \scalebox{0.9}{$   \big(
        (\lambda,\,\rho)
        ,\,
        (\lambda',\,\rho')
      \big)
      $}
      &\longmapsto&
   \scalebox{0.9}{$   \big(
        \lambda' \cdot \lambda
        ,\,
        \rho \cdot \rho'
      \big).
      $}
    \end{tikzcd}
  \]
\end{proposition}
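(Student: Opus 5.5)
We prove the statement by pasting commuting squares of hom-set bijections. Fix conjugate transformations $(\lambda,\rho) : (L_1 \dashv R_1) \Rightarrow (L_2 \dashv R_2)$ between $\mathcal{C}$ and $\mathcal{D}$, and $(\lambda',\rho') : (L'_1 \dashv R'_1) \Rightarrow (L'_2 \dashv R'_2)$ between $\mathcal{D}$ and $\mathcal{E}$. We read the variance as dictated by the hom-set square \eqref{ConjugacyOfTransformations}: each left adjoint points in the direction that makes those hom-sets well-typed. We take the composite adjunctions $L'_i L_i \dashv R_i R'_i$ with their standard hom-bijection, namely the composite
\[
  \mathcal{E}(L'_i L_i c,\, e)
  \;\simeq\;
  \mathcal{D}(L_i c,\, R'_i e)
  \;\simeq\;
  \mathcal{C}(c,\, R_i R'_i e)\,.
\]
The horizontal composites are $\lambda'\cdot\lambda = \lambda'_{L_1}\circ L'_2\lambda$ and $\rho\cdot\rho' = \rho_{R'_1}\circ R_2\rho'$. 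By the interchange law these equal $L'_1\lambda\circ\lambda'_{L_2}$ and $R_1\rho'\circ\rho_{R'_2}$ respectively, and we may use whichever form is convenient.

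The key step is to factor the required square \eqref{ConjugacyOfTransformations} for the composite into a $2\times 2$ grid of smaller squares. The grid has rows
\[
  \mathcal{E}(L'_2L_2c,\,e) \to \mathcal{D}(L_2c,\,R'_2e) \to \mathcal{C}(c,\,R_2R'_2e),
\]
\[
  \mathcal{E}(L'_1L_2c,\,e) \to \mathcal{D}(L_2c,\,R'_1e) \to \mathcal{C}(c,\,R_2R'_1e),
\]
\[
  \mathcal{E}(L'_1L_1c,\,e) \to \mathcal{D}(L_1c,\,R'_1e) \to \mathcal{C}(c,\,R_1R'_1e).
\]
The vertical maps are induced by $\lambda'_{L_2c}$, $\rho'_e$ and $R_2\rho'_e$ between the first two rows, and by $L'_1\lambda_c$, $\lambda_c$ and $\rho_{R'_1e}$ between the last two rows. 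The four squares then commute for the following reasons:
\begin{itemize}
  \item top-left: the conjugacy of $(\lambda',\rho')$, instantiated at the object $L_2 c$;
  \item top-right: naturality in the second variable of the hom-bijection of $L_2 \dashv R_2$, applied to the morphism $\rho'_e$;
  \item bottom-left: naturality in the first variable of the hom-bijection of $L'_1 \dashv R'_1$, applied to the morphism $\lambda_c$;
  \item bottom-right: the conjugacy of $(\lambda,\rho)$, instantiated at the object $R'_1 e$.
\end{itemize}
Pasting the four squares gives the outer square. Its left vertical composite is precomposition with $\lambda'_{L_2c}\circ L'_1\lambda_c$, and its right vertical composite is postcomposition with $\rho_{R'_1e}\circ R_2\rho'_e$. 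By interchange these are exactly the components of $\lambda'\cdot\lambda$ and $\rho\cdot\rho'$, which proves conjugacy of the composite.

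It remains to check that horizontal composition is a functor $\mathrm{Cat}_{\mathrm{adj}}(\mathcal{D},\mathcal{E})\times\mathrm{Cat}_{\mathrm{adj}}(\mathcal{C},\mathcal{D})\to\mathrm{Cat}_{\mathrm{adj}}(\mathcal{C},\mathcal{E})$. Preservation of identities and of vertical composition holds on the $\lambda$-components, because it holds for horizontal composition in $\mathrm{Cat}$. By Prop.~\ref{UniquenessOfConjugateTransformations} the $\rho$-components are then forced to agree as well, so the assignment is a functor that restricts composition in $\mathrm{Cat}$ along the full inclusions \eqref{FullEmbeddingOfCatAdhCD}.

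The main obstacle is bookkeeping rather than any conceptual difficulty. One must track the variance correctly (the $\lambda$'s act contravariantly on hom-sets, the $\rho$'s covariantly) and place the two pieces of each interchange-law factorization so that the mixed squares are honest naturality squares of the hom-bijections. I would first write the grid out explicitly and check each square componentwise on a morphism $L'_2L_2c\to e$.
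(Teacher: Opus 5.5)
Your proof is correct. The paper gives no argument of its own for this proposition, only the citation to Mac Lane, so your $2\times2$ pasting is a self-contained verification of what the paper takes on citation. I checked each of the four squares:
\begin{itemize}
\item the conjugacy square of the outer pair at $(L_2c,e)$;
\item the conjugacy square of the inner pair at $(c,R'_1e)$;
\item naturality of the hom-bijection of $L_2\dashv R_2$ in its second variable along $\rho'_e$;
\item naturality of the hom-bijection of $L'_1\dashv R'_1$ in its first variable along $\lambda_c$.
\end{itemize}
Each is well-typed and commutes for the reason you give. The outer verticals are precomposition with $\lambda'_{L_2c}\circ L'_1\lambda_c$ and postcomposition with $\rho_{R'_1e}\circ R_2\rho'_e$, and these are exactly the components of the two horizontal composites.

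The other standard route characterizes the conjugate by the mate formula $\rho=R_1\varepsilon_2\circ R_1\lambda R_2\circ\eta_1R_2$ and then computes with the units and counits of the composite adjunctions. That route yields an explicit formula for the composite $\rho$-part. Yours needs only naturality and works directly with the defining square \eqref{ConjugacyOfTransformations}, which is the cleaner choice given how the paper sets things up. Deducing functoriality from functoriality of horizontal composition in $\mathrm{Cat}$ together with Prop.~\ref{UniquenessOfConjugateTransformations} is also fine.

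There are three small slips.
\begin{itemize}
\item Your first formulas $\lambda'_{L_1}\circ L'_2\lambda$ and $L'_1\lambda\circ\lambda'_{L_2}$ are written in diagrammatic order and do not typecheck as right-to-left composites. They should read $L'_2\lambda\circ\lambda'_{L_1}$ and $\lambda'_{L_2}\circ L'_1\lambda$, which is the order you correctly use in the grid.
\item The variance you actually use, $\mathcal{D}(L_ic,d)\simeq\mathcal{C}(c,R_id)$ with $L_i\colon\mathcal{C}\to\mathcal{D}$, is fixed by the diagram in Def.~\ref{ConjugateTransformationOfAdjoints} and by \eqref{FullEmbeddingOfCatAdhCD}. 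The square \eqref{ConjugacyOfTransformations} as printed has $\mathcal{C}$ and $\mathcal{D}$ interchanged, so it is not the square that dictates your reading.
\item Your labels are swapped relative to the statement, where $(\lambda,\rho)$ is the pair in $\mathrm{Cat}_{\mathrm{adj}}(\mathcal{D},\mathcal{E})$. This is harmless, since in both readings one forms the horizontal composite of the two $\lambda$'s and of the two $\rho$'s.
\end{itemize}
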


Via Prop. \ref{HorizontalCompositionOfConjugateTransformations}, we have: 

\begin{definition}[2-category of categories, adjoint functors and conjugate transformations {\cite[p. 102]{MacLane97}}] 
\label{CatAdj}
  Write
  \begin{equation}
    \label{FunctorFromCatAdjToCat}
    \begin{tikzcd}
      \mathrm{Cat}_{\mathrm{adj}}
      \ar[r]
      &
      \mathrm{Cat}
    \end{tikzcd}
  \end{equation}
  for the (very large) locally full sub-2-category of $\mathrm{Cat}$ whose
  \begin{itemize}
    \item objects are categories,
    \item hom-categories are those \eqref{CategoryOfAdjointFunctorsWithConjugateTransformations}
    of adjoint functors with conjugate transformations between them.
  \end{itemize}
\end{definition}

\begin{proposition}[Bivariant pseudofunctors,
{cf. \cite[Lem. 9.1.2]{Jacobs98}\cite[Prop. 2.2.1]{HarpazPrasma15}\cite[pp. 10]{CagneMellies20}}] 
\label{RecognitionOfBivariantPseudofunctors}
  Given a covariant pseudofunctor $\mathbf{C}_{(-)}$ (Def. \ref{Pseudofunctor}) such that each component 
  functor $f_! : \mathbf{C}_X \longrightarrow \mathbf{C}_{Y}$ has a right adjoint
  \begin{equation}
    \label{BivariantPseudofunctor}
    \begin{tikzcd}[row sep=-2pt, column sep=20pt]
      \mbox{\bf C}_{(-)}
      \;\colon\;
      &
      \mathcal{B}
      \ar[rr]
      &&
      \mathrm{Cat}
      \\
      &
      X_1 
      \ar[d, "{f}"]
      &\longmapsto&
      \mbox{\bf C}_{X_1}
      \ar[d, shift right=7pt, "{f_!}"{swap}]
      \ar[from=d, shift right=7pt, "{f^\ast}"{swap}]
      \ar[d, phantom, "{ \scalebox{.8}{$\dashv$} }"]
      \\[+20pt]
      &
      X_2 
      &\longmapsto& 
      \mbox{\bf C}_{X_2}
    \end{tikzcd}
  \end{equation}
  then:
  \begin{itemize}
    \item[{\bf (i)}] it factors essentially uniquely through $\mathrm{Cat}_{\mathrm{adj}}$ \eqref{FunctorFromCatAdjToCat},
    \item[{\bf (ii)}]  hence it induces a contravariant pseudofunctor with component functors $f^\ast$,
    \item[{\bf (iii)}] 
    such that the Grothendieck construction (Def. \ref{GrothendieckConstruction}) on the covariant pseudofunctor is equivalent to 
    that on the corresponding contravariant pseudofunctor via the functor that is the identity on objects and on morphisms is 
    the hom-isomorphism of the given adjoint pairs:
    \[
      \begin{tikzcd}[row sep=-2pt]
        \mathscr{f_! \mathscr{V}}
        \ar[r, "{\widetilde{\phi}}"]
        &
        \mathscr{W}
        \\
        X \ar[r, "f"]
        &
        Y
      \end{tikzcd}
      \;\;\;\;\;\;\;\;\;\;
      \leftrightarrow
      \;\;\;\;\;\;\;\;\;\;
      \begin{tikzcd}[row sep=-2pt]
        \mathscr{\mathscr{V}}
        \ar[r, "{{\phi}}"]
        &
        f^\ast \mathscr{W}
        \\
        X \ar[r, "f"]
        &
        Y
        \mathrlap{\,.}
      \end{tikzcd}
    \]
    Therefore, both construction are still unambiguously denoted by $\int_{X \in \mathcal{B}} \mathbf{C}_{X}$.
    \end{itemize}
\end{proposition}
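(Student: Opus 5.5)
The plan is to obtain all three claims from the contravariant pseudofunctor $\mathbf{C}_{(-)}$ built out of the right adjoints, by way of the unit/counit description of adjunctions and the uniqueness of conjugate transformations (Prop. \ref{UniquenessOfConjugateTransformations}), together with horizontal composition (Prop. \ref{HorizontalCompositionOfConjugateTransformations}).

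For (i), we first choose a right adjoint $f^\ast$ together with its unit and counit for each $f$. By Prop. \ref{UniquenessOfConjugateTransformations}, each coherence isomorphism $\mu_{f,g}\colon g_!\circ f_! \Rightarrow (g\circ f)_!$ has a unique conjugate $\rho_{f,g}\colon (g\circ f)^\ast \Rightarrow f^\ast g^\ast$. Here $g_!\circ f_!$ carries the composite adjunction with right adjoint $f^\ast g^\ast$. Since $\mu_{f,g}$ is invertible, uniqueness and functoriality of conjugation make $\rho_{f,g}$ invertible, because the conjugate of the inverse is inverse to the conjugate. The same reasoning applies to the unit isomorphism $(\mathrm{id}_X)_!\Rightarrow \mathrm{id}$.

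The associativity and unitality coherences for the pairs $(\mu,\rho)$ hold in $\mathrm{Cat}_{\mathrm{adj}}$ because the forgetful functor \eqref{FullEmbeddingOfCatAdhCD} is faithful. Any two parallel composites of conjugate transformations whose $\lambda$-parts agree must therefore agree, and the $\lambda$-parts satisfy the coherences by assumption. Horizontal composites stay within conjugate transformations by Prop. \ref{HorizontalCompositionOfConjugateTransformations}, so the whole diagram can be compared there.

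The resulting factorization through $\mathrm{Cat}_{\mathrm{adj}}$ is essentially unique. Another choice of right adjoints differs by the canonical isomorphisms between right adjoints, and these are again conjugates of identities.

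For (ii), we project the factorization through $\mathrm{Cat}_{\mathrm{adj}}\to\mathrm{Cat}^{\mathrm{co\,op}}$, keeping the $\rho$-components. This gives $f\mapsto f^\ast$ with coherence isomorphisms $\rho_{f,g}^{-1}$ or $\rho_{f,g}$, depending on orientation. The coherence laws for these follow by reversing arrows in the coherence laws established in (i).

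For (iii), we define $\Phi$ to be the identity on objects and to send $(f,\widetilde\phi)$ to $(f,\phi)$, where $\phi$ is the hom-adjunct of $\widetilde\phi$. This is bijective on each hom-set by \eqref{HomSetsOfCovariantGrothendieckConstruction} and \eqref{HomSetsOfContravariantGrothendieckConstruction}, so the only thing to check is functoriality. Identities are preserved, using the unit isomorphisms and their conjugates.

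For composition, we have to show that the adjunct of $\psi\circ g_!(\widetilde\phi)\circ\mu_{f,g}$ equals $\rho_{f,g}\circ g^\ast(\psi)\circ f^\ast(\cdots)\circ\phi$, or whichever composite the contravariant Grothendieck composition formula gives. We do this in two steps. First, naturality of the adjunction bijection lets us pull $\psi$ and $g_!(\widetilde\phi)$ through as $g^\ast\psi$ and $\phi$. Then the conjugacy square \eqref{ConjugacyOfTransformations} for $(\mu_{f,g},\rho_{f,g})$ converts precomposition by $\mu_{f,g}$ into postcomposition by $\rho_{f,g}$.

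I expect this last bookkeeping to be the main obstacle. The difficulty is keeping the orientation of the $\mu$'s and $\rho$'s consistent with the two composition formulas in Def. \ref{GrothendieckConstruction}. Conceptually, however, it is exactly what the conjugacy square encodes, so once the orientations are fixed the check reduces to a single diagram chase.
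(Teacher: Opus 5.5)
Your proposal is correct and follows essentially the same route as the paper. The paper derives (i) from the uniqueness of conjugate transformations (Prop. \ref{UniquenessOfConjugateTransformations}), (ii) from their horizontal composition (Prop. \ref{HorizontalCompositionOfConjugateTransformations}), and (iii) from the conjugacy square \eqref{ConjugacyOfTransformations}; you spell out the invertibility, coherence and orientation bookkeeping that the paper leaves implicit.
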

\begin{proof}
  The first statement is a direct consequence of Prop. \ref{UniquenessOfConjugateTransformations}, the second then follows 
  by Prop. \ref{HorizontalCompositionOfConjugateTransformations} and finally the third by the property \eqref{ConjugacyOfTransformations} 
  in Def. \ref{ConjugateTransformationOfAdjoints}.
\end{proof}

In refinement of Ex. \ref{CategoriesOfIndexedSetsOfObjects}, we have:

\begin{example}[Categories of indexed sets of objects with coproducts]
\label{CategoriesOfIndexedSetsOfObjectsWithCoproducts}
If a category $\mathcal{C}$ already has all coproducts, then the pseudofunctor \eqref{PseudofunctorOfProductCategories}
of its product categories has left adjoint component functors given by forming coproducts over fibers of base maps
\begin{equation}
\label{BaseChangeFunctorsBetweenProductCategories}
  f \,:\, S \longrightarrow T
  \hspace{1cm}
    \vdash
  \hspace{1cm}
  \begin{tikzcd}[row sep=-3pt, column sep=10pt]
 \scalebox{0.8}{$   \big(
      \mathscr{V}_s
    \big)_{ \in S}
    $}
    &\xmapsto{\qquad}&
   \scalebox{0.8}{$  \bigg(\,
      \underset{
        s \in f^{-1}(\{t\})
      }{\coprod}
      \mathscr{V}_s
    \bigg)_{t \in T}
    $}
    \\
    \mathrm{Func}
    \big(
      S
      ,\,
      \mathcal{C}
    \big)
    \ar[
      rr,
      shift left=5pt,
      "{ f_! }"
    ]
    \ar[
      from=rr,
      shift left=5pt,
      "{ f^\ast }"
    ]
    \ar[
      rr,
      phantom,
      "{ \scalebox{.7}{$\bot$} }"
    ]
    &&
    \mathrm{Func}
    \big(
      T
      ,\,
      \mathcal{C}
    \big)    
    \\[6pt]
  \scalebox{0.8}{$   \big(
      \mathscr{V}_{f(s)}
    \big)_{s \in S}
    $}
    \ar[
      rr,
      phantom,
      "{ \xmapsfrom{\qquad} }"
    ]
    &&
 \scalebox{0.8}{$    \big(
      \mathscr{V}_{t}
    \big)_{t \in T}
    $}
  \end{tikzcd}
\end{equation}
Consequently, here Prop. \ref{RecognitionOfBivariantPseudofunctors} says that we have in fact a bivariant pseudofunctor: 
\[
    \begin{tikzcd}[row sep=-3pt, column sep=15pt]
      \mathrm{Set}
      \ar[rr]
      &&
      \mathrm{Cat}_{\mathrm{adj}}
      \\
      S
      \ar[
        dd,
        "{ f }"
      ]
      &\longmapsto&
      \mathrm{Func}(S,\,\mathcal{C})
      \ar[
        from=dd,
        shift right=7pt,
        "{ f^\ast }"{swap}
      ]
      \ar[
        dd,
        shift right=7pt,
        "{ f_! }"{swap}
      ]
      \ar[
        dd,
        phantom,
        "{ \scalebox{.7}{$\dashv$} }"
      ]
      \ar[r, phantom, "{ \simeq }"]
      &[1pt]
      \underset{s \in S}{\prod}
      \mathcal{C}
      \\[+20pt]
      \\
      T &\longmapsto&
      \mathrm{Func}(T,\,\mathcal{C})
      \ar[r, phantom, "{ \simeq }"]
      &
      \underset{t \in T}{\prod}
      \mathcal{C}
      \,.
    \end{tikzcd}
\]
\end{example}

More generally:
\begin{example}[Systems of enriched functor categories]
  \label{SystemsOfEnrichedFunctorCategories}
  Let $\mathbf{C}$ be an $\mathrm{sSet}$-enriched bicomplete and $\mathrm{sSet}$-(co)tensored category. 
    Then for every $\mathrm{sSet}$-enriched functor $\mathbf{f} \,:\, \mathbf{X} \xrightarrow{\;} \mathbf{Y}$
    between small $\mathrm{sSet}$-enriched categories
  the precomposition functors $\mathbf{f}^\ast$ between the $\mathrm{sSet}$-enriched functor categories
  into $\mathbf{C}$ 
  \[
    \mathbf{C}^{(\mbox{-})}
    \,:\defneq\,
    \mathbf{sFunc}(\mbox{-},\mathbf{C})
    \,,
  \]
  has a left adjoint $\mathbf{f}_!$ and a right adjoint $\mathbf{f}_\ast$:
  \begin{equation}
    \label{KanExtensionAdjointTriple}
    \begin{tikzcd}
       \mathbf{C}^{\mathbf{X}}
       \ar[
         rr,
         shift left=14pt,
         "{ \mathbf{f}_! }"
       ]
       \ar[
         rr,
         shift right=14pt,
         "{ \mathbf{f}_\ast }"{swap}
       ]
       \ar[
         from=rr,
         "{
           \mathbf{f}^\ast
         }"{description}
       ]
       \ar[
         rr,
         phantom,
         shift left=8pt,
         "{ \scalebox{.7}{$\bot$} }"
       ]
       \ar[
         rr,
         phantom,
         shift right=8pt,
         "{ \scalebox{.7}{$\bot$} }"
       ]
       &&
       \mathbf{C}^{\mathbf{Y}}
       \,,
    \end{tikzcd}
  \end{equation}
  given by enriched left and right Kan extension, respectively, 
  expressed by the following (co)end formulas \cite[(4.24), (4.25)]{Kelly82}:
  \begin{equation}
    \label{CoEndFormulasForKanExtension}
    \mathscr{V} \,\in\,
    \mathbf{C}^{\mathbf{X}}
    \hspace{1cm}
      \vdash
    \hspace{1cm}
    \left\{
    \def\arraystretch{2}
    \begin{array}{l}
      (f_! \mathscr{V})
      \;:\;
      y \,\mapsto\,
      \int^{x \in \mathbf{X}}
      \,
      \mathbf{Y}\big(\mathbf{f}(x),\,y\big)
      \cdot
      \mathscr{V}_{\!x}
      \\
      (\mathbf{f}_\ast \mathscr{V})
      \;:\;
      y \,\mapsto\,
      \int_{x \in \mathbf{X}}
      \,
      \big(
        \mathscr{V}_{\!x}
      \big)^{
        \scalebox{.7}{$
         \mathbf{Y}\big(y, \, \mathbf{f}(x)\big)
        $}
      }
    \end{array}
    \right.
  \end{equation}  
  In particular, this gives a bivariant pseudo-functor on small $\mathrm{sSet}$-enriched categories:
  \[
    \begin{tikzcd}[sep=0pt]
      \mathllap{
        \mathbf{C}^{(\mbox{-})}
        \;:\;\;
      }
      \mathrm{sSet}\mbox{-}\mathrm{Cat}_{\mathrm{sm}}
      \ar[rr]
      &&
      \mathrm{Cat}_{\mathrm{adj}}
      \\
      \mathbf{X} 
      \ar[d, "{\mathbf{f}}"]
        &\mapsto& 
      \mathbf{C}^{\mathbf{X}}
      \ar[
        d,
        shift right=8pt,
        "{
          \mathbf{f}_!
        }"{swap}
      ]
      \ar[
        from=d,
        shift right=8pt,
        "{
          \mathbf{f}^\ast
        }"{swap}
      ]
      \ar[
        d,
        phantom,
        "{ \scalebox{.7}{$\dashv$} }"
      ]
      \\[30pt]
      \mathbf{Y} 
        &\mapsto& 
      \mathbf{C}^{\mathbf{Y}}   
      \mathrlap{\,.}
    \end{tikzcd}
  \]
\end{example}

\medskip

\noindent
{\bf Model category theory.}

\begin{lemma}[Ken Brown's Lemma {\cite[Lem. 1.1.12]{Hovey99}, based on \cite[p. 421]{Brown73}}]
  \label{KenBrownLemma}
  Let $F \,:\, \mathcal{D} \longrightarrow \mathcal{C}$ be a functor between (underlying categories of) model categories. Then:
  \begin{itemize}
    \item[{\bf (i)}] If $F$ sends acyclic fibrations to weak equivalences then it sends all weak equivalences between fibrant 
    objects to weak equivalences.
    \item[{\bf (ii)}]  If $F$ sends acyclic cofibrations to weak equivalences then it sends all weak equivalences between 
    cofibrant objects to weak equivalences.
  \end{itemize}
\end{lemma}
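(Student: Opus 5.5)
The plan is the standard factorization argument behind Ken Brown's lemma. I would prove (ii) and obtain (i) by dualizing.

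For (ii), let $w \colon A \to B$ be a weak equivalence between cofibrant objects of $\mathcal{D}$. First form the coproduct $A \sqcup B$. Because $A$ and $B$ are cofibrant, the two coprojections $A \to A \sqcup B$ and $B \to A \sqcup B$ are cofibrations, being pushouts of $\varnothing \to B$ and $\varnothing \to A$ respectively. Next, factor the map $(w, \mathrm{id}_B) \colon A \sqcup B \to B$ as a cofibration $q \colon A \sqcup B \to C$ followed by an acyclic fibration $p \colon C \to B$. The composites $q \circ \iota_A \colon A \to C$ and $q \circ \iota_B \colon B \to C$ are then cofibrations. They are also weak equivalences by 2-out-of-3, since $p \circ q \circ \iota_A = w$, $p \circ q \circ \iota_B = \mathrm{id}_B$, and $p$ is a weak equivalence. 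So both are acyclic cofibrations.

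By hypothesis, $F$ sends both of these to weak equivalences. Applying $F$ to $p \circ (q \circ \iota_B) = \mathrm{id}_B$ gives $F(p) \circ F(q \iota_B) = \mathrm{id}_{F(B)}$. By 2-out-of-3 in $\mathcal{C}$, this makes $F(p)$ a weak equivalence. Then $F(w) = F(p) \circ F(q \iota_A)$ is a composite of weak equivalences, hence a weak equivalence.

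Part (i) follows by the dual argument. For $w \colon A \to B$ between fibrant objects, factor $(w, \mathrm{id}_A) \colon A \to B \times A$ as an acyclic cofibration followed by a fibration $C \to B \times A$. Because $A$ and $B$ are fibrant, the projections out of $B \times A$ are fibrations. Hence the composites $C \to B$ and $C \to A$ are fibrations, and by 2-out-of-3 they are acyclic. The same 2-out-of-3 bookkeeping in $\mathcal{C}$ then finishes the argument.

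The only step needing care is the claim that the coprojections (respectively projections) are cofibrations (respectively fibrations). This is exactly where the cofibrancy (respectively fibrancy) of the objects is used, and it holds because cofibrations are stable under pushout and fibrations under pullback. No functoriality of the factorizations is required, since $F$ is applied only to specific maps.
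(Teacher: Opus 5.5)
Your proposal is correct. The paper gives no proof of its own and instead cites \cite[Lem. 1.1.12]{Hovey99}, whose argument is exactly your factorization of $(w,\mathrm{id}_B)\colon A \sqcup B \to B$ (dually of $(w,\mathrm{id}_A)\colon A \to B \times A$), followed by the two-out-of-three bookkeeping in $\mathcal{C}$.
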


As a simple but important special case of right transfer:
\begin{proposition}[Model structure transfer along adjoint equivalence]
  \label{ModelStructureTransferAlongAdjointEquivalence}
  Given an adjoint equivalence of categories
  $
    \begin{tikzcd}
    \mathcal{D}
      \ar[r, shift right=7pt, "{R}"{description}]
      \ar[from=r, shift right=7pt, "{ L }"{description}]
      \ar[r, phantom, "{ \scalebox{.6}{$\bot$} }"]
      &
    \mathcal{C}
    \end{tikzcd}
  $
  and a model structure on $\mathcal{C}$, then $\mathcal{D}$ becomes a model category and the adjunction becomes a Quillen equivalence
  by setting $\mathrm{W}(\mathcal{D}) \,\defneq\, R^{-1}\big(\mathrm{W}(\mathcal{C})\big)$, $\mathrm{Fib}(\mathcal{D}) \,\defneq\, R^{-1}\big(\mathrm{Fib}(\mathcal{C})\big)$,
  $\mathrm{Cof}(\mathcal{D}) \,\defneq\, R^{-1}\big(\mathrm{Fib}(\mathcal{C})\big)$.
\end{proposition}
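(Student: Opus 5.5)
The statement to prove is the transfer of a model structure along an adjoint equivalence $L \dashv R$, $R : \mathcal{D} \to \mathcal{C}$. I read the cofibrations as intended to be $R^{-1}(\mathrm{Cof}(\mathcal{C}))$; the printed "$\mathrm{Fib}$" in that clause looks like a typo.

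The approach is to notice that an adjoint equivalence is an isomorphism "up to coherent natural isomorphism", so every model-categorical property is simply transported. Concretely, unit $\eta : \mathrm{id} \Rightarrow RL$ and counit $\epsilon : LR \Rightarrow \mathrm{id}$ are natural isomorphisms. Two facts drive everything:
\begin{itemize}
\item $R$ is fully faithful and essentially surjective.
\item Each of the three classes in $\mathcal{C}$ is closed under isomorphism in the arrow category $\mathcal{C}^{\to}$.
\end{itemize}
Together these give that a morphism $u$ in $\mathcal{D}$ lies in a defined class iff $R(u)$ does. They also give that for any $v$ in $\mathcal{C}$, $v$ is in a class iff $L(v)$ is, since $RL(v) \cong v$ in $\mathcal{C}^{\to}$ via $\eta$.

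The verification proceeds in three steps.
\begin{itemize}
\item \emph{Bicompleteness and retracts.} $\mathcal{D}$ is bicomplete because equivalences create limits and colimits. Two-out-of-three and closure under retracts follow because $R$ is a functor and preserves composites and retract diagrams.
\item \emph{Lifting.} Given a square in $\mathcal{D}$ with a cofibration on the left and an acyclic fibration on the right (or acyclic cofibration against fibration), apply $R$. A lift exists in $\mathcal{C}$, and full faithfulness of $R$ gives a unique preimage in $\mathcal{D}$. That preimage makes both triangles commute, again by faithfulness.
\item \emph{Factorization.} For $u$ in $\mathcal{D}$, factor $R(u) = p \circ i$ in $\mathcal{C}$. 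Apply $L$ and conjugate by the counit isomorphisms $LR(u) \cong u$ in $\mathcal{D}^{\to}$, which yields $u = (\epsilon \circ L p) \circ (L i \circ \epsilon^{-1})$. The factors lie in the right classes because $R$ of each is isomorphic in the arrow category to $RL(i) \cong i$, respectively $RL(p) \cong p$.
\end{itemize}

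The Quillen property is then immediate: $R$ preserves fibrations and acyclic fibrations by definition. For the Quillen equivalence, a map $L c \to d$ is a weak equivalence iff its image under $R$ is, and $R$ of it is isomorphic to the adjunct $c \to R d$ via $\eta$.

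The only step needing care is the factorization. There one must track that conjugating by $\epsilon$ is an isomorphism of arrows, not merely of objects, so that iso-invariance of the classes applies.
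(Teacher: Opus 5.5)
Your argument is correct and complete, and reading the cofibrations as $R^{-1}\big(\mathrm{Cof}(\mathcal{C})\big)$ is the right repair of the misprint.

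The paper gives no proof. It only files the statement as ``a simple but important special case of right transfer'', i.e.\ it regards the structure on $\mathcal{D}$ as right-induced along $R$. In that framing the fibrations and weak equivalences are created by $R$, and the cofibrations are \emph{defined} by the left lifting property against $R^{-1}\big(\mathrm{Fib}(\mathcal{C})\cap\mathrm{W}(\mathcal{C})\big)$. You instead verify the axioms directly, using only full faithfulness of $R$, the unit and counit isomorphisms, and invariance of the three classes under isomorphism in $\mathcal{C}^{\to}$.

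Your route has the advantage of being unconditional. The general right-transfer theorems (Kan's transfer theorem and its variants) require $\mathcal{C}$ to be cofibrantly generated, together with smallness and an acyclicity condition. None of this is assumed in the statement, so merely citing ``right transfer'' would not literally prove it as posed.

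The transfer framing identifies the resulting structure as the right-induced one, which is how the proposition is used in the proof of Lemma~\ref{SkeletalizationOfLocalSystems}. Your version recovers this automatically. In any model category the cofibrations are exactly the maps with the left lifting property against the acyclic fibrations, so your class $R^{-1}\big(\mathrm{Cof}(\mathcal{C})\big)$ must coincide with the transferred cofibrations.
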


\medskip

\noindent
{\bf Model category structures on Grothendieck constructions.} We recall the main point of \cite{HarpazPrasma15}\cite{CagneMellies20}, which goes back to \cite{Roig94}\cite{Stanulescu12}.

\begin{definition}[2-category of model categories {\cite[p. 24]{Hovey99}, cf. \cite[Def. 2.5.3]{HarpazPrasma15}}] 
\label{ModCat}
Write 
  \begin{equation}
    \label{FunctorFromModCatAdjToCat}
    \begin{tikzcd}
      \mathrm{ModCat}
      \ar[r]
      &
      \mathrm{Cat}_{\mathrm{adj}}
      \ar[r]
      &
      \mathrm{Cat}
    \end{tikzcd}
  \end{equation}
  for the (very large) 2-category whose
\begin{itemize}
  \item objects are model categories,
  \item 1-morphisms are Quillen adjunctions regarded in the direction of the left adjoint,
  \item 2-morphisms are conjugate transformations (Def. \ref{ConjugateTransformationOfAdjoints})
  between the underlying adjoint functors,
\end{itemize}
equipped with its forgetful 2-functor to $\mathrm{Cat}_{\mathrm{adj}}$ (Def. \ref{CatAdj}).
\end{definition}

\begin{definition}[Integral model structure {\cite[Def. 3.0.4]{HarpazPrasma15}}] 
\label{IntegralModelStructure}
  Given a model category $\mathcal{B}$ 
  and a pseudofunctor (Def. \ref{Pseudofunctor}) on $\mathcal{B}$ with values in model categories (Def. \ref{ModCat})
  \[
    \mathbf{C}_{(-)} : \mathcal{B} \longrightarrow \mathrm{ModCat} \longrightarrow \mathrm{Cat}_{\mathrm{adj}}
  \]
  then we call a morphism 
  \[
    \phi_f 
      \,:\, 
    \mathscr{V}_{X} 
      \longrightarrow 
    \mathscr{V}'_{X'}
    \;\;\;\;\;\;\;
    \in
    \;
    \int_{X \in \mathcal{B}} \mathbf{C}_X
  \]
  in its Grothendieck construction (Def. \ref{GrothendieckConstruction}):
  \begin{itemize}
    \item[{\bf (i)}] 
    an {\it integral weak equivalence} if
    \begin{itemize}
      \item[{\bf (a)}] $f : X \longrightarrow X'$ is a weak equivalence in $\mathcal{B}$,
      \item[{\bf (b)}] $f_!(\mathscr{V}^{\mathrm{cof}}) \overset{f_!(p)}{\longrightarrow} f_!(\mathscr{V}) \overset{\tilde \phi}{\longrightarrow} \mathscr{V}'$ is a weak equivalence in $\mathbf{C}_{X'}$,
      for $p : \mathscr{V}^{\mathrm{cof}} \to \mathscr{V}$ a cofibrant replacement in $\mathbf{C}_{X}$,
      \item[{\bf($\overline{\bf{b}}$)}] which, when $f_! \dashv f^\ast$ is a Quillen equivalence, is equivalent to:
      
      $
          \mathscr{V}
          \xrightarrow{\phi}
          f^\ast(\mathscr{V}')
          \xrightarrow{ f^\ast(q) }
          f^\ast(\mathscr{V}'_{\mathrm{fib}})
      $ is a weak equivalence in $\mathbf{C}_X$,
      for $q : \mathscr{V}' \to \mathscr{V}'_{\mathrm{fib}}$ a fibrant replacement in $\mathbf{C}_{X'}$;
    \end{itemize}
    \item[{\bf (ii)}]  an {\it integral fibration} if
    \begin{itemize}
      \item[{\bf (a)}] $f : X \longrightarrow X'$ is a fibration in $\mathcal{B}$;
      \item[{\bf (b)}] $\phi : \mathscr{V} \longrightarrow 
        f^\ast(\mathscr{V}')$ is a fibration in $\mathbf{C}_X$,
    \end{itemize}    
    \item[{\bf (iii)}]  an {\it integral cofibration} if
    \begin{itemize}
      \item[{\bf (a)}] $f : X \longrightarrow X'$ is a cofibration in $\mathcal{B}$,
      \item[{\bf (b)}] $\widetilde{\phi} : f_!(\mathscr{V}) \longrightarrow \mathscr{V}$ is a cofibration in $\mathbf{C}_{X'}$.
    \end{itemize}        
  \end{itemize}
\end{definition}

\begin{proposition}[Existence of integral model structures {\cite[Thm. 3.0.12]{HarpazPrasma15}}]
\label{ExistenceOfIntegralModelStructure}
  The classes of morphisms in Def. \ref{IntegralModelStructure} constitute a model category structure if, given $f : X \to X'$ 
  in $\mathcal{B}$, the following conditions are satisfied:
  \begin{itemize}
    \item[{\bf (i)}]  if $f$ is a weak equivalence then $f_! \dashv f^\ast : \mathbf{C}_X \rightleftarrows\mathbf{C}_{X'}$ 
    is a Quillen equivalence,
    \item[{\bf (ii)}]  if $f$ is an acyclic fibration then 
      $f^\ast : \mathbf{C}_{X'} \longrightarrow \mathbf{C}_X$ preserves weak equivalences,
    \item[{\bf (iii)}]  if $f$ is an acyclic cofibration then 
      $f_! : \mathbf{C}_X \longrightarrow \mathbf{C}_{X'}$ preserves weak equivalences.
  \end{itemize}
\end{proposition}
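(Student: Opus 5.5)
The plan is to verify the model category axioms for the three classes of Def. \ref{IntegralModelStructure} directly, following the strategy of \cite{HarpazPrasma15}. Throughout, a morphism of the Grothendieck construction is split into its base part $f$ and its linear part. The linear part is handled either in the form $\phi : \mathscr{V} \to f^\ast \mathscr{V}'$ in $\mathbf{C}_X$ or in the adjunct form $\widetilde{\phi} : f_!\mathscr{V} \to \mathscr{V}'$ in $\mathbf{C}_{X'}$; we pass between the two via Prop. \ref{RecognitionOfBivariantPseudofunctors}.

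\emph{Limits and colimits.} Bicompleteness follows from Prop. \ref{ColimitsInAGrothendieckConstruction}, since $\mathcal{B}$ and all $\mathbf{C}_X$ are bicomplete.

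\emph{Closure under retracts.} Integral fibrations and cofibrations are closed under retracts because they are defined by the base part and by a fiberwise condition, each of which is retract-closed. Pulling the retract back or pushing it forward along the base gives the required retract diagram in a single fiber. For weak equivalences we use the same device, with functorial cofibrant replacement in the fibers.

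\emph{Characterization of acyclic (co)fibrations.} The first real step is to show the following two identifications.
\begin{itemize}
\item[(1)] Integral cofibrations that are integral weak equivalences are exactly those $\phi_f$ with $f \in \mathrm{Cof}\cap\mathrm{W}$ and $\widetilde{\phi}$ an acyclic cofibration in $\mathbf{C}_{X'}$.
\item[(2)] Integral fibrations that are integral weak equivalences are exactly those $\phi_f$ with $f \in \mathrm{Fib}\cap\mathrm{W}$ and $\phi$ an acyclic fibration in $\mathbf{C}_X$.
\end{itemize}
For (1), hypothesis (iii) says $f_!$ preserves all weak equivalences. Hence $f_!(p)$ is a weak equivalence, and condition (b) of Def. \ref{IntegralModelStructure} reduces to $\widetilde{\phi}$ itself being a weak equivalence. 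For (2), hypotheses (i) and (ii) give the analogous reduction in the dual form ($\overline{\mathrm{b}}$). Here $f^\ast$ preserves all weak equivalences, so composing with $f^\ast(q)$ does not change whether the map is a weak equivalence.

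\emph{Lifting.} Given a square with a cofibration $\phi_f$ on the left and an acyclic fibration $\psi_g$ on the right (or the other mixed case), first lift in $\mathcal{B}$ to obtain $h$. Then transport the linear data to the fiber over the codomain of $f$, using $f_!$ on the domain side and $h^\ast$ on the codomain side. There it becomes a lifting problem between $\widetilde{\phi}$ and $h^\ast$ of the linear part of $\psi$. Right Quillen functors preserve (acyclic) fibrations, so by (1) or (2) this is the lifting of a (acyclic) cofibration against an acyclic fibration (resp. a fibration) in one fiber, which the fiber model structure solves.

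\emph{Factorizations.} Factor $f = p\circ i$ in $\mathcal{B}$ as an (acyclic) cofibration followed by an (acyclic) fibration. Then factor the induced map $i_!\mathscr{V} \to p^\ast \mathscr{V}'$ in $\mathbf{C}_{Z}$, where $Z$ is the middle object, using the fiber model structure. By (1) and (2), the two resulting factors are integral (acyclic) cofibrations, respectively (acyclic) fibrations.

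\emph{Main obstacle: two-out-of-three for integral weak equivalences.} Condition (b) of Def. \ref{IntegralModelStructure} involves derived functors that do not obviously compose. I would first pass to homotopy categories. The derived functors $\mathbb{L}f_!$ form a pseudofunctor $\mathrm{Ho}\,\mathbf{C}_{(-)}$, since composites of left Quillen functors derive compatibly. In this language, $\phi_f$ is a weak equivalence iff $f \in \mathrm{W}$ and the derived adjunct $\mathbb{L}f_!\mathscr{V} \to \mathscr{V}'$ is an isomorphism in $\mathrm{Ho}\,\mathbf{C}_{X'}$.

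For a composite $\psi_g\circ\phi_f$, the derived adjunct factors as $\mathbb{L}(gf)_! \simeq \mathbb{L}g_!\,\mathbb{L}f_!$, followed by $\mathbb{L}g_!$ applied to the adjunct of $\phi$, followed by the adjunct of $\psi$. Base two-out-of-three holds in $\mathcal{B}$. When $f,g$ are weak equivalences, hypothesis (i) makes $\mathbb{L}f_!$ and $\mathbb{L}g_!$ equivalences, so they both preserve and reflect isomorphisms. This gives all three cases of two-out-of-three.

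The delicate point is the case where only $g$ and the composite are known to be weak equivalences. There $f$ is a weak equivalence by base two-out-of-three, so (i) applies to $f$ as well, and the argument goes through.
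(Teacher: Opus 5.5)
Your outline is correct, and it is essentially the proof of \cite[Thm. 3.0.12]{HarpazPrasma15}, which the paper only cites and does not reproduce. That proof characterizes the integral acyclic (co)fibrations fiberwise using (i)--(iii), solves lifting and factorization first in $\mathcal{B}$ and then in a single fiber, and obtains two-out-of-three from the pseudofunctor $\mathbb{L}(-)_!$ on homotopy categories. One small correction: in the case you call delicate, the fiberwise step uses that $\mathbb{L}g_!$ \emph{reflects} isomorphisms, i.e. hypothesis (i) for $g$; hypothesis (i) for $f$ is not needed there.

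The one step that does not go through as written is closure of integral weak equivalences under retracts. Suppose $\psi_g\colon \mathscr{W}_Y \to \mathscr{W}'_{Y'}$ is a retract of $\phi_f$, with codomain-side base maps $a'\colon Y'\to X'$ and $b'\colon X'\to Y'$ satisfying $b'\circ a'=\mathrm{id}$. The functor $b'_!$ preserves weak equivalences only between cofibrant objects, but the target $\mathscr{V}'$ of the linear weak equivalence $f_!(\mathscr{V}^{\mathrm{cof}})\to\mathscr{V}'$ is arbitrary. Functorial cofibrant replacements in different fibers are not compatible with $b'_!$, so they do not give a strict retract diagram in $\mathbf{C}_{Y'}$. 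Pulling back along $a^\ast$ with fibrant replacements fails in the dual way.

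The repair uses what you already establish for two-out-of-three. Derived adjuncts are compatible with composition via $\mathbb{L}(gf)_!\simeq\mathbb{L}g_!\,\mathbb{L}f_!$. Hence the retract diagram induces a retract diagram in the arrow category of $\mathrm{Ho}\,\mathbf{C}_{Y'}$. It exhibits the derived adjunct of $\psi$ as a retract of $\mathbb{L}b'_!$ applied to the derived adjunct of $\phi$, which is an isomorphism. A retract of an isomorphism is an isomorphism, and $g$ is a weak equivalence as a retract of $f$, so $\psi_g$ is an integral weak equivalence.
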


\begin{example}[Integral model structure over trivial model structure]
\label{IntegralModelStructureOvertrivialmodelStructure}
  In the case that the base category $\mathcal{B}$  in Def. \ref{IntegralModelStructure}
  is the ``trivial'' model structure on a bicomplete category -- whose weak equivalences are just the isomorphisms and all whose morphisms 
  are fibrations and cofibrations -- then the conditions in Prop. \ref{ExistenceOfIntegralModelStructure} are satisfied, and hence the 
  integral model structure on any pseudofunctor $\mathbf{C}_{(-)} \,:\,\mathcal{B} \longrightarrow \mathrm{ModCat}$ exists.
\end{example}

\begin{lemma}[Enhanced enrichment of functor categories]
\label{EnhancedEnrichmentOfFunctorCategories}
Consider
\begin{itemize}
  \item $\mathbf{V}$ a bicomplete symmetric closed monoidal category, regarded as canonically enriched over itself
  via its internal hom $[-,-]$;
  \item $(\mathbf{C}, \otimes)$ a complete $\mathbf{V}$-enriched category that is also $\mathbf{V}$-(co)tensored;
  \item $\mathbf{X}$ a small $\mathbf{V}$-enriched category.
\end{itemize}

\begin{itemize}
\item[{\bf (i)}]
Then the enriched functor category
$
  \mbox{\bf{V}}^{\scalebox{.7}{\bf{X}}} 
  :=\,
  \mathrm{Func}(\mathbf{X},\mathbf{V})
$
carries symmetric closed monoidal category structure with respect to the $\mathbf{X}$-objectwise tensor product
\[
  \mathcal{S}_{\mathbf{X}},
  \mathcal{T}_{\mathbf{X}}
  \,\in\,
  \mathbf{V}^{\mathbf{X}}
  \hspace{.7cm}
  \vdash
  \hspace{.7cm}
  \mathcal{S}_{\mathbf{X}}
  \otimes
  \mathcal{T}_{\mathbf{X}}
  \;:=\;
  \big(
    x 
      \,\mapsto\,
    \mathcal{S}_x \otimes \mathcal{T}_x
  \big)
  \;\;\;
  \in
  \;
  \mathbf{V}^{\mathbf{T}},
\]
whose corresponding internal hom is given by
\[
  \mathcal{S}_{\mathbf{X}},
  \mathcal{T}_{\mathbf{X}}
  \,\in\,
  \mathbf{V}^{\mathbf{X}}
  \hspace{.7cm}
  \vdash
  \hspace{.7cm}
  \big[
  \mathcal{S}_{\mathbf{X}}
  \,,\,
  \mathcal{T}_{\mathbf{X}}
  \big]
  \;:=\;
  \Big(
    x 
      \,\mapsto\,
    \int_{x \in \mathbf{X}}
    \mathbf{V}\big(
      \mathbf{X}(x,x')
      \otimes
      \mathcal{S}_{x'}
      \;,\;
      \mathcal{T}_{x'}
    \big)
  \Big)
  \;\;\;
  \in
  \;
  \mathbf{V}^{\mathbf{T}}
  \mathrlap{\,.}
\]

\item[{\bf (ii)}]
Furthermore, $\mathbf{C}^{\mathbf{X}} :=\, \mathrm{Func}(\mathbf{X},\mathbf{C})$ becomes $\mathbf{V}^{\mathbf{X}}$-enriched, 
-tensored and -cotensored via the following end-formulas, respectively:
\begin{equation}
  \def\arraystretch{1.8}
  \def\arraycolsep{10pt}
  \begin{array}{rcl}  
    \mathscr{V}_{\scalebox{.7}{\bf{X}}}
    ,\,
    \mathscr{W}_{\scalebox{.7}{\bf{X}}}
    \,\in\,
    \mbox{\bf{C}}^{\scalebox{.7}{\bf{X}}}
    &
    \vdash
    &
    \mbox{\bf{C}}^{\scalebox{.7}{\bf{X}}}
    \big(
      \mathscr{V}_{\scalebox{.7}{\bf{X}}}
      ,\,
      \mathscr{W}_{\scalebox{.7}{\bf{X}}}
    \big)
    \;:=\;
    \Big(
      x 
      \,\mapsto\,
      \int_{\scalebox{.7}{$x' \!\in\! \mbox{\bf{X}}$}}
      \mbox{\bf{C}}
      \big(
        \mbox{\bf{X}}(x,x')
        \cdot
        \mathscr{V}_{\!x'}
        ,\,
        \mathscr{W}_{\!x'}
      \big)
   \! \Big)
    \;\;\;
    \in
    \;
    \mbox{\bf{V}}^{\scalebox{.7}{\bf{X}}}
    \\
    \mathcal{S}_{\scalebox{.7}{\bf{X}}}
    \,\in\,
    \mbox{\bf{V}}^{\scalebox{.7}{\bf{X}}}
    ,\;
    \mathscr{W}_{\scalebox{.7}{\bf{X}}}
    \,\in\,
    \mbox{\bf{C}}^{\scalebox{.7}{\bf{X}}}
    &
    \vdash
    &
  \mathcal{S}_{\scalebox{.7}{\bf{X}}}
  \cdot
  \mathscr{V}_{\scalebox{.7}{\bf{X}}}
  \;:=\;
  \big(
  x 
    \,\mapsto\,
  \mathcal{S}_{\!x}
  \cdot
  \mathscr{V}_{\!x}
  \big)
  \;\;\;
  \in
  \;
  \mbox{\bf{C}}^{\scalebox{.7}{\bf{X}}}
  \\
    \mathcal{S}_{\scalebox{.7}{\bf{X}}}
    \,\in\,
    \mbox{\bf{V}}^{\scalebox{.7}{\bf{X}}}
    ,\;
    \mathscr{W}_{\scalebox{.7}{\bf{X}}}
    \,\in\,
    \mbox{\bf{C}}^{\scalebox{.7}{\bf{X}}}
    &
    \vdash
    &
    \big(\mathscr{V}_{\scalebox{.7}{\bf{X}}}\big)^{
      \mathcal{S}_{\scalebox{.5}{\bf{X}}}
    }
    \;:=\;
    \Big(
      x 
      \,\mapsto\,
      \int_{\scalebox{.7}{$x' \!\in\! \mbox{\bf{X}}$}}
      \big(\mathscr{W}_{\!x'}\big)^{
        \scalebox{.7}{$
        \mbox{\bf{X}}(x,x')
        \cdot
        \mathcal{S}_{\!x'}      
        $}
      }
    \Big)
    \;\;\;
    \in
    \;
    \mbox{\bf{C}}^{\scalebox{.7}{\bf{X}}}.
  \end{array}
\end{equation}
\end{itemize}
\end{lemma}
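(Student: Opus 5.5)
The plan is to reduce everything to the universal property of (co)ends together with the enriched Yoneda lemma, treating \textbf{(i)} as the special case $\mathbf{C} \defneq \mathbf{V}$ of \textbf{(ii)} plus a symmetry check. Throughout I would use the standard description of the $\mathbf{V}$-object of natural transformations, $\mathbf{C}^{\mathbf{X}}(\mathscr{V},\mathscr{W})_{\mathrm{plain}} \simeq \int_{x} \mathbf{C}(\mathscr{V}_x,\mathscr{W}_x)$, and the enriched Yoneda isomorphisms in their co-Yoneda form, $\int^{x'} \mathbf{X}(x',x)\cdot \mathscr{V}_{x'} \simeq \mathscr{V}_x$ and $\int_{x'} \mathscr{W}_{x'}^{\mathbf{X}(x,x')} \simeq \mathscr{W}_x$. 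I would also use the defining adjunctions for the $\mathbf{V}$-(co)tensoring of $\mathbf{C}$: $\mathbf{C}(S\cdot V, W) \simeq [S, \mathbf{C}(V,W)] \simeq \mathbf{C}(V, W^S)$.

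For \textbf{(i)}, the first step is to check that the objectwise tensor $\mathcal{S}\otimes\mathcal{T}$ is again a $\mathbf{V}$-functor. Its action on hom-objects is
\[
\mathbf{X}(x,y) \xrightarrow{\mathrm{diag}} \mathbf{X}(x,y)\otimes\mathbf{X}(x,y) \longrightarrow [\mathcal{S}_x,\mathcal{S}_y]\otimes[\mathcal{T}_x,\mathcal{T}_y] \longrightarrow [\mathcal{S}_x\otimes\mathcal{T}_x,\,\mathcal{S}_y\otimes\mathcal{T}_y].
\]
This requires a diagonal on $\mathbf{V}$, i.e.\ cartesianness or a comonoid structure on hom-objects. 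That holds in every application in the paper, where $\mathbf{V}=\mathrm{Set}$, $\mathrm{sSet}$, or the image of these under $\mathbb{K}[-]$, as in the footnotes. I would state this tacitly used hypothesis explicitly.

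Associativity, unit (the constant functor on the unit) and symmetry are then inherited objectwise from $\mathbf{V}$. For the closure, I would compute the end of $\mathbf{V}$-objects of natural transformations:
\[
\int_x \big[\mathcal{R}_x\otimes\mathcal{S}_x,\,\mathcal{T}_x\big] \simeq \int_x \Big[\mathcal{R}_x,\,\int_{x'}\big[\mathbf{X}(x,x')\otimes\mathcal{S}_{x'},\,\mathcal{T}_{x'}\big]\Big].
\]
The route is to rewrite $\mathcal{R}_x \simeq \int^{x''}\mathbf{X}(x'',x)\otimes\mathcal{R}_{x''}$ by co-Yoneda, pull the coend out of the end as an end via $[\int^{}\!-,-]\simeq\int[-,-]$, apply Fubini for ends, and finally apply Yoneda once more in the other variable. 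Passing to underlying sets gives the hom-set adjunction.

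For \textbf{(ii)}, I would first define the $\mathbf{V}^{\mathbf{X}}$-valued hom by the given end formula and check functoriality in $x$, which comes from precomposition on $\mathbf{X}(x,x')$. Its unit and composition maps are built from the composition of $\mathbf{X}$ and of $\mathbf{C}$, again using the diagonal to duplicate the $\mathbf{X}(x,x')$-factor. Then I would prove the two defining adjunctions
\[
\mathbf{C}^{\mathbf{X}}(\mathcal{S}\cdot\mathscr{V},\mathscr{W}) \simeq \big[\mathcal{S},\,\mathbf{C}^{\mathbf{X}}(\mathscr{V},\mathscr{W})\big] \simeq \mathbf{C}^{\mathbf{X}}(\mathscr{V},\mathscr{W}^{\mathcal{S}})
\]
in $\mathbf{V}^{\mathbf{X}}$, using the internal hom from (i). Each is the same computation as in (i): expand with co-Yoneda, commute ends past internal homs using the (co)tensoring adjunctions of $\mathbf{C}$, apply Fubini, and collapse with Yoneda. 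Completeness of $\mathbf{C}$ and bicompleteness of $\mathbf{V}$ guarantee that all the ends exist.

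The main obstacle I expect is not the adjunction isomorphisms, which are routine (co)end calculus, but verifying the coherence of the enriched structure in (ii). This means showing that the composition maps of the $\mathbf{V}^{\mathbf{X}}$-enrichment are associative and unital, and that the objectwise tensor with $\mathcal{S}$ is $\mathbf{V}^{\mathbf{X}}$-functorial. Both hinge on the diagonal of $\mathbf{V}$ being coassociative and compatible with composition in $\mathbf{X}$. My first attempt would be to argue this by reducing to representables $\mathcal{S}=\mathbf{X}(x_0,-)\otimes v$: every object of $\mathbf{V}^{\mathbf{X}}$ is a colimit of these, and all the constructions preserve colimits in $\mathcal{S}$. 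On representables, Yoneda identifies everything with the already coherent $\mathbf{V}$-enrichment of $\mathbf{C}$.
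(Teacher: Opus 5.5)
Your proposal is correct and follows essentially the same route as the paper. The paper simply defers to ``standard manipulations'' spelled out on the nLab, namely co-Yoneda, Fubini for ends, and the tensor--hom and (co)tensoring adjunctions. Your explicit diagonal hypothesis is a real sharpening of the lemma as literally stated: for a general symmetric closed monoidal $\mathbf{V}$ the objectwise tensor of two $\mathbf{V}$-functors need not even be a $\mathbf{V}$-functor. For example, take $\mathbf{V} = \mathrm{Mod}_{\mathbb{K}}$ and $\mathbf{X}$ the one-object category of a $\mathbb{K}$-algebra $R$; then $M \otimes_{\mathbb{K}} N$ is an $R$-module only via a coproduct $R \to R \otimes R$. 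By contrast, a coassociative, counital, cocommutative diagonal compatible with composition is available in every application in the paper ($\mathbf{V} = \mathrm{Set}$, $\mathbf{V} = \mathrm{sSet}$, or $\mathbf{X}$ in the image of $\mathbb{K}[-]$).
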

\begin{proof}
This follows by standard manipulations and may be folklore but hard to cite from the literature; we have spelled out the details at: 
\href{https://ncatlab.org/nlab/show/enriched+functor+category\#EnhancedEnrichment}{\tt ncatlab.org/nlab/show/enriched+functor+category\#EnhancedEnrichment}.
\end{proof}

\medskip

\noindent
{\bf Monoidal and enriched model categories.} 
A monoidal category with a model category structure is a {\it monoidal model category} (cf. \cite[\S 4]{Hovey99}\cite[\S 2]{SchwedeShipley00}\cite[\S A.3.1.2]{Lurie09}) and an enriched category with a model structure is an  {\it enriched model category} (\cite[\S II.2]{Quillen67}) if\footnote{
  For a monoidal model category one requires in addition a ``unit axiom'' on the tensor unit. But this axiom is automatically satisfied as soon as the tensor unit is a cofibrant object, which is the case in all cases in the main text.
} the tensor product or tensoring, respectively, is a left Quillen bifunctor (Def. \ref{LeftQuillenBifunctor} below):
\begin{definition}[Pushout-product]
  \label{PushoutProduct}
  Given a bifunctor $\otimes \,:\, \mathcal{D}_1 \times \mathcal{D}_2 \xrightarrow{\phantom{-}} \mathcal{C}$ into a category 
  with pushouts, then the corresponding {\it pushout-product} of a pair of morphisms $f : X \to X'$ in $\mathcal{D}_1$ 
  and $g : Y \to Y'$ in $\mathcal{D}_2$ is the universal dashed map in $\mathcal{C}$ given by the following diagram:
  \begin{equation}
    \label{PushoutProductDiagram}
    \begin{tikzcd}[row sep=large, column sep=huge]
      X \otimes Y
      \ar[
        r,
        "{
          \mathrm{id} \,\otimes\, g
        }"
      ]
      \ar[
        d,
        "{
          f \,\otimes\, \mathrm{id}
        }"{swap}
      ]
      \ar[
        dr,
        phantom,
        "{
          \scalebox{.7}{(po)}
        }"
      ]
      &
      X \otimes Y'
      \ar[
        d,
        "{ q_r }"
      ]
      \ar[
        ddr,
        bend left=20,
        "{ f \,\otimes\, \mathrm{id} }"{sloped}
      ]
      &[-15pt]
      \\
      X' \otimes Y
      \ar[
        r,
        "{ q_l }"
      ]
      \ar[
        drr,
        bend right=20,
        "{ 
           \mathrm{id} \,\otimes\, g 
        }"{sloped}        
      ]
      &
      X' \otimes Y
      \underset{
        X\otimes Y
      }{\coprod}
      X\otimes Y'
      \ar[
        dr,
        dashed,
        shorten <=-16pt,
        "{
          f \,\widehat{\otimes}\, g
        }"{description}
      ]
      \\[-15pt]
      &&
      X' \otimes Y'\;.
    \end{tikzcd}
  \end{equation}
\end{definition}
\begin{example}[Cartesian pushout-products of sets]
  \label{CartesianPushoutProductOfSets}
  In the category $\mathrm{Set}$ with respect to the Cartesian product $\mathrm{Set} \times \mathrm{Set} \xrightarrow{\times} \mathrm{Set}$, 
  the pushout of $\mathrm{id} \times g$ along $f \times \mathrm{id}$ is the quotient set
  \[
    f \,\widehat{\times}\,\ g
    \;\;
    \simeq
    \;\;
    \big\{
      (x,y')
      ,\,
      (x',y)
    \big\}_{\big/ \big( 
      (
        f(x),\, y 
      )
      \,\sim\,
      (
        x,\, g(y)
      )
    \big)}
  \]
  (where all variables range over the sets denoted by the corresponding capital letters)
  whose equivalence classes we denote by $[x,y']$ and  $[x',y]$,
  on which the pushout-product map \eqref{PushoutProductDiagram} is given by
  \[
    \begin{tikzcd}[row sep=-3pt]
      f \,\widehat{\times}\, g
      \ar[rr, dashed]
      &&
      X' \times Y'
      \\
    {[x,y']} 
    &\longmapsto&  
    \big[f(x),\,y'\big] 
    \\
    {[x',y]} 
   &\longmapsto&  
   \big[x',\,g(y)\big]
  \end{tikzcd}
  \]
  whose fibers are as follows:
  \begin{equation}
    \label{FibersOfPushoutProductMapInSets}
    \big(
      f \,\widehat{\times}\, g
    \big)_{(x',\,y')}
    \;\;
      \simeq
    \;\;
    \left\{
    \def\arraystretch{1.4}
    \begin{array}{cll}
      \ast 
      &\big\vert& 
      (x',y') \,\in\, \mathrm{im}(f) \times \mathrm{im}(g)
      \\
      f^{-1}(
        \{x'\}
      )
      \;\simeq\;
      \big\{
        [x,y']
        \;
        \big\vert
        \;
        x 
        \,\in\,
        f^{-1}(
          \{x'\}
        )
      \big\}
      &
      \big\vert
      &
      y' \,\in\, Y' \setminus \mathrm{im}(g)
      \\
      g^{-1}(
        \{y'\}
      )
      \;\simeq\;
      \big\{
        [x',y]
        \;
        \big\vert
        \;
        y 
        \,\in\,
        g^{-1}(
          \{y'\}
        )
      \big\}
      &
      \big\vert
      &
      x' \,\in\, X' \setminus \mathrm{im}(f)\;.
    \end{array}
    \right.
  \end{equation}
\end{example}
\begin{example}[Pushout-product with an identity]
  With respect to any bifunctor $(-)\otimes(-)$, 
  forming the pushout-product (Def. \ref{PushoutProduct})
  with an identity morphism yields the identity morphism on the codomain:
  \begin{equation}
    \label{PushoutProductWithAnIdentity}
    f \,\widehat{\otimes}\, \mathrm{id}_{Y}
    \;\simeq\;
    \mathrm{id}_{X' \otimes Y}
    \hspace{1cm}
    \mbox{and}
    \hspace{1cm}
    \mathrm{id} 
      \,\widehat{\otimes}\, 
    g
    \;\simeq\;
    \mathrm{id}_{X \otimes Y'}
    \,.
  \end{equation}
  Moreover, if $\mathcal{C}$ has initial objects $\varnothing$ and the bifunctor preserves the initial object 
in each argument separately, then the pushout-product with an initial morphism is given by $\otimes$:
  \begin{equation}
    \label{PushoutProductWithAnInitialMorphism}
    \big(
      \varnothing
      \to
      X'
    \big)
    \,\widehat{\otimes}\,
    g
    \;\simeq\;
    \mathrm{id}_{X'} \otimes g
    \hspace{1cm}
    \mbox{and}
    \hspace{1cm}
    f
    \,\widehat{\otimes}\,
    \big(
      \varnothing
      \to
      Y'
    \big)
    \;\simeq\;
    f
      \otimes 
    \mathrm{id}_{Y'} \;.
  \end{equation}
\end{example}

\begin{definition}[Left Quillen bifunctor]
  \label{LeftQuillenBifunctor}
  Given model categories $\mathcal{D}_1, \mathcal{D}_2$ and $\mathcal{C}$ a functor of the form
  \[
    \otimes
    \,:\,
    \begin{tikzcd}
      \mathcal{D}_1
      \times
      \mathcal{D}_2
      \ar[r]
      &
      \mathcal{C}
    \end{tikzcd}
  \]
  is called a (left) {\it Quillen bifunctor}
  if
  \begin{itemize}
    \item[{\bf (i)}] ({\it two-variable cocontinuity}):
     $\otimes$ preserves colimits in each argument separately;
    \item[{\bf (ii)}] ({\it pushout-product axiom}):  
    given a pair of cofibrations, their $\otimes$-pushout product is also a cofibration
    \begin{equation}
     \label{PushoutProductAxiom}
     \def\arraystretch{1.3}
     \begin{array}{l}
       f \;\in\; \mathrm{Cof}(\mathcal{D}_1)
       \color{gray}{ 
       \,\cap \mathrm{W}(\mathcal{D}_1) }
       \\
       g \;\in\; \mathrm{Cof}(\mathcal{D}_2)
       \color{gray}{ 
       \, \cap \mathrm{W}(\mathcal{D}_2) }
     \end{array}
     \hspace{1cm}
       \vdash
     \hspace{1cm}
       f \,\widehat{\otimes}\, g \,\in\, \mathrm{Cof}(\mathcal{C})
       \color{gray}{ 
       \, \cap \mathrm{W}(\mathcal{C}) }
     \end{equation}
   and if, {\it moreover}, either is a weak equivalence, then so is the pushout product.
  \end{itemize}
\end{definition}

\medskip

\noindent
{\bf $\mathrm{sSet}$-Enriched categories (aka: simplicial categories).}
Most of what we say here applies to enriched categories over more general symmetric monoidal enriching categories than just $\mathrm{sSet}$, 
but we focus on this case for brevity of notation, since this is what we use in the main text.

\begin{definition}[$\mathrm{sSet}$-enriched monoidal category {\cite[Def. 1]{BataninMarkl12}\cite[Def. 2.1]{MorrisonPenneys19}\cite[\S 1.6]{LurieAlgebra}}]
\label{sSetEnrichedMonoidalCategory} $\,$ \newline
  A {\it $\mathrm{sSet}$-enriched monoidal category} 
  $(\mathbf{C}, \otimes)$
  is an $\mathrm{sSet}$-enriched category equipped with 
  a tensor product given by an $\mathrm{sSet}$-enriched functor
  \[
    X,X', Y, Y' \,\in\, \mathrm{Obj}(\mathbf{C})
    \;\;\;\;\;\;\;\;\;\;\;
    \vdash
    \;\;\;\;\;\;\;\;\;\;\;    
    \begin{tikzcd}
      \mathbf{C}(X,Y)
      \times
      \mathbf{C}(X',Y')
      \ar[
        rr,
        "{ \otimes_{ X \otimes X',\, Y \otimes Y'} }"
      ]
      &&
      \mathbf{C}\big(
        X \otimes X'
        ,\,
        Y \otimes Y'
      \big)
    \end{tikzcd}
  \]
  satisfying its coherence laws by $\mathrm{sSet}$-enriched natural isomorphisms.
\end{definition}

\medskip

\noindent
{\bf Rigidification of quasi-categories.}
\begin{definition}[Rigidification of quasi-categories]
  We write, as usual,
  \begin{equation}
    \label{HomotopyCoherentNerveAndRigidification}
    \begin{tikzcd}
      \mathrm{sSet}\mbox{-}\mathrm{Cat}
      \ar[
        from=rr,
        shift right=5pt,
        "{ \mathfrak{C} }"{swap}
      ]
      \ar[
        rr,
        shift right=5pt,
        "{ N }"{swap}
      ]
      \ar[
        rr,
        phantom,
        "{ \scalebox{.7}{$\bot$} }"
      ]
      &&
      \mathrm{sSet}
    \end{tikzcd}
  \end{equation}
  for the {\it homotopy coherent nerve} $N$
  of simplicial categories and its left adjoint $\mathfrak{C}$ \cite[\S 1.1.5, \S 2.2]{Lurie09},
  which on quasi-categories $\mathrm{QCat} \hookrightarrow \mathrm{sSet}$ may be understood as {\it rigidification} \cite{DuggerSpivak11}.
\end{definition}
\begin{proposition}[Rigidification preserves products up to DK-equivalence {\cite[Cor. 2.2.5.6]{Lurie09}\cite[Prop. 6.2]{DuggerSpivak11}}]
  \label{RigidificationPreservesProductsUpToDwyerKanEquivalence}
  For $S,\, S' \,\in\, \mathrm{sSet}$ there is a natural isomorphism from the rigidification \eqref{HomotopyCoherentNerveAndRigidification} 
  of their Cartesian product to the product $\mathrm{sSet}$-categories of their  rigidifications 
  \[
    \begin{tikzcd}
      \mathfrak{C}(S \times S')
      \ar[r]
      &
      \mathfrak{C}(S)
      \,\times\,
      \mathfrak{C}(S')
      \mathrlap{\,,}
    \end{tikzcd}
  \]
  which is a Dwyer-Kan equivalence (Prop. \ref{DwyerKanModelStructures}).
\end{proposition}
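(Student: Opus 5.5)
The map in question is the one induced by the two projections, $\big(\mathfrak{C}(\mathrm{pr}_S),\,\mathfrak{C}(\mathrm{pr}_{S'})\big)$, which is natural in $(S,S')$. I would show it is a Dwyer-Kan equivalence in two stages: first for representables, then for general $S,S'$ by an induction over the skeletal filtration. On objects the map is a bijection, since $\mathfrak{C}$ does not change vertices. So the real content is that it induces weak equivalences on all hom-simplicial sets.

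\emph{Representables.} For $S=\Delta[n]$ and $S'=\Delta[m]$ we have $\Delta[n]\times\Delta[m]\simeq N([n]\times[m])$, the nerve of a poset. For any poset $P$ there is the comparison $\mathfrak{C}(NP)\to P$ to the poset regarded as a discrete simplicial category. Its hom-objects $\mathfrak{C}(NP)(a,b)$ are nerves of posets of chains from $a$ to $b$. These are contractible: when $a\le b$ such a poset has a least or greatest element, namely the chain $\{a,b\}$ or the maximal refinements, and when $a\not\le b$ it is empty. Hence $\mathfrak{C}(NP)\to P$ is a Dwyer-Kan equivalence. Applying this to $P=[n]\times[m]$, $P=[n]$ and $P=[m]$, the map $\mathfrak{C}(\Delta[n]\times\Delta[m])\to\mathfrak{C}(\Delta[n])\times\mathfrak{C}(\Delta[m])$ sits over the identity of $[n]\times[m]$ via Dwyer-Kan equivalences. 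By 2-out-of-3 it is therefore a Dwyer-Kan equivalence. Here I use that Dwyer-Kan equivalences between such categories are stable under finite products, because weak equivalences of simplicial sets are.

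\emph{General case.} Both sides preserve colimits separately in $S$ and in $S'$. For the left side this holds because $\mathfrak{C}$ is a left adjoint and $(-)\times S'$ preserves colimits in $\mathrm{sSet}$. For the right side it holds because $\mathfrak{C}$ is a left adjoint and $\mathrm{sSet}\mbox{-}\mathrm{Cat}$ is cartesian closed (Prop. \ref{CartesianClosureOfSSetEnrichedGroupoids}). Fixing $S'$ first as a simplex, I would induct over the cells of $S$, i.e. pushouts along $\partial\Delta[k]\hookrightarrow\Delta[k]$ followed by a transfinite composition, and then repeat the induction in $S'$. On the left, $\mathfrak{C}$ is left Quillen for the Joyal model structure (\ref{RigidificationAdjunctionForQuasiCategories}). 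Such pushouts along monomorphisms are homotopy pushouts, and $\mathfrak{C}$ sends them to homotopy pushouts of cofibrant simplicial categories. Filtered colimits along cofibrations are homotopy colimits, so the transfinite compositions are handled the same way.

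\emph{Main obstacle.} The difficulty is the right-hand side. The functor $\mathfrak{C}(S)\times(-)$ is not left Quillen on $\mathrm{sSet}\mbox{-}\mathrm{Cat}$, so it is not clear a priori that it sends the pushouts along $\mathfrak{C}(\partial\Delta[k])\hookrightarrow\mathfrak{C}(\Delta[k])$ to homotopy pushouts. To get around this I would use the Dugger-Spivak necklace description of mapping spaces instead of abstract homotopy-colimit arguments. It gives a natural weak equivalence between $\mathfrak{C}(S)(a,b)$ and the nerve of the category of necklaces in $S$ from $a$ to $b$. Hom-spaces of the product category are then products of such nerves. The comparison map becomes the functor $\mathrm{Nec}(S\times S')_{(a,a'),(b,b')}\to\mathrm{Nec}(S)_{a,b}\times\mathrm{Nec}(S')_{a',b'}$ that sends a necklace to its two projections. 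I would try to prove this functor has contractible comma categories by Quillen's Theorem A, reducing via the necklace calculus to the simplex case already treated. If this works it proves the statement directly, and the skeletal induction becomes only a consistency check.
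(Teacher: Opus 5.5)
Your representable case is fine. $\mathfrak{C}(NP)(a,b)$ is the nerve of the poset of chains from $a$ to $b$, which has least element $\{a,b\}$ when $a\le b$, and 2-out-of-3 over $[n]\times[m]$ then settles the simplex case. The gap is the passage to arbitrary $S,S'$, which is where the content of the statement lies, and neither of your two routes gets there.

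The cell induction fails for the reason you give yourself. The map $\mathfrak{C}(\partial\Delta[k])\times\mathfrak{C}(S')\to\mathfrak{C}(\Delta[k])\times\mathfrak{C}(S')$ is not a free extension: new morphisms from $0$ to $k$ lying over different objects of $\mathfrak{C}(S')$ become related once composed with morphisms of $\mathfrak{C}(S')$. So nothing makes the right-hand pushouts homotopy pushouts.

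The necklace route is only sketched, and the reduction ``to the simplex case already treated'' does not hold as stated. The comma category of $\mathrm{Nec}\downarrow(S\times S')\to(\mathrm{Nec}\downarrow S)\times(\mathrm{Nec}\downarrow S')$ over an object $(U\to S,\,U'\to S')$ is isomorphic to $\mathrm{Nec}\downarrow(U\times U')$, with endpoints $(\alpha,\alpha')$ and $(\omega,\omega')$. So Theorem A needs the product statement for pairs of \emph{necklaces}, not simplices. Since $U\times U'$ is not the nerve of a poset, your chain computation says nothing about it. To conclude you need an extra input, for instance that $U\times U'\hookrightarrow\Delta[n]\times\Delta[m]$ is a Joyal equivalence, so that $\mathfrak{C}$ makes it a Dwyer--Kan equivalence. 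This holds because each factor inclusion is a Joyal equivalence ($U$ and $\Delta[n]$ both contain the spine via an inner anodyne inclusion) and the Joyal model structure is cartesian.

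Once you use that the Joyal model structure is cartesian, you no longer need necklaces or the representable case. This is essentially the argument behind the cited \cite[Cor. 2.2.5.6]{Lurie09}; the paper itself only cites it.
\begin{enumerate}
\item Choose Dwyer--Kan equivalences $\mathfrak{C}(S)\to\mathcal{A}$ and $\mathfrak{C}(S')\to\mathcal{A}'$ with $\mathcal{A},\mathcal{A}'$ fibrant.
\item By the Quillen equivalence \eqref{RigidificationAdjunctionForQuasiCategories}, with all simplicial sets cofibrant, the adjuncts $u\colon S\to N\mathcal{A}$ and $u'\colon S'\to N\mathcal{A}'$ are Joyal equivalences. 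Hence so is $u\times u'$, and $\mathfrak{C}$ preserves Joyal equivalences.
\item Finite products of Dwyer--Kan equivalences are Dwyer--Kan equivalences. So naturality of the comparison map and 2-out-of-3 reduce you to the case $S=N\mathcal{A}$, $S'=N\mathcal{A}'$.
\item There $N\mathcal{A}\times N\mathcal{A}'\cong N(\mathcal{A}\times\mathcal{A}')$, since $N$ is a right adjoint. The comparison map followed by $\epsilon_{\mathcal{A}}\times\epsilon_{\mathcal{A}'}$ equals the counit $\epsilon_{\mathcal{A}\times\mathcal{A}'}$.
\item All three counits are Dwyer--Kan equivalences because $\mathcal{A}$, $\mathcal{A}'$ and $\mathcal{A}\times\mathcal{A}'$ are fibrant, so 2-out-of-3 finishes the proof.
\end{enumerate}
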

\begin{proposition}[Comparing rigidification to Dwyer-Kan fundamental groupoids {\cite[Thm. 1.1]{MRZ23}}]
  \label{ComparingRigidificationToDKFundamentalGroupoids}
  For $S \,\in\, \mathrm{sSet}$ there is a natural transformation from the localization \eqref{LocalizationAndCore} of the rigidification \eqref{HomotopyCoherentNerveAndRigidification} to the Dwyer-Kan fundamental simplicial groupoid
  \[
    \begin{tikzcd}
      \mathrm{Loc}
        \circ
      \mathfrak{C}(S)
      \ar[r]
      &
      \mathcal{G}(S)
    \end{tikzcd}
  \]
  which is a Dwyer-Kan equivalence (Prop. \ref{DwyerKanModelStructures}).
\end{proposition}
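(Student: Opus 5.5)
The plan is to use that both sides are left adjoints out of $\mathrm{sSet}$. First, build the comparison map on representables $\Delta[n]$, naturally in $[n] \in \Delta$. Then extend it to all simplicial sets by left Kan extension along the Yoneda embedding. Finally, prove it is a Dwyer-Kan equivalence by induction over skeleta. Concretely, $\mathrm{Loc}\circ\mathfrak{C} = \widetilde{\mathbf{G}}$ is left adjoint to $\widehat{\mathrm{W}} = N\circ\iota$ by \eqref{RigidificationQuillenAdjunction}, and $\mathbf{G}$ is left adjoint to $\overline{\mathrm{W}}$ by \eqref{DKGroupoidQuillenEquivalence}. Both functors therefore preserve all colimits, so a natural transformation $\mathrm{Loc}\circ\mathfrak{C} \Rightarrow \mathbf{G}$ is the same as a cosimplicial map of simplicial groupoids $\mathrm{Loc}\,\mathfrak{C}(\Delta[\bullet]) \to \mathbf{G}(\Delta[\bullet])$.

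For the equivalence claim, we argue by skeletal induction. We write $S$ as the sequential colimit of its skeleta. Each skeleton is obtained from the previous one by pushouts of coproducts of boundary inclusions $\partial\Delta[n] \hookrightarrow \Delta[n]$. Both functors are left Quillen: $\mathbf{G}$ by \eqref{DKGroupoidQuillenEquivalence}, and $\widetilde{\mathbf{G}}$ by Prop. \ref{DwyerKanModelStructures}. Hence both send monomorphisms to cofibrations and send every simplicial set to a cofibrant simplicial groupoid. So the image of each cell attachment is a pushout of cofibrant objects along a cofibration, and such pushouts are homotopy pushouts. The image of the skeletal tower is a sequential colimit along cofibrations between cofibrant objects, hence a homotopy colimit. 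The comparison map is natural, so by the gluing lemma and its sequential analogue it suffices to check that it is a Dwyer-Kan equivalence on each $\Delta[n]$ and each $\partial\Delta[n]$. The latter is again handled by induction on $n$, since $\partial\Delta[n]$ is a finite iterated pushout of lower simplices.

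On $\Delta[n]$ both sides are contractible. For $\mathbf{G}(\Delta[n])$ this holds because $\mathbf{G}$ is the left half of a Quillen equivalence on a model category with all objects cofibrant, so by Ken Brown's Lemma \ref{KenBrownLemma} it preserves the weak equivalence $\Delta[n] \to \Delta[0]$. For $\mathrm{Loc}\,\mathfrak{C}(\Delta[n])$ we use that $\mathfrak{C}(\Delta[n]) \to [n]$ is a Dwyer-Kan equivalence, since its hom-objects are cubes. Moreover $\mathrm{Loc}$ is left Quillen by \eqref{DiagramOfDKModelStructures}, so by Ken Brown it preserves weak equivalences between cofibrant objects. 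Thus $\mathrm{Loc}\,\mathfrak{C}(\Delta[n]) \simeq \mathrm{Loc}[n]$, which is the contractible groupoid on $n+1$ objects. Any map between contractible simplicial groupoids is a Dwyer-Kan equivalence.

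The main obstacle is the first step: producing a \emph{strictly} natural cosimplicial map $\mathrm{Loc}\,\mathfrak{C}(\Delta[\bullet]) \to \mathbf{G}(\Delta[\bullet])$. Contractibility of the targets does not supply one, since it gives maps only up to homotopy and not compatibly with all cofaces and codegeneracies. The first thing I would try is to exploit the explicit generators of $\mathbf{G}(\Delta[n])$: the free generators are indexed by the nondegenerate simplices, with their last edge contracted. The idea is to send the $k$-cube $\mathfrak{C}(\Delta[n])(i,j)_k$, parametrized by chains of intermediate vertices, to the corresponding degenerate iterated products of generators, and then check the coface and codegeneracy identities directly. If that fails, the fallback is to work on the right-adjoint side. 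There one would construct a natural map $N(\iota\mathbf{X}) \to \overline{\mathrm{W}}\mathbf{X}$, or a natural zigzag through a common bar-type model, for simplicial groupoids $\mathbf{X}$. One then obtains the transformation as the adjunct of the composite $S \to \overline{\mathrm{W}}\,\mathbf{G}S$ with it, or else settles for a natural zigzag of Dwyer-Kan equivalences, which is all the main text actually uses.
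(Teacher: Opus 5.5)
The paper gives no proof of this statement; it imports it from \cite[Thm.~1.1]{MRZ23}. Judged on its own, your proposal has a genuine gap, and it is exactly where you place the ``main obstacle''. The whole content of the proposition is a \emph{strictly} natural comparison: via $\mathrm{Loc}\dashv\iota$, simplicial functors $\mathfrak{C}(\Delta[n])\to\iota\,\mathbf{G}(\Delta[n])$, compatible with composition of cubes and with all cofaces and codegeneracies. You do not construct it, and ``send cube simplices to degenerate iterated products of generators'' is not yet a definition.

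Write $\tau(x)$ for the generator of Kan's loop groupoid attached to a simplex $x$. Naturality along cofaces and functoriality force the vertices $\{i,j\}$ and $\{i,m,j\}$ of $\mathfrak{C}(\Delta[n])(i,j)$ to go to $\tau[ij]$ and to the composite $\tau[im]\,\tau[mj]$. But the degree-$1$ generator $\tau[imj]$ has faces $\tau[im]$ and the twisted product $\tau[ij]\,\tau[mj]^{-1}$ (in Kan's convention). So already this single edge cannot go to a generator; it must go to a word combining $\tau[imj]^{\pm1}$ with degenerate correction factors of the form $s_0\tau(\cdot)$. In higher cube dimensions these corrections become Szczarba-type operators. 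One must then check that they assemble into simplicial maps on whole cubes, respect concatenation of cubes (composition in $\mathfrak{C}$), and respect codegeneracies (which merge cube coordinates). That verification is the actual work done in the cited source.

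Your fallback does not bypass this. The mate of $\mathrm{Loc}\circ\mathfrak{C}\Rightarrow\mathbf{G}$ is a transformation $\overline{\mathrm{W}}\Rightarrow N\circ\iota$. You wrote the reverse direction, and with it your composite with $S\to\overline{\mathrm{W}}\mathbf{G}S$ does not typecheck. Moreover, since $(\overline{\mathrm{W}}\mathbf{X})_n\cong\mathrm{Hom}(\mathbf{G}\Delta[n],\mathbf{X})$ and $(N\iota\mathbf{X})_n\cong\mathrm{Hom}(\mathrm{Loc}\,\mathfrak{C}\Delta[n],\mathbf{X})$, Yoneda shows that such a transformation is literally the same data as the missing cosimplicial map.

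A natural \emph{zigzag}, by contrast, needs no construction. Both $\mathrm{Loc}\,\mathfrak{C}(\Delta[\bullet])$ and $\mathbf{G}(\Delta[\bullet])$ are Reedy cofibrant and levelwise contractible. Factoring $\mathbf{G}(\Delta[\bullet])\to\ast$ in the Reedy structure and lifting yields $\mathrm{Loc}\,\mathfrak{C}(\Delta[\bullet])\to\mathcal{R}\leftarrow\mathbf{G}(\Delta[\bullet])$. That suffices for the paper's only use of the proposition, namely that $\widetilde{\mathbf{G}}$ preserves weak equivalences. It is not the statement, however, which asserts a direct map. The lifting argument would produce a direct map only if $\mathbf{G}(\Delta[\bullet])$ were Reedy fibrant, and it is not: its second matching map already misses an object.

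The homotopical half of your plan is the standard argument that a natural transformation between cocontinuous, cofibration-preserving functors out of $\mathrm{sSet}$, which is a weak equivalence on every $\Delta[n]$, is a weak equivalence everywhere. It consists of cofibration-preservation, gluing along skeleta, and contractibility on simplices, and it is fine once a map exists, with one caveat. Do not justify ``$\widetilde{\mathbf{G}}$ is left Quillen'' by Prop.~\ref{DwyerKanModelStructures}: the paper derives precisely the weak-equivalence half of that claim from the present proposition, so this would be circular. What you actually use is only that $\widetilde{\mathbf{G}}$ sends monomorphisms to cofibrations. The paper obtains this independently from $\mathfrak{C}\dashv N$ being Quillen for the Joyal structure and $\mathrm{Loc}$ being left Quillen. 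You also use your direct contractibility argument for $\mathrm{Loc}\,\mathfrak{C}(\Delta[n])$, where Ken Brown additionally needs $[n]$ to be Bergner-cofibrant; this holds because $[n]$ is free on the linear graph.
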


\medskip


\end{document}